%!TEX encoding = UTF-8 Unicode
%!TEX spellcheck = English

% \documentclass[11pt,bezier,amstex]{article}  % include bezier curves
% \renewcommand\baselinestretch{1.0}           % single space
% \usepackage[textsize=small]{todonotes}       % includes TO DO LIST
% %\usepackage{showkeys-hara}%
% %\pagestyle{empty}                            % no headers and page numbers

% \parindent=15pt                  % Do not indent paragraphs
% \parskip=0.01 true in
% \usepackage{color}              % Need the color package
% %\usepackage{epsfig}
% \usepackage{graphicx}
% \usepackage[]{amsmath}
% \usepackage{amssymb}
% %\usepackage{epsfig}
%%%%%%%%%%%%%%%%%%%%%%%%%%%%%%%%%%%%
%%											%%
%%  Created by Partha Sarathi Dey on  					%%
%%  Copyright (c) 2010 CIMS, NYU. All rights reserved.			%%
%% 											%%
							%% 
%%											%%
%%%%%%%%%%%%%%%%%%%%%%%%%%%%%%%%%%%%
\newif\iffinal
\finalfalse	% Not final version
%\finaltrue	% Final version

\documentclass[letterpaper,11pt,reqno]{amsart} 
\RequirePackage[utf8]{inputenc}
\usepackage[portrait,margin=3cm]{geometry}
%\iffinal\else\usepackage[notref,notcite]{showkeys}\fi
\usepackage{mathrsfs,soul}
\usepackage{hyperref}
\usepackage[foot]{amsaddr}
\usepackage{amssymb,amsthm,amsfonts,amsbsy,latexsym,dsfont}
\usepackage[textsize=small]{todonotes}       % includes TO DO LIST
\usepackage{graphicx}
\usepackage{upref,setspace}
%\usepackage[inline]{todo}

%%pacchetti miei
\usepackage{amsmath} 
\usepackage{amsthm}
\usepackage{amssymb}

\usetikzlibrary{arrows, quotes}
\usetikzlibrary{positioning,calc}

\definecolor{seagreen}{RGB}{28,190,160}%
\definecolor{bluegreen}{rgb}{0.00000,0.61700,0.79100}%
\definecolor{greenlight}{rgb}{0.278431, 0.76470,0}
%{color2}{rgb}{0.00000,0.44700,0.74100}%
\definecolor{backgroundColor}{rgb}{0,0,0}%

\usepackage{enumerate}

\numberwithin{equation}{section}
\numberwithin{figure}{section}
\numberwithin{table}{section}

\oddsidemargin -10 true pt      % Left margin on odd-numbered pages.
\evensidemargin 10 true pt      % Left margin on even-numbered pages.
\marginparwidth 0.75 true in    % Width of marginal notes.
\oddsidemargin  0 true in       % Note that \oddsidemargin=\evensidemargin
\evensidemargin 0 true in
\topmargin -0.55 true in        % Nominal distance from top of page to top of
\textheight 9.5 true in         % Height of text (including footnotes and figures)
\textwidth 6.375 true in        % Width of text line.

\sloppy
\usepackage{hyperref,bbm}
\definecolor{MyDarkBlue}{rgb}{0,0.08,0.50}
\definecolor{BrickRed}{rgb}{0.65,0.08,0}

\hypersetup{
colorlinks=true,       % false: boxed links; true: colored links
    linkcolor=MyDarkBlue,          % color of internal links
    citecolor=BrickRed,        % color of links to bibliography
    filecolor=red,      % color of file links
    urlcolor=cyan           % color of external links
}

\newtheorem{Lemma}{Lemma}[section]

\newtheorem{Proposition}[Lemma]{Proposition}

\newtheorem{Theorem}[Lemma]{Theorem}

\newtheorem{Remark}[Lemma]{Remark}

\newtheorem{Corollary}[Lemma]{Corollary}
\newtheorem{Definition}[Lemma]{Definition}
\newtheorem{Condition}[Lemma]{Condition}

\newtheorem{remark}[Lemma]{Remark}

\newtheorem{Statement}[Lemma]{Statement}

%% comandi miei

\usepackage{dsfont}

\newcommand{\dopp}[1]{\mathbb{#1}}
\newcommand{\sub}[1]{\boldsymbol{#1}}
\newcommand{\grosso}{\displaystyle}

\newcommand{\R}{\dopp{R}}

\newcommand{\N}{\dopp{N}}
\newcommand{\I}{\mathds{1}}
\newcommand{\pr}{\dopp{P}}
\newcommand{\E}{\dopp{E}}

\newcommand{\CMnd}{\mathrm{CM}_{n}}
\newcommand{\CMndd}{\mathrm{CM}_{n}(\sub{d})}
\newcommand{\EE}{\mathcal{E}}

\newcommand{\eqn}[1]{\begin{equation} #1 \end{equation}}
\newcommand{\eqan}[1]{\begin{align} #1 \end{align}}
\newcommand{\nn}{\nonumber}
\newcommand{\PA}[1]{\mathrm{PA}_{#1}}
\newcommand{\PAmd}[1]{\mathrm{PA}_{#1}(m,\delta)}
\newcommand{\PAONE}[1]{\mathrm{PA}_{#1}(1,\delta/m)}
\newcommand{\core}{\mathrm{Core}}
\newcommand{\cM}{\mathcal{M}}
\newcommand{\cN}{\mathcal{N}}

\newcommand{\cG}{\mathcal{G}}
\newcommand{\vep}{\varepsilon}
\newcommand{\indic}[1]{\mathds{1}_{\{#1\}}}

\newcommand{\dmin}{d_{\mathrm{min}}}
\newcommand{\dfwd}{d_{\mathrm{fwd}}}
\newcommand{\cdist}{c_{\mathrm{dist}}}
\newcommand{\dist}{\mathrm{dist}}
\newcommand{\Var}{\mathrm{Var}}
\newcommand{\sss}{\scriptscriptstyle}
\newcommand{\e}{{\mathrm e}}

\newcommand{\cC}{\mathcal{C}}
\newcommand{\cW}{\mathcal{W}}

%%%%%%%%%%%%%%%%%%%%%%%%%%%%%%%%%%%%%%%%%%%
\newcommand{\blue}{} %-------> Change to black for final version
\newcommand{\shortversion}[1]{} %-------> Move argument #1 for extended version
\newcommand{\longversion}[1]{{#1}}
%%%%%%%%%%%%%%%%%%%%%%%%%%%%%%%%%%%%%%%%%%%
\begin{document}

\title[Diameter in ultra-small scale-free random graphs]{Diameter in ultra-small scale-free random graphs
\longversion{\!\!:\linebreak Extended version}}

\date{}
%\subjclass[2010]{Primary: 60C05, 05C80, 90B15. }% Secondary: ;}
\keywords{random graphs, diameter, scale free, ultra small, configuration model, preferential attachment model}

\author[Caravenna]{Francesco Caravenna$^1$}
\address{$^1$Dipartimento di Matematica e Applicazioni,
 Universit\`a degli Studi di Milano-Bicocca,
 via Cozzi 55, 20125 Milano, Italy}
\author[Garavaglia]{Alessandro Garavaglia$^2$}
\address{$^2$Department of Mathematics and
    Computer Science, Eindhoven University of Technology, 5600 MB Eindhoven, The Netherlands}
\author[van der Hofstad]{Remco van der Hofstad$^2$}
\email{francesco.caravenna@unimib.it, a.garavaglia@tue.nl, rhofstad@win.tue.nl}

\begin{abstract}
It is well known that many random graphs with infinite variance degrees are ultrasmall. More precisely, for configuration models and preferential attachment models where the proportion of vertices of degree at least $k$ is approximately $k^{-(\tau-1)}$ with $\tau\in(2,3)$, typical distances between pairs of vertices in a graph of size $n$ are asymptotic to $\frac{2\log\log n}{|\log(\tau-2)|}$ and $\frac{4\log\log n}{|\log(\tau-2)|}$, respectively. In this paper, we investigate the behavior of the diameter in such models. We show that the diameter is of order $\log\log n$ precisely when the minimal forward degree $\dfwd$ of vertices is at least $2$. We identify the exact constant, which equals that of the typical distances plus $2/\log \dfwd$. Interestingly, the proof for both models follows identical steps, even though the models are quite different in nature.
\end{abstract}

\maketitle

\section{Introduction and results}
\label{sec-int}

In this paper, we study the diameter of two different random graph models: the {\itshape configuration model} 
and the {\itshape preferential attachment model}, when these two models have a power-law degree distribution with exponent $\tau\in(2,3)$, so that
the degrees have finite mean but infinite variance. In this first section, we give a brief introduction to the models, stating the main technical conditions required as well as the two main results proved in the paper.

{\blue Throughout the paper, we write ``with high probability''
to mean ``with probability $1- o(1)$ as $n\to\infty$, or as $t \to \infty$'',
where $n$ and $t$ denote the number of vertices in the configuration model
and in the preferential attachment model, respectively.}

\subsection{Configuration model and main result}
\label{sec:model-form}
The configuration model $\CMnd$ is a random graph with vertex set $[n]:=\{1,2,\ldots,n\}$
and with prescribed degrees.  Let $\sub{d} = (d_1,d_2,\ldots, d_n)$ be a  given {\it degree sequence}, i.e., a sequence of $n$ positive integers with total degree
 	\eqn{
	\label{cmnd-elln}
    	\ell_n = \sum_{i\in [n]} d_i,
    	}
assumed to be even. The configuration model (CM) on $n$ vertices with degree sequence $\sub{d}$ is constructed as follows:
Start with $n$ vertices and $d_i$ half-edges adjacent to vertex $i \in [n]$. Randomly choose pairs of half-edges and match the chosen pairs together to form edges.  Although self-loops may occur, these become rare as $n\to\infty$ {\blue (see e.g.\ \cite[Theorem 2.16]{Boll01}, \cite{Jans06b})}.
We denote the resulting multi-graph on $[n]$ by $\CMnd$, with corresponding edge set $\EE_n$. We often omit the dependence on the degree sequence $\sub{d}$, and write $\CMnd$ for $\CMndd$.
\medskip

\paragraph{\bf Regularity of vertex degrees.}
Let us now describe our regularity assumptions.
For each $n\in\N$ we have a degree sequence
$\sub{d}^{\sss(n)} = (d^{\sss(n)}_1, \ldots, d^{\sss(n)}_n)$. To lighten notation, we omit the superscript $(n)$ and write $\sub{d}$ instead of $\sub{d}^{\sss(n)}$ or $(\sub{d}^{\sss(n)})_{n\in\N}$ and  $d_i$ instead of $d^{\sss(n)}_i$.
Let $(p_k)_{k\in \N}$ be a probability mass function on $\N$. We introduce the {\itshape empirical degree distribution} of the graph as
\begin{equation}
\label{emp-deg-dis}
      p_k^{\sss(n)} = \frac{1}{n}\sum_{i\in[n]}\I_{\{d_i=k\}}.
\end{equation}
We can define now the {\itshape degree regularity conditions}:

\begin{Condition}[Degree regularity conditions]
\label{drc}

Let $\CMnd$ be a configuration model, then we say that $\sub{d}$ satisfies the degrees regularity conditions \eqref{it:a}, \eqref{it:b}, 
  with respect to $(p_k)_{k\in \N}$ if:
  \begin{enumerate}
\renewcommand{\theenumi}{\alph{enumi}}%
\renewcommand{\labelenumi}{{\rm(\theenumi)}}%
   \item\label{it:a} for every $k\in \N$, as $n\rightarrow\infty$
	  \begin{equation}
			\label{drc-a}
	    p_k^{\sss(n)}\longrightarrow p_k.
	  \end{equation}
  \item\label{it:b} $\sum_k kp_k<\infty$, and as $n\rightarrow\infty$
	  \begin{equation}
			\label{drc-b}
	    \sum_{k\in\N}kp_k^{\sss(n)}\longrightarrow \sum_{k\in \N}kp_k.
	  \end{equation}
	\end{enumerate}
	As notation, we write that $\sub{d}$ satisfies the d.r.c. \eqref{it:a}, \eqref{it:b}.
\end{Condition}
\medskip

Let $F_{\sub{d},n}$ be the distribution function of $(p_k^{\sss(n)})_{k\in\N}$, that is, for $k\in\N$,
	\eqn{
	\label{cmnd-discrete-distrib-function}
  	F_{\sub{d},n}(k) = \frac{1}{n}\sum_{i\in[n]}\I_{\{d_i\leq k\}}.
	}
We suppose that $\sub{d}$ satisfies
the d.r.c.\ \eqref{it:a} and \eqref{it:b} with respect to some probability mass function $(p_k)_{k\in\N}$, corresponding to a distribution function $F$.

\begin{Condition}[Polynomial distribution condition]
\label{pol-dis-con}
We say that $\sub{d}$ satisfies the polynomial distribution condition with exponent $\tau\in(2,3)$ if for all $\delta>0$ 
there exist $\alpha=\alpha(\delta)>\frac{1}{2}$, $c_1(\delta)>0$ and $c_2(\delta)>0$ such that,
for every $n\in\N$, the lower bound
	\begin{equation}
	\label{pol-con-left}
	1-F_{\sub{d},n}(x)\geq c_1x^{-(\tau-1+\delta)}
	\end{equation}
holds for all $x\leq n^\alpha$, and the upper bound
\begin{equation}
	\label{pol-con-right}
  	1-F_{\sub{d},n}(x)\leq c_2x^{-(\tau-1-\delta)}
	\end{equation}
holds for all $x\geq 1$.
\end{Condition}
There are two examples that explain Condition \ref{pol-dis-con}. Consider the case of i.i.d.\ degrees with $\pr\left(D_i>x\right)= cx^{-(\tau-1)}$, then the degree sequence satisfies Condition \ref{pol-dis-con} a.s. A second case is when the number of vertices of degree $k$ is $n_k = \lceil nF(k)\rceil - \lceil nF(k-1)\rceil$, and $1-F(x) = cx^{-(\tau-1)}$. Condition \ref{pol-dis-con} allows for more flexible degree sequences than just these examples. 

{\blue 
If we fix $\beta < \min\{\alpha, \frac{1}{\tau-1+\delta}\}$, %(recall that $\tau > 2$),
the lower bound \eqref{pol-con-left} ensures that the number of vertices of degree higher than $x = n^{\beta}$ is at least $n^{1-\beta(\tau-1+\delta)}$, which diverges as a positive power of $n$.
%(their number is at least $c_1 n^{1-\alpha(\tau-1+\delta)} \to \infty$ as $n\to\infty$,
%if we fix $\delta > 0$ small enough).
If we take $\beta > \frac{1}{2}$,  these vertices with high probability form a complete graph.
This will be essential for proving our main results. The precise value of $\beta$ is irrelevant in the sequel of this paper.}

\medskip
For an asymptotic degree distribution with asymptotic probability mass function $(p_k)_{k\in\N}$, we say that
	\begin{equation}
	\label{eq:dmin0}
	\dmin = \min\left\{k\in\N \colon p_k>0\right\}
	\end{equation}
is the minimal degree of the probability given by $(p_k)_{k\in\N}$.
With these technical requests, we can state the main result for the configuration model:

\medskip
\begin{Theorem}[Diameter of $\CMnd$ for $\tau\in(2,3)$] 
\label{main-cmnd}
Let $\sub{d}$ be a sequence satisfying Condition \ref{drc} with asymptotic degree distribution $(p_k)_k$ with $\dmin\geq3$. Suppose that $\sub{d}$ satisfies Condition \ref{pol-dis-con} with $\tau\in(2,3)$ and $d_i\geq \dmin$ for all $i\in[n]$. Then
	\begin{equation}
	\label{diam-cmnd}
  	\frac{\mathrm{diam}(\CMnd)}{\log\log n}
  	{\xrightarrow[\,n\to\infty\,]{\pr} \ }
  	\frac{2}{\log(\dmin-1)}+\frac{2}{|\log(\tau-2)|},
	\end{equation}
where ${\xrightarrow[\,n\to\infty\,]{\pr} \ }$ denotes convergence in probability as $n\rightarrow \infty$.
\end{Theorem}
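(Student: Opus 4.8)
The plan is to prove the two matching bounds separately: a \emph{lower bound} showing $\mathrm{diam}(\CMnd) \ge (1-\vep)\big(\tfrac{2}{\log(\dmin-1)}+\tfrac{2}{|\log(\tau-2)|}\big)\log\log n$ with high probability, and an \emph{upper bound} with the reversed inequality. The heuristic behind the constant should guide both: a typical pair of vertices is connected through the ``core'' of high-degree vertices, which by the remark following Condition~\ref{pol-dis-con} forms a complete graph on a polynomially large set, and reaching this core from a typical vertex costs $\sim \tfrac{\log\log n}{|\log(\tau-2)|}$ steps (the known typical-distance asymptotics, e.g.\ from the work of van der Hofstad--Hooghiemstra--Znamenski and Norros--Reittu). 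The diameter, however, is governed by the \emph{worst} starting vertices: those of minimal degree $\dmin$ whose neighbourhoods, for a while, look like a $\dfwd = \dmin-1$-ary tree, so that escaping to a vertex of degree $\ge D$ takes $\sim \tfrac{\log\log n}{\log(\dmin-1)}$ extra steps at each of the two ends. Adding the two end-contributions $\tfrac{2}{\log(\dmin-1)}$ to the core-crossing contribution $\tfrac{2}{|\log(\tau-2)|}$ gives~\eqref{diam-cmnd}.

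For the \textbf{lower bound}, I would first fix a small $\eta>0$ and locate, via a second-moment/first-moment argument on the number of such vertices, a pair of vertices $u,v$ such that the balls of radius $r_1 := (1-\eta)\tfrac{\log\log n}{\log(\dmin-1)}$ around each of them are trees in which every vertex has forward degree exactly $\dmin-1$ (such vertices are plentiful because a positive fraction of vertices have degree $\dmin$ and, by a branching-process comparison, their local neighbourhoods are trees with bounded degrees up to radius $c\log\log n$ with non-negligible probability; one needs the expected number of such pairs to diverge and a variance bound to conclude existence w.h.p.). Any path from $u$ to $v$ must first exit both these trees, and on exiting, the boundary vertices still have degree $O(1)$, so that — by the standard analysis of the breadth-first exploration in $\CMnd$ with $\tau\in(2,3)$, controlling how fast the maximal degree seen can grow — reaching the core of degree-$n^{\Theta(1)}$ vertices requires a further $(1-\eta)\tfrac{\log\log n}{|\log(\tau-2)|}$ steps on each side. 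Concatenating forces $\mathrm{diam} \ge 2r_1 + 2(1-\eta)\tfrac{\log\log n}{|\log(\tau-2)|} - O(1)$, and letting $\eta\downarrow 0$ gives the claimed lower constant. The technical care here is in the coupling of the exploration to a branching process and in the "doubly logarithmic'' growth estimate for the maximal degree along shortest paths; this is where I expect the bulk of the work, and it is the main obstacle.

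For the \textbf{upper bound}, I would show that \emph{every} vertex is within distance $(1+\eta)\big(\tfrac{1}{\log(\dmin-1)}+\tfrac{1}{|\log(\tau-2)|}\big)\log\log n$ of the core w.h.p., which then yields the diameter bound by routing any two vertices through the (diameter-$1$, or at worst diameter-$2$ after one more hop) complete core. Since $d_i\ge\dmin\ge 3$, the forward degree of every vertex is $\ge 2$, so from any vertex the size of the exploration ball grows at least like $(\dmin-1)^r$ while the explored half-edges are few; as soon as the ball has $\ge n^{\gamma}$ half-edges for a fixed small $\gamma>0$ — which happens by radius $(1+\eta)\tfrac{\log\log n}{\log(\dmin-1)}$ — with high probability it connects, within a further $(1+\eta)\tfrac{\log\log n}{|\log(\tau-2)|}$ steps, to a vertex of degree $\ge n^{\beta}$ with $\beta>1/2$ (this is the layered "degrees grow doubly-exponentially'' argument: a vertex of degree $k$ has, w.h.p., a neighbour of degree $\ge k^{(\tau-2)^{-1}+o(1)}$, iterated $\sim\tfrac{\log\log n}{|\log(\tau-2)|}$ times starting from $k = n^{\gamma/\log\ldots}$). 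A union bound over all $n$ starting vertices is affordable because each of these events fails with probability $o(1/n)$ — this requires the estimates to be quantitative, e.g.\ stretched-exponentially small failure probabilities, which is the one place the argument must be pushed beyond what is needed for typical distances. Finally, one checks via Condition~\ref{pol-dis-con} with $x = n^{\beta}$, $\beta>1/2$, that the set of vertices of degree $\ge n^{\beta}$ has size $\ge n^{1-\beta(\tau-1+\delta)} \to \infty$ and, being $\beta>1/2$, induces a complete graph w.h.p., closing the loop.
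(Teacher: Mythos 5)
Your overall architecture (minimally connected tree-like vertices for the lower bound, plus a uniform bound on the distance from every vertex to a set of high-degree vertices for the upper bound) is the same as the paper's, and your lower-bound sketch — first/second moment for the number of vertices whose $k$-neighbourhood is a $\dmin$-regular tree, then a typical-distance lower bound between the two tree boundaries via path counting — is essentially the paper's Statements~\ref{stat-MKC}--\ref{stat-distance}. The constant accounting in your upper bound is also correct: routing through the complete graph on the degree-$n^\beta$ vertices ($\beta>1/2$) and charging $(1+\eta)\frac{\log\log n}{|\log(\tau-2)|}$ per endpoint for the doubly-exponential climb reproduces the paper's split into exploration phase, $o(\log\log n)$ connection, and core diameter.

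However, the quantitative mechanism you propose for the upper bound has a genuine gap. First, the claim that the exploration ball has $\ge n^{\gamma}$ half-edges by radius $(1+\eta)\frac{\log\log n}{\log(\dmin-1)}$ is false: the only uniform lower bound on growth is $(\dmin-1)^r$ (minus collisions), so at that radius you are guaranteed only $(\log n)^{1+\eta}$ half-edges, a polylogarithmic — not polynomial — amount; reaching $n^\gamma$ half-edges uniformly would cost radius of order $\log n$. Second, and more importantly, the statement ``each of these events fails with probability $o(1/n)$'' cannot hold for a single climbing path started from a bounded-degree vertex: the very first step of the ``degree $k$ has a neighbour of degree $k^{1/(\tau-2)-o(1)}$'' iteration fails with probability bounded away from $0$ when $k$ is of order $\dmin$, and it remains only stretched-exponentially small in a \emph{power of $\log n$} (hence not $o(1/n)$) until the degree exceeds roughly $(\log n)^{1+\epsilon}$. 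So a direct union bound over the $n$ starting vertices, with one climb per vertex, does not close. The missing idea — which is how the paper achieves uniformity — is to use the $\Theta((\log n)^{1+\varepsilon})$ boundary vertices of the (nearly tree-like, after controlling collisions) $k^{+}_n$-exploration graph as many parallel attempts: each attempt reaches polylogarithmic degree (the core) within $O(\log\log\log n)$ steps with probability at least a constant $\eta$, conditionally on the previous attempts failing, so the failure probability is $(1-\eta)^{\Theta((\log n)^{1+\varepsilon})}=o(1/n)$; the $\frac{2}{|\log(\tau-2)|}\log\log n$ term is then supplied by the diameter of that polylog-degree core (or, if you insist on climbing to $n^{\beta}$, the continuation of the climb must itself be run from the whole boundary rather than along a single path). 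Without this parallel-trials step, or a substitute for it, your union bound over all starting vertices does not go through.
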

\medskip

In fact, the result turns out to be false when $p_1+p_2>0$, as shown by {\blue  Fernholz and Ramachandran}  \cite{FeRama} (see also \cite{vdHHGZD1}), since then there are long strings of vertices with low degrees that are of logarithmic length.

\subsection{Preferential attachment model and main result}
\label{subsect-prefatt-intro}
The configuration model presented in the previous section is a {\itshape static model}, because the size $n\in\N$ of the graph was fixed. 

The preferential attachment model instead is a {\itshape dynamic model}, because, in this model, vertices are added sequentially with a number of edges connected to them. These
edges are attached to a receiving vertex with a probability proportional to the degree of the receiving vertex at that time plus a constant, thus favoring vertices with high degrees.

The idea of the preferential attachment model is simple, and we start by defining it informally. We start with a single vertex with a self loop, which is the graph at time $1$. At every time $t\geq2$, we add a vertex to the graph. 
This new vertex has an edge incident to it, and we attach this edge to a random vertex already present in the graph, with probability proportional to the degree of the receiving 
vertex plus a constant {\blue $\delta$}, which means that vertices with large degrees are favored.
Clearly, at each time $t$ we have a graph of size $t$ with exactly $t$ edges.

We can modify this model by changing the number of edges incident to each new vertex we add. 
If we start at time $1$ with a single vertex with $m\in\N$ self loops, and at every time
$t\geq 2$ we add a single vertex with $m$ edges, then at time $t$ we have 
a graph of size $t$ but with $mt$ edges, that we call $\PAmd{t}$. When no confusion can arise, we omit the arguments $(m,\delta)$ and abbreviate $\PA{t}=\PAmd{t}$.
We now give the explicit expression for the attachment probabilities.

\medskip
\begin{Definition}[Preferential attachment model]
\label{def-attprob}
	Fix $m \in \N$, $\delta \in (-m,\infty)$.
	Denote  by $\{t\stackrel{j}{\rightarrow} v\}$
	the event that the $j$-th edge of vertex $t \in \N$ is attached to vertex $v \in [t]$
	(for $1 \le j \le m$).
	The preferential attachment model with parameters
	$(m,\delta)$ is defined by the attachment probabilities
	\begin{equation}
	\label{prefatt-attprob}
		\pr\left(\left.t\stackrel{j}{\rightarrow} v \,\right| \PA{t,j-1}\right) = 
		\begin{cases}
			\grosso \frac{D_{t,j-1}(v) + 1+j\delta/m}
			{c_{t,j}} & \text{ for }v=t, \\
			\rule{0pt}{2em}\grosso \frac{D_{t,j-1}(v)+\delta}
			{c_{t,j}} & \text{ for }v < t,
		\end{cases}
	\end{equation}
where $\PA{t,j-1}$ is the graph after the first $j-1$ 
edges of vertex $t$ have been attached, and correspondingly
$D_{t,j-1}(v)$ is the degree of vertex $v$. 
The normalizing constant $c_{t,j}$ in \eqref{prefatt-attprob} is
\begin{equation} \label{eq:normali}
	c_{t,j} := \left[m(t-1)+(j-1)\right]\left(2+\delta/m\right)+1+\delta/m \,.
\end{equation}

\end{Definition}

We refer to Section~\ref{sec:altprefatt} for more details and explanations
on the construction of the model (in particular, for the reason
behind the factor $j\delta/m$ in the first line of \eqref{prefatt-attprob}).

\medskip
Consider, as in \eqref{emp-deg-dis}, the empirical degree distribution of the graph, which we denote by $P_k(t)$, where in this case the degrees are random variables. It is known from the literature (\cite{BRST01}, \cite{vdH1}) that, for every $k\geq m$, as $t\rightarrow\infty$,
	\eqn{
	\label{prefatt-empdegdist}
	P_k(t){\xrightarrow[\sss\,t\to\infty\,]{\sss \pr} \ } p_k,
	}
where $p_k\sim ck^{-\tau}$, and $\tau = 3+\delta/m$. We focus on the case $\delta\in(-m,0)$,
so that $\PA{t}$ has a power-law degree sequence with power-law exponent $\tau\in(2,3)$.

For the preferential attachment model, our main result is the following:

\begin{Theorem}[Diameter of the preferential attachment model]
\label{main-prefatt}
Let $(\PA{t})_{t\geq 1}$ be a preferential attachment model with $m\geq 2$ and $\delta\in(-m,0)$. Then
	\begin{equation}
	\label{diam-prefatt}
  	\frac{\mathrm{diam}(\PA{t})}{\log\log t}{\xrightarrow[\,t\to\infty\,]{\pr}\ }\frac{2}{\log m}+\frac{4}{|\log(\tau-2)|},
	\end{equation}
where $\tau = 3+\delta/m\in(2,3)$.
\end{Theorem}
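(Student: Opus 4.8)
The plan is to establish matching upper and lower bounds on $\mathrm{diam}(\PA{t})$, each equal to $\big(\tfrac{2}{\log m}+\tfrac{4}{|\log(\tau-2)|}\big)\log\log t$ up to $o(\log\log t)$. The key structural fact is that $\PA{t}$ contains a \emph{core} of high-degree vertices that forms a complete graph (or a graph of bounded diameter), analogous to the role played in the configuration model by vertices of degree at least $n^\beta$ with $\beta>\tfrac12$. The first main step is therefore to identify this core in the preferential attachment setting: the oldest vertices, say those with label at most $t^\eta$ for suitable small $\eta>0$, accumulate degree at least a power of $t$, and with high probability any two such vertices are joined by a path of length $O(1)$ through even older vertices. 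This gives a set $\core$ with $\mathrm{diam}(\core)=O(1)$ that every distance computation can be routed through. The constant $\tfrac{4}{|\log(\tau-2)|}$ is exactly twice the $\tfrac{2}{|\log(\tau-2)|}$ appearing in typical distances for preferential attachment, reflecting that the diameter is governed by \emph{two} worst-case vertices each at distance $\sim\tfrac{2\log\log t}{|\log(\tau-2)|}$ from the core.

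For the \textbf{upper bound}, I would show that every vertex $v\in[t]$ reaches $\core$ within $\big(\tfrac{1}{\log m}+\tfrac{2}{|\log(\tau-2)|}\big)\log\log t + o(\log\log t)$ steps, whence the diameter is at most twice this plus $\mathrm{diam}(\core)$. This decomposes into two sub-steps. First, a \emph{deterministic-type} bound: because every vertex $t\geq 2$ is born with $m\geq 2$ forward edges, following forward edges from $v$ one can, in roughly $\log_m(\log t)=\tfrac{\log\log t}{\log m}$ steps, reach a vertex of degree at least $(\log t)^{c}$ for some constant $c>0$ — this is where the hypothesis $m\geq2$ and the term $\tfrac{2}{\log m}$ (one $\tfrac{1}{\log m}\log\log t$ from each endpoint) enter, exactly as $\dmin-1\geq 2$ does in the configuration model. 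Second, once at a vertex of polylogarithmic degree, one uses the preferential-attachment analogue of the "$k^{1/(\tau-2)}$ neighbor" growth: a vertex of degree $k$ connects (through its neighbors, or through a short path) to a vertex of degree at least $k^{1/(\tau-2)-o(1)}$, so iterating $\tfrac{\log\log\log t}{|\log(\tau-2)|}$-many... more precisely $\tfrac{2\log\log t}{|\log(\tau-2)|}$-many times (the factor $2$ coming from the two-step nature of degree propagation in PA, cf.\ \cite{vdH1}) reaches the core. I would lift these statements from the typical-distance literature for $\PA{t}$ with $\tau\in(2,3)$, strengthening "for a typical vertex" to "for every vertex" by a union bound over the at most $t$ vertices, which costs only a constant factor in the probabilities and is absorbed into the $o(\log\log t)$ error.

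For the \textbf{lower bound}, I would exhibit with high probability a vertex $v$ whose distance to $\core$ — and hence, by a symmetric construction, to another such vertex $v'$ — is at least $\big(\tfrac{1}{\log m}+\tfrac{2}{|\log(\tau-2)|}\big)\log\log t - o(\log\log t)$. The natural candidate is a late-born vertex (label close to $t$) all of whose $m$ forward edges, and whose forward-neighborhood to depth $\sim\tfrac{\log\log t}{\log m}$, attach only to vertices of degree $O(1)$ at their time of birth; since the $m$-ary forward-tree has only $m^h$ vertices at depth $h$, such a vertex exists with high probability when $m^h = (\log t)^{o(1)}$, i.e.\ $h\approx\tfrac{\log\log t}{\log m}$, and from the boundary of this tree it still takes $\sim\tfrac{2\log\log t}{|\log(\tau-2)|}$ steps to build up to core-level degree because each step multiplies the maximal reachable degree by at most $(\cdot)^{1/(\tau-2)}$. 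Making this rigorous requires second-moment / truncated-path arguments controlling the degree evolution of a tagged vertex's ancestors, which is the \textbf{main obstacle}: unlike the configuration model, edges in $\PA{t}$ are not independent, so the "neighborhood of $v$ looks like a subcritical branching process until it hits the core" heuristic must be justified via martingale concentration for the degree processes (Pólya-urn representation) together with a careful union bound over the polynomially-many candidate vertices. The paper's remark that "the proof for both models follows identical steps" suggests that after extracting the core and the two quantitative lemmas (forward-degree doubling in $\log_m$ time; degree amplification by exponent $1/(\tau-2)$ per step), the combinatorial bookkeeping is literally the same as in Theorem~\ref{main-cmnd}, and I would organize the write-up to share that skeleton.
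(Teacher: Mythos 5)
Your lower bound, as structured, does not deliver the stated constant. You propose to exhibit $v$ (and symmetrically $v'$) far from the core and then read off the diameter; but $\dist(v,v')\ge \dist(v,\core_t)+\dist(v',\core_t)$ is not a valid inference, since nothing forces a geodesic between $v$ and $v'$ to pass through or near the old/high-degree vertices, and if you only use bounds of the form $\dist(v,v')\ge \dist(v,\core_t)$ you obtain $\big(\tfrac{1}{\log m}+\tfrac{2}{|\log(\tau-2)|}\big)\log\log t$, i.e.\ half of what is needed. What has to be excluded is \emph{every} path of length at most $\tfrac{2\log\log t}{\log m}+\tfrac{4\log\log t}{|\log(\tau-2)|}$ between the two remote vertices, including paths that never visit vertices of large degree. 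In the paper this is precisely the content of Statements~\ref{stat-MKC}--\ref{stat-distance}: one finds two minimally-$k^-_t$-connected vertices (your late-born vertex whose depth-$\tfrac{\log\log t}{\log m}$ forward tree only meets low-degree vertices), conditions on their tree neighbourhoods, and then a good/bad path-counting estimate with age truncation (Proposition~\ref{prefatt-prop-bounddistance}) bounds the probability that \emph{any} path of length $\le 2\bar k_t$ joins the two tree boundaries; the decoupling of the conditioning from the path event, necessary because edges in $\PA{t}$ are dependent, is done via the factorizable-event argument of Proposition~\ref{pr-PAM-boundsplit}. Your remark about ``second-moment / truncated-path arguments'' points in this direction, but the logical skeleton ``distance to the core, then double it'' has a genuine hole.

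There is also a gap in the upper bound: the claim that one can pass from ``typical vertex'' to ``every vertex'' by a union bound over $t$ vertices ``at the cost of a constant factor'' is false — you need per-vertex failure probabilities $o(1/t)$, which typical-distance results do not provide. This is exactly why the paper grows the full $m$-ary forward exploration tree to depth $(1+\varepsilon)\tfrac{\log\log t}{\log m}$, proves it has a bounded number of collisions before hitting the core so that its boundary contains $(\log t)^{1+\varepsilon}$ vertices (Statement~\ref{stat-collision}), and then shows that each boundary vertex reaches $\core_t$ within $O(\log\log\log t)$ steps with probability bounded below uniformly in the conditioning (Statement~\ref{stat-triplelog}), giving failure probability $(1-\eta)^{(\log t)^{1+\varepsilon}}=o(1/t)$; a single forward path, or a tree of depth only $\tfrac{\log\log t}{\log m}$, does not suffice, and the conditional estimates again need the careful conditioning machinery. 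Apart from this, your decomposition — an $O(1)$-diameter inner core of the oldest vertices plus a radius bound $\tfrac{\log\log t}{\log m}+\tfrac{2\log\log t}{|\log(\tau-2)|}+o(\log\log t)$ — is a legitimate and numerically equivalent alternative to the paper's choice of a polylogarithmic-degree core whose diameter $\tfrac{4\log\log t}{|\log(\tau-2)|}$ is quoted from \cite{DSvdH}, but it obliges you to re-prove the degree-amplification climb inside the core rather than cite it.
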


In the proof of Theorem \ref{main-prefatt} we are also able to identify the typical distances in $\PA{t}$:

\begin{Theorem}[Typical distance in the preferential attachment model]
\label{typ-dis-prefatt}
Let $V_1^t$ and $V_2^t$ be two independent uniform random vertices in $[t]$. 
Denote the distance between $V_1^t$ and $V_2^t$ in $\PA{t}$ by $H_t$. Then
	\eqn{
	\label{typ-dist-formula}
	\frac{H_t}{\log\log t}
	{\xrightarrow[\sss\,t\to\infty\,]{\sss \pr} \ }
		\frac{4}{|\log(\tau-2)|}.
	}
\end{Theorem}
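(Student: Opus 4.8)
The plan is to obtain a matching upper and lower bound for $H_t$, the distance between two uniform vertices $V_1^t, V_2^t$ in $\PA{t}$, both asymptotic to $\frac{4\log\log t}{|\log(\tau-2)|}$. The natural strategy is to exploit the (directed) local structure of the preferential attachment model: each vertex $v$, when born, sends out $m$ edges to older vertices, and then receives further edges over time. Following the forward (older-vertex) edges from a typical vertex, one reaches after $O(1)$ steps a vertex of moderately large degree, and from there the ``power-law / infinite-variance'' mechanism produces the characteristic iterated-logarithm growth: a vertex of degree $k$ is, with high probability, connected to a vertex of degree roughly $k^{1/(\tau-2)}$, so after $\ell$ steps one reaches degree of order $k^{(\tau-2)^{-\ell}}$, and it takes about $\frac{\log\log t}{|\log(\tau-2)|}$ such steps to reach the ``core'' of vertices of polynomial degree $t^{\beta}$, which form a connected (indeed near-complete) set. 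Concatenating the two half-paths from $V_1^t$ and $V_2^t$ through the core gives the factor $2 \times \frac{\log\log t}{|\log(\tau-2)|} = \frac{2\log\log t}{|\log(\tau-2)|}$ — wait, one needs the factor $4$, which comes from the fact that in the preferential attachment model a uniform vertex is typically \emph{young} and must first traverse $O(\log\log t)$ vertices just to reach one of polynomially-large \emph{age}, doubling the typical-distance constant relative to the configuration model; more precisely the age of the neighbour of a young vertex is only a bounded power smaller, and iterating the relation ``age drops by a power, degree must grow'' gives two nested $\log\log$ layers. I would make this precise by passing to a comparison with a suitable P\'olya urn / continuous-time branching process representation of $\PA{t}$ (as in \cite{vdH1}), under which the forward exploration becomes a tractable recursion on (age, degree) pairs.

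For the \textbf{upper bound} $H_t \le (1+\vep)\frac{4\log\log t}{|\log(\tau-2)|}$ w.h.p., I would: (i) show that from $V_i^t$ one can follow $O_{\pr}(1)$ edges to reach a vertex $w_i$ whose age is at least $t^{\eta}$ for some small fixed $\eta>0$ — here one uses that each new vertex connects to older ones and a union bound over the (few) exceptional young vertices; (ii) starting from $w_i$, iteratively apply a ``degree-growth'' lemma: a vertex of age $\ge s$ and degree $\ge k$ has, w.h.p.\ (uniformly), a neighbour of degree $\ge k^{(\tau-2)^{-1}(1-o(1))}$ among vertices of comparable or larger age — this is the step producing the $\frac{\log\log t}{|\log(\tau-2)|}$ count; (iii) after reaching degree $t^{\beta}$ with $\beta>\tfrac12$, invoke the fact (already flagged in the excerpt) that these high-degree vertices w.h.p.\ form a complete graph, so the two half-paths join in one more step. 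Summing the three contributions and optimizing constants yields the bound. For the \textbf{lower bound} $H_t \ge (1-\vep)\frac{4\log\log t}{|\log(\tau-2)|}$ w.h.p., I would run the dual argument: truncate the exploration at generation $(1-\vep)\cdot 2\cdot\frac{\log\log t}{|\log(\tau-2)|}$ from each endpoint and show the two explored neighbourhoods are disjoint w.h.p. The key estimate is an upper bound on how fast degrees (and the number of reachable vertices) can grow along a path of given length — essentially the reverse of the growth lemma — controlling both the forward edges (born with vertex) and the backward edges (received later), using concentration for the attachment process; one shows that within $(1-\vep)$ of the critical number of steps one cannot reach a vertex of polynomial degree, while the total mass of vertices reachable is $t^{o(1)}$, so two independent uniform starting points land in disjoint explored sets with probability $1-o(1)$.

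The \textbf{main obstacle} I anticipate is making the degree-growth recursion rigorous and \emph{uniform} in the preferential attachment model, because — unlike the configuration model where edges are paired exchangeably — here edges are strongly correlated through time: a vertex's degree is a nondecreasing process built from sequential, dependent attachment events, and the ``forward'' neighbourhood of a vertex mixes edges present at birth with edges accrued much later. Handling this cleanly requires a good representation (P\'olya urn embedding, or the Bernoulli/Pólya description of \cite{vdH1}) under which the relevant events become conditionally independent, plus careful truncation to restrict to vertices whose age lies in a dyadic window so that attachment probabilities are essentially constant over the window. A secondary technical point is controlling the $O_{\pr}(1)$ ``burn-in'' from a uniform (typically very young) vertex to one of polynomial age, and confirming that this contributes only lower-order terms; and finally one must check that the additive $O(1)$ from the complete core, together with the two $2\frac{\log\log t}{|\log(\tau-2)|}$-length arms — giving $4\frac{\log\log t}{|\log(\tau-2)|}$ after accounting for the doubling from the age/degree nesting described above — does indeed produce the stated constant and not, say, $\frac{2}{|\log(\tau-2)|}$; this bookkeeping is where the difference between ``distance in number of vertices'' and ``number of $\log\log$ layers'' must be tracked with care.
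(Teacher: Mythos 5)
There is a genuine gap, and it sits exactly at the point you yourself flag as uncertain: the value of the constant. Your upper-bound scheme climbs the degree hierarchy with a ``degree-growth lemma'' asserting that a vertex of degree $k$ has, with high probability, a \emph{neighbour} of degree about $k^{1/(\tau-2)}$ among vertices of comparable or larger age, and then finishes by claiming that the vertices of degree $t^\beta$, $\beta>\tfrac12$, form a complete graph. Both of these are configuration-model facts and are false for $\PA{t}$ with $\delta\in(-m,0)$: two old high-degree vertices are typically \emph{not} adjacent, and the climb from one degree scale to the next costs \emph{two} edges, through a young ``$t$-connector'' vertex that attaches to both (this is precisely the content of Definition~\ref{prefatt-def-tconn} and Lemma~\ref{prefatt-lemma-existencetconn}, and it is the reason $\cdist=2$ and the constant is $4$ rather than $2$). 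If your one-edge-per-level lemma were true, your argument would give $H_t\le(1+\vep)\tfrac{2\log\log t}{|\log(\tau-2)|}$, contradicting the theorem's own lower bound. Your attempted rescue of the factor $4$ (``a uniform vertex is young and needs $O(\log\log t)$ steps to reach polynomial age, giving two nested $\log\log$ layers'') is not the correct mechanism, and the closing admission that the bookkeeping might produce $2$ instead of $4$ is the symptom of this missing idea. The same issue infects the lower bound: to show that the two truncated explorations are disjoint you must rule out fast degree growth along \emph{every} path, and the estimate that does this is a path-counting bound of the type \eqref{prefatt-paths-minmax} (leading to Proposition~\ref{prefatt-prop-bounddistance}), which encodes quantitatively that single edges between two old vertices are unlikely; your sketch does not supply a substitute.

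For comparison, the paper's proof of this theorem is almost immediate given what is already in place: the upper bound is quoted from Dereich, M\"onch and M\"orters \cite{DSMCM}, and the lower bound follows by noting that a uniform vertex has index at least $t/(\log t)^2$ with probability $1-o(1)$ and then applying Proposition~\ref{prefatt-prop-bounddistance}, which bounds $\pr(\dist_{\PA{t}}(x,y)\le 2\bar k_t)$ uniformly over such pairs by $O((\log t)^{-2})$. So the route you propose is not the paper's; a from-scratch two-sided proof is legitimate in principle, but it would have to be rebuilt around the connector mechanism (for the upper bound) and around path counting with the connection-probability bound \eqref{prefatt-connectioncondition} (for the lower bound), rather than around adjacency of hubs and a complete core, which simply do not hold in this model.
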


\medskip

Theorems \ref{main-prefatt}--\ref{typ-dis-prefatt} prove \cite[Conjecture 1.8]{vdHHGZD1}.

\subsection{Structure of the paper and heuristics}
The proofs of our main results on the diameter in 
Theorems~\ref{main-cmnd} and~\ref{main-prefatt} have a surprisingly similar structure. 
We present a detailed outline in Section~\ref{sec-structure} below, where we split 
the proof into a lower bound (Section~\ref{sub-lower}) and an upper bound 
(Section~\ref{sub-upper}) on the diameter. 
Each of these bounds is then divided into 3 statements, that hold for each model.
In Sections~\ref{lowerproofs-cmnd} and~\ref{lowerproofs-prefatt} we prove the lower bound 
for the configuration model and for the preferential attachment model, respectively, 
while in Sections~\ref{upperproofs-cmnd} and~\ref{upperproofs-prefatt} we prove 
the corresponding upper bounds. 
\longversion{Finally, Appendixes~\ref{section-appendix} and~\ref{app-B}
collects full details of some proofs of technical results that are 
minor modifications of proofs in the literature.} 
\shortversion{In \cite[Appendix]{CarGarHof16ext}, some proofs of technical results 
that are minor modifications of proofs in the literature are presented in detail.}

Even though the configuration and preferential attachment models are quite
different in nature, they are \emph{locally} similar, because
for both models the attachment probabilities are roughly proportional to the degrees.
The core of our proof is a combination of \emph{conditioning arguments} 
(which are particuarly
subtle for the preferential attachment model), that allow to
combine local estimates in order to derive
bounds on \emph{global} quantities, such as the diamter.

Let us give a heuristic explanation of the proof 
(see Figure \ref{fig-proof} for a graphical representation).
For a quantititative outline, we refer to Section~\ref{sec-structure}.
We write $\PA{n}$ instead of $\PA{t}$ to simplify 
the exposition, and denote by $d_{\rm fwd}$ the minimal \emph{forward degree}, 
that is $d_{\rm fwd}=\dmin-1$ for the configuration model and $d_{\rm fwd}=m$ for 
the preferential attachment model.

\begin{itemize}
\item For the {\em lower bound} on the diamter, 
we prove that there are so-called {\em minimally-connected} vertices. 
These vertices are quite special, in that their neighborhoods 
up to distance $k_n^- \approx \log\log{n}/\log{d_{\rm fwd}}$
are \emph{trees
with the minimal possible degree}, given by $\dfwd+1$.
This explains the first term in the right hand sides of \eqref{diam-cmnd}
and \eqref{diam-prefatt}.

%By ``minimal possible degree'' we mean $\dmin$ for the configuration model and $m+1$ 
%%(except for the root) 
%for the preferential attachment model. 
Pairs of
minimally-connected vertices are good candidates for achieving the maximal 
possible distance, i.e., the diameter. In fact,
the boundaries of their tree-like neighborhoods turn out to be at distance equal 
to the \emph{typical distance} $2\bar{k}_n$ between vertices in the graph,
that is $2\bar{k}_n 
\approx 2\cdist \log\log{n}/|\log(\tau-2)|$, where
$\cdist = 1$ for the configuration model and 
$\cdist = 2$ for the preferential attachment model.
This leads to the second term in the right hand sides of \eqref{diam-cmnd}
and \eqref{diam-prefatt}.

In the proof, we split the possible paths between the boundaries of two minimally 
connected vertices into bad paths, which are too short, 
and typical paths, which have the right number of edges in them, 
and then show that the contribution due to bad paths vanishes. 
The degrees along the path determine whether a path is bad or typical. 

The strategy for the lower bound is depicted in the bottom part of Figure \ref{fig-proof}.

\item For the {\em upper bound} on the diamter, 
we perform a lazy-exploration from every vertex in the graph 
and realize that the neighborhood up to a distance $k_n^+$, which is roughly the same as 
$k_n^-$, contains at least \emph{as many vertices as the tree-like neighborhood of a 
minimally-connected vertex}. All possible other vertices in this neighborhood are ignored. 

We then show that the vertices at the boundary of these lazy neighborhoods are 
with high probability \emph{quickly} connected to the core, that is
by a path of $h_n=o(\log\log{n})$ steps. 
By {\em core} we mean the set of all vertices with large degrees,
%(for the preferential attachment model, it turns out to be convenient 
%to choose the vertices that have a high degree at time $n/2$ instead).
which is known to be highly connected, with a diameter 
close to $2\bar{k}_n$, similar to the typical distances
(see \cite{vdHHGZD1} for the configuration model and \cite{DSvdH} 
for the preferential attachment model). 

The proof strategy for the upper bound is depicted in the top part of Figure \ref{fig-proof}.
\end{itemize}

\begin{figure}[t]
\begin{center}
\scalebox{0.4}{
\begin{tikzpicture}[
%\tikzstyle{normal}=[circle,fill=white,draw=black,line width=1.9 pt,minimum size=1 cm ]
    node/.style={circle, draw=black, fill=white, line width=1.5 pt, minimum size=0.5cm,
        text centered, anchor=center, text=black},
    root/.style={circle, draw=black, fill=red!40, line width=1.9 pt, minimum size=1cm,
        text centered, anchor=north, text=black},
    invisible/.style={circle, draw=white, opacity = 0, line width=0pt, minimum size=0.1cm,
        text centered, anchor=north, text=black},
    whitenode/.style={circle, draw=white,  line width=1 pt, minimum size=0.5cm,
        text centered, anchor=north, text=black},
        	edge/.style = {line width = 2pt},
    block1/.style={
      rectangle,
      draw=greenlight,
      align=center,
      rounded corners,
      line width = 3pt,
    },	
    level distance=0.5cm, growth parent anchor=south]

%%%%%%%%%%%%%%%%%%%%%%%
SEPARATION
\draw[edge]  [color = white, sloped, pos = 0.5] (-12cm,-4.5cm) to ["\color{black}\huge Lower bound"] (-12cm,0cm);
\draw[edge]  [color = white,sloped] (-12cm,4.5cm) to ["\color{black}\huge Upper bound"] (-12cm,8.4cm);
\draw[edge]  [color = white,sloped ] (12cm,0cm) to ["\color{black}\huge Lower bound"] (12cm,-4.5cm);
\draw[edge]  [color = white,sloped] (12cm,8.4cm) to ["\color{black}\huge Upper bound"](12cm,4.5cm) ;

%%%%%%%%%%%%%%%%%%%%
%% DISTANCES UPPER BOUND
\draw[edge]  [line width=3pt, |-|] (-3.5cm,8.4cm) to ["\huge $2\bar{k}_n$"] (3.5cm,8.4cm)  ;

\draw[edge]  [line width=3pt, |-|, above ] (-5.5cm,8.4cm) to ["\huge $h_n$"] (-3.4cm,8.4cm);
\draw[edge]  [line width=3pt, |-| ] (3.4cm,8.4cm) to ["\huge $h_n$"] (5.5cm,8.4cm);

\draw[edge]  [line width=3pt, |-| ] (5.4cm,8.4cm) to ["\huge $k^+_n$"] (12cm,8.4cm);
\draw[edge]  [line width=3pt, |-| ] (-12cm,8.4cm) to ["\huge $k^+_n$"] (-5.4cm,8.4cm);
%%%%%%%%%%%%%%%%%%%%%%%
%%% DISTANCES LOWER BOUND
\draw[edge]  [line width=3pt, |-|] (5.5cm,-8.4cm) to ["\huge $2\bar{k}_n$"] (-5.5cm,-8.4cm)  ;
\draw[edge]  [line width=3pt, |-| ] (12cm,-8.4cm) to ["\huge $k^-_n$"] (5.4cm,-8.4cm);
\draw[edge]  [line width=3pt, |-| ] (-5.4cm,-8.4cm) to ["\huge $k^-_n$"] (-12cm,-8.4cm);

%%%%%%%%%%%%%%%%%%%%
%% CORE
%%%%%%%%%%%%%%%%%%%%
%%%%%%%%%%%%%%%%%%%%%%%%%%%%%%%%%%%%%%%%%%%%%%%%%%%%%%%%%%
\node(1001) [node, minimum size = 1.5cm, line width = 2pt,anchor = center] at (-12cm,0cm) {\Huge v};
\node (1002)[node, minimum size = 1.5cm, line width = 2pt, anchor = center] at (12cm,0cm) {\Huge w};
	\draw  [line width=3pt, color = black] (1001) to (1002) ;

\node (1) [node,minimum size=7cm, draw = bluegreen, fill = bluegreen!40, line width = 5pt] at (0cm,4cm) {\Huge Core};

%%%%%%%%%%%%%%%%%%%%%%%%%%%%%%%%
%PATHS
\draw[edge]  [line width=3pt, color = magenta, bend right] (-4.8cm,-7cm) to  (4.8cm,-3cm)  ;
\draw[edge]  [line width=3pt, color = magenta, bend left] (-4.8cm,-4.6cm) to  (4.8cm,-4cm)  ;
\draw[edge]  [line width=3pt, color = magenta, bend left] (-4.8cm,-1cm) to  (4.8cm,-7cm)  ;
%\draw[edge]  [line width=3pt, color = magenta, bend right] (4.8cm,-2cm) to (-4.8cm,-3.5cm)  ;
\draw[edge]  [line width=3pt, color = magenta, bend left] (4.8cm,-1cm) to  (-4.8cm,-5.6cm)  ;
\draw[edge]  [line width=3pt, color = magenta, bend left] (4.8cm,-6.5cm) to  (-4.8cm,-2cm)  ;

\node (999) [node, minimum size = 5.3cm, draw = magenta, fill = magenta!40, line width = 5pt] at (0cm,-4cm) {\huge typical paths};

%%%%%%%%%%%%%%%%%%%%%%%%%
% LEFT CLOUD OF NODES
\node (2) [node,fill = seagreen] at (-5.5cm,7cm) {};
\node (3) [node, fill = seagreen] at (-5.5cm,6cm) {};	
\node (4) [node, fill = seagreen] at (-5.5cm,5cm) {};
\node (5) [node,fill = seagreen] at (-5.5cm,4cm) {};
\node (6) [node,fill = seagreen] at (-5.5cm,3cm) {};
\node (7) [node, fill = seagreen] at (-5.5cm,2cm) {};
\node (8) [node, fill = seagreen] at (-5.5cm,1cm) {};
\node (9) [node] at (-5.5cm,-7cm) {};
\node (10) [node] at (-5.5cm,-6cm) {};	
\node (11) [node] at (-5.5cm,-5cm) {};
\node (12) [node] at (-5.5cm,-4cm) {};
\node (13) [node] at (-5.5cm,-3cm) {};
\node (14) [node] at (-5.5cm,-2cm) {};
\node (15) [node] at (-5.5cm,-1cm) {};

\draw[block1]  ($(2.north west)+(-0.5cm,0.5cm)$)  rectangle ($(9.south east)+(0.5cm,-0.5cm)$);	

%%%%%%%%%%%%%%%%%%%%%%%%%%%%%%
%%%%%%RIGHTLEFT CLOUD OF NODES
\node (16) [node, fill = seagreen] at (5.5cm,7cm) {};
\node (17) [node,fill = seagreen] at (5.5cm,6cm) {};	
\node (18) [node, fill = seagreen] at (5.5cm,5cm) {};
\node (19) [node, fill = seagreen] at (5.5cm,4cm) {};
\node (20) [node, fill = seagreen] at (5.5cm,3cm) {};
\node (21) [node, fill = seagreen] at (5.5cm,2cm) {};
\node (22) [node, fill = seagreen] at (5.5cm,1cm) {};
\node (23) [node] at (5.5cm,-7cm) {};
\node (24) [node] at (5.5cm,-6cm) {};	
\node (25) [node] at (5.5cm,-5cm) {};
\node (26) [node] at (5.5cm,-4cm) {};
\node (27) [node] at (5.5cm,-3cm) {};
\node (28) [node] at (5.5cm,-2cm) {};
\node (29) [node] at (5.5cm,-1cm) {};

\draw[block1]  ($(16.north west)+(-0.5cm,0.5cm)$)  rectangle ($(23.south east)+(0.5cm,-0.5cm)$);

%%%%%%%%%%%%%%%%%%%%%%%%%%%%%%%%%%%%%%%%%%%%%%%%%%%%%%%
%LINK NODES IN CORE
\draw [color = seagreen, line width = 3pt] (2) to (1);
\draw [color = seagreen, line width = 3pt] (5) to (1);
\draw [color = seagreen, line width = 3pt] (6) to (1);
\draw [color = seagreen, line width = 3pt] (17) to (1);
\draw [color = seagreen, line width = 3pt] (21) to (1);

\draw [color = seagreen, line width = 3pt] (3) to (1);
\draw [color = seagreen, line width = 3pt] (4) to (1);
\draw [color = seagreen, line width = 3pt] (7) to (1);
\draw [color = seagreen, line width = 3pt] (8) to (1);

\draw [color = seagreen, line width = 3pt] (16) to (1);
\draw [color = seagreen, line width = 3pt] (18) to (1);
\draw [color = seagreen, line width = 3pt] (19) to (1);
\draw [color = seagreen, line width = 3pt] (20) to (1);
\draw [color = seagreen, line width = 3pt] (22) to (1);

%%%%%%%%%%%%%%%%%%%%%%%%%%%%%%%%%%%%%%%%%%%%%%%%%%%%%%%%%%%%%%%%%%%%%%

%LEFT TOP TREE
\node(1003) [node] at (-10.5cm,5cm) {};
\node(1004) [node] at (-10.5cm,2cm) {};
\draw[line width = 1pt] (1001) to (1003);
\draw[line width = 1pt] (1001) to (1004);
\draw[line width = 1pt, dashed] (1004) to (-10cm,3cm);
\node(1007) [node] at (-9cm,6cm) {};
\node(1008) [node] at (-9cm,4.5cm) {};
\draw[line width = 1pt] (1003) to (1007);
\draw[line width = 1pt] (1003) to (1008);
\draw[line width = 1pt, dashed] (1008) to (-8.5cm,5.5cm);
\draw[line width = 1pt, dashed] (1008) to (-8.5cm,3.5cm);
\node(1009) [node] at (-9cm,2.5cm) {};
\node(1010) [node] at (-9cm,1cm) {};
\draw[line width = 1pt, dashed] (1010) to (-8.5cm,2cm);
\draw[line width = 1pt] (1004) to (1009);
\draw[line width = 1pt] (1004) to (1010);

\node(1011) [node] at (-7.5cm,0.6cm) {};
\node(1012) [node] at (-7.5cm,1.4cm) {};
\draw[line width = 1pt] (1011) to (1010);
\draw[line width = 1pt] (1012) to (1010);
\node(1013) [node] at (-7.5cm,2.9cm) {};
\node(1014) [node] at (-7.5cm,2.1cm) {};
\draw[line width = 1pt, dashed] (1013) to (-7cm,3.9cm);
\draw[line width = 1pt] (1013) to (1009);
\draw[line width = 1pt] (1014) to (1009);
\node(1015) [node] at (-7.5cm,6.4cm) {};
\node(1016) [node] at (-7.5cm,5.6cm) {};
\draw[line width = 1pt, dashed] (1015) to (-7cm,7.4cm);
\draw[line width = 1pt] (1015) to (1007);
\draw[line width = 1pt] (1016) to (1007);
\node(1017) [node] at (-7.5cm,4.9cm) {};
\node(1018) [node] at (-7.5cm,4.1cm) {};
\draw[line width = 1pt] (1017) to (1008);
\draw[line width = 1pt] (1018) to (1008);

\draw[line width = 1pt ] (1011) to (-6.2cm,0.4cm);
\draw[line width = 1pt ] (1011) to (-6.2cm,0.8cm);
\draw[line width = 1pt ] (1012) to (-6.2cm,1.6cm);
\draw[line width = 1pt ] (1012) to (-6.2cm,1.2cm);
\draw[line width = 1pt ] (1013) to (-6.2cm,2.7cm);
\draw[line width = 1pt ] (1013) to (-6.2cm,3.1cm);
\draw[line width = 1pt ] (1014) to (-6.2cm,1.9cm);
\draw[line width = 1pt ] (1014) to (-6.2cm,2.3cm);	
\draw[line width = 1pt ] (1015) to (-6.2cm,6.2cm);
\draw[line width = 1pt ] (1015) to (-6.2cm,6.6cm);	
\draw[line width = 1pt ] (1016) to (-6.2cm,5.8cm);
\draw[line width = 1pt ] (1016) to (-6.2cm,5.4cm);	
\draw[line width = 1pt ] (1017) to (-6.2cm,4.7cm);
\draw[line width = 1pt ] (1017) to (-6.2cm,5.1cm);	
\draw[line width = 1pt ] (1018) to (-6.2cm,3.9cm);
\draw[line width = 1pt ] (1018) to (-6.2cm,4.3cm);	

%%%%%%%%%%%%%%%%%%
%LEFT BOTTOM TREE
\node(2003) [node] at (-10.5cm,-5cm) {};
\node(2004) [node] at (-10.5cm,-2cm) {};
\draw[line width = 1pt] (1001) to (2003);
\draw[line width = 1pt] (1001) to (2004);
\node(2007) [node] at (-9cm,-6cm) {};
\node(2008) [node] at (-9cm,-4.5cm) {};
\draw[line width = 1pt] (2003) to (2007);
\draw[line width = 1pt] (2003) to (2008);
\node(2009) [node] at (-9cm,-2.5cm) {};
\node(2010) [node] at (-9cm,-1cm) {};
\draw[line width = 1pt] (2004) to (2009);
\draw[line width = 1pt] (2004) to (2010);

\node(2011) [node] at (-7.5cm,-0.6cm) {};
\node(2012) [node] at (-7.5cm,-1.4cm) {};
\draw[line width = 1pt] (2011) to (2010);
\draw[line width = 1pt] (2012) to (2010);
\node(2013) [node] at (-7.5cm,-2.9cm) {};
\node(2014) [node] at (-7.5cm,-2.1cm) {};
\draw[line width = 1pt] (2013) to (2009);
\draw[line width = 1pt] (2014) to (2009);
\node(2015) [node] at (-7.5cm,-6.4cm) {};
\node(2016) [node] at (-7.5cm,-5.6cm) {};
\draw[line width = 1pt] (2015) to (2007);
\draw[line width = 1pt] (2016) to (2007);
\node(2017) [node] at (-7.5cm,-4.9cm) {};
\node(2018) [node] at (-7.5cm,-4.1cm) {};
\draw[line width = 1pt] (2017) to (2008);
\draw[line width = 1pt] (2018) to (2008);

\draw[line width = 1pt ] (2011) to (-6.2cm,-0.4cm);
\draw[line width = 1pt ] (2011) to (-6.2cm,-0.8cm);
\draw[line width = 1pt ] (2012) to (-6.2cm,-1.6cm);
\draw[line width = 1pt ] (2012) to (-6.2cm,-1.2cm);
\draw[line width = 1pt ] (2013) to (-6.2cm,-2.7cm);
\draw[line width = 1pt ] (2013) to (-6.2cm,-3.1cm);
\draw[line width = 1pt ] (2014) to (-6.2cm,-1.9cm);
\draw[line width = 1pt ] (2014) to (-6.2cm,-2.3cm);	
\draw[line width = 1pt ] (2015) to (-6.2cm,-6.2cm);
\draw[line width = 1pt ] (2015) to (-6.2cm,-6.6cm);	
\draw[line width = 1pt ] (2016) to (-6.2cm,-5.8cm);
\draw[line width = 1pt ] (2016) to (-6.2cm,-5.4cm);	
\draw[line width = 1pt ] (2017) to (-6.2cm,-4.7cm);
\draw[line width = 1pt ] (2017) to (-6.2cm,-5.1cm);	
\draw[line width = 1pt ] (2018) to (-6.2cm,-3.9cm);
\draw[line width = 1pt ] (2018) to (-6.2cm,-4.3cm);

%RIGHT TOP TREE
\node(3003) [node] at (10.5cm,5cm) {};
\node(3004) [node] at (10.5cm,2cm) {};
\draw[line width = 1pt] (1002) to (3003);
\draw[line width = 1pt] (1002) to (3004);
\draw[line width = 1pt, dashed] (3004) to (10cm,3cm);
\draw[line width = 1pt, dashed] (3004) to (10cm,1cm);
\node(3007) [node] at (9cm,6cm) {};
\node(3008) [node] at (9cm,4.5cm) {};
\draw[line width = 1pt, dashed] (3007) to (8.5cm,7cm);
\draw[line width = 1pt, dashed] (3008) to (8.5cm,3.5cm);
\draw[line width = 1pt] (3003) to (3007);
\draw[line width = 1pt] (3003) to (3008);
\node(3009) [node] at (9cm,2.5cm) {};
\node(3010) [node] at (9cm,1cm) {};
\draw[line width = 1pt, dashed] (3010) to (8.5cm,2cm);
\draw[line width = 1pt] (3004) to (3009);
\draw[line width = 1pt] (3004) to (3010);

\node(3011) [node] at (7.5cm,0.6cm) {};
\node(3012) [node] at (7.5cm,1.4cm) {};
\draw[line width = 1pt] (3011) to (3010);
\draw[line width = 1pt] (3012) to (3010);
\node(3013) [node] at (7.5cm,2.9cm) {};
\node(3014) [node] at (7.5cm,2.1cm) {};
\draw[line width = 1pt] (3013) to (3009);
\draw[line width = 1pt] (3014) to (3009);
\node(3015) [node] at (7.5cm,6.4cm) {};
\node(3016) [node] at (7.5cm,5.6cm) {};
\draw[line width = 1pt] (3015) to (3007);
\draw[line width = 1pt] (3016) to (3007);
\node(3017) [node] at (7.5cm,4.9cm) {};
\node(3018) [node] at (7.5cm,4.1cm) {};
\draw[line width = 1pt] (3017) to (3008);
\draw[line width = 1pt] (3018) to (3008);

\draw[line width = 1pt ] (3011) to (6.2cm,0.4cm);
\draw[line width = 1pt ] (3011) to (6.2cm,0.8cm);
\draw[line width = 1pt ] (3012) to (6.2cm,1.6cm);
\draw[line width = 1pt ] (3012) to (6.2cm,1.2cm);
\draw[line width = 1pt ] (3013) to (6.2cm,2.7cm);
\draw[line width = 1pt ] (3013) to (6.2cm,3.1cm);
\draw[line width = 1pt ] (3014) to (6.2cm,1.9cm);
\draw[line width = 1pt ] (3014) to (6.2cm,2.3cm);	
\draw[line width = 1pt ] (3015) to (6.2cm,6.2cm);
\draw[line width = 1pt ] (3015) to (6.2cm,6.6cm);	
\draw[line width = 1pt ] (3016) to (6.2cm,5.8cm);
\draw[line width = 1pt ] (3016) to (6.2cm,5.4cm);	
\draw[line width = 1pt ] (3017) to (6.2cm,4.7cm);
\draw[line width = 1pt ] (3017) to (6.2cm,5.1cm);	
\draw[line width = 1pt ] (3018) to (6.2cm,3.9cm);
\draw[line width = 1pt ] (3018) to (6.2cm,4.3cm);

%RIGHT bottom TREE
\node(4003) [node] at (10.5cm,-5cm) {};
\node(4004) [node] at (10.5cm,-2cm) {};
\draw[line width = 1pt] (1002) to (4003);
\draw[line width = 1pt] (1002) to (4004);
\node(4007) [node] at (9cm,-6cm) {};
\node(4008) [node] at (9cm,-4.5cm) {};
\draw[line width = 1pt] (4003) to (4007);
\draw[line width = 1pt] (4003) to (4008);
\node(4009) [node] at (9cm,-2.5cm) {};
\node(4010) [node] at (9cm,-1cm) {};
\draw[line width = 1pt] (4004) to (4009);
\draw[line width = 1pt] (4004) to (4010);

\node(4011) [node] at (7.5cm,-0.6cm) {};
\node(4012) [node] at (7.5cm,-1.4cm) {};
\draw[line width = 1pt] (4011) to (4010);
\draw[line width = 1pt] (4012) to (4010);
\node(4013) [node] at (7.5cm,-2.9cm) {};
\node(4014) [node] at (7.5cm,-2.1cm) {};
\draw[line width = 1pt] (4013) to (4009);
\draw[line width = 1pt] (4014) to (4009);
\node(4015) [node] at (7.5cm,-6.4cm) {};
\node(4016) [node] at (7.5cm,-5.6cm) {};
\draw[line width = 1pt] (4015) to (4007);
\draw[line width = 1pt] (4016) to (4007);
\node(4017) [node] at (7.5cm,-4.9cm) {};
\node(4018) [node] at (7.5cm,-4.1cm) {};
\draw[line width = 1pt] (4017) to (4008);
\draw[line width = 1pt] (4018) to (4008);

\draw[line width = 1pt ] (4011) to (6.2cm,-0.4cm);
\draw[line width = 1pt ] (4011) to (6.2cm,-0.8cm);
\draw[line width = 1pt ] (4012) to (6.2cm,-1.6cm);
\draw[line width = 1pt ] (4012) to (6.2cm,-1.2cm);
\draw[line width = 1pt ] (4013) to (6.2cm,-2.7cm);
\draw[line width = 1pt ] (4013) to (6.2cm,-3.1cm);
\draw[line width = 1pt ] (4014) to (6.2cm,-1.9cm);
\draw[line width = 1pt ] (4014) to (6.2cm,-2.3cm);	
\draw[line width = 1pt ] (4015) to (6.2cm,-6.2cm);
\draw[line width = 1pt ] (4015) to (6.2cm,-6.6cm);	
\draw[line width = 1pt ] (4016) to (6.2cm,-5.8cm);
\draw[line width = 1pt ] (4016) to (6.2cm,-5.4cm);	
\draw[line width = 1pt ] (4017) to (6.2cm,-4.7cm);
\draw[line width = 1pt ] (4017) to (6.2cm,-5.1cm);	
\draw[line width = 1pt ] (4018) to (6.2cm,-3.9cm);
\draw[line width = 1pt ] (4018) to (6.2cm,-4.3cm);	

\end{tikzpicture}
}
\end{center}
\caption{Structure of the proof in a picture}
\label{fig-proof}
\end{figure}
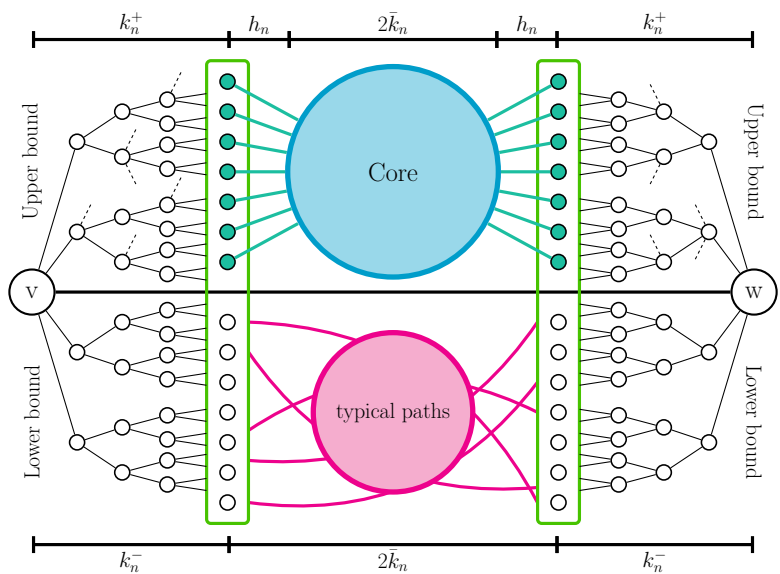

\subsection{Links to the literature and comments}
This paper studies the diameter in $\CMnd$ and $\PA{t}$ when the degree power-law exponent $\tau$ satisfies $\tau\in(2,3)$, which means the degrees have finite mean but
 infinite variance. Both in \eqref{diam-cmnd} and \eqref{diam-prefatt}, the explicit constant is the sum of two terms, one depending on $\tau$, and the other depending on the minimal forward degree {\blue (see \eqref{d_tree})}, which is $\dmin-1$ for $\CMnd$ and $m$ for $\PA{t}$. We remark that the term depending on $\tau$ is related to the typical distances, while the other is related to the periphery of the graph. 

There are several other works that have already studied typical distances and diameters of such models.
Van der Hofstad, Hooghiemstra and Znamenski \cite{vdHHGZD2} analyze typical distances in $\CMnd$  for $\tau\in(2,3)$, while 
Van der Hofstad, Hooghiemstra and Van Mieghem \cite{HofHooVan05a} study $\tau>3$. They prove that for $\tau\in(2,3)$ typical distances are of order $\log\log n$, while for $\tau>3$ is of order $\log n$, and it presents the explicit  constants of asymptotic growth. Van der Hofstad, Hooghiemstra and Znamensky \cite{vdHHGZD1} shows for $\tau>2$ and when vertices of degree $1$ or $2$ are present, that with high probability the diameter of $\CMnd$ is bounded from below by a constant times $\log n$, while when $\tau\in(2,3)$ and the minimal degree is $3$, the diameter is bounded from above by a constant times $\log\log n$.  {\blue In \cite{HofKom16}, Van der Hofstad and Komj\'athy investigate typical distances for configuration models and $\tau\in(2,3)$ in great generality, extending the results in \cite{vdHHGZD1} beyond the setting of i.i.d.\ degrees. Interestingly, they also investigate the effect of truncating the degrees at $n^{\beta_n}$ for values of $\beta_n\rightarrow 0$. It would be of ineterest to also extend our diameter results to this setting.}

{\blue We significantly improve upon the result in \cite{vdHHGZD1}  for $\tau\in(2,3)$. We do make use of similar ideas in our proof of the upper bound on the diameter. Indeed, we again define a core consisting of vertices with high degrees, and use the fact that the diameter of this core can be computed exactly (for a definition of the 
core, see \eqref{core_n-def}). The novelty of our current approach is that we quantify precisely how far the further vertex is from this core in the configuration model. It is a pair of such remote vertices that realizes the graph diameter.}

Fernholz and Ramachandran \cite{FeRama} prove that the diameter of $\CMnd$ is equal to an
 explicit constant times $\log n$ plus $o(\log n)$ when $\tau\in (2,3)$ but there are vertices of degree $1$ or $2$ present in the graph, by studying the longest paths in the configuration model that are not part of the 2-core (which is the part of the graph for which all vertices have degree at least 2). Since our minimal degree is at least 3, the 2-core is whp the entire graph, and thus this logarithmic phase vanishes. 
Dereich, M\"onch and M\"orters \cite{DSMCM} prove that typical distances in $\PA{t}$ are asymptotically equal to an explicit constant times $\log\log t$, using path counting techniques. We use such path counting techniques as well, now for the lower bound on the diameters. Van der Hofstad \cite{vdH2} studies the diameter of $\PA{t}$ when $m=1$, and proves that the diameter still has logarithmic growth. Dommers, van der Hofstad and Hooghiemstra \cite{DSvdH} prove an upper bound on the diameter of $\PA{t}$, but the explicit constant is not sharp. 

{\blue  Again, we significantly improve upon that result. Our proof uses ideas from \cite{DSvdH}, in the sense that we again rely on an appropriately chosen core for the preferential attachment model, but our upper bound now quantifies precisely how the further vertex is from this core, as for the configuration model, but now applied to the much harder preferential attachment model.}

$\CMnd$ and $\PA{t}$ are two different models, in the sense that $\CMnd$ is a static model while $\PA{t}$ is a dynamic model. It is
 interesting to notice that the main strategy to prove Theorems \ref{main-cmnd} and \ref{main-prefatt} is the same. In fact, all the statements formulated in Section \ref{sec-structure} are general and hold for both models. Also the explicit constants appearing in \eqref{diam-cmnd} and \eqref{diam-prefatt} are highly similar, which reflects the same structure of the proofs. 
The differences consist in a factor $2$ in the terms containing $\tau$ and in the presence of $\dmin-1$ and $m$ in the remaining term.
The factor 2 can be understood by noting that in $\CMnd$ pairs of vertices with high degree are likely to be at distance $1$, while in $\PA{t}$ they are at distance $2$. The difference in $\dmin-1$ and $m$ is due to the fact that $\dmin-1$ and $m$ play the same role in the two models, i.e., they are the minimal forward degree (or ``number of children'') of a vertex that is part of a tree contained in the graph. We refer to Section~\ref{sec-structure} for more details. 

{\blue
While the structures of the proofs for both models are identical, the details of the various steps are significantly different. Pairings in the configuration model are uniform, making explicit computations easy, even when already many edges have been paired. In the preferential attachment model, on the other hand, the edge statuses are highly dependent, so that we have ro rely on delicate conditoning arguments. These conditioning arguments are arguably the most significant innovation in this paper. This is formalized in the notion of factorizable events in Definition \ref{def:factorizable}.}

Typical distances and diameters have been studied for other random graphs models as well, showing $\log\log$ behavior. Bloznelis \cite{Blo} investigates the typical distance in power-law intersection random graphs, where such distance, conditioning on being finite, is of order $\log\log n$, while results on diameter are missing. Chung and Lu \cite{ChuLu1, ChuLu2} present results respectively for random graphs with given expected degrees and Erd\H{o}s and R\'enyi random graphs
{\blue $G(n,p)$, see also van den Esker, the last author and Hooghiemstra \cite{vdHEsker} for the rank-1 setting. The setting of the configuration model with finite-variance degrees is studied in \cite{FeRama}.}
In \cite{ChuLu1}, they prove that for the power-law regime with exponent $\tau\in(2,3)$, the diameter is $\Theta(\log n)$, while typical distances are of order $\log\log n$. This can be understood from the existence of a positive proportion of vertices with degree 2, creating long, but thin, paths. In \cite{ChuLu2}, the authors investigate the different behavior of the diameter according to the parameter $p$.

An interesting open problem is the study of fluctuations of the diameters in $\CMnd$ and $\PA{t}$ around the asymptotic mean, i.e., the study of the difference between the diameter of the graph and the asymptotic behavior (for these two models, the difference between the diameter and the right multiple of $\log\log n$). In \cite{vdHHGZD2}, the authors prove that in graphs with i.i.d.\ power-law degrees with $\tau\in(2,3)$, the difference {\blue $\Delta_n$} between 
the typical distance and the asymptotic behavior $2\log\log n/|\log(\tau-2)|$ does not 
converge {\blue in distribution}, even though it is 
{\blue \emph{tight} (i.e., for every $\epsilon > 0$ there is $M < \infty$
such that $\pr(|\Delta_n| \le M) > 1-\epsilon$ for all $n\in\N$). These results have been significantly improved in \cite{HofKom16}.}
%{\blue (i.e.\ bounded in probability)}. 

{\blue In the literature results on fluctuations for the diameter of random graph models are rare. Bollob\'as in \cite{Bol81}, and, later,  Riordan and Wormald in  \cite{riordan10} give precise estimates on the diameter of the Erd\"os-Renyi random graph.} It would be of interest to investigate whether the diameter has tight fluctuations around $c\log\log{n}$ for the appropriate $c$.

\section{General structure of the proofs}
\label{sec-structure}

We split the proof of Theorems~\ref{main-cmnd} and~\ref{main-prefatt}
into a lower and an upper bound. Remarkably, the strategy is the same
for both models despite the inherent difference in the models. In this section we explain the strategy in detail, formulating
general statements that will be proved for each model separately in the next sections.

Throughout this section, we assume that the assumptions of Theorems~\ref{main-cmnd}
and~\ref{main-prefatt} are satisfied and,
to keep unified notation, we denote the size of 
the preferential attachment
model by $n\in\N$, instead of the more usual $t\in\N$.

{\blue 
Throughout the paper, we treat real numbers as integers when we consider graph distances. By this, we mean that we round real numbers to the closest integer. To keep the notation light and make the paper easier to read, we omit the rounding operation.}

\subsection{Lower bound}
\label{sub-lower}
We start with the structure of the proof of the lower bound
in \eqref{diam-cmnd} and \eqref{diam-prefatt}.
The key notion is that of a
\emph{minimally-$k$-connected vertex}, defined as a vertex whose 
$k$-neighborhood (i.e., the neighborhood up to distance $k$)
is {\blue essentially \emph{a regular tree with the smallest possible degree}}, 
equal to $\dmin$ for the configuration model 
and to $m+1$ for the preferential attachment model.  
{\blue Due to technical reasons, the precise definition of minimally-$k$-connected vertex is slightly different for the two models (see Definitions~\ref{def:minicm} and~\ref{prefatt-MKCdef}).}

Henceforth we fix $\varepsilon>0$ and define, for $n\in\N$,
	\eqn{
	\label{kstar-tree}
	{\blue k^{-}_n} = (1-\varepsilon)\frac{\log\log n}{\log(\dfwd)} ,
	}
where $\dfwd$ denotes the \emph{forward degree}, or ``number of children'':
	\eqn{
	\label{d_tree}
	\dfwd = \begin{cases}
		\dmin-1 & \text{for } \ \CMnd; \\
		m & \text{for } \ \PA{n}.
	\end{cases}
	}
Our first goal is to prove that
the number of minimally-$k^{-}_n$-connected vertices is large enough, as formulated in the following statement:

\medskip
\begin{Statement}[Moments of $M_{k^{-}_n}$]
\label{stat-MKC}
Denote by $M_{k^{-}_n}$ the number of minimally-$k^{-}_n$-connected vertices in the graph 
(either $\CMnd$ or $\PA{n}$). Then, as $n \to \infty$,
	\eqn{
	\label{MKC-moments}
	\E\left[M_{k^{-}_n}\right]\rightarrow \infty, \qquad
	\Var\left(M_{k^{-}_n}\right) = o\left(\E\left[M_{k^{-}_n}\right]^2\right),
	}
where $\Var(X) := \E[X^2] - \E[X]^2$ denotes the variance of the random variable $X$.
\end{Statement}

{\blue The proof for the preferential attachment model makes use of conditioning arguments. Indeed, we describe as much information as necessary to be able to bound probabilities that vertices are minimally-$k$ connected. Particularly in the variance estimate, these arguments are quite delicate, and crucial for our purposes.}

The bounds in \eqref{MKC-moments} show that $M_{k^{-}_n} {\xrightarrow[]{\ \pr\ } \ } \infty$
as $n\to\infty$. This will imply that \emph{there is a pair of minimally-$k^{-}_n$-connected vertices 
with disjoint $k^{-}_n$-neighborhoods},\footnote{\blue A justification for this fact
is provided by the following Statement~\ref{stat-distance-boundaries}
(the randomly chosen vertices $W_1^n$ and $W_2^n$ have
disjoint $k_n^-$-neighborhoods, because $\tilde H_n > 0$ with high probability). 
For a more direct justification, see Remark~\ref{rem:disjcm} for the configuration model
and Remark~\ref{prefatt-disjneigh-remark} for the preferential attachment model.
%the $k_n^-$-neighborhoods of two distinct minimally-$k_n^-$-connected vertices $v, w$ may overlap;
%for this to happen, there must exist a self-avoiding path from $v$ to $w$ of length $\le 2 k_n^-$
%which only visits vertices with degree $\dmin$
%(because the $k_n^-$-neighborhoods of $v$ and $w$ are regular trees).
%Given $v$, the number of such
%paths is $O((\dmin - 1)^{2 k_n^-}) = O((\log n)^{2(1-\epsilon)})$, see \eqref{kstar-tree},
%while the total number of minimally-$k_n^-$-connected vertices is much larger,
%since $M_{k_n^-} \sim \E[M_{k_n^-}] = n^{1-o(1)}$, as we will show in \eqref{eq:bome};
%therefore there must exist a minimally-$k_n^-$-connected vertex $w$ whose $k_n^-$-neighborhood
%is disjoint from that of $v$.
} 
hence
the diameter of the graph is at least $2 k^{-}_n$, which 
explains the first term in \eqref{diam-cmnd} and \eqref{diam-prefatt}. Our next aim is to prove that these minimally connected trees are typically at distance $2\cdist\log\log n/|\log(\tau-2)|$, where $\cdist=1$ for the configuration model and $\cdist=2$ for the preferential attachment model.
\medskip

For this, let us now define
	\eqn{
	\label{general-k_n}
	\bar{k}_n = (1-\varepsilon)\frac{\cdist\log\log n}{|\log(\tau-2)|},
	}
where 
	\eqn{
	\label{c_dist}
	\cdist = \begin{cases}
	1 & \text{for } \ \CMnd;\\
	2 & \text{for } \ \PA{n}.
	\end{cases}
	}
The difference in the definition of $\cdist$ is due to fact that in $\CMnd$ vertices with high degree are likely at distance $1$, while in $\PA{n}$ are at distance $2$. We explain the origin of this effect in more detail in the proofs.
\medskip

It turns out that the distance between the $k^{-}_n$-neighborhoods  of two minimally-$k^{-}_n$-connected vertices is at least $2\bar{k}_n$. More precisely, we have the following statement:

\begin{Statement}[Distance between neighborhoods]
\label{stat-distance-boundaries}
Let $W^n_1$ and $W^n_2$ be two random vertices
chosen independently and uniformly among the minimally-$k^{-}_n$-connected ones.
Denoting by $\tilde{H}_n$ the distance between the $k^{-}_n$-neighborhoods of $W^n_1$ and $W^n_2$,
we have $\tilde{H}_n \geq 2\bar{k}_n$ with high probability.
\end{Statement}

It follows immediately from Statement~\ref{stat-distance-boundaries} that the
distance between the vertices $W^n_1$ and $W^n_2$ is at least $2 k^{-}_n + 2 \bar{k}_n$,
with high probability. This proves the lower bound in \eqref{diam-cmnd} and \eqref{diam-prefatt}.

\smallskip

It is known from the literature that $2 \bar{k}_n$, {\blue see} \eqref{general-k_n}, represents the \emph{typical distance} 
between two vertices chosen independently and uniformly in the graph.  In order to prove Statement~\ref{stat-distance-boundaries}, we collapse the $k^{-}_n$-neighborhoods of $W^n_1$ and $W^n_2$ into single vertices
and show that their distance is roughly equal to the typical distance $2\bar{k}_n$. 
This is a delicate point, because the collapsed vertices have a relatively large degree and thus {\em could} be closer than the typical distance. The crucial point why they are not closer is that the degree of the boundary only grows polylogarithmically. The required justification is provided by the next statement:

\begin{Statement}[Bound on distances]
\label{stat-distance}
Let us introduce the set
	\begin{equation}
	\label{eq:set}
	V_n := \begin{cases}
	\big\{ v \in [n]\colon \ d_v \le \log n \big\} & \text{for } \ \CMnd;\\
	\rule{0pt}{1.5em}\big\{ v \in [n]\colon \ v \ge \frac{n}{(\log n)^2}\} 
	& \text{for } \ \PA{n}.
	\end{cases}
	\end{equation}
Then, denoting the distance in the graph of size $n$ by $\dist_n$,
	\eqn{
	\label{stat-dist-formula}
	\max_{a,b \in V_n}
	\pr\left(\dist_n(a,b)\leq 2\bar{k}_n\right) =
	O \left(\frac{1}{(\log n)^2} \right).
	}
\end{Statement}

The proof of Statement~\ref{stat-distance} is based on {\em path counting techniques.}
These are different for the two models, but the idea is the same:
We split the possible paths 
between the vertices $a$ and $b$ into two sets, called {\itshape good paths} and {\itshape bad paths}.
Here {\itshape good} means that the degrees of vertices 
along the path increase, but {\itshape not too much}. We then separately and directly estimate
the contribution of each set. The details are described in the proof.

\subsection{Upper bound}
\label{sub-upper}
We now describe the structure of the proof for the upper bound,
which is based on two key concepts:
the {\itshape core of the graph} and the $k$-{\itshape exploration graph} of a vertex.

We start by introducing some notation. First of all, fix a constant $\sigma \in (1/(3-\tau), \infty)$. 
We define $\core_n$ as the set of vertices in the graph of size $n$ with degree larger than $(\log n)^\sigma$.
More precisely, {\blue 
denoting by $D_t(v) = D_{t,m}(v)$ the degree of vertex $v$ in the preferential attachment model
after time $t$, i.e.\ in the graph $\PA{t}$
(see the discussion after \eqref{prefatt-attprob}),}
we let
	\eqn{
	\label{core_n-def}
	\core_n:=\begin{cases}
	\{v\in[n]\colon d_v\geq  (\log n)^\sigma\}& \text{for } \ \CMnd;\\
	\{v\in[n]\colon {\blue D_{n/2}(v)}\geq  (\log n)^\sigma\} & \text{for } \ \PA{n}.
	\end{cases}
	}
The fact that we evaluate the degrees at time $n/2$ for the PAM is quite crucial in the proof of Statement \ref{stat-core} below. 
In Section \ref{upperproofs-prefatt}, we also give bounds on $D_v(n)$ for $v\in \core_n$, as well as for $v\not\in \core_n$, that show that the degrees cannot grow too much between time $n/2$ and $n$. The first statement, that we formulate for completeness, upper bounds the diameter of $\core_n$ and is already 
known from the literature for both models:

\begin{Statement}
\label{stat-core}
Define $\cdist$ as in \eqref{c_dist}. Then, for every $\vep>0$, {\blue with high probability}
	\eqn{
	\label{core_diam}
	\frac{\mathrm{diam}(\core_n)}{\log\log n}
	\leq 	(1+\vep)\frac{2\cdist}{|\log(\tau-2)|}.
	}
\end{Statement}
\medskip

Statement \ref{stat-core} for $\CMnd$ is \cite[Proposition 3.1]{vdHHGZD1}, for $\PA{n}$ it is \cite[Theorem 3.1]{DSvdH}.
\medskip

Next we bound the distance between a vertex and $\core_n$.  We define the \emph{$k$-exploration graph} of 
a vertex $v$ as a suitable subgraph of its $k$-neighborhood, built as follows:
We consider the usual exploration process starting at $v$, but
instead of exploring all the edges incident to a vertex, we only explore a 
\emph{fixed} number of them, namely $\dfwd$ defined in \eqref{d_tree}. 
(The choice of which edges to explore is a natural one, and it will be explained in more detail in the proofs.)

We stress that it is possible to
explore vertices that have already been explored,
leading to what we call a {\itshape collision}. If there are no
collisions, then the $k$-exploration graph is a tree. 
In presence of collisions, the $k$-exploration graph is not a tree, and it is clear that 
every collision reduces the number of vertices in the $k$-exploration graph.

Henceforth we fix $\varepsilon > 0$ and, in analogy with \eqref{kstar-tree}, we define, for $n\in\N$,
	\eqn{
	\label{kstar-exp}
	{\blue k^{+}_n} = \left(1+\varepsilon\right) \frac{\log\log n}{\log(\dfwd)} .
	}
Our second statement for the upper bound shows that the $k^{+}_n$-exploration graph of \emph{every}
vertex in the graph either intersects $\core_n$, or it has a bounded number of collisions:

\begin{Statement}[Bound on collisions]
\label{stat-collision}
There is a constant $c < \infty$ such that, with high probability, 
the $k^{+}_n$-exploration graph of \emph{every} vertex in the graph 
has at most $c$ collisions before hitting $\core_n$. 
As a consequence, for some constant $s>0$, the $k^{+}_n$-exploration graph
of \emph{every} vertex in the graph either intersects $\core_n$, or its boundary
has cardinality at least
	\eqn{
	\label{exp-boundary}
	s(\dfwd)^{k^{+}_n} = (\log n)^{1+\varepsilon+o(1)}.
	}
\end{Statement}
\medskip

With a bounded number of collisions, the $k^{+}_n$-exploration graph is not far from being a tree, which explains 
the lower bound \eqref{exp-boundary} on the cardinality of its boundary. Having enough vertices on its boundary, the $k^{+}_n$-exploration is likely to be connected to $\core_n$ \emph{fast}, which for our purpose means in $o(\log\log n)$ steps. This is the content of our last statement:

\begin{Statement}
\label{stat-triplelog}
There {\blue are} constants $B,C < \infty$ such that, with high probability, 
the $k^{+}_n$-exploration graph of every vertex in the graph is at distance at most 
$h_n = \lceil B\log\log\log n+C\rceil$ from $\core_n$.
\end{Statement}

{\blue
The proof for this is novel. For example, for the configuration model, we grow the $k^{+}_n+h_n$ neighborhood of a vertex, and then show that there are so many half-edges at its boundary that it is very likely to connect immediately to the core. The proof for the preferential attachment model is slightly different, but the conclusion is the same. This shows that the vertex is indeed at most
at distance $k^{+}_n+h_n$ away from the core.}

In conclusion, with high probability, the diameter of the graph is at most
	$$
	(k^{+}_n+ h_n) +\mathrm{diam}(\core_n)+ (k^{+}_n +h_n),
	$$
which gives us the expressions in \eqref{diam-cmnd} and \eqref{diam-prefatt} 
and completes the proof of the upper bound.

\section{Lower bound for configuration model}
\label{lowerproofs-cmnd}

In this section we prove Statements~\ref{stat-MKC}, \ref{stat-distance-boundaries}
and~\ref{stat-distance} for the configuration model. By the discussion in
Section~\ref{sub-lower}, this completes the proof of the lower
bound in Theorem~\ref{main-cmnd}.

In our proof, it will be convenient to choose a particular order to pair the half-edges. This is made precise in the following remark:

\begin{remark}[Exchangeability in half-edge pairing]
\rm
\label{cmnd-conditionedlaw}
Given a sequence $\sub{d} = (d_1, \ldots, d_n)$
such that $\ell_n = d_1 +\ldots + d_n$ is even,
the configuration model $\CMnd$ can be built iteratively as follows:
\begin{itemize}
\item[$\rhd$] start with $d_i$ half-edges attached to each vertex 
$i \in [n] = \{1,2,\ldots, n\}$;

\item[$\rhd$] choose an \emph{arbitrary} half-edge and pair it to a uniformly chosen half-edge;

\item[$\rhd$] choose an \emph{arbitrary} half-edge, among the $\ell_n - 2$
that are still unpaired, and pair it to a uniformly chosen half-edge; and so on.
\end{itemize}
The {\em order} in which the arbitrary half-edges are chosen does not matter in the above, by exchangeability (see also \cite[Chapter 7]{vdH1}).
\end{remark}

\subsection{Proof of Statement~\ref{stat-MKC}}
\label{proof-lower-cmnd-1}

With a slight abuse of notation ({\blue see} \eqref{eq:dmin0}), in this section we set
	\begin{equation*}
	\dmin = \min\{d_1, \ldots, d_n\} \,.
	\end{equation*}
Given a vertex $v\in[n]$ and $k\in\N$, we denote the set of vertices at distance at most $k$ from $v$
(in the graph $\CMnd$) by $U_{\leq k}(v)$ and we call it the \emph{$k$-neighborhood of $v$}.

\begin{Definition}[Minimally-$k$-connected vertex]\label{def:minicm}
For $k\in\N_0$, a vertex $v\in[n]$ is called {\em minimally-$k$-connected} when all the vertices in $U_{\leq k}(v)$ have minimal degree, i.e.,
  	$$
	  \begin{array}{ccc}
		  d_i = \dmin & &\mbox{ for all } i\in U_{\leq k}(v) \,,
	  \end{array}
  	$$
and furthermore there are no self-loops, multiple edges or cycles in $U_{\leq k}(v)$. Equivalently, $v$ is minimally-$k$-connected when the graph $U_{\leq k}(v)$ is a {\blue regular} tree with degree $\dmin$.

We denote the (random) set of minimally-$k$-connected vertices by $\mathcal{M}_k \subseteq [n]$,
and its cardinality by $M_k = |\mathcal{M}_k|$, i.e.\ $M_k$ denotes the number of minimally-$k$-connected vertices.
\end{Definition}

\begin{remark}[The {\blue volume} of the $k$-neighborhood of $k$-minimally connected vertices]
\rm
\label{rem:layers}
For a minimally-$k$-connected vertex $v$, since $U_{\leq k}(v)$ is a tree with
degree $\dmin$, the number of \emph{edges} inside $U_{\leq k}(v)$ equals
(assuming $\dmin \ge 2$)
	\eqn{
	\label{i-k}
  	i_k = \sum_{l=1}^k\dmin(\dmin-1)^{l-1} =
  	\begin{cases}
 	\dmin \, k & \text{if } \dmin = 2;\\
 	\rule{0pt}{1.9em} \displaystyle \dmin\frac{(\dmin-1)^k-1}{\dmin-2}
  	& \text{if } \dmin \ge 3.
  	\end{cases}
	}
Moreover, the number of \emph{vertices} inside $U_{\leq k}(v)$ equals $i_k+1$.
By \eqref{i-k}, it is clear why $\dmin>2$, or $\dmin\geq 3$, is crucial. Indeed, this implies that the volume of neighborhoods of minimally-$k$-connected vertices grows exponentially in $k$.
\end{remark}

\begin{remark}[Collapsing minimally-$k$ connected trees]\rm
\label{cmnd-conditionMKC}
By Remarks~\ref{cmnd-conditionedlaw} and~\ref{rem:layers}, conditionally on the event $\{v \in \mathcal{M}_k\}$ that a given vertex $v$ is minimally-$k$-connected, the random graph obtained from $\CMnd$ by collapsing
$U_{\leq k}(v)$ to a single vertex, called $a$, \emph{is still a configuration model} with $n - i_k$ vertices and
with $\ell_n$ replaced by $\ell_n - 2 i_k$, where the new vertex $a$ has degree $\dmin(\dmin-1)^k$.

Analogously, conditionally on the event $\{v \in \mathcal{M}_k, \,
w \in \mathcal{M}_m, \, U_{\leq k}(v) \cap U_{\leq m}(w) = \varnothing\}$ that
two given vertices $v$ and $w$ are minimally-$k$ and minimally-$m$-connected
with \emph{disjoint} neighborhoods, collapsing
$U_{\leq k}(v)$ and $U_{\leq m}(w)$ to single vertices $a$ and $b$
yields again a configuration model with $n - i_k - i_m $ vertices,
where $\ell_n$ is replaced by $\ell_n - 2 i_k - 2 i_m$ and where
the new vertices $a$ and $b$ have degrees equal to $\dmin(\dmin-1)^k$
and $\dmin(\dmin-1)^m$, respectively.
\end{remark}

We denote the number of vertices of degree $k$ in the graph by $n_k$, i.e.,
	\eqn{
	n_k = \sum_{i\in[n]}\I_{\{d_i = k\}}.
	}
We now study the first two moments of $M_k$, where we recall that the total degree $\ell_n$ is defined by \eqref{cmnd-elln}:

\begin{Proposition}[Moments of $M_k$]
\label{cmnd-mk-moments}
Let $\CMnd$ be a configuration model such that $\dmin \ge 2$.
Then, for all $k\in\N$,
	\eqn{
	\label{cmnd-mk-firstmoment}
		\E[M_k] = n_{\dmin}\prod_{i=1}^{i_k}\frac{\dmin(n_{\dmin}-i)}{\ell_n-2i+1},
	}
where $i_k$ is defined in \eqref{i-k}. When, furthermore, $\ell_n > 4i_k$,
	\eqn{
	\label{cmnd-mk-secondmoment}
	\E[M_k^2]\leq \E[M_k]^2+ \E[M_k]\left(
	(i_k + 1) + 
	i_{2k} \,\dmin \,  \frac{n_{\dmin}}{\ell_n - 4 i_k}\right).
	}
\end{Proposition}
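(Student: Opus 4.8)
The plan is to compute $\E[M_k]$ exactly by writing $M_k = \sum_{v\in[n]} \I_{\{v\in\mathcal M_k\}}$ and evaluating $\pr(v\in\mathcal M_k)$ via the sequential half-edge pairing of Remark~\ref{cmnd-conditionedlaw}. First I would observe that $\pr(v\in\mathcal M_k)=0$ unless $d_v=\dmin$, so only the $n_{\dmin}$ vertices of minimal degree contribute. Fix such a vertex $v$ and explore its neighborhood breadth-first, pairing half-edges one at a time in the order dictated by the exploration (legitimate by exchangeability). To build the tree $U_{\le k}(v)$ we must pair exactly $i_k$ half-edges, and at the $j$-th such pairing we need: (a) the partner half-edge belongs to a vertex of degree $\dmin$ not yet seen in the exploration, and (b) that partner is a fresh vertex (no self-loop, multi-edge, cycle). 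At step $j$ there are $\ell_n-2(j-1)-1=\ell_n-2j+1$ available half-edges for the partner, and the number of ``good'' partners is $\dmin(n_{\dmin}-j)$: we must land on one of the $n_{\dmin}-j$ untouched degree-$\dmin$ vertices (we have already consumed $v$ plus $j-1$ others, and the constraint $j\le i_k$ keeps this nonnegative when $\ell_n>2i_k$), each offering $\dmin$ half-edges to pair to (the incoming half-edge is used up, but a fresh vertex of degree $\dmin$ contributes $\dmin$ half-edges; one is consumed by this very pairing, so strictly one should check whether it is $\dmin$ or $\dmin$ free half-edges available as partners — here the partner half-edge is one of the $d_w=\dmin$ half-edges of $w$, so the count of good partner half-edges is indeed $\dmin(n_{\dmin}-j)$ at the moment just before matching). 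Multiplying these conditional probabilities over $j=1,\dots,i_k$ and then summing over the $n_{\dmin}$ choices of $v$ gives \eqref{cmnd-mk-firstmoment}.

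For the second moment I would write $\E[M_k^2]=\sum_{v,w}\pr(v,w\in\mathcal M_k)$ and split according to the relative position of the two neighborhoods. If $U_{\le k}(v)\cap U_{\le k}(w)=\varnothing$, then by Remark~\ref{cmnd-conditionMKC} (or by continuing the same sequential pairing, now building two disjoint trees of $i_k$ edges each) one gets $\pr(v,w\in\mathcal M_k,\ \text{disjoint})\le \pr(v\in\mathcal M_k)\,\pr(w\in\mathcal M_k)$, because the conditional probabilities for building the second tree are of the same product form but with a depleted half-edge pool, hence termwise no larger; summing yields the $\E[M_k]^2$ term. The remaining contribution comes from pairs $v,w$ whose $k$-neighborhoods overlap. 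There are two sub-cases: $w\in U_{\le k}(v)$ itself, which on the event $\{v\in\mathcal M_k\}$ forces at most $i_k+1$ choices of $w$ (the vertices of the tree), contributing at most $\E[M_k](i_k+1)$; and $w\notin U_{\le k}(v)$ but $U_{\le k}(w)$ nonetheless touches $U_{\le k}(v)$, which forces $w$ to lie within distance $2k$ of $v$. On $\{v\in\mathcal M_k\}$ the ball $U_{\le 2k}(v)$, to the extent it is tree-like, has at most $i_{2k}$ edges; bounding the number of such $w$ by the number of half-edges reachable — roughly $i_{2k}$ positions, each pairing to one of at most $\dmin n_{\dmin}$ good half-edges out of $\ell_n-4i_k$ remaining (using $\ell_n>4i_k$ to keep denominators positive after already exploring two trees' worth of edges) — gives the last term $\E[M_k]\, i_{2k}\,\dmin\, n_{\dmin}/(\ell_n-4i_k)$. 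Adding the three pieces yields \eqref{cmnd-mk-secondmoment}.

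The main obstacle, and the step requiring the most care, is the bookkeeping in the overlapping case: precisely defining the event that $w\notin U_{\le k}(v)$ yet $U_{\le k}(w)$ collides with $U_{\le k}(v)$, and controlling the number of such $w$ together with the conditional probability that $w$ is still minimally-$k$-connected, all while the half-edge pool has been partially consumed. One must be slightly generous here — it suffices to bound, not compute — but one has to make sure the bound is conditioned correctly (first on $\{v\in\mathcal M_k\}$, giving the factor $\E[M_k]$, then on the structure near $v$) and that the denominators $\ell_n-2i+1$ appearing in building the second tree are uniformly controlled, which is exactly where the hypothesis $\ell_n>4i_k$ enters. The first-moment computation itself is routine once the sequential-pairing viewpoint is set up; the only subtlety there is checking that the count of ``good partner half-edges'' is $\dmin(n_{\dmin}-j)$ and not $(\dmin-1)(n_{\dmin}-j)$ or similar, i.e.\ keeping straight which half-edges are consumed at the moment of each pairing.
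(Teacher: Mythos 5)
Your proposal follows essentially the same route as the paper: the first moment is computed exactly via sequential pairing (and your count $\dmin(n_{\dmin}-j)$ of good partner half-edges at the $j$-th pairing, against $\ell_n-2j+1$ available ones, is exactly right), and the second moment is split into the same three cases — disjoint neighborhoods, $w\in U_{\leq k}(v)$, and overlapping with $w\notin U_{\leq k}(v)$ — yielding the same three terms. Two points need repair before this is a proof. First, in the disjoint case your justification is stated in the wrong direction: conditioning on $\{v\in\mathcal{M}_k\}$ and collapsing, the factors for building $w$'s tree become $\dmin(n_{\dmin}-i-i_k-1)/(\ell_n-2i-2i_k+1)$, and a depleted half-edge pool by itself makes each such ratio \emph{larger} (the denominator shrinks), so ``depleted pool, hence termwise no larger'' fails as written. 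What makes the comparison work is that the pool of untouched degree-$\dmin$ vertices shrinks as well (numerator down by $i_k+1$ against denominator down by $2i_k$), and a one-line cross-multiplication shows each ratio is at most the unconditioned one provided $\ell_n\geq 2n_{\dmin}+O(1)$, which holds since $\ell_n\geq \dmin n_{\dmin}$; the paper carries out exactly this check before concluding the bound $\pr(v\in\mathcal{M}_k)^2$.

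Second, your third term is only a heuristic count (``roughly $i_{2k}$ positions, each pairing to at most $\dmin n_{\dmin}$ good half-edges out of $\ell_n-4i_k$''), and as you yourself note the conditioning there is the delicate part. The way to make it precise (and the paper's way) is to decompose over $l=\dist(v,w)-1\in\{k,\ldots,2k-1\}$: on that event $v\in\mathcal{M}_k$, $w\in\mathcal{M}_{l-k}$, the neighborhoods $U_{\leq k}(v)$ and $U_{\leq l-k}(w)$ are disjoint, and some boundary half-edge of $U_{\leq l-k}(w)$ is paired to a boundary half-edge of $U_{\leq k}(v)$. Conditionally, after collapsing the two trees into vertices of degrees $\dmin(\dmin-1)^k$ and $\dmin(\dmin-1)^{l-k}$, that connecting event has probability at most $\dmin^2(\dmin-1)^{l}/(\ell_n-4i_k)$ (here the hypothesis $\ell_n>4i_k$ enters); dropping the constraint on $w$, summing over $l$ and using $(\dmin-1)i_{2k-1}\leq i_{2k}$, then summing over $w$ (a factor $n_{\dmin}$) and over $v$ (a factor $\E[M_k]$), gives precisely the stated term $\E[M_k]\,i_{2k}\,\dmin\,n_{\dmin}/(\ell_n-4i_k)$. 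So the architecture of your argument is the paper's, but both the termwise comparison in the disjoint case and the distance-$l$ decomposition in the overlap case must be written out; they are where the actual content of the second-moment bound lies.
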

\medskip

Before proving Proposition \ref{cmnd-mk-moments}, let us complete the proof of Statement~\ref{stat-MKC} subject to it.  We are working under the assumptions of Theorem~\ref{main-cmnd}, hence $\dmin \ge 3$ and the degree sequence $\sub{d}$ satisfies the degree regularity condition Condition \ref{drc}, as well as the polynomial distribution condition Condition \ref{pol-dis-con} with exponent $\tau \in (2,3)$.
Recalling \eqref{cmnd-elln}-\eqref{emp-deg-dis},
we can write $n_{\dmin} = n \, p_{\dmin}^{\sss(n)}$ and 
$\ell_n = n \, \sum_{k\in\N} k p_k^{\sss(n)}$, so that, as $n\to\infty$,
	\begin{equation} \label{eq:nel}
	n_{\dmin} {\blue =} 
	n \, p_{\dmin}(1+o(1)) \,, \qquad
	\ell_n = n \, \mu (1+o(1))\,, \qquad \text{with} \qquad
	p_{\dmin} > 0 \,, \quad
	\mu := \sum_{k\in\N} k p_k < \infty .
	\end{equation}
Recalling the definition \eqref{kstar-tree}
of $k^{-}_n$ and \eqref{i-k}, for $k= k^{-}_n$,
	\begin{equation} \label{eq:ikn}
	i_{k^{-}_n} = \dmin\frac{(\dmin-1)^{k^{-}_n}-1}{\dmin-2}
	= \frac{\dmin}{\dmin - 2} (\log n)^{1-\varepsilon}(1+o(1)),
	\qquad \text{hence} \qquad
	i_{2k^{-}_n} = O((\log n)^{2(1-\varepsilon)}) .
	\end{equation}
Bounding $\E[M_k] \le n$, it follows
by \eqref{cmnd-mk-secondmoment} that
\begin{equation} \label{eq:bova}
	\Var[M_{k^{-}_n}] \le \E[M_{k^{-}_n}] \left(
	O(i_{k^{-}_n}) + O(i_{2k^{-}_n}) \right) \le
	n \, O((\log n)^{2(1-\varepsilon)}) = n^{1+o(1)} \,.
\end{equation}
On the other hand, applying \eqref{cmnd-mk-firstmoment},
for some $c \in (0,1)$ one has
\begin{equation} \label{eq:bome}
	\E[M_{k^{-}_n}] \ge n \, p_{\dmin} 
	\, \left( \frac{\dmin \, p_{\dmin}}{\mu} + o(1) \right)^{i_{k^{-}_n}}
	\ge n \, p_{\dmin} \, c^{(\log n)^{1-\varepsilon}}
	= n^{1-o(1)} \,.
\end{equation}
Relations \eqref{eq:bova} and \eqref{eq:bome}
show that \eqref{MKC-moments} holds,
completing the proof of Statement~\ref{stat-MKC}.\qed
\medskip

{\blue
\begin{remark}[Disjoint neighborhoods]\rm\label{rem:disjcm}
Let us show that, with high probability, there are vertices
%minimally-$k_n^-$-connected vertices 
$v, w \in \cM_{k_n^-}$ with 
%disjoint $k_n^-$-neighborhoods 
$U_{\le k_n^-}(v) \cap U_{\le k_n^-}(w) = \varnothing$.
We proceed by contradiction: fix $v \in \cM_{k_n^-}$ and assume that,
for every vertex $w \in \cM_{k_n^-}$, one has
$U_{\le k_n^-}(v) \cap U_{\le k_n^-}(w) \ne \varnothing$.
Then, for any $w \in \cM_{k_n^-}$
there must exist \emph{a self-avoiding path from $v$ to $w$ of length $\le 2 k_n^-$
which only visits vertices with degree $\dmin$}
(recall that $U_{\le k_n^-}(v)$ and $U_{\le k_n^-}(w)$ are regular trees).
However, for fixed $v$, the number of such
paths is $O((\dmin - 1)^{2 k_n^-}) = O((\log n)^{2(1-\epsilon)})$, see \eqref{kstar-tree},
while by Statement~\ref{stat-MKC}
the number of vertices $w \in \cM_{k_n^-}$ is much larger,
since $M_{k_n^-} \sim \E[M_{k_n^-}] = n^{1-o(1)}$, see \eqref{eq:bome}.
\end{remark}
}

\begin{proof}[Proof of Proposition~\ref{cmnd-mk-moments}]
To prove \eqref{cmnd-mk-firstmoment} we write
	\eqn{
	\label{cmnd-mkmom-1}
	M_k = \sum_{v\in[n]\colon \, d_v = \dmin}\I_{\{v\in\mathcal{M}_k\}},
	}
and since every vertex in the sum has the same 
probability of being minimally-$k$-connected, 
	\eqn{
	\label{cmnd-mkmom-2}
	\E\left[M_k\right] = n_{\dmin}\pr(v\in\mathcal{M}_k).
	}
A vertex $v$ with $d_v=\dmin$ is in $\mathcal{M}_k$ when 
{\blue  all the half-edges in $U_{\le k}(v)$}
 are paired to half-edges incident to distinct vertices having minimal degree, without generating cycles. 
By Remark~\ref{cmnd-conditionedlaw}, we can start pairing a half-edge incident to $v$ to a half-edge incident to another vertex of degree $\dmin$. Since there are $n_{\dmin}-1$ such vertices, this event has probability
	\begin{equation*}
	\frac{\dmin(n_{\dmin}-1)}{\ell_n-1}
	\end{equation*}
We iterate this procedure, and suppose that we have already successfully paired $(i-1)$ couples of half-edges; then the next half-edge can be paired to a distinct vertex of degree $\dmin$ with probability
	\eqn{
	\label{cmnd-mkmom-3}
	\frac{ \dmin(n_{\dmin}-i)}{\ell_n-2(i-1)-1} = \frac{\dmin(n_{\dmin}-i)}{\ell_n-2i+1}.
	}
Indeed, every time that we use a half-edge of a vertex of degree $\dmin$, we cannot use its remaining 
half-edges, and every step we make reduces the total number of possible
half-edges by two. By \eqref{i-k}, exactly $i_k$ couples of half-edges need to be paired for
$v$ to be minimally-$k$-connected, so that
	\eqn{
	\label{cmnd-mkmom-4}
	\E[M_k] = n_{\dmin} \pr(v\in\mathcal{M}_k) = n_{\dmin}
	\prod_{i=1}^{i_k}\frac{ \dmin(n_{\dmin}-i)}{\ell_n-2i+1}.
	}
which proves \eqref{cmnd-mk-firstmoment}. If $n_{\dmin} \le i_k$ the right hand side vanishes, in agreement with the fact that there cannot be any minimally-$k$-connected vertex in this case (recall \eqref{i-k}).

To prove \eqref{cmnd-mk-secondmoment}, we notice that
	\eqn{
	\label{cmnd-mkmom-5}
	\E[M_k^2] = \sum_{v,w\in[n]\colon \,
	d_v = d_w = \dmin}\pr(v,w\in\mathcal{M}_k).
	}
We distinguish different cases: the $k$-neighborhoods of $v$ and $w$ might be disjoint
or they may overlap, in which case $w$ can be included in $U_{\leq k}(v)$ or not. Introducing the events
	\begin{equation} \label{eq:AB}
	A_{v,w} = \left\{ U_{\leq k}(v)\cap U_{\leq k}(w) \neq \varnothing\right\} ,
	\qquad B_{v,w} = \left\{ w\in U_{\leq k}(v)\right\} ,
	\end{equation}
we can write the right hand side of \eqref{cmnd-mkmom-5} as
	\eqn{
	\label{cmnd-mkmom-6}
	\sum_{\substack{v,w\in[n]\\
	d_v = d_w = \dmin}} \!\! \!\!\!\! \big[
	\pr\left( v,w\in\mathcal{M}_k, A_{v,w}^c\right)
	+\pr\left(v,w\in\mathcal{M}_k, A_{v,w}, B_{v,w}\right)+
 	\pr\left(v,w\in\mathcal{M}_k, A_{v,w},B_{v,w}^c\right)\big].
	}

Let us look at the first term in \eqref{cmnd-mkmom-6}. By Remarks~\ref{rem:layers} and~\ref{cmnd-conditionMKC}, 
conditionally on $\{v\in\mathcal{M}_k\}$, the probability of $\{w\in\mathcal{M}_k, A_{v,w}^c\}$ equals the
probability that $w$ is minimally-$k$-connected in a new configuration model, with $\ell_n$ replaced by $\ell_n - 2 i_k$ and with the number of vertices with minimal degree reduced from $n_{\dmin}$ to $n_{\dmin}- (i_k+1)$.
Then, by the previous analysis ({\blue see} \eqref{cmnd-mkmom-4}),
	\eqn{
	\label{cmnd-mkmom-7}
   	\pr\left(v, w\in\mathcal{M}_k, A_{v,w}^c  \right) =
    	\prod_{i=1}^{i_k}\frac{\dmin(n_{\dmin}-i-i_k-1)}{\ell_n-2i-2i_k+1} 
    	\, \pr\left(v \in \mathcal{M}_k\right) .
	}
By direct computation, 
the ratio in the right hand side of \eqref{cmnd-mkmom-7}
is always maximized for $i_k=0$ (provided $\ell_n \ge 2 n_{\dmin} - 3$, which is
satisfied since $\ell_n \ge \dmin n_{\dmin} \ge 3 n_{\dmin}$ by assumption).
Therefore, setting $i_k = 0$ in the ratio and recalling \eqref{cmnd-mkmom-4}, we get the upper bound
	\eqn{
	\label{cmnd-mkmom-8}
  	\pr\left(v,w\in\mathcal{M}_k, A_{v,w}^c\right) \leq \left[\prod_{i=1}^{i_k}
  	\frac{\dmin(n_{\dmin}-i)}{\ell_n-2i+1}\right] \, \pr(v\in\mathcal{M}_k)=
		\pr\left(v\in\mathcal{M}_k\right)^2.
	}
Since there are at most $n_{\dmin}^2$ pairs of vertices of degree $\dmin$, it follows from \eqref{cmnd-mkmom-8} that 
	\eqn{
	\label{cmnd-mkmom-9}
	\sum_{\substack{v,w\in[n]\\
	d_v = d_w = \dmin}} \pr\left(v,w\in\mathcal{M}_k, A_{v,w}^c\right) \leq
	n_{\dmin}^2 \, \pr\left(v\in\mathcal{M}_k\right)^2 = \E[M_k]^2,
	}
which explains the first term in \eqref{cmnd-mk-secondmoment}. 

For the second term in \eqref{cmnd-mkmom-6}, $v$ and $w$ are
minimally-$k$-connected with overlapping neighborhoods, and $w\in U_{\leq k}(v)$. 
Since $\left\{v,w\in\mathcal{M}_k\right\}\cap A_{v,w}\cap B_{v,w}\subseteq \left\{v\in\mathcal{M}_k\right\}\cap B_{v,w}$, we can bound
	\eqn{
	\label{cmnd-mkmom-11}
	\sum_{\substack{v,w\in[n]\\
	d_v = d_w = \dmin}}\pr\left(v,w\in\mathcal{M}_k, A_{v,w},B_{v,w}\right) 
	\leq \E\Big[\sum_{v\in[n]: \, d_v = \dmin}
	\I_{\{v\in\mathcal{M}_k\}}\sum_{w\in[n]: \, d_w = \dmin}\I_{B_{v,w}}\Big] ,
	}
and note that $\sum_{w\in[n]}\I_{B_{v,w}} = \left|U_{\leq k}(v)\right| = i_k + 1$, by Remark~\ref{rem:layers}.
Therefore
	\eqn{
	\label{cmnd-mkmom-12}
	\sum_{\substack{v,w\in[n]\\
	d_v = d_w = \dmin}} \pr\left(v,w\in\mathcal{M}_k ,A_{v,w},B_{v,w}\right)
	\leq \E[M_k] \, (i_k+1) ,
	}
which explains the second term in \eqref{cmnd-mk-secondmoment}.

For the third term in \eqref{cmnd-mkmom-6}, $v$ and $w$ are minimally-$k$-connected vertices with overlapping neighborhoods, but $w\not\in U_{\leq k}(v)$. This means that $\dist(v,w) = l+1$ for some $l\in \{k, \ldots, 2k-1\}$,
so that $U_{\leq k}(v) \cap U_{\leq l-k}(w) = \varnothing$ and, moreover, a half-edge on the boundary of $U_{\leq (l-k)}(w)$ is paired to a half-edge on the boundary of $U_{\leq k}(v)$, an event that we call $F_{v,w; l,k}$.
Therefore
	\eqn{	
	\label{cmnd-mkmom-13}
  	\left\{w\in\mathcal{M}_k\right\}\cap A_{v,w}\cap B_{v,w}^c \subseteq 
  	\bigcup_{l=k}^{2k-1}
 	 \left\{w\in\mathcal{M}_{l-k}\right\} \cap 
  	\{U_{\leq k}(v) \cap U_{\leq l-k}(w) = \varnothing\} \cap F_{v,w;l,k} .
	}
and we stress that in the right hand side $w$ is only minimally-$(l-k)$-connected (in case $l=k$ this just
means that $d_w = \dmin$). Then
	\eqn{
	\label{cmnd-mkmom-14}
	\pr\left(v,w\in\mathcal{M}_k, A_{v,w},B_{v,w}^c\right)\leq\sum_{l=k}^{2k-1}
	\E\left[\I_{\{v\in\mathcal{M}_k, \, w\in \mathcal{M}_{l-k}, \,
	U_{\leq k}(v) \cap U_{\leq l-k}(w) = \varnothing\}}
	\I_{F_{v,w;l,k}} \right].
	}
By Remark~\ref{cmnd-conditionMKC}, conditionally on
$\{v \in \mathcal{M}_k, \,
w \in \mathcal{M}_{l-k}, \, U_{\leq k}(v) \cap U_{\leq l-k}(w) = \varnothing\}$,
we can collapse $U_{\leq k}(v)$ and $U_{\leq l-k}(w)$ to single vertices
$a$ and $b$ with degrees respectively
$\dmin(\dmin-1)^k$ and $\dmin(\dmin-1)^{l-k}$, getting a new configuration
model with $\ell_n$ replaced by $\ell_n - 2 i_k - 2 i_{l-k}$.
Bounding the probability that a half-edge of $a$ is paired
to a half-edge of $b$, we get
	\eqn{
	\label{cmnd-mkmom-15}
	\begin{split}
	\pr(F_{v,w;l,k} \,|\, v \in \mathcal{M}_k, \, &
	w \in \mathcal{M}_{l-k}, \, U_{\leq k}(v) \cap U_{\leq l-k}(w) = \varnothing ) \\
	& \le
	\frac{\dmin(\dmin-1)^k \dmin (\dmin-1)^{l-k}}{\ell_n - 2 i_k - 2 i_{l-k} - 1}
	\le \frac{\dmin^2(\dmin-1)^{l}}{\ell_n - 4 i_k} \,,
	\end{split}
	}
because $l \le 2k-1$ and, consequently, $i_{l-k} \le i_{k-1} \le i_k - 1$.
Plugging \eqref{cmnd-mkmom-15} into \eqref{cmnd-mkmom-14}, and then
forgetting the event
$\{w \in \mathcal{M}_{l-k}, \, U_{\leq k}(v) \cap U_{\leq l-k}(w) = \varnothing\}$, leads to
	\begin{equation} \label{eq:lastpro}
	\begin{split}
	\sum_{\substack{v,w\in[n]\\ d_v = d_w = \dmin}}
	\pr\left(v,w\in\mathcal{M}_k, \, A_{v,w}, \, B_{v,w}^c\right) 
	& \leq
	\left(\sum_{l=k}^{2k-1} \frac{\dmin^2(\dmin-1)^l}{\ell_n - 4 i_k}\right)
	\sum_{\substack{v,w\in[n]\\ d_v = d_w = \dmin}} 
	\pr(v\in\mathcal{M}_{k}) \\
	& \leq \frac{\dmin(\dmin-1)}{\ell_n - 4 i_k} \, i_{2k-1} \,
	n_{\dmin} \, \E[M_k] \,,
	\end{split}
	\end{equation}
where we have used the definition \eqref{i-k} of $i_{2k-1}$.
Since $(\dmin - 1) i_{2k-1} \le i_{2k}$, again by \eqref{i-k}, we have
obtained the third term in \eqref{cmnd-mk-secondmoment}.
\end{proof}

\subsection{Proof of Statement \ref{stat-distance-boundaries}}
\label{lower-proof-cmnd-2}
We recall that $W^n_1$ and $W^n_2$ are two independent random vertices
chosen uniformly in $\mathcal{M}_{k^{-}_n}$ (the set of
minimally-$k^{-}_n$-connected vertices), assuming that $\mathcal{M}_{k^{-}_n}\neq \varnothing$
(which, {\blue as we have shown,} occurs {\blue with high probability}). Our goal is to show that
	\begin{equation}
	\label{eq:goalst2}
	\lim_{n\to\infty} 
	\pr(E_n) = 0 ,
	\end{equation}
where we set
	\begin{equation}
	\label{En-def}
	E_n := \left\{ \dist\big( U_{\le k^{-}_n}(W^n_1),
	U_{\le k^{-}_n}(W^n_2)\big) \le 2 \bar{k}_n \right\} =
	\left\{\dist(W_1^n,W_2^n)\leq 2k^{-}_n+2\bar{k}_n\right\} .
	\end{equation}

We know from Statement~\ref{stat-MKC} that as $n\to\infty$
	\eqn{
	\label{concl-split-opiccolo}
	\pr\left( M_{k^{-}_n} \le \frac{1}{2}\E[M_{k^{-}_n}] \right) \le
	\pr\left( |M_{k^{-}_n}-\E[M_{k^{-}_n}]|> \frac{1}{2}\E[M_{k^{-}_n}] \right)
	\le \frac{\Var[M_{k^{-}_n}]}{\frac{1}{4} \E[M_{k^{-}_n}]^2} = o(1).
	}
Therefore,
	\begin{equation}
	\label{concl-sumtobound}
	\begin{split}
	\pr(E_n) & = \pr\left(E_n\cap\{M_{k^{-}_n} > \tfrac{1}{2}\E[M_{k^{-}_n}]\}\right)
	+ o(1) \\
	& = \E\Big[\sum_{v_1,v_2\in [n]}
	\I_{\{W_1^n=v_1,W_2^n=v_2\}}\I_{\{\dist(v_1,v_2)\leq 2k^{-}_n+2\bar{k}_n\}}
	\I_{\{M_{k^{-}_n} > \frac{1}{2}\E[M_{k^{-}_n}]\}}\Big]+ o(1) \\
	& \leq \E\Big[\sum_{v_1,v_2\in [n]}\frac{\I_{\{v_1\in \mathcal{M}_{k^{-}_n},v_2\in \mathcal{M}_{k^{-}_n}\}}}
	{M_{k^{-}_n}^2} \I_{\{\dist(v_1,v_2)\leq 2k^{-}_n+2\bar{k}_n\}}
			\I_{\{M_{k^{-}_n} > \frac{1}{2}\E[M_{k^{-}_n}]\}}\Big]+ o(1) \\
 	& \leq \sum_{v_1,v_2\in[n]}
	\frac{\pr\left(v_1,v_2\in \mathcal{M}_{k^{-}_n}, \, \dist(v_1,v_2)\leq 2k^{-}_n+2\bar{k}_n \right)}
	{\frac{1}{4}\E[M_{k^{-}_n}]^2} + o(1).
	\end{split}
	\end{equation}

In analogy with \eqref{eq:AB}, we introduce the event
	\begin{equation*}
	A_{v_1, v_2} := \big\{
	U_{\le k^{-}_n}(v_1) \cap U_{\le k^{-}_n}(v_2) \ne \varnothing \big\} \,,
	\end{equation*}
and show that it gives a negligible contribution.  Recalling the proof of Proposition~\ref{cmnd-mk-moments}, in particular \eqref{cmnd-mkmom-12} and \eqref{eq:lastpro}, the sum restricted to $A_{v_1, v_2}$ leads
precisely to the second term in the right hand side of \eqref{cmnd-mk-secondmoment}:
	\begin{equation} \label{eq:preci}
	\begin{split}
	\sum_{v_1,v_2\in[n]}
	\frac{\pr\left(v_1,v_2\in \mathcal{M}_{k^{-}_n}, \,
	A_{v_1, v_2} \right)}{\frac{1}{4}\E[M_{k^{-}_n}]^2} 
	& \le \frac{\E[M_{k^{-}_n}] 
	\left( (i_{k^{-}_n}+1) + i_{2{k^{-}_n}} \frac{\dmin \, n_{\dmin}}{\ell_n - 4 i_{k^{-}_n}} \right)}
	{\frac{1}{4}\E[M_{k^{-}_n}]^2} \\
	& = \frac{O(i_{k^{-}_n}) + O(i_{2k^-_{n}})}{\E[M_{k^{-}_n}]} 
	= \frac{O((\log n)^2)}{n^{1-o(1)}} = o(1) \,,
	\end{split}
	\end{equation}
where we have used \eqref{eq:ikn} and \eqref{eq:bome}
(see also \eqref{eq:nel}).

We can thus focus on the event  $A_{v_1, v_2}^c = \{U_{\le k^{-}_n}(v_1) \cap U_{\le k^{-}_n}(v_2) = \varnothing\}$.
By Remark~\ref{cmnd-conditionMKC},
	\begin{equation} \label{eq:rema}
	\pr\left(\dist(v_1,v_2)\leq 2k^{-}_n+2\bar{k}_n\mid
	v_1,v_2\in \mathcal{M}_{k^{-}_n}, \, A_{v_1, v_2}^c \right)
	 = \hat\pr\left(\dist(a,b)\leq 2\bar{k}_n\right),
	\end{equation}
where $\hat\pr$ is the law of the new configuration model which
results from collapsing the neighborhoods $U_{\le k^{-}_n}(v_1)$ and $U_{\le k^{-}_n}(v_2)$
to single vertices $a$ and $b$, with degrees $\dmin(\dmin-1)^{k^{-}_n}
= O(\log n)$ (recall \eqref{kstar-tree}-\eqref{d_tree}). The degree sequence $\sub{\hat d}$ of 
this new configuration model is a slight modification of the original degree sequence $\sub{d}$: 
two new vertices of degree $O(\log n)$ have been added, while $2 (i_{k^{-}_n} + 1) = O(\log n)$ 
vertices with degree $\dmin$ have been removed  (recall \eqref{eq:ikn}). 
Consequently $\sub{\hat d}$ still satisfies the assumptions of Theorem~\ref{main-cmnd},
hence Statement~\ref{stat-distance} (to be proved in Section~\ref{lower-proof-cmnd-3})
holds for $\hat \pr$ and we obtain
	\begin{equation} \label{eq:o1}
	\hat\pr\left(\dist(a,b)\leq 2\bar{k}_n\right) = o(1) .
	\end{equation}

We are ready to conclude the proof of Statement \ref{stat-distance-boundaries}. By \eqref{concl-sumtobound}-\eqref{eq:preci}-\eqref{eq:rema},
	\begin{equation*}
	\begin{split}
	\pr(E_n) & = \sum_{v_1,v_2\in[n]}
	\frac{\pr\left(v_1,v_2\in \mathcal{M}_{k^{-}_n}, \,
	\dist(v_1,v_2)\leq 2k^{-}_n+2\bar{k}_n, \, A_{v_1, v_2}^c
	\right)}{\frac{1}{4}\E[M_{k^{-}_n}]^2} + o(1) \\
	& \le \hat \pr\left(\dist(a,b)\leq 2\bar{k}_n\right)\sum_{v_1,v_2\in[n]}
	\frac{\pr\left(v_1,v_2\in \mathcal{M}_{k^{-}_n}\right)}{\frac{1}{4}\E[M_{k^{-}_n}]^2} + o(1) \\
	& =	\hat\pr\left(\dist(a,b)\leq 2\bar{k}_n\right)
	\frac{\E[(M_{k^{-}_n})^2]}{\frac{1}{4}\E[M_{k^{-}_n}]^2} + o(1) .
	\end{split}
	\end{equation*}
Observe that $\E[(M_{k^{-}_n})^2] = \E[M_{k^{-}_n}]^2 + \Var(M_{k^{-}_n})= O(\E[M_{k^{-}_n}]^2)$,
by the second relation in \eqref{MKC-moments}. Applying \eqref{eq:o1},
it follows that $\pr(E_n) = o(1)$, completing the proof of Statement \ref{stat-distance-boundaries}.
\qed

\subsection{Proof of Statement \ref{stat-distance}}
\label{lower-proof-cmnd-3}
In this section, we give a self-contained proof of Statement \ref{stat-distance} for $\CMnd$, as used in the proof of Statement \ref{stat-distance-boundaries}.

Given two vertices $a, b \in [n]$, let $\mathcal{P}_k(a,b)$ be the set of all {\blue self-avoiding} paths of length $k$ from $a$ to $b$, i.e.\ of all sequences $(\pi_0, \pi_1, \ldots, \pi_k) \in [n]^{k+1}$ with $\pi_0 = a$, $\pi_k = b$ and such that $(\pi_{i-1}, \pi_i)$ is an edge in the graph, for all $i=1,\ldots, k$. Analogously, let $\mathcal{P}_k(a) = \cup_{b\in[n]} \mathcal{P}_k(a,b)$  denote the set of all paths of length $k$ starting at $a$.

Let us fix an arbitrary increasing sequence $(g_l)_{l\in\N_0}$ (that will be specified later). {\blue Define, for $a,b\in\R$, $a\wedge b := \min\{a,b\}$.} We say that a path $\pi\in\mathcal{P}_k(a,b)$ is {\itshape good}  when $d_{\pi_l}\leq g_l\wedge g_{k-l}$ for every $l=0,\ldots,k$, and {\itshape bad} otherwise.  In other words, a path is good when the degrees along the path do not increase too much from $\pi_0$ to $\pi_{k/2}$, and similarly they do not increase too much in the backward direction, from $\pi_k$ to $\pi_{k/2}$.

For $k \in \N_0$, we introduce the event
	\begin{equation}
	\label{eq:gopa}
		\mathcal{E}_k(a,b) = \left\{
	\exists  \pi\in \mathcal{P}_k(a,b): \
	\pi \text{ is a good path}\right\} .
	\end{equation}
To deal with bad paths, we define
	\begin{equation} \label{eq:bapa}
	\mathcal{F}_k(a) = \left\{
	\exists  \pi\in \mathcal{P}_k(a): \
	d_{\pi_k} > g_k \ \text{ but } \ d_{\pi_i} \le g_i \ \forall i \le k-1\right\} .
	\end{equation}
If $\dist_{\CMnd}(a,b) \le 2 \bar{k}$, then there must be a path in $\mathcal{P}_k(a,b)$
for some $k \le \bar{k}$, and this path might be good or bad. This leads to the simple bound
	\begin{equation}\label{eq:keyinc}
	\pr(\dist_{\CMnd}(a,b)\leq 2\bar{k})\leq \sum_{k=0}^{2\bar{k}}
  	\pr(\mathcal{E}_k(a,b))+\sum_{k = 0}^{\bar{k}}
  	\left[\pr(\mathcal{F}_k(a))+\pr(\mathcal{F}_k(b))\right].
	\end{equation}

We give explicit estimates for the two sums in the right hand side. We introduce the \emph{size-biased distribution function} $F^*_n$ associated to the degree sequence $\sub{d} = (d_1, \ldots, d_n)$ by
	\begin{equation}\label{eq:sizebias}
	F^*_n(t) = \frac{1}{\ell_n} \sum_{v \in [n]} d_v \, \I_{\{d_v \le t\}} .
	\end{equation}
If we choose uniformly one of the $\ell_n$ half-edges in the graph, and call $D^*_n$ the degree of the vertex incident to this half-edge, then $F^*_n(t) = \pr(D^*_n \le t)$.
We also define the truncated mean
	\begin{equation}\label{eq:nu}
	  \nu_n(t) = \E\big[ (D^*_n-1) \I_{\{D^*_n \le t\}} \big] =
  	\frac{1}{\ell_n}\sum_{v\in[n]}d_v(d_v-1)\I_{\{d_v\leq t\}}.
	\end{equation}
Now we are ready to bound \eqref{eq:keyinc}.

\begin{Proposition}[Path counting for configuration model]
\label{cmnd-lolog lower bound}
Fix $\sub{d} = (d_1, \ldots, d_n)$ (such that
$\ell_n = d_1 + \ldots + d_n$ is even) and an increasing sequence
$(g_l)_{l\in\N_0}$.
For all distinct vertices $a,b \in [n]$ with $d_a \le g_0$, $d_b \le g_0$, 
and for all $\bar k \in \N$,
	\begin{equation}\label{cmnd-distance-bound}
	\begin{split}
 	\pr\left(\dist_{\CMnd}(a,b)\leq 2\bar{k}\right) {\leq}
		& \frac{d_a d_b}{\ell_n}
		  \sum_{k=1}^{2\bar{k}}
		   \bigg( 1 - \frac{2k}{\ell_n} \bigg)^{-k}\,
		  \prod_{l=1}^{k-1}\nu_n(g_l\wedge g_{h-l})\\
  	& \displaystyle + (d_a+d_b) \sum_{k=1}^{\bar{k}}
	 \bigg( 1 - \frac{2k}{\ell_n} \bigg)^{-k}\,
  	(1-F^*_{n}(g_k)) \prod_{l=1}^{k-1}\nu_n(g_l).
	\end{split}
	\end{equation}
\end{Proposition}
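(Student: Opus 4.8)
The plan is to bound $\pr(\dist_{\CMnd}(a,b)\le 2\bar k)$ via the inclusion \eqref{eq:keyinc}, estimating separately the ``good path'' probabilities $\pr(\mathcal{E}_k(a,b))$ and the ``bad path'' probabilities $\pr(\mathcal{F}_k(a))$, $\pr(\mathcal{F}_k(b))$ by a first-moment (union) bound over the relevant families of self-avoiding paths. For a fixed candidate self-avoiding path $\pi=(\pi_0,\dots,\pi_k)$ with $\pi_0=a$, $\pi_k=b$, the probability that all $k$ edges $(\pi_{l-1},\pi_l)$ are actually present in $\CMnd$ is computed by sequentially revealing pairings, using the exchangeability from Remark~\ref{cmnd-conditionedlaw}: the first edge uses one of $d_a$ half-edges at $a$ matched to one of $d_{\pi_1}$ half-edges at $\pi_1$, contributing $d_a d_{\pi_1}/(\ell_n-1)$; the $l$-th edge then contributes at most $d_{\pi_{l-1}}(d_{\pi_l}-1)/(\ell_n-2l+1)$ for $2\le l\le k-1$ (one half-edge at $\pi_{l-1}$ is already used up arriving there), and the last edge $d_{\pi_{k-1}}(d_b-1)/(\ell_n-2k+1)\le d_{\pi_{k-1}} d_b/(\ell_n-2k+1)$. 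Uniformly in $l\le 2\bar k$ we bound $\ell_n-2l+1\ge \ell_n(1-2k/\ell_n)$, which extracts the factor $(1-2k/\ell_n)^{-k}$.

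Next I would sum over the admissible paths. Writing the product of degree factors and summing over the intermediate vertices $\pi_1,\dots,\pi_{k-1}$, the key identity is that $\sum_{v\in[n]} d_v(d_v-1)\I_{\{d_v\le t\}} = \ell_n\,\nu_n(t)$ by \eqref{eq:nu}, while $\sum_{v\in[n]} d_v\I_{\{d_v\le t\}} = \ell_n F^*_n(t)$ and $\sum_{v\in[n]} d_v\I_{\{d_v> t\}} = \ell_n(1-F^*_n(t))$ by \eqref{eq:sizebias}. Hence summing the good-path bound factorizes: the two endpoint contributions give $d_a d_b/\ell_n$ (one $\ell_n$ in the denominator survives the telescoping of the $\ell_n$'s produced at each intermediate sum), and each intermediate vertex $\pi_l$ (for $l=1,\dots,k-1$) contributes a factor $\nu_n(g_l\wedge g_{k-l})$, because for a good path we have the constraint $d_{\pi_l}\le g_l\wedge g_{k-l}$ and we sum $d_{\pi_l}(d_{\pi_l}-1)$ over all $v$ satisfying that constraint. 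This yields the first sum in \eqref{cmnd-distance-bound}. (One should note $k$ ranges from $1$, since for $k=0$ the path is just $a=b$, excluded as $a\ne b$.) For the bad paths $\mathcal{F}_k(a)$: the path $(\pi_0,\dots,\pi_k)$ has $\pi_0=a$, free endpoint $\pi_k$ with $d_{\pi_k}>g_k$ but $d_{\pi_i}\le g_i$ for $i\le k-1$; summing gives $d_a$ from the start vertex, $\prod_{l=1}^{k-1}\nu_n(g_l)$ from the intermediate vertices, and $(1-F^*_n(g_k))$ from summing $d_v$ over the terminal vertices with $d_v>g_k$ — matching the second sum in \eqref{cmnd-distance-bound}, after adding the symmetric contribution starting from $b$. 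The factor $(1-2k/\ell_n)^{-k}$ comes out exactly as before.

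The main obstacle — really the only delicate point — is making the sequential-conditioning argument for the edge-probabilities rigorous when the path may revisit denominators that have shrunk: one must be careful that, after revealing $l-1$ edges, the conditional probability of pairing a designated free half-edge of $\pi_{l-1}$ to a designated free half-edge of $\pi_l$ is at most $1/(\ell_n-2(l-1)-1)$, and that $\pi_l$ still has at least $d_{\pi_l}-1$ free half-edges because the only half-edge of $\pi_l$ possibly used so far is the one connecting to $\pi_{l-1}$ — this is where self-avoidance of $\pi$ is used. There is also the bookkeeping subtlety that the union bound over paths must be taken in the correct order (first choose the path, then the event that it is realized), and that the number of half-edge choices realizing a given vertex-path is absorbed into the degree factors $d_{\pi_{l-1}}d_{\pi_l}$ (choosing which half-edge at $\pi_{l-1}$ and which at $\pi_l$). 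Once these are handled, the rest is the elementary summation described above; I would close by remarking that the bound holds deterministically for any degree sequence, the probabilistic input (Condition~\ref{pol-dis-con}) entering only later when $(g_l)$ is chosen and \eqref{cmnd-distance-bound} is shown to be $O((\log n)^{-2})$.
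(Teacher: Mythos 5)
Your proposal is correct and takes essentially the same route as the paper's proof: the decomposition \eqref{eq:keyinc} into good and bad paths, a sequential union bound over half-edge pairings giving the per-path probability as in \eqref{eq:proba}, and then summation over the intermediate vertices via $\sum_v d_v(d_v-1)\I_{\{d_v\le t\}}=\ell_n\nu_n(t)$ and $\sum_v d_v\I_{\{d_v>t\}}=\ell_n(1-F_n^*(t))$, with the denominators absorbed into $(1-2k/\ell_n)^{-k}$ exactly as the paper does via $(\ell_n-2k-1)!!/(\ell_n-1)!!\le(\ell_n-2k)^{-k}$. One bookkeeping slip: in your per-edge bound the ``$-1$'' should be on the \emph{departing} vertex, i.e.\ $(d_{\pi_{l-1}}-1)d_{\pi_l}/(\ell_n-2l+1)$ rather than $d_{\pi_{l-1}}(d_{\pi_l}-1)/(\ell_n-2l+1)$, since when the $l$-th edge is revealed $\pi_{l-1}$ has one half-edge already used while $\pi_l$ (not yet visited, by self-avoidance) has all $d_{\pi_l}$ half-edges free; as written your per-edge expression is not a valid conditional bound when $d_{\pi_l}<d_{\pi_{l-1}}$, and it assigns $\pi_1$ a factor $d_{\pi_1}^2$ instead of the $d_{\pi_1}(d_{\pi_1}-1)$ you later use in the summation. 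With that transposition corrected, every intermediate vertex contributes exactly $d_{\pi_l}(d_{\pi_l}-1)$ and your argument coincides with the paper's.
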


\begin{proof}
Fix an arbitrary sequence of vertices $\pi = (\pi_i)_{0 \le i \le k} \in [n]^{k+1}$. The probability that vertex $\pi_0$ is connected to $\pi_1$ is at most
	\begin{equation*}
	\frac{d_{\pi_0} d_{\pi_1}}{\ell_n-1} \,,
	\end{equation*}
because there are $d_{\pi_0} d_{\pi_1}$ ordered couples of half-edges, each of which can be paired with probability $1/(\ell_n-1)$ (recall Remark~\ref{cmnd-conditionedlaw}), and we use the union bound. By similar arguments, conditionally on a specific half-edge incident to $\pi_0$ being paired to a specific half-edge incident to $\pi_1$, the probability that another half-edge incident to $\pi_1$ is paired to a half-edge incident to $\pi_2$ is by the union bound bounded from above by
	\begin{equation*}
	\frac{(d_{\pi_1}-1) d_{\pi_2}}{\ell_n-3} .
	\end{equation*}
Iterating the argument, the probability that $\pi$ is a path in $\CMnd$ is at most
	\begin{equation} \label{eq:proba}
	\frac{d_{\pi_0} d_{\pi_1}}{\ell_n-1}
	\frac{(d_{\pi_1}-1) d_{\pi_2}}{\ell_n-3}
	\frac{(d_{\pi_2}-1) d_{\pi_3}}{\ell_n-5} \cdots
	\frac{(d_{\pi_{k-1}}-1) d_{\pi_k}}{\ell_n-(2k-1)} .
	\end{equation}

Let us now fix $a,b \in [n]$ with $a \ne b$. Recalling \eqref{eq:gopa}-\eqref{eq:nu},
choosing $\pi_0 = a$, $\pi_k = b$ and summing \eqref{eq:proba} over all vertices $\pi_1, \ldots, \pi_{k-1}$ satisfying
$d_{\pi_i} \le g_i \wedge g_{k-i}$ yields
	\eqn{
	\label{cmnd-lowdistfix-3}
 	 \pr(\mathcal{E}_k(a,b))\leq 
  	d_a d_b 
   	\frac{(\ell_n-2k-1)!!}{(\ell_n-1)!!}
  	\left(\prod_{i=1}^{k-1}
  	\ell_n \, \nu_n(g_i\wedge g_{k-i})\right) .
	}
 Bounding $(\ell_n-2k-1)!!/(\ell_n-1)!! \le (\ell_n - 2k)^{-k}$ yields the
first term in the right hand side of \eqref{cmnd-distance-bound}.
The bound for $\pr(\mathcal{F}_k(a))$ is similar.
Recalling \eqref{eq:bapa}-\eqref{eq:sizebias}, choosing $\pi_0 = a$ and summing
\eqref{eq:proba} over vertices $\pi_1, \ldots, \pi_{k-1}, \pi_k$ such that
$d_{\pi_i} \le g_i$ for $i \le k-1$ while $d_{\pi_k} > g_k$ gives
	\eqn{
	\label{cmnd-lowdistfix-6}
  	\pr(\mathcal{F}_k(a))\leq 
   	d_a 
    	\frac{(\ell_n-2k-1)!!}{(\ell_n-1)!!}
    	\left(\prod_{i=1}^{k-1}
  	\ell_n \, \nu_n(g_i)\right)
 	\big\{ \ell_n \, \big(1 - F^*_n(g_k) \big) \big\} ,
	}
and the same holds for $\pr(\mathcal{F}_k(b))$. Plugging \eqref{cmnd-lowdistfix-3}
and \eqref{cmnd-lowdistfix-6} into \eqref{eq:keyinc} proves \eqref{cmnd-distance-bound}.
\end{proof}

In order to exploit \eqref{cmnd-distance-bound}, we need estimates
on $F_n^*$ and $\nu_n$, provided by the next lemma:

\begin{Lemma}[Tail and truncated mean bounds for $D_n^*$]
\label{lem-tailFn*-bd}
Assume that Condition \ref{pol-dis-con} holds. Fix $\eta>0$, then there exist two constants $C_1=C_1(\eta)$ and $C_2=C_2(\eta)$ such that, for every $x\geq 0$,
	\begin{align}
		1-F^*_n(x)\leq C_1x^{-(\tau-2-\eta)},&  & \nu_n(x)\leq C_2x^{(3 -\tau+\eta)}.
	\end{align}
\end{Lemma}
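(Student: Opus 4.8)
The plan is to reduce both bounds to the power-law tail estimate \eqref{pol-con-right} of Condition~\ref{pol-dis-con}, by rewriting the size-biased quantities $1-F^*_n$ and $\nu_n$ from \eqref{eq:sizebias}--\eqref{eq:nu} as explicit weighted sums of the empirical tail $1-F_{\sub{d},n}$. Fix $\eta>0$; we may and do assume $\eta<\tau-2$ (for larger $\eta$ the claimed bounds are weaker when $x\ge 1$, and for $x<1$ they are trivial, since then $1-F^*_n(x)=1$ and $\nu_n(x)=0$ because the $d_i$ are positive integers). Choose once and for all $\delta\in(0,\eta]$ with $\delta<\tau-2$, and write $c_2=c_2(\delta)$ for the corresponding constant in \eqref{pol-con-right}. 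Throughout I will use the elementary bound $\ell_n=\sum_{i\in[n]}d_i\ge n$, so that $n/\ell_n\le 1$.

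\emph{Bound on $1-F^*_n$.} Using $d_v=\sum_{k\ge 1}\I_{\{d_v\ge k\}}$ and $\#\{v:d_v\ge k\}=n\,(1-F_{\sub{d},n}(k-1))$, I would first establish, for every integer $T\ge 0$, the identity
\[
\ell_n\bigl(1-F^*_n(T)\bigr)=\sum_{v\in[n]}d_v\,\I_{\{d_v\ge T+1\}}
=(T+1)\,n\,\bigl(1-F_{\sub{d},n}(T)\bigr)+n\sum_{j\ge T+1}\bigl(1-F_{\sub{d},n}(j)\bigr),
\]
obtained by splitting the sum over $k$ at $k=T+1$. Inserting $1-F_{\sub{d},n}(x)\le c_2\,x^{-(\tau-1-\delta)}$ (valid for $x\ge 1$), the first term is $\le 2c_2\,n\,T^{-(\tau-2-\delta)}$ and the second is bounded by $c_2\,n\int_T^\infty s^{-(\tau-1-\delta)}\,ds=\tfrac{c_2}{\tau-2-\delta}\,n\,T^{-(\tau-2-\delta)}$, the integral converging precisely because $\delta<\tau-2$. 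Dividing by $\ell_n\ge n$ gives $1-F^*_n(T)\le C\,T^{-(\tau-2-\delta)}$ for all integers $T\ge 1$; I would then pass to real $x\ge 1$ using $F^*_n(x)=F^*_n(\lfloor x\rfloor)$ and $\lfloor x\rfloor\ge x/2$, and, since $\delta\le\eta$, replace the exponent $\tau-2-\delta$ by $\tau-2-\eta$ (these steps only change the constant). For $0\le x<1$ the bound is immediate after taking $C_1\ge 1$.

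\emph{Bound on $\nu_n$.} This is entirely parallel: using $d_v(d_v-1)=2\sum_{k=2}^{d_v}(k-1)$ and exchanging the order of summation, I would establish, for integer $T\ge 0$,
\[
\ell_n\,\nu_n(T)=\sum_{v\in[n]}d_v(d_v-1)\,\I_{\{d_v\le T\}}
\le 2n\sum_{k=2}^{T}(k-1)\,\bigl(1-F_{\sub{d},n}(k-1)\bigr),
\]
after which inserting \eqref{pol-con-right} leaves $\sum_{k=2}^{T}(k-1)^{2-\tau+\delta}=O(T^{3-\tau+\delta})$, the exponent satisfying $2-\tau+\delta>-1$ automatically because $\tau<3$. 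Dividing by $\ell_n\ge n$, extending to real arguments via $\nu_n(x)=\nu_n(\lfloor x\rfloor)$ and $\lfloor x\rfloor\le x$, and using $\delta\le\eta$ gives $\nu_n(x)\le C_2\,x^{3-\tau+\eta}$ for $x\ge 1$, while $\nu_n(x)=0$ for $x<2$ makes the bound trivial there.

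I do not expect a genuine obstacle: once the two combinatorial identities above are in place, everything is routine estimation. The points that require care are purely bookkeeping: the transition between integer and real arguments, the small-$x$ regime, and---most importantly---choosing the exponent in \eqref{pol-con-right} so that the series in the $1-F^*_n$ estimate is summable, which is exactly where $\delta<\tau-2$ (equivalently, the positivity of the decay exponent $\tau-2-\eta$) is used; in the $\nu_n$ estimate the analogous condition $3-\tau+\delta>0$ holds for free.
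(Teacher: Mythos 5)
Your proposal is correct and follows essentially the same route as the paper: both proofs rewrite $1-F^*_n$ and $\nu_n$ as weighted sums of the empirical tail $1-F_{\sub{d},n}$ (your combinatorial identities $d_v=\sum_k\I_{\{d_v\ge k\}}$ and $d_v(d_v-1)=2\sum_{k=2}^{d_v}(k-1)$ are the same summation-by-parts manipulations the paper phrases via $\E[D_n\I_{\{D_n>x\}}]=\sum_j\pr(D_n>j\vee x)$ and $\E[D_n^2\I_{\{D_n\le x\}}]=\sum_j(2j+1)\pr(D_n>j,D_n\le x)$), then insert the upper bound \eqref{pol-con-right} and use $\ell_n\ge n$. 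Your extra care about the integer-versus-real argument, the small-$x$ regime, and the reduction to $\eta<\tau-2$ for summability only tidies up points the paper treats implicitly.
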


\begin{proof}
For every $x\geq 0$ and $t\geq 0$ we can see that 
	\eqn{
	1-F^*_n(x) = \frac{1}{\ell_n}\sum_{v\in[n]}d_v\I_{\{d_v>x\}} 
	= \frac{n}{\ell_n}\Big[\frac{1}{n}\sum_{v\in[n]}d_v\I_{\{d_v>x\}}\Big] 
	= \frac{n}{\ell_n}\E\left[D_n\I_{\{D_n>x\}}\right],
	}
where we recall that $D_n$ is the degree of a uniformly chosen vertex. This means that
	\eqn{
	\label{Fn*-bd}
	\begin{split}
		\frac{n}{\ell_n}\E[D_n\I_{\{D_n>x\}}] = &
			\frac{n}{\ell_n}\sum_{j=0}^\infty \pr\left(D_n\I_{\{D_n>x\}}>j\right)
			= \frac{n}{\ell_n}\sum_{j=0}^\infty\pr\left(D_n>j,D_n>x\right)  \\
			& = \frac{n}{\ell_n}\sum_{j=0}^\infty\pr\left(D_n>j\vee x\right) 
			= \frac{n}{\ell_n}\sum_{j=0}^\infty \big( 1-F_{\sub{d},n}(j\vee x) \big)\\
			& = \frac{n}{\ell_n}\Big[x(1-F_{\sub{d},n}(x))+\sum_{j=x}^\infty
			\big( 1-F_{\sub{d},n}(j)\big) \Big]\\
			& \leq \frac{n}{\ell_n}C\Big[x^{-(\tau-2-\eta)}
			+\sum_{j=x}^\infty j^{-(\tau-1-\eta)}\Big]
			\leq C_1 x^{-(\tau-2-\eta)},
	\end{split}
	}
where we have used Condition \ref{pol-dis-con} in the second last step
(recall that $2 < \tau < 3$).

For $\nu_n$, we can instead write
	\eqn{
	\begin{split}
		\nu_n(x) & =  \frac{1}{\ell_n}\sum_{v\in[n]}d_v(d_v-1)\I_{\{d_v\leq x\}} 
				= \frac{n}{\ell_n}\Big[\frac{1}{n}\sum_{v\in[n]}d_v(d_v-1)\I_{\{d_v\leq x\}}\Big] \\
		& = \frac{n}{\ell_n}\E\left[D_n(D_n-1)\I_{\{D_n\leq x\}}\right] 
		\leq \frac{n}{\ell_n}\E\left[D_n^2\I_{\{D_n\leq x\}}\right],
	\end{split}
}
where $D_n$ is again the degree of a uniformly chosen vertex. The claim now follows from 
	\eqn{
	\begin{split}
		\frac{n}{\ell_n}\E\left[D_n^2\I_{\{D_n\leq x\}}\right]
		& = \frac{n}{\ell_n}\sum_{j=0}^\infty(2j +  1)\pr\left(D_n\I_{\{D_n\leq x\}}>j\right)  \\
			& = \frac{n}{\ell_n}\sum_{j=0}^\infty(2j + 1)\pr\left(D_n>j,D_n\leq x\right)\leq \frac{n}{\ell_n}\sum_{j=0}^{x-1} (2j + 1)\pr\left(D_n>j\right)\\
			& = \frac{n}{\ell_n}\sum_{j=0}^{x-1}(2j + 1)[1-F_{\sub{d},n}(j)]\leq
			 \frac{n}{\ell_n}\sum_{j=0}^{x-1}Cj^{-(\tau-2-\eta)}\leq \frac{n}{\ell_n}C_2
			 x^{3-\tau+\eta}.
	\end{split}
}
\vskip-1cm
\end{proof}
\bigskip
\bigskip

We are finally ready to complete the proof of Statement \ref{stat-distance}:

\begin{proof}[Proof of Statement \ref{stat-distance}]
As in \eqref{general-k_n}, we take
\begin{equation}\label{eq:kbarn}
	\bar{k}_n=(1-\varepsilon)\frac{\log\log n}{|\log(\tau-2)|} \,,
\end{equation}
and our goal is to show that, as $n\rightarrow \infty$,
\eqn{
	\max_{a,b \in [n]: \, d_a, d_b\leq \log n}
  \pr\left(\dist_{\CMnd}(a,b)\leq 2\bar{k}_n\right)\longrightarrow 0.
}

We stress that $\tau \in (2,3)$ and $\varepsilon > 0$ are fixed.
Then we choose $\eta>0$ so small that
\begin{equation} \label{eq:usefu}
		2\eta < \tau - 2 \qquad \text{and} \qquad 
	\frac{|\log(\tau-2-2\eta)|}{|\log|\log(\tau-2)|} \le 
	\frac{1-\varepsilon/2}{1-\varepsilon} .
\end{equation}
We use the inequality \eqref{cmnd-distance-bound} given by Proposition~\ref{cmnd-lolog lower bound},
with the following choice of $(g_k)_{k\in\N_0}$:
\begin{equation}\label{eq:gk}
	g_k := (g_0)^{p^k} \,, \qquad 
	\text{where} \qquad
	\begin{cases} g_0 := (\log n)^{\log \log n}; \\
	p := \frac{1}{\tau-2-2\eta}>1.
	\end{cases}
\end{equation}

Let us focus on the first term in the right hand side of \eqref{cmnd-distance-bound}, that is
\begin{equation}\label{eq:firstterm}
		\frac{d_a d_b}{\ell_n}
		  \sum_{k=1}^{2\bar{k}}
		  \bigg( 1 - \frac{2k}{\ell_n} \bigg)^{-k}\,
		  \prod_{l=1}^{k-1}\nu_n(g_l\wedge g_{h-l}) \,.
\end{equation}
Since $\ell_n =\mu n (1+o(1))$ by \eqref{eq:nel}, for $k \le 2 \bar{k}_n$
we have
\begin{equation}\label{eq:eas}
		\bigg( 1 - \frac{2k}{\ell_n} \bigg)^{-k} \le 
	\bigg( 1 - \frac{4\bar{k}_n}{\ell_n} \bigg)^{-{2\bar{k}_n}}
	= 1 + O\bigg(\frac{\bar k_n^2}{\ell_n}\bigg)
	= 1 + O\bigg(\frac{(\log \log n)^2}{n}\bigg) = 1 + o(1) \,.
\end{equation}
Then observe that, by Lemma~\ref{lem-tailFn*-bd} and \eqref{eq:gk},
for $k \le 2 \bar{k}_n$
\begin{equation}\label{eq:quat}
	\begin{split}
	\prod_{l=1}^{k-1}\nu_n(g_l\wedge g_{k-l}) & = \prod_{l=1}^{k/2}\nu_n(g_l)^2 
	\le C_2^{k/2} \prod_{l=1}^{k/2}(g_l)^{2(3-\tau+\eta)}
	= C_2^{k/2} (g_0)^{2(3-\tau+\eta) \sum_{l=1}^{k/2} p^l} \\
	& \le C_2^{\bar{k}_n} (g_0)^{2(3-\tau+\eta) C \, p^{\bar{k}_n}},
\end{split}
\end{equation}
with $C = \frac{p}{p-1}$.
Note that $C_2^{\bar k_n} = O((\log n)^c)$ for some $c \in (0,\infty)$,
{\blue see} \eqref{eq:kbarn}, while by \eqref{eq:usefu}
	\eqn{
	\label{pkn-bd}
	p^{\bar{k}_n} 
	= \mathrm{exp}\Big(|\log(\tau-2-2\eta)|(1-\varepsilon)
	\frac{\log\log n}{|\log(\tau-2)|)}\Big) 
	= (\log n)^{(1-\varepsilon)\frac{|\log(\tau-2-2\eta)|}{|\log(\tau-2)|}}\leq 
		(\log n)^{(1-\varepsilon/2)},
	}
hence the right hand side of \eqref{eq:quat}
is $n^{o(1)}$ (since $g_0 = (\log n)^{\log \log n}$). Then, for $d_a, d_b \le \log n$,
\begin{equation*}
	\eqref{eq:firstterm} \le 
		  \frac{(\log n)^2}{\ell_n} (2\bar k_n)
		  \, \big( 1+o(1) \big) \, n^{o(1)} =
		  O \bigg( \frac{(\log n)^2}{n} \, (\log \log n) \, n^{o(1)} \bigg) = o(1) \,.
\end{equation*}

It remains to look at the second sum in \eqref{cmnd-distance-bound}:
\begin{equation}\label{eq:2su}
		(d_a+d_b) \sum_{k=1}^{\bar{k}_n}
 	 \bigg( 1 - \frac{2k}{\ell_n} \bigg)^{-k}
  	(1-F^*_{n}(g_k)) \prod_{l=1}^{k-1}\nu_n(g_l).
\end{equation}
By Lemma  \ref{lem-tailFn*-bd} ,we can bound $ 1-F^*_{n}(g_k)\leq C_1 (g_k)^{-(\tau-2-\eta)}$.
By \eqref{eq:eas} and $C_1^{\bar k_n} = O((\log n)^c)$ for some $c \in (0,\infty)$,
{\blue see} \eqref{eq:kbarn},
bounding the product in \eqref{eq:2su} like we did in \eqref{eq:quat} yields
\begin{equation}\label{eq:sesu}
		O\big((\log n)^c\big) \,(d_a+d_b) \, 
	\sum_{k=1}^{\bar{k}_n}(g_k)^{-(\tau-2-\eta)}(g_0)^{(3-\tau+\eta) C p^{k-1}},
\end{equation}
where $p=1/(\tau-2-2\eta)$ and $C= \frac{p}{p-1}$. By \eqref{eq:gk}
	\eqn{
	(g_k)^{-(\tau-2-\eta)}(g_0)^{-\frac{p}{p-1}(3-\tau+\eta)p^{k-1}} 
	= (g_{k-1})^{-p(\tau-2-\eta)}(g_{k-1})^{\frac{p}{p-1}(3-\tau+\eta)},
	}
where
	\eqn{
	p(\tau-2-\eta) = \frac{\tau-2-\eta}{\tau-2-2\eta}>1,
	\qquad
	\mbox{and}
	\qquad
	\frac{p}{p-1}(3-\tau+\eta) = \frac{3-\tau+\eta}{3-\tau+2\eta} <1.
	}
This means that, setting $D := p(\tau-2-\eta) - \frac{p}{p-1}(3-\tau+\eta) > 0$,
by \eqref{eq:gk},
\begin{equation}\label{eq:sesu2}
		\eqref{eq:sesu} = O\big((\log n)^c\big) \, (d_a+d_b) \, 
	\sum_{k=1}^{\bar{k}_n} (g_0)^{-Dp^{k-1}} \le
	O\big((\log n)^c\big) \, \, \frac{d_a+d_b}{(g_0)^D} \,.
\end{equation}
Since $g_0 = (\log n)^{\log\log n}$ while $d_a, d_b \le \log n$,
the right hand side of \eqref{eq:sesu2} is $o(1)$.
\end{proof}

\section{Lower bound for preferential attachment model}
\label{lowerproofs-prefatt}

In this section we prove Statements~\ref{stat-MKC}, \ref{stat-distance-boundaries}
and~\ref{stat-distance} for the preferential attachment model. By the discussion in
Section~\ref{sub-lower}, this completes the proof of the lower
bound in Theorem~\ref{main-prefatt}.

We recall that, given $m\in\N$ and $\delta \in (- m, \infty)$,
the preferential attachment model $\PA{t}$ is a random graph
with vertex set $[t] = \{1,2,\ldots, t\}$, 
where each vertex $w$ has $m$ outgoing edges, 
which are attached to vertices $v \in [w]$ with probabilities
given in \eqref{prefatt-attprob}. 
In the next subsection we give a more detailed construction 
using random variables. This equivalent reformulation will be used in a few places, when we need
to describe carefully some complicated events. However, for most of the exposition
we will stick to the intuitive description given in Section~\ref{subsect-prefatt-intro}.

\subsection{Alternative construction of the preferential attachment model}
\label{sec:altprefatt}

We introduce 
random variables $\xi_{w,j}$ 
to represent the vertex to which the $j$-th edge of vertex $w$ is attached, i.e.
\begin{equation} \label{eq:nota}
	\xi_{w,j} = v \qquad \iff \qquad w \overset{j}{\to} v \,.
\end{equation}
The graph $\PA{t}$
is a \emph{deterministic} function of these random variables: two vertices $v,w\in [t]$ with
$v \le w$ are connected in $\PA{t}$ if and only if $\xi_{w,j} = v$ for some $j \in [m]$.
In particular, the degree of a vertex $v$ after the $k$-th edge of vertex $t$ has been attached,
denoted by $D_{t,k}(v)$, is
\begin{equation}\label{eq:degree}
	D_{t,k}(v) := \sum_{(s,i) \le (t,k)} \big( \indic{\xi_{s,i} = v} +
	\indic{s = v} \big) \,,
\end{equation}
where we use the natural order relation
\begin{equation*}
	(s,i) \le (t,j) \qquad \iff \qquad s < t \quad \ \text{or} \quad \ s=t, \
	i \le j \,.
\end{equation*}

Defining the preferential attachment model amounts to giving a joint law for the
sequence $\xi = (\xi_{w,j})_{(w,j) \in \N \times [m]}$.
In agreement with \eqref{prefatt-attprob}, we set
$\xi_{1,j} = 1$ for all
$j \in [m]$, and for $t \ge 2$ 
	\begin{equation}
	\label{prefatt-attprob2}
		\pr\left( \left. \xi_{t,j} = v \, \right| \xi_{ \le (t, j-1)} \right) = 
		\begin{cases}
			\grosso \frac{D_{t,j-1}(v) + 1 +j\delta/m}
			{c_{t,j}} & \text{ if }v=t; \\
			\rule{0pt}{2em}\grosso \frac{ D_{t,j-1}(v)+\delta}
			{c_{t,j}} & \text{ if }v < t,
		\end{cases}
\end{equation}
where $\xi_{ \le (t, i-1)}$
is a shorthand for the vector $(\xi_{s,i})_{(s,i) \le (t,i-1)}$
(and we agree that $(t,0) := (t-1, m)$).
The normalizing constant $c_{t,j}$ in 
\eqref{prefatt-attprob2} is indeed given by \eqref{eq:normali}, because
by \eqref{eq:degree},
\begin{equation*}
	\sum_{v \in [t]} D_{t,j-1}(v)
	= \sum_{(s,i) \le (t,j-1)} (1+1) = 2 ((t-1)m + (j-1)) \,.
\end{equation*}

The factor $j\delta/m$ in the first line of \eqref{prefatt-attprob2}
is commonly used in the literature (instead of the possibly more natural $\delta$).
The reason is that, with such a definition, the graph $\PAmd{t}$ can be obtained from
the special case $m=1$, where every vertex has only one outgoing edge: one first
generates the random graph $\PAONE{mt}$, whose vertex set is $[mt]$,
and then collapses the block of vertices $[m(i-1)+1,m i)$ into a single vertex $i \in [t]$
(see also \cite[Chapter 8]{vdH1}).

\begin{remark}\rm\label{rem:order}
It is clear from the construction that $\PA{t}$
is a \emph{labeled directed graph}, because any
edge connecting sites $v, w$, say with $v \le w$,
carries a label $j \in [m]$ and a direction, from the newer vertex
$w$ to the older one $v$ ({\blue see} \eqref{eq:nota}). 
Even though our final result, the asymptotic behavior
of the diameter, only depends on the {\blue underlying} undirected graph,
it will be convenient to exploit the labeled directed structure of the graph in the proofs.
\end{remark}

\subsection{Proof of Statement~\ref{stat-MKC}}
\label{lower-proof-prefatt-1}

We denote by $U_{\leq k}(v)$ the $k$-neighborhood in $\PA{t}$
of a vertex $v\in[t]$, i.e.\ the set of vertices at distance at most $k$ from $v$,
viewed as a labeled directed subgraph (see Remark~\ref{rem:order}).
We denote by $D_t(v) = D_{t,m}(v)$ the degree of vertex $v$ after time $t$, i.e. in the graph
$\PA{t}$ (recall \eqref{eq:degree}).

We define the notion of \emph{minimally-$k$-connected vertex}
in analogy with the configuration model ({\blue see} Definition~\ref{def:minicm}),
up to minor technical restrictions made for later convenience.

\begin{Definition}[Minimally-$k$-connected vertex]
\label{prefatt-MKCdef}
For $k\in\N_0$, a vertex $v\in[ t] \setminus [ t/2]$ is called {\em minimally-$k$-connected} 
when $D_{ t}(v)=m$, 
all the other vertices $i\in U_{\leq k}(v)$ are in $[t/2]\setminus[t/4]$ 
and have degree $D_{t}(i)=m+1$, and 
there are no self-loops, multiple edges or cycles in $U_{\leq k}(v)$.
The graph $U_{\leq k}(v)$ 
is thus a tree with degree $m+1$, except for the root $v$ which has degree $m$.

We denote the (random) set of minimally-$k$-connected vertices by 
$\mathcal{M}_k \subseteq [t] \setminus [t/2]$,
and its cardinality by $M_k = |\mathcal{M}_k|$.
\end{Definition}

For the construction of a minimally-$k$-connected neighborhood in the preferential attachment model 
we remind that the vertices are added to the graph at different times, so that the vertex 
degrees change while the graph grows. The relevant degree for Definition~\ref{prefatt-MKCdef}
is the one at the final time $t$.
To build a minimally-$k$-connected neighborhood, we need
	\eqn{
	\label{prefatt-i_k}
  	i_k = 1+\sum_{i=1}^km^{i} = \frac{m^{k+1}-1}{m-1}
	}
many vertices. The center $v$ of the neighborhood is the youngest vertex in $U_{\leq k}(v)$, and it has degree $m$, while all the other vertices have degree $m+1$. 

\smallskip

Our first goal is to evaluate the probability $\pr(v \in \cM_k)$
that a given vertex $v \in [t] \setminus [t/2]$
is minimally-$k$-connected. 
The analogous question for the configuration model could be answered
quite easily in Proposition~\ref{cmnd-mk-moments}, 
because the configuration model can be built exploring its vertices
in an arbitrary order, in particular starting from $v$, {\blue see} 
Remark~\ref{cmnd-conditionedlaw}.
This is no longer true for the preferential attachment model,
whose vertices have an order, the chronological one,
along which the conditional probabilities take the explicit 
form~\eqref{prefatt-attprob} or \eqref{prefatt-attprob2}.
This is why the proofs for the preferential attachment model are
harder than for the configuration model.

\smallskip

As it will be clear in a moment, to get explicit formulas
it is convenient to evaluate the probability $\pr(v \in \cM_k, \, U_{\le k}(v) = H)$,
where $H$ is a fixed \emph{labeled directed} subgraph, i.e.\ it comes with the specification of which
edges are attached to which vertices. To avoid trivialities, we restrict to those $H$ for which
the probability does not vanish, i.e.\ which 
satisfy the constraints in Definition~\ref{prefatt-MKCdef},
and we call them \emph{admissible}.

Let us denote by $H^o := H \setminus \partial H$ the set of vertices in $H$ that are not on the boundary
(i.e.\ they are at distance at most $k-1$ from $v$). 
With this notation, we have the following result:

\begin{Lemma}
\label{prefatt-MKC-determnistic}
Let $\{\PA{ t}\}_{t\in\N}$ be a preferential attachment model.
For any vertex $v \in [t] \setminus [t/2]$
and any directed labeled graph $H$ which is admissible,
 	\eqn{
	\label{prob-fixed-neigh}
   	\pr\left(v\in\mathcal{M}_k,U_{\leq k}(v)=H\right) = L_1(H) \, L_2(H) \,,
 	}
where
 	\begin{eqnarray}
	\label{MKC-L_1}
   	L_1(H) 
	&:=& \prod_{u\in H^o}\prod_{j=1}^m\frac{m+\delta}{ c_{u,j}},\\
	\label{MKC-L_2}
 	L_2(H) &:=& \prod_{u\not\in H^o} \
    	\prod_{j=1}^m\left[1-\frac{ D_{u-1}(H)+|H \cap [u-1]|\delta}{ c_{u,j}}	\right],
	\end{eqnarray}
and $D_{u-1}(H) = \sum_{w \in H} D_{u-1,m}(w)$ is the total degree of $H$ before vertex
$u$ is added to the graph, 
and the normalization constant $c_{u,j}$ is defined in \eqref{eq:normali}.
\end{Lemma}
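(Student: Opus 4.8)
The plan is to compute the probability $\pr(v\in\mathcal{M}_k,\,U_{\le k}(v)=H)$ by expressing the event $\{U_{\le k}(v)=H\}$ in terms of the random variables $\xi_{w,j}$ introduced in Section~\ref{sec:altprefatt}, and then evaluating the resulting conditional probabilities one edge-attachment at a time in chronological order. Since $H$ is admissible, it is a labeled directed tree on $i_k$ vertices all living in $[t]\setminus[t/4]$, so the event $\{U_{\le k}(v)=H\}$ is the intersection of two kinds of constraints: (i) for each interior vertex $u\in H^o$, all $m$ of its outgoing edges $\xi_{u,1},\ldots,\xi_{u,m}$ land on the prescribed neighbours dictated by $H$; and (ii) for \emph{every} vertex $u$ in $H$ that is not an interior vertex — i.e.\ $u\not\in H^o$ — together with (crucially) the requirement that no \emph{other} vertex of the graph attaches into $H$ in a way that would increase a degree beyond $m+1$ or create a cycle. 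One has to be a little careful here: the clean factorization into $L_1(H)L_2(H)$ works because, for the boundary vertices and for vertices outside $H$, the only constraint that survives is that their outgoing edges do \emph{not} attach to $H$ (the degrees inside $H$ are already saturated by the interior-edge constraints), whereas interior vertices have all their outgoing edges pinned.

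Concretely, I would enumerate all pairs $(u,j)$ with $u\le t$, $j\in[m]$ in the order $\le$ defined after \eqref{eq:degree}, and write
\[
\pr\big(v\in\mathcal{M}_k,\,U_{\le k}(v)=H\big)=\prod_{(u,j)}\pr\big(A_{u,j}\,\big|\,\xi_{\le(u,j-1)}\big),
\]
where $A_{u,j}$ is the relevant constraint on $\xi_{u,j}$. For $u\in H^o$, $A_{u,j}$ is the event $\{\xi_{u,j}=\text{the $H$-prescribed target}\}$; since that target is an older vertex already present, and since admissibility forces that target to have $H$-degree exactly $m$ at that point (it is a child of $u$ in the tree whose only other incident edges come later), formula \eqref{prefatt-attprob2} gives exactly $(m+\delta)/c_{u,j}$ — this produces $L_1(H)$. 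Here one uses that at the moment edge $(u,j)$ is attached, the target vertex $w<u$ has degree precisely $m$: its $m$ own outgoing edges, and no incoming edge yet, because in a tree rooted at the youngest vertex $v$ the unique edge from $w$ towards the root is the one being attached now (all edges point from younger to older). For $(u,j)$ with $u\in H\setminus H^o$ (boundary vertices) or $u\notin H$ altogether, the constraint is that $\xi_{u,j}$ avoids landing on $H$ in a forbidden way; summing the attachment probabilities \eqref{prefatt-attprob2} over the vertices of $H$ that are "live" at that time and subtracting from $1$ gives the bracketed factor $1-\frac{D_{u-1}(H)+|H\cap[u-1]|\delta}{c_{u,j}}$, which assembles into $L_2(H)$. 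The key bookkeeping identity is that the total attachment weight pointing into $H$ at time $(u,j)$ equals $D_{u-1}(H)+|H\cap[u-1]|\,\delta$, since each of the $|H\cap[u-1]|$ vertices of $H$ already present contributes its degree $D_{u-1,m}(\cdot)$ plus the constant $\delta$.

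The main obstacle — and the point demanding the most care — is justifying that for the boundary and exterior vertices the \emph{only} surviving constraint is "do not attach to $H$", with no further conditioning subtleties. One has to argue that (a) all interior edges of $H$ have already been placed by the time we examine any boundary vertex's outgoing edges (true because a boundary vertex is, by admissibility, younger than its unique $H$-neighbour? — no: one must instead note that the constraint is naturally expressed forward and the product telescopes regardless of order, using the chronological filtration), and (b) the degrees $D_{u-1}(H)$ appearing in $L_2(H)$ are indeed \emph{determined} by $H$ and $u$, not random — this holds because on the event $\{U_{\le k}(v)=H\}$ the subgraph induced on $H$ is exactly $H$, so the degree of each $w\in H$ at any time is a deterministic function of $H$ (count $H$-edges incident to $w$ attached before that time, plus $m$ if $w$ itself has already been born). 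Handling the boundary vertices requires distinguishing whether the boundary vertex $u$ itself has been born before or after the current time, which is what the factor $|H\cap[u-1]|$ tracks. Once these points are pinned down, multiplying the per-step conditional probabilities and grouping the interior contributions into $L_1(H)$ and the "avoidance" contributions into $L_2(H)$ completes the proof. I would also remark that the product defining $L_2(H)$ ranges over all $u\notin H^o$ with $u\le t$ (not just $u\in H$), absorbing the exterior vertices, and that for $u\notin H$ the term $|H\cap[u-1]|$ and $D_{u-1}(H)$ are still well-defined; this is the version that makes \eqref{MKC-L_2} literally correct.
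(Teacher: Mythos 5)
Your proposal is correct and follows essentially the same route as the paper: decompose the event over all pairs $(u,j)$ in chronological order, pin the $m$ outgoing edges of each $u\in H^o$ to their prescribed targets (each of degree exactly $m$ at the moment of attachment, yielding the factors $(m+\delta)/c_{u,j}$ of $L_1(H)$), require every other edge to avoid $H$ (yielding the factors of $L_2(H)$), and observe that all conditional probabilities are deterministic functions of $H$, so the chain rule makes the product telescope — exactly the paper's factorizable-event argument. Your two self-flagged hesitations (the age relation between a boundary vertex and its unique $H$-neighbour, and whether interior edges are placed before boundary ones) are resolved just as you indicate: one simply conditions along the chronological filtration, so no exploration order is needed.
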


\begin{proof} 
We recall that $\{a\stackrel{i}{\rightarrow} b\}$ denotes the event that the $i$-th edge 
of $a$ is attached to $b$ ({\blue see} \eqref{eq:nota}).
Since $H$ is an admissible labeled directed subgraph, for all $u \in H^o$ and $j \in [m]$,
the $j$-th edge of $u$ is connected to a vertex in $H$, that we denote by $\theta_j^H(u)$.
We can then write
	\eqn{
	\label{prefatt-mkmom-1}
  	\{v\in\mathcal{M}_k,U_{\leq k}(v)=H\} 
	= \Big(\bigcap_{u\in H^o}\bigcap_{j=1}^m\{u\stackrel{j}{\rightarrow}\theta^H_j(u)\}\Big)\cap
	\Big(\bigcap_{u\not\in H^o} \bigcap_{j = 1}^m \{u\stackrel{j}{\not\rightarrow} H\}\Big),
	}
where of course $\{u\stackrel{j}{\not\rightarrow} H\} 
:= \bigcup_{w \not\in H} 
\{ u \overset{j}{\rightarrow} w\}$.
The first term in \eqref{prefatt-mkmom-1} is exactly the event that the edges present in $H$ 
are connected in $\PA{t}$ as they should be. The second term is the event that the vertices $u\not\in H^o$ are not attached to $H$, so that $U_{\leq k}(v)=H$. Notice that in \eqref{prefatt-mkmom-1} every vertex and every edge of the graph appears.
For a vertex $u\in H^o$, by \eqref{prefatt-attprob}
	\eqn{
	\label{prefatt-mkmom-2}
	\pr\Big(u\stackrel{j}{\rightarrow}\theta^H_j(u)\mid \PA{ u,j-1}\Big) = 
		\frac{m+\delta}{c_{u,j}},
	}
because the vertex $\theta^H_j(u)$ has degree precisely $m$ (when $u$ is not 
already present in the graph). For $u\not\in H^o$, we have to evaluate the probability that 
its edges do no attach to $H$, which is 
	\eqn{
	\label{prefatt-mkmom-3}
	\pr\Big(u\stackrel{j}{\not\rightarrow}H\mid \PA{u-1,j-1}\Big)=
		1-\frac{D_{u-1}(H)+|H \cap [u-1]|\delta}{c_{u,j}}.
	}
Using conditional expectation iteratively, we obtain \eqref{prefatt-mkmom-2} or \eqref{prefatt-mkmom-3} for every edge in the graph, depending on whether the edge is part of $H$ or not. This proves \eqref{MKC-L_1} and \eqref{MKC-L_2}.
\end{proof}

The event $\{v\in\mathcal{M}_k,U_{\leq k}(v)=H\}$ is an example of a class of events,
called \emph{factorizable}, that will be used throughout this section and Section \ref{upperproofs-prefatt}. 
For this reason we define it precisely.

It is convenient to use the random variable $\xi_{w,j}$, introduced 
in Section~\ref{sec:altprefatt}, to denote
the vertex to which the $j$-th edge of vertex $w$ is attached ({\blue see} \eqref{eq:nota}).
Any event $A$ for $\PA{t}$ can be characterized iteratively, specifying
a set $A_{s,i} \subseteq [s]$ of values for $\xi_{s,i}$,  for all $(s,i) \le (t,m)$:
\begin{equation*}
	A = \bigcap_{(s,i) \le (t,m)} \big\{ \xi_{s,i} \in A_{s,i} \big\} \,.
\end{equation*}
Of course, the set $A_{s,i}$ is allowed to
depend on the ``past'', i.e.\ $A_{s,i} = A_{s,i}\big( \xi_{\le (s,i-1)} \big)$,
or equivalently $A_{s,i} = A_{s,i}\big(\PA{s,i-1}\big)$. 
Let us set $A_{\le (s,i)} := \bigcap_{(u,j) \le (s,i)} A_{u,j}$.

\begin{Definition}[Factorizable events]\label{def:factorizable}
An event $A$ for $\PA{t}$ is called \emph{factorizable} when
the conditional probabilities
of the events $\{\xi_{s,i} \in A_{s,i}\}$, given the past, are deterministic.
More precisely, for any $(s,i)$ there is a (non-random) $p_{s,i} \in [0,1]$ such that
\begin{equation} \label{eq:nonra}
	\pr\left( \left. \xi_{s,i} \in A_{s,i} \,\right| \xi_{\le(s,i-1)} \right) 
	= p_{s,i} 
\end{equation}
on the event $\xi_{\le(s,i-1)} \in A_{\le (s,i-1)}$.
As a consequence, the chain rule for probabilities yields
\begin{equation*}
	\pr(A) = \prod_{(s,i) \le (t,m)} p_{s,i} \,.
\end{equation*}
\end{Definition}

\begin{remark}\rm\label{rem:factorizable}
Relations
\eqref{prefatt-mkmom-2} and \eqref{prefatt-mkmom-3} show that
$A = \{v\in\mathcal{M}_k,U_{\leq k}(v)=H\}$ is a factorizable event. In fact,
$A_{s,i}$ is either the single vertex $\theta^H_i(s)$ (if $s \in H^o$) or the
set $[s-1] \setminus H$ (if $s \not\in H^o$). In both cases, the set $A_{s,i} \subseteq [s-1]$
has \emph{a fixed total degree and a fixed cardinality}, hence the conditional probabilities
\eqref{eq:nonra} are specified in a deterministic way (recall \eqref{prefatt-attprob2}).

Note that the event $\{v \in \mathcal{M}_k\}$ is not factorizable. This is the reason for
specifying the realization of the $k$-neighborhood $U_{\leq k}(v)=H$.
\end{remark}

\medskip

Henceforth we fix $\varepsilon > 0$. We recall that
$k^-_n$ was defined in \eqref{kstar-tree}. Using the more customary $t$ instead of $n$,
we have
	\eqn{
	\label{kstar-MKC-prefatt}
	k^{-}_t = (1-\varepsilon)\frac{\log\log t}{\log m}.
	}
We recall that $M_{k^{-}_t}=|\mathcal{M}_{k^{-}_t}|$ denotes
the number of minimally-$k^{-}_t$-connected vertices in $\PA{t}$
({\blue see} Definition~\ref{prefatt-MKCdef}).
We can now prove half of Statement~\ref{stat-MKC}  for the preferential attachment model,
more precisely the first relation in
equation \eqref{MKC-moments}.

\begin{Proposition}[First moment of $M_{k^{-}_t}$]
\label{prefatt-prop-firstmoment}
Let $(\PA{t})_{t\geq 1}$ be a preferential attachment model, with $m\geq 2$ and $\delta\in(-m,0)$. Then, for $k^{-}_t$ as in \eqref{kstar-MKC-prefatt}, as $t\rightarrow\infty$,
	\eqn{
	\label{firstmoment-prefatt}
  	\E[M_{k^{-}_t}]\longrightarrow\infty.
	}
\end{Proposition}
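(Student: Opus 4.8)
The plan is to compute $\E[M_{k^-_t}]$ by summing $\pr(v \in \cM_{k^-_t})$ over all eligible vertices $v \in [t] \setminus [t/2]$, and to bound each such probability from below using Lemma~\ref{prefatt-MKC-determnistic}. First I would write
\[
\E[M_{k^-_t}] = \sum_{v \in [t] \setminus [t/2]} \pr(v \in \cM_{k^-_t})
= \sum_{v \in [t] \setminus [t/2]} \sum_{H \text{ admissible}} \pr\big(v \in \cM_{k^-_t}, \, U_{\le k^-_t}(v) = H\big),
\]
and then, rather than summing over all admissible $H$, restrict to a convenient sub-collection of $H$ whose vertices sit in a prescribed ``time window'' — say all the non-root vertices of $H$ born in $[t/4, t/2]$ and, more precisely, in a thin slab where we can control all the normalizing constants $c_{u,j}$ up to a multiplicative $(1+o(1))$. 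On such $H$, the factor $L_1(H)$ from \eqref{MKC-L_1} is a product of $m \, i_{k^-_t}$ terms each of size $\asymp (m+\delta)/((2+\delta/m) u) \asymp c/t$ (since $u \asymp t$), and the factor $L_2(H)$ from \eqref{MKC-L_2} is a product over $u \notin H^o$ of quantities $1 - O(D_{u-1}(H)/c_{u,j})$; since $|H|$ is only polylogarithmic and $D_{u-1}(H) \le (m+1)|H|$ while $c_{u,j} \asymp u$, and $u$ ranges up to $t$, the total ``mass removed'' is $\sum_{u} O(|H|/u) = O(|H| \log t)$, so $L_2(H) \ge \exp(-O(|H|\log t)) = \exp(-(\log t)^{1-\varepsilon + o(1)})$, which is $t^{-o(1)}$.

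The combinatorial input is that the number of admissible labeled directed graphs $H$ of the required shape (a tree with root degree $m$, all other degrees $m+1$, i.e.\ $i_{k^-_t}$ vertices, all non-root vertices born in the designated window of length $\asymp t$) is at least of order $\big(\text{(window size)} \cdot (\text{labeling choices})\big)^{i_{k^-_t}-1} \asymp (c\,t)^{i_{k^-_t}-1}$ for a suitable constant $c>0$ — essentially each new vertex of the tree can be any of $\asymp t$ vertices in the window, attached via one of boundedly many admissible edge-labels to its parent. Multiplying, for a single eligible root $v$,
\[
\pr(v \in \cM_{k^-_t}) \ge (c\,t)^{i_{k^-_t}-1} \cdot \Big(\frac{c'}{t}\Big)^{m\,i_{k^-_t}} \cdot t^{-o(1)} \ge (c'')^{i_{k^-_t}} \, t^{-1-o(1)},
\]
where $i_{k^-_t} = \frac{m^{k^-_t+1}-1}{m-1} = \Theta\big((\log t)^{1-\varepsilon}\big)$ by \eqref{prefatt-i_k} and \eqref{kstar-MKC-prefatt}, so $(c'')^{i_{k^-_t}} = \exp\big(O((\log t)^{1-\varepsilon})\big) = t^{o(1)}$. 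Hence $\pr(v \in \cM_{k^-_t}) \ge t^{-1-o(1)}$, and summing over the $\asymp t/2$ eligible roots $v$ gives $\E[M_{k^-_t}] \ge t^{-o(1)} \to \infty$. (One must keep track of the bookkeeping so that the various $o(1)$'s in the exponent of $t$ genuinely tend to $0$; the condition $m \ge 2$ is what makes $i_{k^-_t}$ polylogarithmic rather than polynomial, and $\delta \in (-m,0)$ guarantees $m + \delta > 0$ so the attachment probabilities in $L_1$ are bounded below.)

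The main obstacle I anticipate is \emph{not} the order-of-magnitude estimate above, which is routine, but rather the careful handling of the factor $L_2(H)$ — one must check that requiring the $i_{k^-_t}$ vertices of $H$ to be at specified times and that \emph{all later vertices avoid $H$} does not cost more than a $t^{-o(1)}$ factor. This needs the observation that the degree $D_{u-1}(H)$ grows only polylogarithmically (it never exceeds $(m+1)|H| = O((\log t)^{1-\varepsilon})$), together with the harmonic-sum bound $\sum_{u=1}^{t} 1/u = O(\log t)$, so that $-\log L_2(H) = O((\log t)^{1-\varepsilon} \cdot \log t) = O((\log t)^{2-\varepsilon})$, which is still $o(\log t)$ and hence gives only a $t^{o(1)}$ correction. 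A secondary subtlety is ensuring the chosen time-window $[t/4,t/2]$ for the internal and boundary vertices of $H$ is consistent with Definition~\ref{prefatt-MKCdef} (root in $[t]\setminus[t/2]$, other vertices in $[t/2]\setminus[t/4]$), and that within this window the normalizations $c_{u,j}$, being $\asymp u \asymp t$ uniformly, contribute the clean factor $(c'/t)^{m\,i_{k^-_t}}$ claimed above; this is a matter of bounding $c_{u,j}$ between two constant multiples of $t$, which is immediate from \eqref{eq:normali}.
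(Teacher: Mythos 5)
Your strategy is the same as the paper's proof of Proposition~\ref{prefatt-prop-firstmoment} (sum over roots $v$ and admissible realizations $H$, factorize via Lemma~\ref{prefatt-MKC-determnistic}, bound $L_1$ and $L_2$ separately, count the admissible $H$), but the quantitative bookkeeping fails at the two places that carry the proof, the second being precisely the step you flag as the crux. First, the exponent in your $L_1$ bound is wrong: in \eqref{MKC-L_1} the product runs only over $u\in H^o$, i.e.\ over the $i_{k-1}$ \emph{internal} vertices, so $L_1$ consists of $m\,i_{k-1}=i_k-1$ factors of order $1/t$ (note $m\,i_{k-1}=i_k-1$ exactly, by \eqref{prefatt-i_k}), not $m\,i_k$ of them: the out-edges of boundary vertices are not pinned to specific vertices, they only have to avoid $H$, which is accounted for in $L_2$. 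With your exponent the powers of $t$ do not balance, $(ct)^{i_k-1}(c'/t)^{m i_k}=c^{O(i_k)}t^{-(m-1)i_k-1}$, which is super-polynomially smaller than the $t^{-1-o(1)}$ you claim since $(m-1)i_{k^-_t}\to\infty$; and even taking your displayed conclusion at face value, ``$\E[M_{k^-_t}]\ge t^{-o(1)}\to\infty$'' is not a valid inference, as $t^{-o(1)}$ need not diverge. With the corrected exponent $i_k-1$ one gets $\pr(v\in\cM_{k^-_t})\ge t^{-o(1)}$ per root and hence $\E[M_{k^-_t}]\ge t^{1-o(1)}$, which is what the paper obtains.

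Second, your bound on $L_2$ is both miscomputed and insufficient. Summing $O(|H|/u)$ over all $u\le t$ gives $-\log L_2(H)=O(|H|\log t)=O((\log t)^{2-\varepsilon})$, and your assertion that this is ``still $o(\log t)$'' is false for $\varepsilon\in(0,1)$ (your intermediate rewriting of $\exp(-O(|H|\log t))$ as $\exp(-(\log t)^{1-\varepsilon+o(1)})$ is also off by a factor $\log t$ in the exponent). That estimate only yields $L_2\ge\exp(-C(\log t)^{2-\varepsilon})$, which is not $t^{-o(1)}$ and would destroy the conclusion. The missing observation, which is exactly how the paper argues, is that every vertex of $H$ lies in $[t]\setminus[t/4]$, so all factors in \eqref{MKC-L_2} with $u\le t/4$ equal $1$ (there $D_{u-1}(H)=0$ and $H\cap[u-1]=\varnothing$); the product thus effectively runs over $u\in(t/4,t]$, where $c_{u,j}=\Omega(t)$ uniformly, giving $-\log L_2=O\big(mt\cdot i_k/t\big)=O(i_k)=O((\log t)^{1-\varepsilon})=o(\log t)$, i.e.\ $L_2\ge t^{-o(1)}$, with no harmonic sum and no extra $\log t$. (Your count of admissible $H$ of order $(ct)^{i_k-1}$ also glosses over the age-monotonicity constraint along root-to-leaf paths, but this is harmless: the paper's cruder count $\binom{t/4}{i_k-1}=t^{\,i_k-1-o(1)}$ suffices because $(i_k-1)!$ and $4^{i_k}$ are $t^{o(1)}$.) With these corrections your argument becomes the paper's proof.
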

\begin{proof} Similarly to the proof of \eqref{cmnd-mk-firstmoment}, we write
	\eqn{
	\label{prefatt-mkmom-4}
	\E[M_k] =  \sum_{v\in[t]\setminus[t(2]}\pr\left(v\in\mathcal{M}_k\right) 
	= \sum_{v\in[t]\setminus[t/2]}\sum_{H\subseteq[ t]\setminus[t/4]} 	
		\pr\big(v\in\mathcal{M}_k,U_{\leq k}(v)=H\big),
	}
where the sum is implicitly restricted to admissible $H$ (i.e., to $H$ that are possible
realizations of $U_{\le k}(v)$).

Since we will use \eqref{prob-fixed-neigh}, we need a lower bound on \eqref{MKC-L_1} and \eqref{MKC-L_2}. 
Recalling \eqref{eq:normali},
it is easy to show, since the number of vertices in $H^o$ equals $i_k-m^k = i_{k-1}$,
and $u \le v$ for $u \in H^o$,
	\eqn{
	\label{prefatt-mkmom-5}
	L_1(H) \geq \left[\frac{m+\delta}{v(2m+\delta)+1+\delta/m}\right]^{m i_{k-1}}.
	}
Note that for $u \le t/4$ all the factors in the product in \eqref{MKC-L_2} equal $1$,
because $H \subseteq [t] \setminus [t/4]$. Restricting to $u > t/4$ and bounding
$D_{u-1}(H)+|H \cap [u-1]|\delta \leq (m+1+\delta)i_k$, we get
	\eqn{
	\label{prefatt-mkmom-6}
	L_2(H) \geq \left[1-\frac{(m+1+\delta)i_k}{\frac{t}{4}(2m+\delta)}\right]^{3mt/4}.
	}
	
Let us write $H = \{v\} \cup H'$ where
$H'$ is a subset of $[t/2]\setminus[t/4]$ with $|H'|= i_k-1$. Clearly, for any such subset 
there is at least one 
way to order the vertices to generate an admissible $H$.
The number of possible subsets 
in $[t/2]\setminus[t/4]$ is at least $\binom{t/4}{i_k-1}$. Then, we obtain
	\eqn{
	\label{prefatt-mkmom-7}
	\E[M_k] \geq \sum_{v\in[t]\setminus[t/2]}\binom{t/4}{i_k-1}
	\left[\frac{m+\delta}{v(2m+\delta)+1+\delta/m}\right]^{m i_{k-1}}
		\left[1-\frac{(m+1+\delta)i_k}{\frac{t}{4}(2m+\delta)}\right]^{3mt/4}.
	}
Recalling that 
	\eqn{
	\binom{t/4}{i_k-1}=\frac{t^{i_k}}{4^{i_k}(i_k-1)!}(1+o(1)), 
	}
since $m i_{k-1} \le i_k$, we obtain
	\eqn{
	\label{prefatt-mkmom-8}
	\E[M_k] \geq {\blue \frac{t}{2}}\frac{t^{i_k}}{4^{i_k}(i_k-1)!}\left[\frac{m+\delta}{t(2m+\delta)+1+\delta/m}\right]^{i_{k}}
      \left[1-\frac{(m+1+\delta)i_k}{\frac{t}{4}(2m+\delta)}\right]^{3mt/4}.
	}
Choosing $k = k^{-}_t$ as in \eqref{kstar-MKC-prefatt}
and bounding $1-x \ge \e^{-2x}$ for $x$ small, as well as $m+1 \le 2m$, we obtain
	\eqn{
	\E[M_{k^{-}_t}] \geq
	{\blue \frac{t}{2}} \, \frac{t^{i_{k^{-}_t}}}{4^{i_{k^{-}_t}} \, i_{k^{-}_t}!}
	\left(\frac{m}{C \, t}\right)^{i_{k^{-}_t}}
	\mathrm{exp}\left(- 3 \, c \, m \, i_{k^{-}_t}\right)
	\ge \frac{1}{(C')^{i_{k^{-}_t}}} \,  \frac{t}{{\blue 2 \, } i_{k^{-}_t}!}
	\, \mathrm{exp}\left(- 3 \, c \, m \, i_{k^{-}_t}\right),
	}
where $C$ is a constant and $C' = 4C/m$. 
Recalling that $i_{k}$ is given by \eqref{prefatt-i_k}, and $k^{-}_t$ by  \eqref{kstar-MKC-prefatt},
hence $i_{k^{-}_t}=\frac{m}{m-1} m^{k^{-}_t}(1+o(1)) \le 2 (\log t)^{1-\varepsilon}$, hence
	\eqn{i_{k^{-}_t}! \le
	\lfloor 2(\log t)^{1-\varepsilon} \rfloor! 
	\le \left[2(\log t)^{1-\varepsilon}\right]^{2(\log t)^{1-\varepsilon}}
	= t^{o(1)} \,,
	}
and also $(C' \e^{3 \, c \, m})^{i_{k^{-}_t}} = t^{o(1)}$.
This implies that  $\E[M_k]\rightarrow\infty$, as required.
\end{proof}

\begin{remark}[Disjoint neighborhoods for minimally $k$-connected pairs]
\rm
\label{prefatt-disjneigh-remark}
We observe that, on the event $\{v,w\in\mathcal{M}_k\}$ with $v\neq w$, necessarily
	$$
  	U_{\leq k}(v)\cap U_{\leq k}(w)=\varnothing,
	$$
because if a vertex $x$ is in $U_{\leq k}(v)\cap U_{\leq k}(w)$ and $x\neq v,w$, this means 
that $D_x(t) = m+2$, because in addition to its original $m$ outgoing edges,
vertex $x$ has one incident edge from a younger vertex in
$U_{\leq k}(v)$ and one incident edge from a younger vertex in $U_{\leq k}(u)$, 
which gives a contradiction. Similar arguments apply when $x=v$ or $x=w$.
\end{remark}

We use the previous remark to prove the second relation in Statement \ref{stat-MKC} for the preferential attachment model.

\begin{Proposition}[Second moment of $M_{k^{-}_t}$]
\label{prefatt-prop-secondmoment}
 Let $(\PA{t})_{t\geq 1}$ be a preferential attachment model, with $m\geq 2$ and $\delta\in(-m,0)$. Then, for $k\in\N$,
	 \eqn{
	\label{secondmoment-prefatt}
   	\E[M_k^2] \leq \mathrm{exp}
	\left(32 m i_k^2/t\right) \E[M_k]^2+\E[M_k].
 	}
Consequently, for $k=k^{-}_t$ as in \eqref{kstar-MKC-prefatt},  as $t \to \infty$,
\begin{equation}\label{prefatt-mkmom-8.5}
	\E[M_{k^{-}_t}^2] \le (1+o(1)) \, \E[M_{k^{-}_t}]^2 \,.
\end{equation}
\end{Proposition}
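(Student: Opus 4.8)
Write $M_k=\sum_{v\in[t]\setminus[t/2]}\I_{\{v\in\mathcal{M}_k\}}$, so that $\E[M_k^2]=\E[M_k]+\sum_{v\ne w}\pr(v,w\in\mathcal{M}_k)$; the diagonal produces the additive $\E[M_k]$ in \eqref{secondmoment-prefatt}. By Remark~\ref{prefatt-disjneigh-remark}, on $\{v,w\in\mathcal{M}_k\}$ with $v\ne w$ the neighborhoods $U_{\leq k}(v)$ and $U_{\leq k}(w)$ are automatically disjoint, so I decompose $\pr(v,w\in\mathcal{M}_k)=\sum_{(H_1,H_2)}\pr(v\in\mathcal{M}_k,\,U_{\leq k}(v)=H_1,\,w\in\mathcal{M}_k,\,U_{\leq k}(w)=H_2)$, the sum over pairs of \emph{disjoint} admissible directed labeled graphs. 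Since, by Lemma~\ref{prefatt-MKC-determnistic}, $\pr(v\in\mathcal{M}_k)\,\pr(w\in\mathcal{M}_k)=\sum_{H_1}\sum_{H_2}L_1(H_1)L_2(H_1)L_1(H_2)L_2(H_2)$ (now summed over \emph{all} admissible $H_1,H_2$), it suffices to prove that for every pair of disjoint admissible $H_1,H_2$,
\[
\pr\big(v\in\mathcal{M}_k,U_{\leq k}(v)=H_1,w\in\mathcal{M}_k,U_{\leq k}(w)=H_2\big)\ \le\ \mathrm{exp}\!\left(\tfrac{32mi_k^2}{t}\right)L_1(H_1)L_2(H_1)L_1(H_2)L_2(H_2);
\]
summing over $v\ne w$ and bounding $\sum_{v\ne w}\le\sum_{v,w}$ then gives \eqref{secondmoment-prefatt}.

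\textbf{Step 2: factorizing the joint event.} Repeating the argument of Lemma~\ref{prefatt-MKC-determnistic} — now requiring the internal vertices of $H_1$ and of $H_2$ to attach as prescribed and \emph{every other} vertex to attach outside $H_1\cup H_2$ — shows that this joint event is again factorizable (Definition~\ref{def:factorizable}) with probability $L_1(H_1)L_1(H_2)L_2(H_1\cup H_2)$, where, \emph{using crucially that $H_1\cap H_2=\varnothing$ so that degrees and cardinalities split additively},
\[
L_2(H_1\cup H_2)=\prod_{u\notin H_1^o\cup H_2^o}\prod_{j=1}^m\left[1-\frac{D_{u-1}(H_1)+D_{u-1}(H_2)+|(H_1\cup H_2)\cap[u-1]|\,\delta}{c_{u,j}}\right].
\]
Thus the inequality to prove becomes $L_2(H_1\cup H_2)\le\mathrm{exp}(32mi_k^2/t)\,L_2(H_1)L_2(H_2)$. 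Verifying that the joint event is factorizable and pinning down this product form is the main obstacle; once it is in place, the rest is an elementary estimate.

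\textbf{Step 3: the analytic estimate.} Put $a_u:=D_{u-1}(H_1)+|H_1\cap[u-1]|\delta$ and $b_u:=D_{u-1}(H_2)+|H_2\cap[u-1]|\delta$ (both in $[0,\infty)$, since every $x\in H_i\cap[u-1]$ contributes $D_{u-1}(x)+\delta\ge m+\delta>0$), and $S:=[t]\setminus(H_1^o\cup H_2^o)$, so that $H_1^o,H_2^o,S$ partition $[t]$. Splitting each product over $\{u\notin H_i^o\}=S\cup H_{3-i}^o$ gives
\[
\frac{L_2(H_1\cup H_2)}{L_2(H_1)L_2(H_2)}=\Bigg[\prod_{u\in S}\prod_{j=1}^m\frac{1-\frac{a_u+b_u}{c_{u,j}}}{\big(1-\frac{a_u}{c_{u,j}}\big)\big(1-\frac{b_u}{c_{u,j}}\big)}\Bigg]\cdot\underbrace{\prod_{u\in H_2^o}\prod_{j=1}^m\frac{1}{1-\frac{a_u}{c_{u,j}}}}_{T_1}\cdot\underbrace{\prod_{u\in H_1^o}\prod_{j=1}^m\frac{1}{1-\frac{b_u}{c_{u,j}}}}_{T_2}.
\]
Each factor of the first bracket is $\le 1$ because $(1-x)(1-y)\ge 1-x-y$ for $x,y\ge0$. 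For $T_1$: every $u\in H_2^o$ has $u\ge t/4+1$ by Definition~\ref{prefatt-MKCdef}, hence $c_{u,j}\ge m(u-1)(2+\delta/m)>m(u-1)\ge mt/4$; and $a_u\le D_{u-1}(H_1)\le\sum_{x\in H_1}D_t(x)=(m+1)i_k-1<(m+1)i_k$ (the root of $H_1$ has degree $m$, the other $i_k-1$ vertices degree $m+1$). So $a_u/c_{u,j}<8i_k/t\le 1/2$ for $t$ large (as $i_k=o(t)$, in particular for $k=k^{-}_t$), whence $-\log(1-a_u/c_{u,j})\le 2a_u/c_{u,j}$ and, using $|H_2^o|=i_{k-1}$ and $m\,i_{k-1}=i_k-1$ from \eqref{prefatt-i_k},
\[
\log T_1\ \le\ 2\,|H_2^o|\,m\,\frac{8i_k}{t}\ =\ \frac{16\,m\,i_{k-1}\,i_k}{t}\ =\ \frac{16(i_k-1)i_k}{t}\ <\ \frac{16\,i_k^2}{t}.
\]
The same bound holds for $\log T_2$, so the ratio is $\le\mathrm{exp}(32i_k^2/t)\le\mathrm{exp}(32mi_k^2/t)$, which proves \eqref{secondmoment-prefatt}.

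\textbf{Step 4: consequence for $k=k^{-}_t$.} From the proof of Proposition~\ref{prefatt-prop-firstmoment}, $i_{k^{-}_t}\le 2(\log t)^{1-\varepsilon}$, so $32m\,i_{k^{-}_t}^2/t\to 0$ and $\mathrm{exp}(32m\,i_{k^{-}_t}^2/t)=1+o(1)$; moreover $\E[M_{k^{-}_t}]\to\infty$ by that same proposition, so $\E[M_{k^{-}_t}]=o(\E[M_{k^{-}_t}]^2)$. Plugging these into \eqref{secondmoment-prefatt} yields $\E[M_{k^{-}_t}^2]\le(1+o(1))\E[M_{k^{-}_t}]^2$, which is \eqref{prefatt-mkmom-8.5}; together with Proposition~\ref{prefatt-prop-firstmoment} (giving $\Var(M_{k^{-}_t})=\E[M_{k^{-}_t}^2]-\E[M_{k^{-}_t}]^2=o(\E[M_{k^{-}_t}]^2)$) this completes Statement~\ref{stat-MKC} for the preferential attachment model.
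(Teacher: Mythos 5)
Your proof is correct and follows essentially the same route as the paper's: the same diagonal/off-diagonal split using Remark~\ref{prefatt-disjneigh-remark}, the same factorization of the joint event into $L_1(H_1)L_1(H_2)L_2(H_1\cup H_2)$, the same use of $(1-x)(1-y)\ge 1-(x+y)$ to reduce everything to the two correction products over $H_1^o$ and $H_2^o$, and the same degree and $c_{u,j}$ bounds (your $\exp(32 i_k^2/t)$ is in fact slightly sharper than the paper's $\exp(32 m i_k^2/t)$ before you relax it to the stated form). The conclusion for $k=k^-_t$ is also argued exactly as in the paper.
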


\begin{proof}
We write
	\eqn{
	\label{prefatt-mkmom-9}
  	\E\left[M^2_k\right] 
	= \sum_{v,w\in[t]\setminus[t/2]}\pr\left(v,w\in\mathcal{M}_k\right) 
	= \sum_{v\neq w}\pr\left(v,w\in\mathcal{M}_k\right)
		+\E[M_k].
	}
By Remark \ref{prefatt-disjneigh-remark}, for $v\neq w$  we can write
	\eqn{
	\label{prefatt-mkmom-10}
  	\pr\left(v,w\in\mathcal{M}_k\right) 
	= \sum_{H_v\cap H_w=\varnothing}\pr\left(v,w\in\mathcal{M}_k, U_{\leq k}(v) 
	= H_v,U_{\leq k}(w) = H_w\right).
}

The crucial observation is that the event 
$\{v,w\in\mathcal{M}_k, U_{\leq k}(v) = H_v,U_{\leq k}(w) = H_w\}$ is factorizable
(recall Definition~\ref{def:factorizable} and Remark~\ref{rem:factorizable}).
More precisely, in analogy with \eqref{MKC-L_1} and \eqref{MKC-L_2}:
	\eqn{
	\label{prefatt-mkmom-11}
  	\pr\left(v,w\in\mathcal{M}_k, U_{\leq k}(v) = H_v,U_{\leq k}(w) = H_w\right) = L_1(H_v,H_w)L_2(H_v,H_w),
	}
where now
	\begin{eqnarray}
	\label{prefatt-mkmom-12}
   	L_1(H_v,H_w) 
	&=& \prod_{x\in H_v^o\cup H_w^o}\prod_{j=1}^{m}\frac{m+\delta}{c_{x,j}},\\
	\label{prefatt-mkmom-13}
  	L_2(H_v,H_w) &=& \prod_{x\not\in H_v^o\cup H_w^o}\prod_{j=1}^m\left[1-
   	 \frac{D_{x-1}(H_v \cup H_w)+|(H_v \cup H_w) \cap [x-1]|\delta}{c_{x,j}}\right].
	\end{eqnarray}
To prove \eqref{prefatt-mkmom-11}, notice that in \eqref{prefatt-mkmom-12} 
and \eqref{prefatt-mkmom-13} every edge and every vertex of the graph appear. 
Further, \eqref{prefatt-mkmom-12} is the probability of the event 
$\{U_{\leq k}(v) = H_v,U_{\leq k}(w)=H_w\}$, while \eqref{prefatt-mkmom-13} is the 
probability that all vertices not in the two neighborhoods do not attach to the two trees. 

A look at \eqref{MKC-L_1} shows that $L_1(H_v,H_w) = L_1(H_v)L_1(H_w)$. 
We now show that analogous factorization holds approximately also for $L_2$.
Since, for every $a,b\in[0,1]$, with $a+b<1$, it is true that $1-(a+b)\leq (1-a)(1-b)$, we can bound
	\eqan{
	\label{prefatt-mkmom-14}
	&\left[1-
	\frac{D_{x-1}(H_v \cup H_w)+|(H_v \cup H_w) \cap [x-1]|\delta}{c_{x,j}}\right]\\
 	&\qquad\leq\left[1-\frac{D_{x-1}(H_v)+|H_v \cap [x-1]|\delta}{c_{x,j}}\right]
		\left[1-\frac{D_{x-1}(H_w)+|H_w \cap [x-1]|\delta}{c_{x,j}}\right].\nn
	}
When we plug \eqref{prefatt-mkmom-14} into \eqref{prefatt-mkmom-13},
{\blue we obtain $L_2(H_v)L_2(H_w)$ (recall \eqref{MKC-L_2}) times the following terms:}
	\eqn{
	\label{prefatt-mkmom-15}
	\left(\prod_{x\in H_w^o}\left[1-
	\frac{D_{x-1}(H_v)+|H_v \cap [x-1]|\delta}{c_{x,j}}\right]\right)^{\blue -1}
	\left(\prod_{x\in H_v^o}\left[1-
	\frac{D_{x-1}(H_w)+|H_w \cap [x-1]|\delta }{c_{x,j}}\right]\right)^{\blue -1} .
	}
We can bound $D_{x-1}(H_v)+|H_v \cap [x-1]|\delta \le
D_{x-1}(H_v) \le (m+1)i_k$ (recall that $\delta < 0$)
and analogously for $H_w$. The square brackets in \eqref{prefatt-mkmom-15}
equal $1$ for $x \le t/4$ (since $H_v, H_w \subseteq [t] \setminus [t/4]$ by construction),
and for $x > t/4$ we have $c_{x,j} \ge \frac{t}{4}(2m+\delta) \ge \frac{m}{4} t$ by \eqref{eq:normali}
and $\delta > -m$. We can thus write
	\eqn{
	\label{prefatt-mkmom-16}
	\begin{split}
	L_2(H_v,H_w) & \le L_2(H_v) \, L_2(H_w) \,
	\prod_{x \in H_v^o\cup H_w^o} \prod_{j=1}^m
		\left[1-\frac{(m+1)i_k}{\frac{m}{4}t}\right]^{-1} \\
	& \leq L_2(H_v) \, L_2(H_w) \,
	\mathrm{exp}\left(2 (2 i_k) m \frac{(m+1)i_k}{
	\frac{m}{4}t}\right),
\end{split}
}
where we have used the bound $1-z \ge \e^{-2z}$ for small $z > 0$.
Since $m+1 \le 2m$, we obtain
	\eqn{
	\label{prefatt-mkmom-18}
	\begin{array}{l}
	\grosso \sum_{v\neq w}\left[\sum_{H_v\cap H_w= \varnothing}
	\pr\left(v,w\in\mathcal{M}_k\mbox{, }U_{\leq k}(v) = H_v\mbox{, }U_{\leq k}(w) = H_w\right)\right] \\
	\qquad\grosso \leq \mathrm{exp}\left(32 mi_k^2/t\right)
	\sum_{v\in[t]\setminus[t/2]}\sum_{H_v}L_1(H_v)L_2(H_v)
	\sum_{w\in[t]\setminus[t/2]}\sum_{H_w}L_1(H_w)L_2(H_w) \\
	\qquad\grosso  =\mathrm{exp}\left(32 mi_k^2/t\right)\E[M_k]^2.
	\end{array}
	}
Substituting \eqref{prefatt-mkmom-18} in \eqref{prefatt-mkmom-9} completes the proof of
\eqref{secondmoment-prefatt}. 

Finally, for $k=k^{-}_t$ as in \eqref{kstar-MKC-prefatt}
we have $i_{k^{-}_t} \le 2 (\log t)^{1-\varepsilon}$ (recall that $i_{k}$ is given by \eqref{prefatt-i_k}).
We have already shown in Proposition~\ref{prefatt-prop-firstmoment} that $\E[M_{k^{-}_t}] \to \infty$,
hence \eqref{prefatt-mkmom-8.5} follows.
\end{proof}
\medskip

Together, Propositions \ref{prefatt-prop-firstmoment} and \ref{prefatt-prop-secondmoment} prove 
Statement \ref{stat-MKC}. This means, as for the configuration model, since $\Var(M_{k^{-}_t}^2) = 
o(\E[M_{k^{-}_t}]^2)$, that $M_{k^{-}_t}/\E[M_{k^{-}_t}]{\xrightarrow[\sss\,t\to\infty\,]{\sss \pr} \ } 1$, so in particular $M_{k^{-}_t}{\xrightarrow[\sss\,t\to\infty\,]{\sss \pr} \ }\infty$.
\qed

\subsection{Proof of Statement \ref{stat-distance}}
\label{lower-proof-prefatt-3}
Fix $\varepsilon > 0$ and define, as in \eqref{general-k_n},
	\eqn{
	\label{prefatt-low-kt}
		\bar{k}_t = (1-\varepsilon)\frac{2\log \log t}{|\log(\tau-2)|}.
	} 
Statement~\ref{stat-distance} follows from the following result on distances 
between not too early vertices:

\begin{Proposition}[Lower bound on distances]
\label{prefatt-prop-bounddistance}
 Let $(\PA{t})_{t\geq 1}$ be a preferential attachment model, with $m\geq 2$ and $\delta\in(-m,0)$. Then, there exists a constant $p>0$ such that
	\eqn{
	\label{prefatt-bounddistance}
	\max_{x,y \ge \frac{t}{(\log t)^2}}
   	\pr\left(\dist_{\PA{t}}(x,y)\leq 2\bar{k}_t\right)\leq
			\frac{p}{(\log{t})^2}.
 	}
\end{Proposition}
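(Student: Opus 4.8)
The plan is to mimic the path-counting strategy used for the configuration model (Proposition~\ref{cmnd-lolog lower bound} and the proof of Statement~\ref{stat-distance} there), but now adapted to the directed, order-dependent structure of $\PA{t}$. A path of length $k$ between $x$ and $y$ in the underlying undirected graph corresponds to a sequence of vertices $\pi_0=x,\pi_1,\dots,\pi_k=y$ in which consecutive vertices are joined by an edge, i.e.\ for each $i$ either $\pi_{i-1}\overset{j}{\to}\pi_i$ or $\pi_i\overset{j}{\to}\pi_{i-1}$ for some label $j\in[m]$. The first step is to set up a union bound over all such self-avoiding paths, splitting each path into its sequence of ``up-steps'' (edge pointing from younger to older) and ``down-steps''. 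The key probabilistic input is an upper bound on the probability that a fixed candidate path is actually present: when the $j$-th edge of a vertex $s$ is required to attach to a specified older vertex $v$, this has conditional probability at most $(D_{s,j-1}(v)+\delta)/c_{s,j}\le (D_{t}(v)+\delta)/(cms)$ for a constant $c>0$; so the probability of a fixed path decays like a product over its vertices of factors involving the degrees and the (inverse) birth times.

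The second step is to classify paths into \emph{good} and \emph{bad} exactly as in the configuration-model proof: introduce an increasing truncation sequence $(g_l)$ (the same choice $g_l=(g_0)^{p^l}$ with $g_0=(\log t)^{\log\log t}$, $p=1/(\tau-2-2\eta)$ works here), call a path good if $D_t(\pi_l)\le g_l\wedge g_{k-l}$ for all $l$, and bad otherwise. For good paths, summing the path-probability bound over intermediate vertices produces, for each step, a truncated-second-moment-type quantity; the required analogue of Lemma~\ref{lem-tailFn*-bd} is a bound of the form $\sum_{v}(D_t(v)\wedge g)^2/v \le C\,g^{3-\tau+\eta}\log t$ (and a tail bound $\#\{v: D_t(v)>g\}\le C t g^{-(\tau-1)+\eta}$), which follows from the known power-law degree estimates for $\PA{t}$ together with the fact that early vertices have larger degrees. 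With these inputs the good-path sum over $k\le 2\bar k_t$ telescopes just as in \eqref{eq:quat}--\eqref{pkn-bd}, giving a contribution that is $n^{o(1)}$ times $(\log t)^{-2}$-type prefactors coming from the constraint $x,y\ge t/(\log t)^2$ (which gives each of the two endpoints a factor $\le D_t(x)/x$, and $D_t(x)\le (\log t)^{O(1)}$, $1/x\le (\log t)^2/t$). The bad-path sum is controlled by the tail bound on the truncation: a bad path must at some step $l$ reach a vertex of degree $>g_l$ while all earlier degrees are $\le g_i$, and the same computation as in \eqref{eq:2su}--\eqref{eq:sesu2} shows this is $o((\log t)^{-2})$.

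The \textbf{main obstacle} is the order-dependence: unlike the configuration model, where Remark~\ref{cmnd-conditionedlaw} lets us pair half-edges in any order and the probability of a fixed path factorizes cleanly, in $\PA{t}$ the conditional attachment probabilities depend on the current degrees, which are random and depend on the past. The event ``$\pi$ is a path'' is \emph{not} factorizable in the sense of Definition~\ref{def:factorizable}, because we do not control which other edges have attached where. The resolution is to bound the conditional probability of each required attachment by replacing the random instantaneous degree $D_{s,j-1}(v)$ by the final degree $D_t(v)$ (a monotone, crude but sufficient bound, since $D_{s,j-1}(v)\le D_t(v)$), and the normalization $c_{s,j}$ from below by $\Theta(ms)$; this decouples the estimate and reduces it to a deterministic sum over vertices weighted by $D_t(v)/v$. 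One then needs the degree bounds to hold with high probability \emph{simultaneously} for all relevant vertices — this is where one invokes (or reproves, as the paper does for related statements in Section~\ref{upperproofs-prefatt}) concentration for the degrees $D_t(v)$, e.g.\ $D_t(v)\le (t/v)^{1/(2+\delta/m)}(\log t)^{C}$ whp for all $v$. Conditioning on this high-probability event and then running the union bound over paths yields \eqref{prefatt-bounddistance}; Statement~\ref{stat-distance} for $\PA{t}$ is then immediate since $V_n=\{v\ge t/(\log t)^2\}$ and $2\bar k_t$ is exactly \eqref{prefatt-low-kt}.
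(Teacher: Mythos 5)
There is a genuine gap, and it sits exactly at the point you flag as the ``main obstacle''. Your key step bounds the conditional attachment probability $\pr(s\overset{j}{\to}v\mid \PA{s,j-1})=(D_{s,j-1}(v)+\delta)/c_{s,j}$ by $(D_t(v)+\delta)/(cms)$ and then multiplies these bounds along the path. This is not a valid use of the tower property: $D_t(v)$ is a random variable depending on the \emph{future} of the process (including on the very edges of the candidate path), so it is not measurable with respect to the conditioning $\sigma$-algebra, and the resulting ``bound'' on $\pr(\pi\subseteq\PA{t})$ is a random quantity, not a number that can be summed over paths. Conditioning on a whp event such as $\{D_t(v)\le (t/v)^{\chi}(\log t)^{C}\ \forall v\}$ does not repair this, because that event is again not adapted, and because your good/bad classification $D_t(\pi_l)\le g_l\wedge g_{k-l}$ and the ``truncated second moment'' $\sum_v (D_t(v)\wedge g)^2/v$ are random and correlated with the path-presence event; making the per-path estimate and the degree truncation compatible is precisely the non-factorizability problem of Definition~\ref{def:factorizable}, and resolving it requires either an adapted (stopping-time/martingale) argument or the incoming-neighborhood conditioning machinery of the kind developed for Proposition~\ref{pr-PAM-boundsplit} --- none of which your sketch supplies. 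In effect, the decoupled path-probability estimate you assume is the nontrivial input \eqref{prefatt-paths-minmax} (Bollob\'as--Riordan for $\delta=0$, extended in \cite{DSvdH}), which cannot be rederived by the crude replacement $D_{s,j-1}(v)\le D_t(v)$.

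The paper avoids this entirely by truncating on vertex \emph{age} rather than on degree: good paths are those with $\pi_l\ge g_l\wedge g_{k-l}$, a deterministic condition on the path itself, so that after invoking \eqref{prefatt-paths-minmax} the whole computation \eqref{prefatt-lowdistfix-6} is deterministic and no conditioning on random degrees is needed. The analysis of the resulting sums is also structurally different from the configuration-model case: the truncation sequence is not the explicit double-exponential $g_k=(g_0)^{p^k}$ you propose, but is defined implicitly through the recursion \eqref{prefatt-lowdistfix-8-app}--\eqref{prefatt-lowdistfix-10-app}, and the bound \eqref{prefatt-lowdistfix-11} is proved by induction together with a growth estimate on $\eta_k=t/g_k$ (Appendix~\ref{section-appendix}); it is this recursion, not an analogue of Lemma~\ref{lem-tailFn*-bd}, that produces the $(\log t)^{-2}$ rate uniformly in $x,y\ge t/(\log t)^2$. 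If you want to keep a degree-based truncation, you would first have to prove a path bound in which degrees enter in an adapted way (or convert degree truncation into age truncation via whp degree bounds, at which point you have essentially reproduced the paper's argument), so as it stands the proposal does not yield \eqref{prefatt-bounddistance}.
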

Inequality \eqref{prefatt-bounddistance} is an adaptation of a result 
proved in \cite[Section 4.1]{DSMCM}. Consequently we just give a sketch of the proof
(the complete proof can be found in \longversion{Appendix~\ref{section-appendix}}\shortversion{\cite[Appendix A]{CarGarHof16ext}}).

Let us denote by $u \leftrightarrow v$ the event that vertices $u,v$
are neighbors in $\PA{t}$, that is
\begin{equation*}
	\{u \leftrightarrow v\} = \bigcup_{j=1}^m \Big( \{u \overset{j}{\to} v\} \cup
	\{v \overset{j}{\to} u\} \Big) \,.
\end{equation*}
(As a matter of fact, $\{v \overset{j}{\to} u\}$ is only possible if $v > u$,
while $\{u \overset{j}{\to} v\}$ is only possibly if $v < u$.)
Given a sequence
$\pi = (\pi_0, \pi_1, \ldots, \pi_k) \in [t]^{k+1}$ of distinct vertices,
we denote by $\{\pi\subseteq \PA{t}\}$ the event that \emph{$\pi$ is a path in $\PA{t}$}, that is
\begin{equation*}
	\big\{ \pi\subseteq \PA{t} \big\} = \{\pi_0 \leftrightarrow \pi_1 \leftrightarrow \pi_2
	\cdots \leftrightarrow \pi_k\} = \bigcap_{i=1}^k \{\pi_{i-1} \leftrightarrow \pi_i\} \,.
\end{equation*}

The proof of Proposition~\ref{prefatt-prop-bounddistance} requires the following bound on 
the probability of connection between two vertices from  \cite[Lemma 2.2]{DSvdH}: for $\gamma=m/(2m+\delta)\in (\tfrac{1}{2},1)$, there exists $c \in (0,\infty)$ such that, for all vertices $u,v\in[t]$.
	\eqn{
	\label{prefatt-connectioncondition}
		\pr\left(u\leftrightarrow v\right) \leq c(u\vee v)^{\gamma-1}(u \wedge v)^{-\gamma}.
	}
From \cite[Corollary 2.3]{DSvdH} we know, for any sequence
$\pi = (\pi_0, \pi_1, \ldots, \pi_k) \in [t]^{k+1}$ of distinct vertices,
 	\eqn{
	\label{prefatt-paths-minmax}
    	\pr\big( \pi\subseteq \PA{t} \big) \leq 
	 	p(\pi_0,\pi_1,\ldots,\pi_k)
		:= \prod_{i=0}^{k-1}
	\frac{Cm}{(\pi_i\wedge \pi_{i+1})^\gamma(\pi_i\vee 	\pi_{i+1})^{1-\gamma}},
 	}
where $C$ is an absolute constant. The history of \eqref{prefatt-paths-minmax} is that it was first proved by Bollob\'as and Riordan \cite{BolRio04b} for $\delta=0$ (so that $\gamma=1-\gamma=1/2$), and the argument was extended to all $\delta$ in \cite[Corollary 2.3]{DSvdH}.

\begin{Remark}
	Proposition \ref{prefatt-prop-bounddistance} holds for every random graphs that satisfies \eqref{prefatt-paths-minmax}.
\end{Remark}

We proceed in a similar way as in Section \ref{lower-proof-cmnd-3}. 
Given two vertices $x,y\in[t]$, we consider
paths $\pi = (\pi_0, \pi_1, \ldots, \pi_k)$ between $x = \pi_0$ and $y = \pi_k$.
We fix a decreasing sequence of numbers $(g_l)_{l\in\N_0}$ that 
serve as truncation values for the {\em age} of vertices along the path 
(rather than the degrees as for the configuration model). 
We say that a path $\pi$ is {\itshape good} when $\pi_l \geq g_l \wedge g_{k-l}$
for every $l=0, \ldots,k$, and {\itshape bad} otherwise. In other words, 
a path is good
when the age of vertices does not decrease too much from $\pi_0$ to $\pi_{k/2}$
and, backwards, from $\pi_{k}$ to $\pi_{k/2}$. Intuitively, this also means that their degrees do not grow too fast.
This means that
	\eqn{
	\label{prefatt-lowdistfix-2}
  	\pr(\dist_{\PA{t}}(x,y)\leq 2\bar{k}_t)\leq 
      	\sum_{k=1}^{2\bar{k}_t}\pr(\mathcal{E}_k(x,y))+\sum_{k=1}^{\bar{k}_t}
	\left[\pr(\mathcal{F}_k(x))+\pr(\mathcal{F}_k(y))\right],
	}
where $\mathcal{E}_k(x,y)$ is the event of there being a good path of length $k$,
as in \eqref{eq:gopa}, while $\mathcal{F}_k(x)$ is the event of there being a 
path $\pi$ with $\pi_i \ge g_i$ for $i \le k-1$ but $\pi_k < g_k$, in analogy with
\eqref{eq:bapa}.

Recalling the definition of $p(\pi_0, \pi_1, \ldots, \pi_k)$ in \eqref{prefatt-paths-minmax},
we define for $l\in \N$,
	\eqn{
	\label{prefatt-lowdistfix-4}
  	f_{l,t}(x,w) = \I_{\{x\geq g_0\}}\sum_{\pi_1=g_1}^{t}\sum_{\pi_2=g_2}^{t} 
	\cdots \sum_{\pi_{l-1}=g_{l-1}}^{t}p(x,\pi_1,\ldots,\pi_{l-1},w),
	}
setting $f_{0,t}(x,w) = \I_{\{x\geq g_0\}}$ 
and $f_{1,t}(x,w) = \I_{\{x\geq g_0\}} p(x,w)$.
From \eqref{prefatt-lowdistfix-2} we then obtain
	\eqn{
	\label{prefatt-lowdistfix-6}
	\begin{array}{rl}
  	\pr(\dist_{\PA{t}}(x,y)\leq 2\bar{k}_t)
	\leq &\displaystyle \sum_{k=1}^{2\bar{k}_t}\sum_{l=g_{\lfloor k/2\rfloor}}^{t} 
	f_{\lfloor k/2\rfloor,t}(x,l)f_{\lceil k/2\rceil,t}(y,l)\\
   	& \displaystyle +\sum_{k=1}^{\bar{k}_t}\sum_{l=1}^{g_k-1} f_{k,t}(x,l)
	+\sum_{k=1}^{\bar{k}_t}\sum_{l=1}^{g_k-1}f_{k,t}(y,l).
 	\end{array}
	}
This is the starting point of the proof of Proposition \ref{prefatt-prop-bounddistance}. 

We will show in \longversion{Appendix \ref{section-appendix}}\shortversion{\cite[Appendix A]{CarGarHof16ext}} that the following recursive bound holds
	\eqn{
	\label{prefatt-lowdistfix-11}
   	f_{k,t}(x,l)\leq \alpha_kl^{-\gamma}+\I_{\{l>g_{k-1}\}}\beta_k l^{\gamma-1},
	}
for suitable sequences $(\alpha_k)_{k\in\N}$, $(\beta_k)_{k\in\N}$
and $(g_k)_{k\in\N}$ (see \longversion{Definition~\ref{prefatt-def-sequencerecursive-app}}\shortversion{\cite[Definition~A.2]{CarGarHof16ext}}).
We will prove recursive bounds on these sequences that
guarantee that the sums in \eqref{prefatt-lowdistfix-6} 
satisfy the required bounds. 
We omit further details at this point, and refer the interested reader to
\longversion{Appendix~\ref{section-appendix}}\shortversion{\cite[Appendix A]{CarGarHof16ext}}.

\subsection{Proof of Statement \ref{stat-distance-boundaries}}
\label{lower-proof-prefatt-2}
Consider now two independent random vertices $W^t_1$ and $W^t_2$ that
are uniformly distributed
in the set of minimally-$k^{-}_t$-connected vertices $\mathcal{M}_{k^{-}_t}$.
We set
	\begin{equation}
	\label{En-PAM-def}
	E_t := \left\{ \dist\big( U_{\le k^{-}_t}(W^t_1),
	U_{\le k^{-}_t}(W^t_2)\big) \le 2 \bar{k}_t \right\} =
	\left\{\dist(W_1^t,W_2^t)\leq 2k^{-}_t+2\bar{k}_t\right\} 
	\end{equation}
and, in analogy with Section~\ref{lower-proof-cmnd-2}, our goal is to show that
	\begin{equation}
	\label{eq:PAM-goalst2}
	\lim_{t\to\infty} 
	\pr(E_t) = 0 .
	\end{equation}

We know from Statement~\ref{stat-MKC} that, as $t\to\infty$,
	\eqn{
	\label{concl-PAM-split-opiccolo}
	\pr\left( M_{k^{-}_t} \le \frac{1}{2}\E[M_{k^{-}_t}] \right) \le
	\pr\left( |M_{k^{-}_t}-\E[M_{k^{-}_t}]|> \frac{1}{2}\E[M_{k^{-}_t}] \right)
	\le \frac{\Var(M_{k^{-}_t})}{\frac{1}{4} \E[M_{k^{-}_t}]^2} = o(1).
	}
We also define the event
\eqn{
	\label{eq-PAM-deg}
	B_t := \left\{\max_{v\in[t]}D_t(v)\leq \sqrt{t}\right\} 
}
and note that
it is known (see \cite[Theorem 8.13]{vdH1}) 
that $\lim_{t\rightarrow\infty}\pr(B_t)=1$. 
Therefore,
	\begin{equation}
	\label{concl-PAM-sumtobound}
	\begin{split}
	\pr(E_t) & = \pr\left(E_t\cap\{M_{k^{-}_t} > \tfrac{1}{2}\E[M_{k^{-}_t}]\}\cap B_t\right)
	+ o(1) \\
	& = \E\Big[\sum_{v_1,v_2\in [t]}
	\I_{\{W_1^t=v_1,W_2^t=v_2\}}\I_{\{\dist(v_1,v_2)\leq 2k^{-}_t+2\bar{k}_t\}}
	\I_{\{M_{k^{-}_t} > \frac{1}{2}\E[M_{k^{-}_t}]\}}\I_{B_t}\Big]+ o(1) \\
	& \leq \E\Big[\sum_{v_1,v_2\in [t]  \setminus [t/2]}
	\frac{\I_{\{v_1\in \mathcal{M}_{k^{-}_t},v_2\in \mathcal{M}_{k^{-}_t}\}}}
	{M_{k^{-}_t}^2} \I_{\{\dist(v_1,v_2)\leq 2k^{-}_t+2\bar{k}_t\}}
			\I_{\{M_{k^{-}_t} > \frac{1}{2}\E[M_{k^{-}_t}]\}}\I_{B_t}\Big]+ o(1) \\
 	& \leq \sum_{v_1,v_2\in[t]\setminus [t/2]}
	\frac{\pr\left(v_1,v_2\in \mathcal{M}_{k^{-}_t}, \, 
	\dist(v_1,v_2)\leq 2k^{-}_t+2\bar{k}_t, \, B_t \right)}
	{\frac{1}{4}\E[M_{k^{-}_t}]^2} + o(1).
	\end{split}
	\end{equation}
The contribution of the terms with $v_1 = v_2$ is negligible,
since it gives
\begin{equation*}
	\frac{\sum_{v_1 \in [t] \setminus [t/2]} \pr\left(v_1 \in \mathcal{M}_{k^{-}_t}\right)}
	{\frac{1}{4}\E[M_{k^{-}_t}]^2} = \frac{4}{\E[M_{k^{-}_t}]}
	= o(1) ,
\end{equation*}
because $\E[M_{k^{-}_t}] \to \infty$ by Proposition~\ref{prefatt-prop-firstmoment}.
Henceforth we restrict the sum in \eqref{concl-PAM-sumtobound} to $v_1 \ne v_2$.
Summing over the realizations $H_1$ and $H_2$ of the random neighborhoods
$U_{\leq k^{-}_t}(v_1)$ and $U_{\leq k^{-}_t}(v_2)$, and over paths $\pi$ from an arbitrary vertex
$x \in \partial H_1$ to an arbitrary vertex $y \in \partial H_2$, we obtain
	\eqn{	\label{concl-prefatt-sum}
\begin{split}
	\pr(E_t) \le
	\frac{4}{\E[M_{k^{-}_t}]^2} & \sum_{\substack{v_1,v_2\in[t] \setminus [t/2] \\ v_1 \ne v_2}}
	\ \sum_{H_1,\, H_2 \subseteq [t] \setminus [t/4]} 
	\ \sum_{x \in \partial H_1,\, y \in \partial H_2} 
	\ \sum_{\substack{\pi: x \to y \\
	|\pi| \le 2 \bar{k}_t}}\\
	& \quad \pr\left(
	U_{\leq k^{-}_t}(v_1)=H_1, \, U_{\leq k^{-}_t}(v_2)
	=H_2, \, \pi \subseteq \PA{t}, \, B_t\right) \ + \ o(1) .
\end{split}
	}
The next proposition, proved below, decouples the probability appearing in the last expression:

\begin{Proposition}
\label{pr-PAM-boundsplit}
There is a constant $q \in (1,\infty)$ such that,
for all $v_1, v_2$, 
$H_1, H_2$ and $\pi$, 
\eqn{\label{eq:usbb}
\begin{split}
	&\pr\left(
	U_{\leq k^{-}_t}(v_1)=H_1, \, U_{\leq k^{-}_t}(v_2)
	=H_2, \, \pi \subseteq \PA{t}, \, B_t\right)\\
	&\leq q\, \pr\left(U_{\leq k^{-}_t}(v_1)=H_1,~U_{\leq k^{-}_t}(v_2)=H_2\right)
	\pr\left(\pi \subseteq \PA{t}\right).
\end{split}
}
\end{Proposition}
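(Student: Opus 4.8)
The plan is to combine the factorizability of the neighborhood event with a careful bookkeeping of which edges are "used" by the three events $\{U_{\leq k^{-}_t}(v_1)=H_1\}$, $\{U_{\leq k^{-}_t}(v_2)=H_2\}$, $\{\pi\subseteq\PA{t}\}$, and to absorb the dependence between them into the constant $q$ and into the event $B_t$. The first step is to recall (Remark~\ref{rem:factorizable} and the proof of Lemma~\ref{prefatt-MKC-determnistic}, extended as in the proof of Proposition~\ref{prefatt-prop-secondmoment}) that the event $\{U_{\leq k^{-}_t}(v_1)=H_1,\,U_{\leq k^{-}_t}(v_2)=H_2\}$ is factorizable, with probability $L_1(H_1,H_2)L_2(H_1,H_2)$, and that by the argument in \eqref{prefatt-mkmom-14}--\eqref{prefatt-mkmom-18} this is bounded above, up to a factor $\e^{O(i_k^2/t)}=1+o(1)$, by $\pr(U_{\leq k^{-}_t}(v_1)=H_1)\pr(U_{\leq k^{-}_t}(v_2)=H_2)$. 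So it is enough to control the joint event including $\{\pi\subseteq\PA{t}\}$ and $B_t$.

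Next I would write the joint event in terms of the variables $\xi_{s,i}$: the event $\{U_{\leq k^{-}_t}(v_1)=H_1,\,U_{\leq k^{-}_t}(v_2)=H_2\}$ specifies $\xi_{s,i}$ exactly for $s\in H_1^o\cup H_2^o$ (each such edge must point to its prescribed endpoint), and for $s\notin H_1^o\cup H_2^o$ it imposes $\xi_{s,i}\notin H_1\cup H_2$. The event $\{\pi\subseteq\PA{t}\}$ imposes, for each of the $\le 2\bar k_t$ consecutive pairs $(\pi_{j-1},\pi_j)$, that at least one of the $m$ outgoing edges of the younger endpoint points to the older one. Since $\pi$ runs between the two boundaries $\partial H_1$ and $\partial H_2$, its interior vertices lie outside $H_1^o\cup H_2^o$ (collisions with the trees are excluded by the neighborhood constraint, so $\pi$ can only touch $H_1,H_2$ at $x\in\partial H_1$, $y\in\partial H_2$). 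Hence the edges $\pi$ uses are disjoint from those frozen by the neighborhood event, except possibly at the two boundary vertices, where at most $O(m)$ edges are shared; these contribute only a bounded multiplicative factor. For the remaining path-edges, I would bound the conditional probability, given the past (which now includes the partial neighborhood information), that a given outgoing edge of the younger endpoint attaches to the prescribed older vertex. By \eqref{prefatt-attprob2} this conditional probability is at most $\dfrac{D_{s,i-1}(\pi_{j})+\delta}{c_{s,i}}$ (or the analogous expression if the path-edge points from $\pi_j$ to $\pi_{j-1}$); on the event $B_t$ the degree is at most $\sqrt t$, and for the bound \eqref{prefatt-paths-minmax} of \cite[Corollary 2.3]{DSvdH} the relevant quantity is really the degree divided by the normalization, which is comparable to $(u\wedge v)^{-\gamma}(u\vee v)^{\gamma-1}$ up to a constant — this is exactly where $B_t$ is needed, to rule out atypically large degrees that would otherwise spoil the decoupling. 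Summing over the hidden path-vertex indices then reconstructs, up to a universal constant per edge, the deterministic upper bound $p(\pi_0,\dots,\pi_k)\ge\pr(\pi\subseteq\PA{t})$... but since we only want $\pr(\pi\subseteq\PA{t})$ on the right-hand side of \eqref{eq:usbb}, I would instead directly compare the conditional path-probability (given the neighborhood/past information and on $B_t$) with the \emph{unconditional} path-probability $\pr(\pi\subseteq\PA{t})$, showing the ratio is bounded by a constant to the power $|\pi|$, which is $\le 2\bar k_t$; but a constant raised to $2\bar k_t=O(\log\log t)$ is $(\log t)^{O(1)}$, not $O(1)$.

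This last point is the main obstacle, and the way around it is to be more careful: rather than bounding the ratio edge-by-edge, one should observe that conditioning on the neighborhood event (which freezes $O(m\,i_k)=O((\log t)^{1-\varepsilon})$ edges and forbids attachment to the $O(i_k)$ vertices of $H_1\cup H_2$) changes each normalization $c_{s,i}$ by only $O(i_k)=O((\log t)^{1-\varepsilon})$ and each relevant degree $D_{s,i-1}(\pi_j)$ by at most an additive $O(1)$ (a path-vertex $\pi_j\notin H_1\cup H_2$ receives at most two extra incident edges from the neighborhood trees, and typically none). Since $c_{s,i}\ge \tfrac{m}{2}\,s\ge \tfrac{m}{2}\cdot\tfrac{t}{(\log t)^2}$ for $s\ge t/(\log t)^2$ (all path-vertices satisfy this because $x,y\ge t/(\log t)^2$ and the good/typical-path truncation keeps $\pi_j\ge g_j\ge$ a positive power of $t$ — here I would restrict, as in Section~\ref{lower-proof-prefatt-3}, to the paths that actually arise), the multiplicative perturbation of the conditional law relative to the unconditional one is $1+O(i_k (\log t)^2/t)=1+o(1)$ per edge, so over $\le 2\bar k_t$ edges the cumulative factor is $(1+o(1))^{O(\log\log t)}=1+o(1)$. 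Combining: the joint probability is at most $q$ (a fixed constant, coming from the $O(m)$ shared boundary edges and the $\e^{O(i_k^2/t)}$ factor from the $L_2$ decoupling) times $\pr(U_{\leq k^{-}_t}(v_1)=H_1)\pr(U_{\leq k^{-}_t}(v_2)=H_2)\pr(\pi\subseteq\PA{t})$, which is \eqref{eq:usbb}. I would close the proof by noting that all estimates are uniform in $v_1,v_2,H_1,H_2,\pi$ within the allowed ranges, so $q$ is indeed a single universal constant.
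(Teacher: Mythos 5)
You have the right target in mind (the total correction over the whole path must be $1+o(1)$, not a constant per edge, and $B_t$ is what rules out the degenerate degrees), but the central step of your argument is asserted rather than proved, and the mechanics you invoke are not the right ones for this model. The claim that ``conditioning on the neighborhood event changes each normalization $c_{s,i}$ by only $O(i_{k})$ and each relevant degree $D_{s,i-1}(\pi_j)$ by at most an additive $O(1)$'' does not hold up: in the preferential attachment model the normalizations $c_{s,i}$ in \eqref{prefatt-attprob2} are \emph{deterministic} and are not altered by any conditioning (you seem to be importing the configuration-model picture, where pairings remove half-edges from the pool), and the degrees $D_{s,i-1}(\pi_j)$ are random quantities whose conditional law under $\{U_{\leq k^{-}_t}(v_1)=H_1,\,U_{\leq k^{-}_t}(v_2)=H_2\}$ is exactly the dependence one has to control --- it is not bounded by an ``extra two incident edges from the trees''. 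Moreover, $\pr(\pi\subseteq\PA{t})$ is not a product over edges of explicit terms (the bound \eqref{prefatt-paths-minmax} is itself a nontrivial cited result), so a ``per-edge ratio of conditional to unconditional law'' is not even well defined at the level of your sketch; this is precisely the gap. The paper's proof closes it by a different device, which your proposal never introduces: one sums over the \emph{incoming neighborhoods} $\cN(\pi_1)=K_1,\ldots,\cN(\pi_{k-1})=K_{k-1}$ of the interior path vertices (with $|K_i|\le\sqrt t$ thanks to $B_t$, see \eqref{eq:Ksqrt}), which makes the joint event factorizable in the sense of Definition~\ref{def:factorizable}: all degrees appearing in the product \eqref{eq:nogat} become deterministic, and the comparison with $\pr(H_1,H_2)\cdot\pr(\cN(\pi_1)=K_1,\ldots)$ reduces to two explicit correction factors $C_1,C_2$, bounded by $\exp(O(k\,i_{k^{-}_t}/\sqrt t))=1+o(1)$ using $i_{k^{-}_t}=O((\log t)^{1-\varepsilon})$ and the $\sqrt t$ degree cap. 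Without this (or an equivalent) conditioning, your $1+o(1)$-per-edge claim has no proof behind it.

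Two smaller points. First, the opening reduction of $\pr(H_1,H_2)$ to $\pr(H_1)\pr(H_2)$ is unnecessary and goes in an unhelpful direction: the right-hand side of \eqref{eq:usbb} retains the joint probability of the two neighborhoods, so nothing needs to be decoupled there, and replacing it by the product would require the reverse correlation inequality, which you do not establish. Second, you restrict to paths with $\pi_j\ge g_j$ (``good'' paths) to get your per-edge error estimate, but Proposition~\ref{pr-PAM-boundsplit} is applied in \eqref{concl-prefatt-sum} to \emph{all} paths of length at most $2\bar k_t$ before any good/bad splitting; the truncation machinery of Section~\ref{lower-proof-prefatt-3} enters only afterwards, inside the bound on the sum over $\pi$, so your estimate would have to hold uniformly in $\pi$ without that restriction. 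In the paper's argument no lower bound on the ages of path vertices is needed, because the only factors that must be controlled involve either vertices of $H_1\cup H_2$ (all in $[t]\setminus[t/4]$) or the capped incoming-neighborhood sizes.
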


The proof of Proposition \ref{pr-PAM-boundsplit} reveals that we can take $q=2$ for $t$ sufficiently large.
Using \eqref{eq:usbb} in \eqref{concl-prefatt-sum}, we obtain
	\begin{equation}
\begin{split}
	\label{concl-prefatt-sumsplit}
	\pr(E_t) \le
 	\frac{4q}{\E[M_{k^{-}_t}]^2}\sum_{v_1,v_2\in[t] \setminus [t/2]} \
	& \sum_{H_1,\, H_2 \subseteq [t] \setminus [t/4]} 
	\pr\left(U_{\leq k}(v_1)= H_1\mbox{, }U_{\leq k}(v_2)= H_2\right)\\
	& \times \Bigg\{
	\sum_{x \in \partial H_1,\, y \in \partial H_2} 
	\ \sum_{\substack{\pi: x \to y \\
	|\pi| \le 2 \bar{k}_t}}
	\pr\left(\pi \subseteq \PA{t}\right) \Bigg\} .
\end{split}
	\end{equation}
If we bound $\pr\left(\pi \subseteq \PA{t}\right) \le p(\pi)$ in \eqref{concl-prefatt-sumsplit},
as in \eqref{prefatt-paths-minmax},
\emph{the sum over $\pi$
can be rewritten as the right hand side of \eqref{prefatt-lowdistfix-6}}
(recall \eqref{prefatt-lowdistfix-2}-\eqref{prefatt-lowdistfix-4}).
We can thus apply Proposition~\ref{prefatt-prop-bounddistance} ---because the proof of Proposition \ref{prefatt-prop-bounddistance} really gives a bound on \eqref{prefatt-lowdistfix-6}--- concluding
that the sum over $\pi$ is at most
$p / (\log t)^2$, where the constant $p$ is defined in Proposition \ref{prefatt-prop-bounddistance}.
Since $|\partial H_1| = |\partial H_2| = m^{k^{-}_t} = (\log t)^{1-\varepsilon}$
(recall \eqref{kstar-MKC-prefatt}), we finally obtain
	\eqn{
	\label{concl-prefatt-penultima}
	\pr(E_t) \le
		\frac{4q}{\E[M_{k^{-}_t}]^2} \,
		\frac{p (\log t)^{2(1-\varepsilon)}}{(\log{t})^2} \,
	\E[M_{k^{-}_t}^2] 
	= \big(1+o(1)\big) \frac{4pq}{(\log t)^{2\varepsilon}} ,
	}
where the last step uses Proposition~\ref{prefatt-prop-secondmoment}.
This completes the proof that $\pr(E_t) = o(1)$.
\qed

\begin{proof}[Proof of Proposition \ref{pr-PAM-boundsplit}]
We recall that $H_1 \subseteq [t] \setminus [t/4]$ is a labeled directed subgraph
containing $v_1$, such that it is an admissible realization of the neighborhood
$U_{\le k^{-}_t}(v_1)$ of the minimally-$k^{-}_t$-connected vertex $v_1$
(recall Definition~\ref{prefatt-MKCdef}); in particular, $H_1 \setminus \{v_1\} 
\subseteq [t/2] \setminus [t/4]$.
We also recall that, for all $u \in H_1^o := H_1 \setminus \partial H_1$ and $j \in [m]$,
the $j$-th edge of $u$ is connected to a well specified vertex in $H_1$, denoted by $\theta_j^{H_1}(u)$.
Analogous considerations apply to $H_2$.

We have to bound the probability
\eqn{\label{eq:nofa}
	\pr\left(U_{\leq k^{-}_t}(v_1)=H_1,~U_{\leq k^{-}_t}(v_2)=H_2,~ \pi\subseteq \PA{t},~ B_t\right),
}
where $\pi = (\pi_0, \pi_1, \ldots, \pi_k) \in [t]^{k+1}$ is a given sequence of vertices
with $\pi_0 \in \partial H_1$ and $\pi_k \in \partial H_2$.
The event in \eqref{eq:nofa} is not factorizable, because the degrees of the vertices
in the path $\pi$ are not specified, hence it is not easy to evaluate
its probability. To get a factorizable event, we need to give more information.
For a vertex $v \in [t]$, define its \emph{incoming neighborhood} $\mathcal{N}(v)$ by
\begin{equation} \label{eq:inco}
	\cN(v) := \{(u,j) \in [t] \times [m]: \ u \overset{j}{\to} v\} \,.
\end{equation}
The key observation is that \emph{the knowledge of $\cN(v)$ determines the degree $D_s(v)$
at any time $s \le t$} (for instance, at time $t$ we simply have $D_t(v) = |\cN(v)| + m$).

We are going to fix the incoming neghborhoods $\cN(\pi_1) = K_1$,
\ldots, $\cN(\pi_{k-1}) = K_{k-1}$ of all vertices in the path $\pi$,
except the extreme ones $\pi_0$ and $\pi_k$ (note that $\cN(\pi_0)$
and $\cN(\pi_k)$ reduce to single points in $H_1^o$ and $H_2^o$, respectively,
because $\pi_0 \in \partial H_1$ and $\pi_k \in \partial H_2$).
We emphasize that such incoming neighborhoods
allow us to determine whether $\pi = (\pi_0, \ldots, \pi_k)$ is a path in $\PA{t}$.
Recalling the definition of the event $B_t$ in \eqref{eq-PAM-deg},
we restrict to
\begin{equation}\label{eq:Ksqrt}
	|K_i| \le \sqrt{t}, \qquad \text{for }i\in[k-1],
\end{equation}
and simply drop $B_t$ from \eqref{eq:nofa}. 
We will then prove the following relation:
for all $v_1, v_2$, $H_1, H_2$, $\pi = (\pi_0, \ldots, \pi_k)$,
and for all $K_1, \ldots, K_{k-1}$ satisfying \eqref{eq:Ksqrt}, we have
\begin{equation}\label{eq:usbb2}
\begin{split}
	&\pr\left(
	U_{\leq k^{-}_t}(v_1)=H_1, \ U_{\leq k^{-}_t}(v_2)
	=H_2, \ \{ \cN(\pi_1) = K_1, \, \ldots, \cN(\pi_{k-1}) = K_{k-1} \} \right)\\
	&\leq q\, \pr\left(U_{\leq k^{-}_t}(v_1)=H_1,~U_{\leq k^{-}_t}(v_2)=H_2\right)
	\pr\left(\cN(\pi_1) = K_1, \, \ldots, \cN(\pi_{k-1}) = K_{k-1}\right).
\end{split}
\end{equation}
Our goal \eqref{eq:usbb} follows by summing this relation
over all 
$K_1, \ldots, K_{k-1}$ 
for which $\pi \subseteq \PA{t}$.

The first line of \eqref{eq:usbb2} is the probability
of a factorizable event. In fact, setting for short
\begin{equation*}
	R \; := \; \big( H_1^o \times [m]\big) \; \cup \; 
	\big( H_2^o \times [m] \big) 
	\; \cup \; K_1 \; \cup \; \ldots \; \cup \; K_{k-1} \,,
\end{equation*}
the event in the first line of \eqref{eq:usbb2} is the intersection
of the following four events ({\blue see} \eqref{prefatt-mkmom-1}):
\begin{gather*}
	\bigcap_{u \in H_1^o} \bigcap_{j=1}^m \{u \overset{j}{\to} \theta^{H_1}_j(u) \} \,,
	\qquad
	\bigcap_{u \in H_2^o} \bigcap_{j=1}^m \{u \overset{j}{\to} \theta^{H_2}_j(u) \} \,,
	\qquad
	\bigcap_{i=1}^{k-1} \bigcap_{(u,j) \in K_i} \{u \overset{j}{\to} \pi_i\}, \\
	\bigcap_{(u,j) \in [t] \times [m] \, \setminus \, R} \{u \overset{j}{\not\to} 
	(H_1 \cup H_2 \cup \pi^o) \} \,,
\end{gather*}
where we set $\pi^o := \pi \setminus \{\pi_0, \pi_k\} = (\pi_1, \ldots, \pi_{k-1})$.
Generalizing \eqref{prefatt-mkmom-2}-\eqref{prefatt-mkmom-3},
we can rewrite the first line of \eqref{eq:usbb2} as follows, recalling \eqref{prefatt-attprob}:
\begin{equation} \label{eq:nogat}
\begin{split}
	& \pr\left(
	U_{\leq k^{-}_t}(v_1)=H_1, \ U_{\leq k^{-}_t}(v_2)
	=H_2, \ \{ \cN(\pi_1) = K_1, \, \ldots, \cN(\pi_{k-1}) = K_{k-1} \} \right) \\
	&\quad = \Bigg\{ \prod_{u \in H_1^o} \prod_{j=1}^m \frac{m+\delta}{c_{u,j}} \Bigg\}
	\Bigg\{ \prod_{u \in H_2^o} \prod_{j=1}^m \frac{m+\delta}{c_{u,j}} \Bigg\}
	\Bigg\{ \prod_{i=1}^{k-1} \prod_{(u,j) \in K_i} \frac{D_{u,j-1}(\pi_i)+\delta}{c_{u,j}} \Bigg\} \\
	& \quad \quad \ \ \Bigg\{ \prod_{(u,j) \in [t]\times [m] \;\setminus\; R}
	\bigg(1- \frac{D_{u,j-1}(H_1 \cup H_2 \cup \pi^o)
	+ |(H_1 \cup H_2 \cup \pi^o) \cap [u-1]| \delta}{c_{u,j}} \bigg) \Bigg\} .
\end{split}
\end{equation}
We stress that $D_{u,j-1}(\pi_i)$ is {\em non-random}, because it is determined
by $K_i$. Analogous considerations apply to $D_{u,j-1}(H_1 \cup H_2 \cup \pi^o)$.
We have thus obtained a factorizable event.

Next we evaluate the second line of \eqref{eq:usbb2}.
Looking back at \eqref{prefatt-mkmom-11}-\eqref{prefatt-mkmom-13}, we have
\begin{equation} \label{eq:comp1}
\begin{split}
	& \pr\left(U_{\leq k^{-}_t}(v_1)=H_1,~U_{\leq k^{-}_t}(v_2)=H_2\right) =
	\Bigg\{ \prod_{u \in H_1^o} \prod_{j=1}^m \frac{m+\delta}{c_{u,j}} \Bigg\}
	\Bigg\{ \prod_{u \in H_2^o} \prod_{j=1}^m \frac{m+\delta}{c_{u,j}} \Bigg\} \\
	& \qquad\qquad 
	\Bigg\{ \prod_{(u,j) \in [t]\times [m] \;\setminus\; (H_1^o \cup H_2^o) \times [m]}
	\bigg(1- \frac{D_{u,j-1}(H_1 \cup H_2)
	+ |(H_1 \cup H_2) \cap [u-1]| \delta}{c_{u,j}} \bigg) \Bigg\} \,.
\end{split}
\end{equation}
On the other hand,
\begin{equation} \label{eq:comp2}
\begin{split}
	& \pr\left(\cN(\pi_1) = K_1, \, \ldots, \cN(\pi_{k-1}) = K_{k-1}\right) =
	\Bigg\{ \prod_{i=1}^{k-1} \prod_{(u,j) \in K_i} \frac{D_{u,j-1}(\pi_i)+\delta}{c_{u,j}} \Bigg\} \\
	& \qquad \Bigg\{ \prod_{(u,j) \in [t]\times [m] \;\setminus\; K_1 \cup \ldots \cup K_{k-1}}
	\bigg(1- \frac{D_{u,j-1}(\pi^o)
	+ |\pi^o \cap [u-1]| \delta}{c_{u,j}} \bigg) \Bigg\} .
\end{split}
\end{equation}
Using the bound $(1-(a+b)) \le (1-a)(1-b)$ in the second line of \eqref{eq:nogat},
and comparing with \eqref{eq:comp1}-\eqref{eq:comp2}, we only need
to take into account the missing terms in the product in the last lines.
This shows that relation \eqref{eq:usbb2} holds if one sets $q = C_1 \, C_2$ therein, where
\begin{gather*}
	C_1 := \Bigg\{ \prod_{(u,j) \in K_1 \; \cup \; \ldots \; \cup \; K_{k-1}}
	\bigg(1- \frac{D_{u,j-1}(H_1 \cup H_2)
	+ |(H_1 \cup H_2) \cap [u-1]| \delta}{c_{u,j}} \bigg) \Bigg\}^{-1} , \\
	C_2 := \Bigg\{ \prod_{(u,j) \in (H_1^o \cup H_2^o) \times [m]}
	\bigg(1- \frac{D_{u,j-1}(\pi^o)
	+ |\pi^o \cap [u-1]| \delta}{c_{u,j}} \bigg) \Bigg\}^{-1} .
\end{gather*}

To complete the proof, it is enough to give uniform upper bounds on $C_1$ and $C_2$,
that does not depend on $H_1$, $H_2$, $\pi$. We start with $C_1$.
In the product we may assume $u > t/4$, because the terms with
$u \le t/4$ are identically one, since $H_1, H_2 \subseteq [t] \setminus [t/4]$.
Moreover, for $u > t/4$ we have $c_{u,j} \ge t(2m+\delta)/4 \ge mt/4$
by \eqref{eq:normali} and $\delta > -m$. Since $D_{u,j-1}(H_1 \cup H_2) \le 2(m+1) i_k$,
using $1-x \ge \e^{-2x}$ for $x$ small and recalling that $\delta < 0$, it follows that
\eqn{
\begin{split}
	C_1^{-1}& \ge \prod_{(u,j) \in K_1 \; \cup \; \ldots \; \cup \; K_{k-1}} \left(
	1-\frac{2(m+1) i_k}{\frac{m}{4}t}\right)
	\ge \e^{-\frac{8(m+1)}{tm} |K_{[k-1]} | i_k} ,
\end{split}
}
where $K_{\sss [k-1]}=K_1\; \cup \; \ldots \; \cup \; K_{k-1}$.
Since $i_{k}$ is given by \eqref{prefatt-i_k}, for $k = k^{-}_t$ as in \eqref{kstar-MKC-prefatt}
we have $i_{k}=\frac{m}{m-1} m^{k^{-}_t}(1+o(1)) \le 2 (\log t)^{1-\varepsilon}$.
Recalling also \eqref{eq:Ksqrt} and bounding $m+1\le 2m$, we obtain
\begin{equation*}
	C_1 \le \e^{\frac{8(m+1)}{tm} |K_{\sss [k-1]}| i_k} 
	\le \e^{16k \, i_{k}/\sqrt{t}}  = \e^{O(\log t/\sqrt{t})} = 1 + o(1) \,.
\end{equation*}
For $C_2$, since $D_{u,j-1}(\pi^o) \le D_{t}(\pi^o) = |K_{\sss [k-1]}|
\le k \sqrt{t}$, again by \eqref{eq:Ksqrt}, we get
\eqn{
\begin{split}
	C_2^{-1} &\ge \prod_{(u,j) \in (H_1^o \cup H_2^o) \times [m]}
	\left(1-\frac{k \sqrt{t}}{\frac{m}{4} t}\right) 
	\ge \e^{- \frac{8}{m} \, \frac{k}{\sqrt{t}} \, |H_1^o \cup H_2^o| m}
	\ge \e^{- 16 \, k \, i_k/\sqrt{t}} = 1 - o(1) \,.
\end{split}
}
It follows that $C_1C_2$ is bounded from above by some constant $q$. This completes the proof.
\end{proof}

\subsection{Proof of Theorem \ref{typ-dis-prefatt}}

Dereich, M\"onch and M\"orters \cite{DSMCM} have already proved the upper bound. 
For the lower bound we use Proposition~\ref{prefatt-prop-bounddistance}. In fact, for $\bar{k}_t$ as in \eqref{prefatt-low-kt},
	\eqn{
	\label{prefatt-typ-1}
	\pr\left(H_t\leq 2\bar{k}_t\right) = \sum_{v_1,v_2\in [t]}\pr\left(V_1=v_1,V_2=v_2,\dist(v_1,v_2)\leq 2\bar{k}_t\right).
	}
If $v_1$ and $v_2$ are both larger or equal than $g_0=\lceil\frac{t}{(\log t)^2}\rceil$, 
then we can apply Proposition \ref{prefatt-prop-bounddistance}. The probability that $V_1<g_0$ or $V_2<g_0$ is
	\eqn{
	\label{prefatt-typ-2}
	\pr\left(\{V_1<g_0\}\cup\{V_2<g_0\}\right) \leq 2g_0/t = o(1),
	}
hence we get
	\eqan{
	\label{prefatt-typ-3}
	&\frac{1}{t^2}\sum_{v_1,v_2\in [t]\setminus[g_0]}
	\pr\left(\dist(v_1,v_2)\leq 2\bar{k}_t\right)+o(1)
	\leq \frac{(t-g_0)^2}{t^2}\frac{p}{(\log{t})^2}+o(1)=o(1),\nn
	}
and this completes the proof of Theorem \ref{typ-dis-prefatt}.
\qed

\section{Upper bound for configuration model}
\label{upperproofs-cmnd}

In this section we prove Statements~\ref{stat-collision} and~\ref{stat-triplelog}
for the configuration model. By the discussion in
Section~\ref{sub-upper}, this completes the proof of the upper
bound in Theorem~\ref{main-cmnd}, because the proof
of Statement~\ref{stat-core} is already known in the literature, 
as explained below Statement~\ref{stat-core}.

Throughout this section, the assumptions of Theorem~\ref{main-cmnd}
apply. In particular, we work on a configuration model
$\CMnd$, with $\tau\in(2,3)$ and $\dmin\geq3$.

\subsection{Proof of Statement \ref{stat-collision}}
\label{proof-upper-cmnd-1}

We first recall what $\core_n$ is, and define the $k$-exploration graph.

Recall from \eqref{core_n-def} that, for $\CMnd$, $\core_n$ is defined as
  $$
	  \core_n = \left\{i\in[n]\mbox{ such that }d_i > (\log n)^\sigma\right\},
  $$
where $\sigma>1/(3-\tau)$. Since the degrees $d_i$ are fixed in the configuration model,
$\core_n$ is a deterministic subset.

For any $v \in [n]$, we recall that
$U_{\leq k}(v) \subseteq [n]$ denotes the subgraph of $\CMnd$ consisting of
the vertices at distance at most $k$ from $v$. 
We next consider the \emph{$k$-exploration graph} $\widehat U_{\leq k}(v)$
as a modification of $U_{\leq k}(v)$, where we only explore $\dmin$
half-edges of the starting vertex $v$, and only $\dmin - 1$
for the following vertices:

\begin{Definition}[$k$-exploration graph in $\CMnd$]
\label{cmnd-def-kexp} 
The {\em $k$-exploration graph} of a vertex $v$ is the subgraph $\widehat U_{\leq k}(v)$ 
built iteratively as follows: 

\begin{itemize}
\item[$\rhd$] Starting from $\widehat U_{\leq 0}(v) = \{v\}$, we consider the first $\dmin$ 
half-edges of $v$ and we pair {\blue them, one by one,}
to a uniformly chosen {\blue unpaired} half-edge (see Remark~\ref{cmnd-conditionedlaw}),
to obtain $\widehat U_{\leq 1}(v)$.

\item[$\rhd$] Assume that we have built $\widehat U_{\leq \ell}(v)$, for $\ell \ge 1$,
and set $\widehat U_{=\ell}(v) 
:= \widehat U_{\leq \ell}(v) \setminus \widehat U_{\leq (\ell-1)}(v)$.
For each vertex in $\widehat U_{=\ell}(v)$, we consider
the first $\dmin - 1$ \emph{unpaired} half-edges 
and we pair {\blue them, one by one,} to a uniformly chosen {\blue unpaired} half-edge,
to obtain $\widehat U_{\leq (\ell + 1)}(v)$.
(Note that, by construction, each vertex in $\widehat U_{=\ell}(v)$
has \emph{at least} one already paired half-edge.)

\end{itemize}
\end{Definition}

\begin{Definition}[Collision]
\label{cmnd-def-coll}
In the process of building the $k$-exploration graph $\widehat U_{\leq k}(v)$, we say that there is a
\emph{collision} when a half-edge is paired to a vertex already included in the $k$-exploration graph.
\end{Definition}

We now prove Statement~\ref{stat-collision}. Let us fix $\varepsilon > 0$ and set
	\eqn{
	\label{cmnd-kstar-expl}
	k^{+}_n = \left(1+\varepsilon\right)\frac{\log\log n}{\log(\dmin-1)}.
	}

\begin{Proposition}[At most one collision]
\label{cmnd-prop-collision}
Under the assumption of Theorem~\ref{main-cmnd}, the following holds with high probability:
the $k^{+}_n$-exploration graph of \emph{every} vertex either intersects $\core_n$,
or it has at most {\blue one} collision.
\end{Proposition}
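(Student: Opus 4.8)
The plan is to bound, by a union bound over all $n$ vertices, the probability that the $k^+_n$-exploration graph of a fixed vertex $v$ has at least two collisions before hitting $\core_n$. Fix $v$ and run the exploration of Definition~\ref{cmnd-def-kexp} up to depth $k^+_n$, stopping early if we ever reach a vertex in $\core_n$. At each pairing step we match a half-edge to a uniformly chosen unpaired half-edge. Since we avoid $\core_n$, every vertex so far has degree at most $(\log n)^\sigma$, so after $\ell$ steps the exploration graph has at most $1 + \dmin\sum_{j=0}^{\ell-1}(\dmin-1)^j(\log n)^\sigma = (\log n)^{\sigma + o(1)}(\dmin-1)^{k^+_n} = (\log n)^{1+\varepsilon+\sigma+o(1)}$ vertices at the final depth, and hence at most that many half-edges incident to it (call this bound $\Lambda_n$; note $\Lambda_n = (\log n)^{O(1)}$, a fixed power of $\log n$). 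A collision at a given pairing step occurs precisely when the uniformly chosen half-edge happens to be one of the at most $\Lambda_n$ half-edges incident to the current exploration graph; conditionally on the past this has probability at most $\Lambda_n/(\ell_n - 2i)$ where $i$ is the number of pairings done so far. Since the total number of pairings in the whole exploration is at most $\Lambda_n \ll \ell_n \sim \mu n$, each such conditional probability is at most $2\Lambda_n/\ell_n$.

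The key step is then a second-moment / Bonferroni-type bound on the number of collisions. Let $N$ be the (random) number of collisions during the exploration of $v$. Enumerating the pairing steps $1,\dots,S$ with $S \le \Lambda_n$, and writing $c_s$ for the indicator that step $s$ produces a collision, we have $\pr(c_s = 1 \mid \mathcal{F}_{s-1}) \le 2\Lambda_n/\ell_n$ for each $s$. Therefore
\begin{equation}
\pr(N \ge 2) \le \sum_{1 \le s < s' \le S} \pr(c_s = 1,\, c_{s'} = 1)
\le \binom{\Lambda_n}{2}\left(\frac{2\Lambda_n}{\ell_n}\right)^2
= O\!\left(\frac{\Lambda_n^4}{\ell_n^2}\right)
= O\!\left(\frac{(\log n)^{O(1)}}{n^2}\right),
\end{equation}
where in the middle inequality we used that, conditionally on $\mathcal{F}_{s'-1}$ (which includes the outcome of step $s$), the step-$s'$ collision probability is still at most $2\Lambda_n/\ell_n$, and then took expectations. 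Summing this over all $v \in [n]$ gives a total probability $O((\log n)^{O(1)}/n) = o(1)$, which proves that with high probability \emph{every} vertex's $k^+_n$-exploration graph either meets $\core_n$ or has at most one collision.

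The main obstacle I anticipate is purely bookkeeping rather than conceptual: one must be careful that the ``stop on hitting $\core_n$'' rule is correctly incorporated, so that the degree-bound $(\log n)^\sigma$ — and hence the bound $\Lambda_n$ on the number of half-edges incident to the exploration graph — genuinely holds at every pairing step; and one must check that the exploration graph is well-defined (half-edges may run out, but since $\Lambda_n = o(\ell_n)$ this is not an issue). A minor subtlety is that the exploration explores only $\dmin-1$ forward half-edges per non-root vertex, so one should make sure the boundary size and half-edge count are estimated with the correct branching factor $\dmin-1$; this only affects the exponent inside $\Lambda_n = (\log n)^{O(1)}$, which is all that matters. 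The fact that $\ell_n = \mu n(1+o(1))$ with $\mu < \infty$, guaranteed by the degree regularity conditions (see \eqref{eq:nel}), is what makes the denominator genuinely of order $n$ and drives the whole bound.
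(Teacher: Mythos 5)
Your proposal is correct and takes essentially the same approach as the paper's proof: on the event that the exploration has so far avoided $\core_n$, bound the conditional collision probability at each pairing step by $(\log n)^{\sigma+1+\varepsilon+o(1)}/n$, sum over pairs of steps to get $O((\log n)^{O(1)}/n^2)$ for a fixed vertex, and finish with a union bound over all $n$ vertices. The only differences are bookkeeping details (your uniform half-edge bound $\Lambda_n$ versus the paper's step-dependent bound $(i-1)(\log n)^\sigma$, handled there via the events $E_{\ell}$), which you correctly flag and which do not affect the argument.
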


\begin{proof}
Let us fix a vertex $v \in [n]$. We are going to estimate the probability
\begin{equation*}
	q_n(v) := \pr\Big(\text{there are at least $2$ collisions in $\widehat U_{\le k^{+}_n}(v)$
	and }\widehat U_{\le k^{+}_n}(v) \cap \core_n = \varnothing \Big) \,.
\end{equation*}
If we show that $\sup_{v\in [n]} q_n(v) = o(1/n)$, then it follows that
$\sum_{v \in [n]} q_n(v) = o(1)$, completing the proof.

Starting from the vertex $v$, we pair
successively one half-edge after the other,
as described in Definition~\ref{cmnd-def-kexp}
(recall also Remark~\ref{cmnd-conditionedlaw}).
In order to build $\widehat U_{\le k^{+}_n}(v)$,
{\blue we need to make a number of pairings,
denoted by  $\cN$, which is \emph{random}},
because collisions may occur.
In fact, when there are no collisions, $\cN$ is deterministic and takes its maximal value
given by $i_{k^{+}_n}$ in \eqref{i-k}, therefore
\begin{equation} \label{eq:boundcN}
	\cN \le i_{k^{+}_n}
	\le \frac{\dmin}{\dmin-2} (\dmin - 1)^{k^{+}_n} \le 3 \, (\log n)^{1+\varepsilon} \,.
\end{equation}
Introducing the event $C_i :=$ ``there is a collision when pairing
the $i$-th half-edge'', we can write
\begin{equation} \label{eq:luba1}
\begin{split}
	q_n(v) & \le \E \Bigg[ \sum_{1 \le i < j \le \cN} \I_{\{ C_i, \ C_j, \
	\widehat U_{\le k^{+}_n}(v) \cap \core_n = \varnothing \}} \Bigg] \\
	& = \sum_{1 \le i < j \le 3 (\log n)^{1+\varepsilon}} \pr \big( C_i, \ C_j, \
	j \le \cN , \
	\widehat U_{\le k^{+}_n}(v) \cap \core_n = \varnothing \big) \,.
\end{split}
\end{equation}

Let $E_\ell$ be the event that the first $\ell$ half-edges are paired to vertices
with degree $\le (\log n)^\sigma$ (i.e., the graph obtained after
pairing the first $\ell$ half-edges is disjoint from $\core_n$). Then
\begin{equation}\label{eq:luba2}
\begin{split}
	\pr \big( C_i, \ C_j, \
	j \le \cN , \
	\widehat U_{\le k^{+}_n}(v) \cap \core_n = \varnothing \big)
	& \le \pr \big( C_i, \ C_j, \ E_{j-1}\big) \\
	& = \pr ( E_{i-1} ) \, \pr( C_i \,|\, E_{i-1}) \, \pr( C_j \,|\, C_i, \, E_{j-1} ) \,.
\end{split}
\end{equation}
On the event $E_{i-1}$, before pairing the $i$-th half-edge, the graph is composed by at most $i-1$
vertices, each with degree at most $(\log n)^\sigma$, hence, for $i \le 3 (\log n)^{1+\varepsilon}$,
\begin{equation*}
	\pr( C_i \,|\, E_{i-1}) \le
	\frac{(i-1) (\log n)^\sigma}{\ell_n - 2i + 1} \le 
	\frac{3 (\log n)^{1+\varepsilon} (\log n)^\sigma}
	{\ell_n - 6 (\log n)^{1+\varepsilon}}
	\le c \, \frac{(\log n)^{\sigma + 1 + \varepsilon}}{n} \,,
\end{equation*}
for some $c \in (0,\infty)$, thanks to $\ell_n=n \mu(1+o(1))$ (recall \eqref{eq:nel}).
The same arguments show that 
\begin{equation*}
	\pr( C_j \,|\, C_i, \, E_{j-1} ) \le c \, \frac{(\log n)^{\sigma + 1 + \varepsilon}}{n} \,.
\end{equation*}
Looking back at \eqref{eq:luba1}-\eqref{eq:luba2}, we obtain
\begin{equation*}
	\sup_{v\in [n]} q_n(v) \le \sum_{1 \le i < j \le 3 (\log n)^{1+\varepsilon}} 
	c^2 \, \frac{(\log n)^{2(\sigma + 1 + \varepsilon)}}{n^2}
	\le 9 \, c^2 \, \frac{(\log n)^{2\sigma + 4(1 + \varepsilon)}}{n^2}
	= o\bigg(\frac{1}{n}\bigg) \,,
\end{equation*}
which completes the proof.
\end{proof}

\begin{Corollary}[Large boundaries]
\label{cmnd-cor-boundryexpl}
Under the assumptions of Theorem~\ref{main-cmnd} and on the event $\widehat U_{\leq k^{+}_n}(v)\cap \core_n=\varnothing$, with high probability,
the boundary $\widehat U_{= k^{+}_n}(v)$ of the ${k^{+}_n}$-exploration graph
of any vertex $v\in[n]$ 
contains at least $(\dmin - 2)(\dmin-1)^{k^{+}_n-1} \ge \frac{1}{2} (\log n)^{1+\varepsilon}$ vertices,
each one with at least two unpaired half-edges.
\end{Corollary}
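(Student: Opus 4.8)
The plan is to derive the corollary from Proposition~\ref{cmnd-prop-collision} by a purely deterministic analysis of the exploration tree, once the random event has been fixed. First I would invoke Proposition~\ref{cmnd-prop-collision}: with high probability, the $k^{+}_n$-exploration graph of \emph{every} vertex $v\in[n]$ either intersects $\core_n$ or has at most one collision. From now on I would work on this high-probability event intersected with $\{\widehat U_{\le k^{+}_n}(v)\cap\core_n=\varnothing\}$, so that $\widehat U_{\le k^{+}_n}(v)$ has at most one collision; everything below is then deterministic.

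I would start with the collision-free case. If the exploration of $v$ produces no collision, the rule of Definition~\ref{cmnd-def-kexp} makes $\widehat U_{\le k^{+}_n}(v)$ a tree whose root has exactly $\dmin$ children and whose other vertices each have exactly $\dmin-1$ children, so $|\widehat U_{=k^{+}_n}(v)|=\dmin(\dmin-1)^{k^{+}_n-1}$, and every boundary vertex $w$, having one half-edge paired towards its parent and never being explored, carries $d_w-1\ge\dmin-1\ge 2$ unpaired half-edges. The core of the argument is to absorb one collision. With exactly one collision, $\widehat U_{\le k^{+}_n}(v)$ has as many edges as vertices, hence it is a tree plus one extra edge (possibly a self-loop or a multiple edge); deleting that edge yields a spanning tree $T$, and since the exploration is breadth-first the extra edge joins two vertices whose distances from $v$ differ by at most one, so depth in $T$ coincides with distance from $v$. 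The collision consumes two half-edges, belonging to at most two vertices, each of which thus loses at most one child relative to the collision-free tree; a missing child at depth $d+1$ destroys a subtree with at most $(\dmin-1)^{k^{+}_n-1-d}\le(\dmin-1)^{k^{+}_n-1}$ boundary vertices, so
\begin{equation*}
	|\widehat U_{=k^{+}_n}(v)|\ \ge\ \dmin(\dmin-1)^{k^{+}_n-1}-2(\dmin-1)^{k^{+}_n-1}\ =\ (\dmin-2)(\dmin-1)^{k^{+}_n-1}.
\end{equation*}

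To count unpaired half-edges I would split into two cases according to the extra edge. If no endpoint of it lies on the boundary $\widehat U_{=k^{+}_n}(v)$ — which always holds for a self-loop, and more generally whenever both endpoints have depth $\le k^{+}_n-1$ — then the estimate from the collision-free case applies verbatim to \emph{every} boundary vertex. If instead an endpoint $b$ of the extra edge has depth $k^{+}_n$, then its other endpoint has depth $k^{+}_n-1$ (it is the half-edge actively being explored at the moment of the collision), so the collision destroys at most a single subtree rooted at depth $k^{+}_n$, whence $|\widehat U_{=k^{+}_n}(v)|\ge(\dmin-1)^{k^{+}_n}$; among these vertices only $b$ may have fewer than two unpaired half-edges, and it still has $d_b-2\ge\dmin-2\ge 1$. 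Since $(\dmin-1)^{k^{+}_n}-1\ge(\dmin-2)(\dmin-1)^{k^{+}_n-1}$, in either case at least $(\dmin-2)(\dmin-1)^{k^{+}_n-1}$ boundary vertices carry two unpaired half-edges.

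Finally I would substitute the value of $k^{+}_n$ from \eqref{cmnd-kstar-expl}: $(\dmin-1)^{k^{+}_n}=\e^{(1+\varepsilon)\log\log n}=(\log n)^{1+\varepsilon}$, and $\tfrac{\dmin-2}{\dmin-1}\ge\tfrac12$ because $\dmin\ge 3$, so $(\dmin-2)(\dmin-1)^{k^{+}_n-1}=\tfrac{\dmin-2}{\dmin-1}(\log n)^{1+\varepsilon}\ge\tfrac12(\log n)^{1+\varepsilon}$, which completes the proof. The main obstacle is the combinatorial bookkeeping in the third step: one must track \emph{both} half-edges consumed by a collision together with the depth of the subtrees they would otherwise have spawned, and in particular handle the borderline configuration in which one endpoint of the collision edge is itself a boundary vertex; everything else is routine.
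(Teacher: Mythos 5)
Your proof is correct and follows essentially the same route as the paper: invoke Proposition~\ref{cmnd-prop-collision} to reduce to at most one collision, then deterministically bound the number of boundary vertices lost (at most two subtrees, worst case at the root, exactly the paper's ``self-loop at $v$'' worst case) and check the unpaired half-edges, before substituting $k^{+}_n$ from \eqref{cmnd-kstar-expl}. If anything, your bookkeeping is more careful than the paper's, since you explicitly treat the borderline configuration in which the collision target is itself a vertex of $\widehat U_{= k^{+}_n}(v)$ (leaving it only $\dmin-2$ unpaired half-edges), a case the paper's two-line argument passes over silently.
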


\begin{proof} 
By Proposition~\ref{cmnd-prop-collision}, with high probability, 
every ${k^{+}_n}$-exploration graph has at most one collision before hitting $\core_n$. 
The worst case is when the collision happens immediately, i.e.\
a half-edge incident to $v$ is paired to another half-edge incident to $v$: in this case,
removing both half-edges, the $k^{+}_n$-exploration
graph becomes a tree with $(\dmin - 2) (\dmin - 1)^{k^{+}_n - 1}$ vertices
on its boundary,
each of which has at least $(\dmin - 1) \ge 2$ yet unpaired half-edges.
Since $(\dmin - 2)/(\dmin - 1) \ge \frac{1}{2}$ for $\dmin \ge 3$,
and moreover $(\dmin - 1)^{k^{+}_n} = (\log n)^{1+\varepsilon}$ by \eqref{cmnd-kstar-expl},
we obtain the claimed bound.

If the collision happens at a later stage, i.e.\ for a half-edge incident
to a vertex different from the starting vertex $v$, then we just remove the branch from $v$
to that vertex, getting a tree with $(\dmin - 1) (\dmin - 1)^{k^{+}_n - 1} $ vertices
on its boundary. The conclusion follows.
\end{proof}

Together, Proposition \ref{cmnd-prop-collision} and Corollary \ref{cmnd-cor-boundryexpl} prove Statement \ref{stat-collision}.
\qed

\subsection{Proof of Statement \ref{stat-triplelog}}
\label{proof-upper-cmnd-2}
Consider the ${k^{+}_n}$-exploration graph
$\widehat U =
\widehat U_{\le k^{+}_n}(v)$ of a fixed vertex $v\in[n]$, as in Definition~\ref{cmnd-def-kexp},
and let $x_1,\ldots,x_N$ be the (random) vertices on its boundary.
We stress that, by Corollary \ref{cmnd-cor-boundryexpl}, with high probability
$N\geq \frac{1}{2}(\log n)^{1+\varepsilon}$. Set
	\eqn{
	\label{cmnd-hlog}
  	h_n = \big\lceil B\log\log\log n +\, C \big\rceil \,,
	}
where $B, C$ are fixed constants, to be determined later on. 

Henceforth we fix a realization $H$ of $\widehat U = \widehat U_{\le k^{+}_n}(v)$
and we work \emph{conditionally} on the event $\{\widehat U =H\}$.
By Remark~\ref{cmnd-conditionedlaw}, we can complete the construction
of the configuration model $\CMnd$ by pairing uniformly all the yet unpaired half-edges.
We do this as follows: for each vertex $x_1,\ldots,x_N$
on the boundary of $\widehat U$, we explore
its neighborhood, looking for \emph{fresh} vertices with higher and higher degree, up
to distance $h_n$ (we call a vertex \emph{fresh} if it is connected
to the graph for the first time, hence it only has one paired half-edge).
We now describe this procedure in detail: 

\begin{Definition}[Exploration procedure]
\label{cmnd-def-expl}
Let $x_1, \ldots, x_N$ denote the vertices on the boundary of
a $k^{+}_n$-exploration graph $\widehat U = \widehat U_{\le k^{+}_n}(v)$.
We start the exploration procedure from $x_1$.
\begin{itemize}
\item[$\rhd$] Step 1. We set $v_0^{\sss(1)} := x_1$ and we pair all its unpaired half-edges.
\emph{Among the fresh vertices} to which $v_0^{\sss(1)}$ has been connected,
we call $v_1$ the one with maximal degree.

\item[$\rhd$] When there are no fresh vertices at some step, the procedure for $x_1$ stops.

\item[$\rhd$] Step 2. Assuming we have built $v_1^{\sss(1)}$,
we pair all its unpaired half-edges:
among the fresh connected vertices, we denote by
$v_2^{\sss(1)}$ the vertex with maximal degree.

\item[$\rhd$] We continue in this way for (at most) $h_n$ steps, defining $v_j^{\sss(1)}$ 
for $0 \le j \le h_n$ (recall \eqref{cmnd-hlog}).
\end{itemize}
After finishing the procedure for $x_1$, we perform the same procedure for $x_2, x_3, \ldots, x_N$,
defining the vertices $v_0^{\sss(i)}, v_1^{\sss(i)}, \ldots, v_{h_n}^{\sss(i)}$ starting from $v_0^{\sss(i)} = x_i$.
\end{Definition}

\begin{Definition}[Success]
\label{cmnd-def-success}
Let $x_1, \ldots, x_N$ be the vertices on the boundary of
a $k^{+}_n$-exploration graph $\widehat U = \widehat U_{\le k^{+}_n}(v)$.
We define the event $S_{x_i} :=$ ``$x_i$ 
is a {\em success}'' by
\begin{equation*}
	S_{x_i} := \big\{ \{v_0^{\sss(i)}, v_1^{\sss(i)}, \ldots, v_{h_n}^{\sss(i)}\} 
	\cap \core_n \ne \varnothing \big\}
	= \big\{ d_{v_j^{\sss(i)}} > (\log n)^\sigma \ \text{ for some } 0 \le j \le h_n\big\} \,.
\end{equation*}
\end{Definition}

\smallskip
Here is the key result, proved below:

\begin{Proposition}[Hitting the core quickly]
\label{cmnd-lemma-hitting}
There exists a constant $\eta>0$ such that, 
for every $n\in\N$ and
for every realization $H$ of $\widehat U$,
    		\eqn{
		\label{cmnd-success-1}
	    	\pr\big(S_{x_1} \,\big|\, \widehat U = H\big)\geq \eta,
	   	}
     and, for each $i=2,\ldots,N$,
		\eqn{
     		\label{cmnd-success-2}
	    	\pr\big(S_{x_i} \,\big|\, \widehat U = H \,,
		\, S_{x_1}^c \,,\, \ldots \,,\, S_{x_{i-1}}^c \big)\geq \eta.
     		}
\end{Proposition}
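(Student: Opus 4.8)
The strategy is a bounded‑degree, one‑step‑at‑a‑time coupling: show that at each of the $h_n$ steps of the exploration from a boundary vertex $x_i$, there is a fixed positive probability that the maximal‑degree fresh vertex discovered has degree at least a definite power of the degree reached so far, so that after $O(\log\log\log n)$ steps we reach degree $(\log n)^\sigma$ and hit $\core_n$. First I would record the key counting fact coming from Condition~\ref{pol-dis-con}: for any threshold $x\le n^{\alpha}$, the number of vertices of degree $\ge x$ is at least $c_1 n x^{-(\tau-1+\delta)}$, hence the number of \emph{half‑edges} incident to such vertices is at least $c_1 n x^{1-(\tau-1+\delta)}=c_1 n x^{2-\tau-\delta}$, which (since $\tau<3$ and $\delta$ is small) is a positive power of $n$ larger than $x$. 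Thus a single uniformly chosen unpaired half‑edge lands on a vertex of degree $\ge x$ with probability at least $\sim c_1 x^{2-\tau-\delta}/\mu$, a quantity bounded below uniformly in $n$ once $x$ is at most polylogarithmic in $n$ (which it always is, by the argument below, since we only ever need $x\le(\log n)^\sigma$ before stopping).

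Next I would set up the recursion on degrees. Condition on $\widehat U=H$ and on the failures $S_{x_1}^c,\dots,S_{x_{i-1}}^c$; by Remark~\ref{cmnd-conditionedlaw} the as‑yet‑unpaired half‑edges are still paired uniformly, so when we pair the half‑edges of $v_j^{\sss(i)}$ we are drawing uniformly from the remaining pool, whose size is $\ell_n-O((\log n)^{1+\varepsilon}\cdot(\log n)^{O(h_n)})=n\mu(1+o(1))$ (the total number of half‑edges touched in the whole procedure, over all boundary vertices and all $h_n$ steps, is still $n^{o(1)}$, so depletion is negligible, and the number of already‑explored vertices is $n^{o(1)}$ so "fresh" is satisfied with probability $1-o(1)$). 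A vertex $v$ of degree $d$ has at least $d-1\ge 1$ unpaired half‑edges to pair; the probability that \emph{at least one} of them connects to a fresh vertex of degree $\ge x$ is, by the half‑edge count above and a union/inclusion–exclusion bound, at least $1-(1-\tfrac{c_1}{2\mu}x^{2-\tau-\delta})^{d-1}\ge \tfrac{1}{2}\min\{1,\ (d-1)\tfrac{c_1}{2\mu}x^{2-\tau-\delta}\}$. Choosing $x=d^{\theta}$ with a fixed $\theta\in(0,1/(\tau-2+\delta))$, so that $(d-1)d^{-\theta(\tau-2+\delta)}\to\infty$, this probability is bounded below by a constant $\eta_0>0$ uniformly over all $d\ge \dmin\ge 3$. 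Hence with probability $\ge\eta_0$ we have $d_{v_{j+1}^{\sss(i)}}\ge (d_{v_j^{\sss(i)}})^{\theta}$.

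Finally I would iterate: starting from $d_{v_0^{\sss(i)}}=d_{x_i}\ge\dmin\ge 3$, after $j$ successful steps the degree is at least $3^{\theta^{-j}}\!$... wait — the exponent compounds the \emph{wrong} way, so instead I would run the recursion on $\log d$: a successful step multiplies $\log d_{v_j}$ by at least $\theta$... that also shrinks. The correct choice is $x=d^{\theta}$ with $\theta>1$, which is legitimate because the constraint from Condition~\ref{pol-dis-con} is only that $x\le n^\alpha$, and one needs $(d-1)x^{2-\tau-\delta}=(d-1)d^{\theta(2-\tau-\delta)}$ to stay bounded below; since $2-\tau-\delta\in(-1,0)$ we need $\theta<1/(\tau-2+\delta)$, and $1/(\tau-2+\delta)>1$ because $\tau<3$ and $\delta$ small, so we may indeed fix $\theta\in(1,1/(\tau-2+\delta))$. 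Then one successful step gives $\log d_{v_{j+1}}\ge\theta\log d_{v_j}$, so after $r$ successful steps $\log d_{v_r}\ge\theta^{r}\log\dmin$, and we exceed $(\log n)^\sigma$ — i.e.\ hit $\core_n$ — once $\theta^{r}\ge \sigma\log\log n/\log\dmin$, i.e.\ after $r=O(\log\log\log n)$ successful steps. So if we take $h_n=\lceil B\log\log\log n+C\rceil$ with $B,C$ chosen so that $h_n\ge r$, the event $S_{x_i}$ contains the event that all of the first $r$ steps (out of $h_n$) succeed, which by the one‑step bound and the Markov property has conditional probability at least $\eta_0^{\,r}$. This is not bounded below by a constant — so the final refinement is to note we do not need \emph{all} $h_n$ steps to succeed: we have $h_n$ independent‑enough trials and need only $r$ successes, so by a binomial/stopping argument $\pr(S_{x_i}\mid\cdots)\ge\pr(\mathrm{Bin}(h_n,\eta_0)\ge r)\ge \eta$ for a fixed $\eta>0$ once $B$ is large enough that $h_n\eta_0\ge 2r$, say, via a Chernoff bound. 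The one genuine obstacle is bookkeeping the conditioning: the depletion of the half‑edge pool and the "freshness" of discovered vertices must be controlled uniformly over all boundary vertices of all $k^+_n$‑exploration graphs and over the failure events $S_{x_1}^c,\dots,S_{x_{i-1}}^c$; this is handled by the crude bound that the total number of half‑edges ever touched is $N\cdot h_n\cdot(\log n)^{\sigma}\cdot\text{(branching)}=n^{o(1)}$, so all the denominators are $\ell_n(1+o(1))$ and all "fresh" requirements hold with the stated uniform constant, but it requires care to state cleanly.
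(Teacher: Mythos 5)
Your setup (uniform pairing after conditioning, the half-edge tail bound from Condition~\ref{pol-dis-con}, the one-step estimate that a vertex of degree $d$ finds a fresh neighbor of degree at least $d^{\theta}$ with $\theta\in(1,1/(\tau-2+\delta))$, and the count that $O(\log\log\log n)$ such steps reach $(\log n)^\sigma$) is essentially the paper's argument, including the doubly-exponential degree schedule (the paper fixes $g_{j+1}=g_j^{\,\e^{\gamma}}$ with $\e^{\gamma}(\tau-2+\zeta)<1$, which is your $\theta$). The conditioning bookkeeping you flag at the end is handled in the paper exactly as you suggest: all conditioning events are measurable with respect to $O((\log n)^{1+\sigma+\varepsilon})$ pairings, well within the range where the fresh-high-degree-vertex bound (Lemma~\ref{cmnd-lemma-highvertices}) holds uniformly.

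The genuine gap is your last step. From "at least $r$ successes among the $h_n$ steps" you cannot conclude $S_{x_i}$: a success only means $d_{v_{j+1}}\ge d_{v_j}^{\theta}$, and after a \emph{failure} the maximal-degree fresh neighbor may have degree as small as $\dmin$, so all accumulated growth can be reset. Scattered successes do not compound (e.g.\ alternating success/failure gives $h_n/2$ successes but never exceeds degree $\dmin^{\theta}$), so the bound $\pr(S_{x_i}\mid\cdots)\ge\pr(\mathrm{Bin}(h_n,\eta_0)\ge r)$ is unjustified; you would need a run of $r$ consecutive successes, which a binomial count does not provide. The fix — and this is what the paper does — is already contained in your own one-step estimate: do not coarsen it to the uniform constant $\eta_0$. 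Conditionally on all previous steps having succeeded (or the core having been hit already), the current degree exceeds $g_j$, which grows doubly exponentially, so the failure probability at step $j$ is at most $\varepsilon_j=\exp(-c\,g_j^{\xi}/2)$ with $\xi=1-\theta(\tau-2+\zeta)>0$; these are summable, hence $\pr(\text{all }h_n\text{ steps succeed})\ge\prod_{j\ge 0}(1-\varepsilon_j)=:\eta>0$ uniformly in $n$. The paper formalizes this with the events $E_j=T_{j-1}\cup W_j$ (core already hit, or degrees on schedule so far) and the induction $\pr^*(E_{j+1})\ge(1-\varepsilon_j)\pr^*(E_j)$, which also takes care of the case where the core is reached before step $h_n$; no "only $r$ out of $h_n$ successes" argument is needed, and as stated yours does not work.
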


\smallskip
This directly leads to the proof of Statement \ref{stat-triplelog}, as the following
corollary shows:

\begin{Corollary}[Distance between periphery and $\core_n$]
\label{cmnd-prop-perifery}
	Under the hypotheses of Theorem \ref{main-cmnd}, with high probability,
	the distance of every vertex in the graph from $\core_n$ is at most
		\eqn{
		\label{cmnd-formula-perifery}
		(1+\varepsilon)\frac{\log\log n}{\log(\dmin-1)}+o\left(\log\log n\right).
		}
\end{Corollary}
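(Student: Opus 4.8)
The plan is to deduce Corollary~\ref{cmnd-prop-perifery} directly from Proposition~\ref{cmnd-lemma-hitting} by a Borel--Cantelli-type union bound over all vertices $v\in[n]$. First I would fix $\varepsilon>0$, recall $k^+_n$ from \eqref{cmnd-kstar-expl} and $h_n$ from \eqref{cmnd-hlog}, and observe that it suffices to show that, with high probability, \emph{every} vertex $v\in[n]$ has its $k^+_n$-exploration graph $\widehat U_{\le k^+_n}(v)$ either already intersecting $\core_n$, or having at least one boundary vertex $x_i$ that is a success; in either case $\dist(v,\core_n)\le k^+_n+h_n$, which is exactly \eqref{cmnd-formula-perifery} since $h_n=o(\log\log n)$.

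The core estimate is a conditional-independence argument. Fix $v$ and condition on a realization $\{\widehat U_{\le k^+_n}(v)=H\}$ with $H\cap\core_n=\varnothing$; by Corollary~\ref{cmnd-cor-boundryexpl} we may restrict to the high-probability event that the boundary has $N\ge\frac12(\log n)^{1+\varepsilon}$ vertices $x_1,\dots,x_N$, each with at least two unpaired half-edges. Then, writing $F_v$ for the event that no $x_i$ is a success,
\begin{equation*}
\pr\big(F_v \,\big|\, \widehat U_{\le k^+_n}(v)=H\big)
= \prod_{i=1}^{N}\pr\big(S_{x_i}^c \,\big|\, \widehat U_{\le k^+_n}(v)=H,\, S_{x_1}^c,\dots,S_{x_{i-1}}^c\big)
\le (1-\eta)^N,
\end{equation*}
using Proposition~\ref{cmnd-lemma-hitting} at each factor. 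Since $N\ge\frac12(\log n)^{1+\varepsilon}$, this is at most $(1-\eta)^{(\log n)^{1+\varepsilon}/2}=\exp(-c(\log n)^{1+\varepsilon})$ for some $c>0$, which is $o(1/n)$. Averaging over the (finitely many) realizations $H$ and adding the $o(1)$ probability that the Corollary~\ref{cmnd-cor-boundryexpl} boundary bound or the Proposition~\ref{cmnd-prop-collision} one-collision bound fails, we get $\pr(F_v \text{ and } \widehat U_{\le k^+_n}(v)\cap\core_n=\varnothing)\le o(1/n)$, uniformly in $v$.

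Finally I would take a union bound over all $n$ vertices:
\begin{equation*}
\pr\Big(\exists\, v\in[n]:\ \widehat U_{\le k^+_n}(v)\cap\core_n=\varnothing \text{ and no } x_i \text{ is a success}\Big)
\le n\cdot o(1/n) = o(1).
\end{equation*}
On the complementary high-probability event, every vertex $v$ either has $\widehat U_{\le k^+_n}(v)$ meeting $\core_n$ within $k^+_n$ steps, or reaches $\core_n$ from some boundary vertex in at most $h_n$ further steps, so $\dist(v,\core_n)\le k^+_n+h_n=(1+\varepsilon)\frac{\log\log n}{\log(\dmin-1)}+o(\log\log n)$, as claimed.

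The main obstacle is not in this deduction itself---which is a routine second-Borel--Cantelli-flavoured union bound once the two probabilistic inputs are in place---but in ensuring that the conditioning in Proposition~\ref{cmnd-lemma-hitting} is genuinely compatible with the product decomposition above: the exploration procedure of Definition~\ref{cmnd-def-expl} for $x_i$ must only use half-edges left unpaired after processing $x_1,\dots,x_{i-1}$ (and after building $H$), so that the events $S_{x_1}^c,\dots,S_{x_{i-1}}^c$ together with $\{\widehat U_{\le k^+_n}(v)=H\}$ describe exactly the information generated so far, and the uniform lower bound $\eta$ genuinely applies. This bookkeeping is precisely what Proposition~\ref{cmnd-lemma-hitting} asserts (its proof handles the depletion of half-edges), so here one only needs to invoke it correctly; the quantitative slack $N\gtrsim(\log n)^{1+\varepsilon}\gg\log n$ is what makes $(1-\eta)^N=o(1/n)$ and hence closes the union bound over the $n$ vertices.
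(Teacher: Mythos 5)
Your argument is correct and is essentially the paper's own proof: condition on the realization $H$ of the $k^{+}_n$-exploration graph, apply Proposition~\ref{cmnd-lemma-hitting} factor by factor in the chain rule to get $(1-\eta)^N=o(1/n)$ uniformly in $H$, and then union bound over the $n$ vertices, treating the failure of Corollary~\ref{cmnd-cor-boundryexpl} as a single global $o(1)$ event. The only cosmetic difference is that the paper accounts for that global $o(1)$ once at the end rather than folding it into the per-vertex bound, but the bookkeeping and conclusion $\dist(v,\core_n)\le k^{+}_n+h_n$ are identical.
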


\begin{proof}
By Corollary~\ref{cmnd-cor-boundryexpl}, with high probability,
every vertex $v \in [n]$ either is at distance at most $k^{+}_n$ from $\core_n$, or has a $k^{+}_n$-exploration graph $\widehat U = \widehat U_{\le k^{+}_n}(v)$
with at least $N \ge \frac{1}{2}(\log n)^{1+\varepsilon}$ vertices on its boundary. It suffices to consider the latter case.
Conditionally on $\widehat U = H$,
the probability that none of these vertices is a success can be bounded by
Proposition~\ref{cmnd-lemma-hitting}:
	\eqn{
\begin{split}
	\label{cmnd-fastly-16}
	\pr\big(S_{x_1}^c\cap\cdots \cap S_{x_N}^c
	\,\big|\, \widehat U = H \big) 
	& = \pr\big(S_{x_1}^c \,\big|\, \widehat U = H \big) 
	\prod_{j=2}^N 	\pr\big(S_{x_j}^c \,\big|\, \widehat U = H \,,
		\, S_{x_1}^c \,,\, \ldots \,,\, S_{x_{j-1}}^c \big)\\
	& \le (1-\eta)^N \le (1-\eta)^{\frac{1}{2}(\log n)^{1+\varepsilon}}
	= o(1/n) \,.
\end{split}
	}
This is uniform over $H$, hence the probability
that no vertex is a success, without conditioning, is still $o(1/n)$.
It follows that, with high probability, every $v \in [n]$ 
has at least one successful vertex on the boundary of its $k^{+}_n$-exploration graph. 
This means that the distance of every
vertex $v \in [n]$ from $\core_n$ is at most $k^{+}_n + h_n = k^{+}_n + o(\log \log n)$,
by \eqref{cmnd-hlog}. Recalling \eqref{cmnd-kstar-expl},
we have completed the proof of Corollary~\ref{cmnd-prop-perifery} 
and thus of Statement \ref{stat-triplelog}.
\end{proof}

To prove Proposition \ref{cmnd-lemma-hitting}, we need the following technical
(but simple) result: 

\begin{Lemma}[High-degree fresh vertices]
\label{cmnd-lemma-highvertices}
Consider the process of building a configuration model $\CMnd$
as described in Remark~\ref{cmnd-conditionedlaw}.
Let $\cG_l$ be the random graph obtained after $l$ pairings of half-edges
and let $V_{l}$ be the random vertex incident to the half-edge to which 
the $l$-th half-edge is paired.
For all $l,n\in\N$ and $z \in [0,\infty)$ such that
 	\eqn{
	\label{cmnd-zcondition}
  	l\leq \frac{n}{4}(1-F_{\sub{d},n}(z)),
 	}
the following holds:
 	\eqn{
	\label{cmnd-high-relation}
    	\pr\left(\left.d_{V_{l+1}}>z \,,\, 
	V_{l+1} \not\in \cG_l \,\right|
    	\cG_l\right)\geq z[1-F_{\sub{d},n}(z)]\frac{n}{2\ell_n}.
 	}
In particular, when Conditions~\ref{drc} and~\ref{pol-dis-con} hold,
for every $\zeta > 0$ 
there are $c > 0$, $n_0 < \infty$
such that
\begin{equation}\label{eq:cmnd-high-relation+}
	\forall \ n \ge n_0\,, \
	\, 0 \le z \le n^{1/3}\,, \ \, l \le n^{1/3}: \qquad
	\pr\left(\left.d_{V_{l+1}}>z \,,\, V_{l+1} \not\in \cG_l \,\right| \cG_l\right)
	\ge \frac{c}{z^{\tau - 2 + \zeta}} \,.
\end{equation}

\end{Lemma}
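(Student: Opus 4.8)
The plan is to obtain \eqref{cmnd-high-relation} by a direct counting argument on half-edges, and then deduce \eqref{eq:cmnd-high-relation+} by plugging in the polynomial bounds of Condition~\ref{pol-dis-con}. First I would observe that, conditionally on $\cG_l$, the $(l+1)$-th pairing chooses a uniformly random half-edge among the $\ell_n - (2l+1)$ unpaired ones. To realize the event $\{d_{V_{l+1}} > z, \, V_{l+1} \notin \cG_l\}$, it suffices that this half-edge is incident to a vertex $w$ with $d_w > z$ that has not yet been touched by the exploration. Let $N_{>z}$ denote the number of vertices of degree $> z$, so $N_{>z} = n(1 - F_{\sub d,n}(z))$. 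At most $l$ of these can already be in $\cG_l$, so at least $N_{>z} - l$ fresh vertices of degree $> z$ remain, each contributing at least $z$ unpaired half-edges (in fact at least $z-1$, since such a vertex in $\cG_l$ would have at most one half-edge paired, but fresh vertices have none paired, so the full $d_w > z$ half-edges are available). Hence the number of favorable half-edges is at least $z\,(N_{>z} - l)$, and
\begin{equation*}
	\pr\left(\left.d_{V_{l+1}}>z \,,\, V_{l+1} \not\in \cG_l \,\right| \cG_l\right)
	\ge \frac{z\,(N_{>z} - l)}{\ell_n - (2l+1)}
	\ge \frac{z\,(N_{>z} - l)}{\ell_n} \,.
\end{equation*}
Using the hypothesis \eqref{cmnd-zcondition}, namely $l \le \tfrac{n}{4}(1 - F_{\sub d,n}(z)) = \tfrac14 N_{>z}$, we get $N_{>z} - l \ge \tfrac12 N_{>z} = \tfrac{n}{2}(1 - F_{\sub d,n}(z))$ (even $\tfrac34 N_{>z}$, but a factor $\tfrac12$ is all we need), which gives exactly \eqref{cmnd-high-relation}.

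For the second part I would fix $\zeta > 0$ and choose $\eta$ small (for instance $\eta = \zeta$, or $\eta < \zeta$) so that Condition~\ref{pol-dis-con} can be applied with parameter $\eta$. By \eqref{pol-con-left}, for $n$ large enough and $z \le n^{1/3} \le n^{\alpha(\eta)}$ (note $\alpha > 1/2 > 1/3$, so this holds eventually), we have $1 - F_{\sub d,n}(z) \ge c_1 z^{-(\tau - 1 + \eta)}$. First I would check the side condition \eqref{cmnd-zcondition}: for $l \le n^{1/3}$ and $z \le n^{1/3}$,
\begin{equation*}
	\frac{n}{4}(1 - F_{\sub d,n}(z)) \ge \frac{c_1}{4}\, n\, z^{-(\tau-1+\eta)}
	\ge \frac{c_1}{4}\, n \cdot n^{-(\tau - 1 + \eta)/3} = \frac{c_1}{4}\, n^{1 - (\tau-1+\eta)/3} \,,
\end{equation*}
and since $\tau < 3$ the exponent $1 - (\tau - 1 + \eta)/3$ is strictly larger than $1/3$ for $\eta$ small, so $\tfrac{n}{4}(1 - F_{\sub d,n}(z)) \ge n^{1/3} \ge l$ for all large $n$; thus \eqref{cmnd-high-relation} applies. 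Plugging the lower bound on $1 - F_{\sub d,n}(z)$ into \eqref{cmnd-high-relation}, and using $\ell_n = \mu n (1 + o(1))$ from \eqref{eq:nel} so that $n/(2\ell_n) \ge 1/(4\mu)$ for large $n$, yields
\begin{equation*}
	\pr\left(\left.d_{V_{l+1}}>z \,,\, V_{l+1} \not\in \cG_l \,\right| \cG_l\right)
	\ge z \cdot c_1 z^{-(\tau - 1 + \eta)} \cdot \frac{1}{4\mu}
	= \frac{c_1}{4\mu} \, z^{-(\tau - 2 + \eta)} \,,
\end{equation*}
which is \eqref{eq:cmnd-high-relation+} with $c = c_1/(4\mu)$ and $\zeta = \eta$ (and for a general prescribed $\zeta$ one simply applies this with any $\eta \le \zeta$, since $z^{-(\tau-2+\eta)} \ge z^{-(\tau-2+\zeta)}$ when $z \ge 1$, while for $z < 1$ the right-hand side of \eqref{eq:cmnd-high-relation+} exceeds $1$ only in a trivial range that can be absorbed by shrinking $c$; strictly speaking one notes the bound is only meaningful for $z \ge$ some constant, and for smaller $z$ the probability is trivially bounded below by its value at that constant).

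The argument is essentially routine; the only points requiring care are (i) the bookkeeping that fresh vertices of degree $>z$ really contribute their full degree in available half-edges — which is where the word \emph{fresh} (only one, or in this case zero, half-edges already paired) is used — and (ii) verifying that the side condition \eqref{cmnd-zcondition} is automatically satisfied in the regime $z, l \le n^{1/3}$, which is the main thing one must not skip, and which hinges on $\tau < 3$. There is no genuine obstacle; the lemma is a soft counting estimate designed precisely to be fed into the exploration procedure of Definition~\ref{cmnd-def-expl} in the proof of Proposition~\ref{cmnd-lemma-hitting}.
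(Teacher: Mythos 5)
Your argument is correct and follows essentially the same route as the paper's proof: both express the conditional probability as the fraction of unpaired half-edges incident to fresh vertices of degree $>z$, bound the numerator by $z$ times the number of such vertices, use \eqref{cmnd-zcondition} to keep at least half of the $n(1-F_{\sub{d},n}(z))$ high-degree vertices fresh, and then verify the side condition in the regime $z,l\le n^{1/3}$ via the polynomial lower bound of Condition~\ref{pol-dis-con} and $\tau<3$, with $\ell_n=\mu n(1+o(1))$. One small slip: after $l$ pairings up to $2l$ (not $l$) vertices can belong to $\cG_l$, but since \eqref{cmnd-zcondition} gives $2l\le\tfrac{n}{2}(1-F_{\sub{d},n}(z))$ the factor $\tfrac12$ you actually use survives (only your parenthetical ``$\tfrac34$'' claim would fail), so the conclusion is unaffected.
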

\begin{proof} 
By definition of $\CMnd$, the $(l+1)$-st half-edge is paired to a uniformly chosen
half-edge among the $\ell_n - 2l - 1$ that are not yet paired. Consequently
	\eqn{
	\pr\left(\left.d_{V_{l+1}}>z \,,\, 
	V_{l+1} \not\in \cG_l \,\right|
    	\cG_l\right) =
	\frac{1}{\ell_n-2l-1}\sum_{v\not\in \cG_l}d_v\I_{\{d_v > z\}}.
	}
Since $|\cG_l|\leq 2l\leq \frac{n}{2}(1-F_{\sub{d},n}(z))$ by \eqref{cmnd-zcondition}, we obtain
	\eqn{
	\frac{1}{\ell_n-2l-1}\sum_{v\not\in \cG_l}d_v\I_{\{d_v > z\}}
	\geq \frac{z}{\ell_n}\big(n(1-F_{\sub{d},n}(z))-|\cG_l|\big) \geq
	z(1-F_{\sub{d},n}(z))\frac{n}{2\ell_n},
	}
which proves \eqref{cmnd-high-relation}.

Assuming Conditions~\ref{drc} and~\ref{pol-dis-con}, we have
$\ell_n =\mu n(1+o(1)),$ with $\mu \in (0,\infty)$, {\blue see} \eqref{eq:nel},
and there are $c_1 > 0$ and $\alpha > 1/2$ such that
$1-F_{\sub{d},n}(z) \ge c_1 \, z^{-(\tau-1)}$ for $0 \le z \le n^\alpha$. 
Consequently, for $0 \le z \le n^{1/3}$, the right
hand side of \eqref{cmnd-zcondition} is at least $\frac{n}{4} \frac{c_1}{n^{(\tau-1)/3}}$.
Note that $(\tau - 1)/3 < 2/3$ (because $\tau < 3$), hence we can choose $n_0$
so that $\frac{n}{4} \frac{c_1}{n^{(\tau-1)/3}} \ge n^{1/3}$ for all $n \ge n_0$.
This directly leads to \eqref{eq:cmnd-high-relation+}.
\end{proof}

With Lemma \ref{cmnd-lemma-highvertices} in hand, we are able to prove Proposition \ref{cmnd-lemma-hitting}:

\begin{proof}[Proof of Proposition \ref{cmnd-lemma-hitting}]
We fix $v\in[n]$ and a realization $H$ of $\widehat U =\widehat U_{\le k^{+}_n}(v)$.
We abbreviate
	\begin{equation} 
	\label{eq:pstar}
	\pr^* (\,\cdot\,) := \pr(\,\cdot\,|\,\widehat U = H) \,.
	\end{equation}
The vertices on the boundary of $\widehat U$ are denoted by $x_1, \ldots, x_N$.
We start proving \eqref{cmnd-success-1}, hence we focus on $x_1$ and we define
$v_0^{\sss(1)}, v_1^{\sss(1)}, \ldots, v_{h_n}^{\sss(1)}$ as in Definition~\ref{cmnd-def-expl}, with $v_0^{\sss(1)} = x_1$.

\smallskip

We first fix some parameters. Since $2 < \tau < 3$,
we can choose $\zeta, \gamma > 0$ small enough so that
	\begin{equation}
	\label{eq:xifix}
	\xi := 1 - \e^\gamma(\tau-2+\zeta) > 0 \,.
	\end{equation}
Next we define a sequence $(g_\ell)_{\ell \in \N_0}$ that grows \emph{doubly exponentially} fast:
	\begin{equation} 
	\label{eq:gell}
	g_\ell :=
	2^{\e^{\gamma \ell}} = \exp \big( (\log 2) \exp (\gamma\, \ell) \big) \,.
	\end{equation}
Then we fix $B = 1/\gamma$ and $C = \log(\sigma/\log 2)$ in \eqref{cmnd-hlog}, 
where $\sigma$ is the same constant as in $\core_n$, see \eqref{core_n-def}.
With these choices, we have
\begin{equation} \label{eq:gbo}
	g_{h_n} = \e^{\sigma \e^{\lceil \log \log \log n\rceil}}
	> \e^{\sigma \log \log n} = (\log n)^\sigma \,, \qquad \text{while} \qquad
	g_{h_n - 1} < (\log n)^\sigma \,.
\end{equation}

Roughly speaking, the idea is to show that, with positive probability, one has $d_{v_j^{\sss(1)}} > g_j$.
As a consequence, $d_{v_{h_n}^{\sss(1)}} > g_{h_n} \ge (\log n)^\sigma$, that is $v_{h_n}^{\sss(1)}$ belongs to $\core_n$
and $x_1$ is a success. The situation is actually more involved, since
we can only show that $d_{v_j^{\sss(1)}} > g_j$ before reaching $\core_n$.

\smallskip

Let us make the above intuition precise.
Recalling \eqref{eq:pstar}, let us set
	\begin{equation*}
	H_{-1} := \varnothing\,, \qquad H_0 := H \,, \qquad
	H_k := H \cup \{v_1^{\sss(1)}, \ldots, v_k^{\sss(1)}\} \quad \text{for } 1 \le k \le h_n \,.
	\end{equation*}
Then we introduce the events
	\begin{equation} \label{eq:TW}
	T_\ell := \bigcup_{k=0}^{\ell} \big\{ {d_{v_k^{\sss(1)}} > (\log n)^\sigma} \big\} \,, \qquad
	W_\ell := \bigcap_{k=0}^{\ell} \big\{ d_{v_k^{\sss(1)}} >  g_k \,, \
	v_k^{\sss(1)} \not\in H_{k-1} \big\} \,.
	\end{equation}
In words, the event 
$T_\ell$ means that one of the vertices $v_0^{\sss(1)}, \ldots, v_\ell^{\sss(1)}$ has already reached $\core_n$,
while the event $W_\ell$ means that the degrees of vertices $v_0^{\sss(1)}, \ldots, v_\ell^{\sss(1)}$ 
grow at least like $g_0, \ldots, g_\ell$ and, furthermore, each $v_k$
is a fresh vertex (this is actually already implied by Definition~\ref{cmnd-def-expl},
otherwise $v_k$ would not even be defined). We finally set
	\begin{equation*}
	E_0 := W_0 \,, \qquad \quad
	E_j := T_{j-1} \cup W_j \quad \text{for} \ \ 1 \le j \le h_n \,.
	\end{equation*}

Note that $T_{h_n}$ coincides with $S_{x_1} = $ ``$x_1$ is a success''.
Also note that $W_{h_n} \subseteq \{d_{v_{h_n}^{\sss(1)}} > (\log n)^\sigma\}$, because 
$d_{v_{h_n}^{\sss(1)}} > g_{h_n} > (\log n)^\sigma$ by \eqref{eq:gbo}, hence
	\begin{equation*}
	E_{h_n} = T_{h_n - 1} \cup W_{h_n} \subseteq T_{h_n - 1} \cup
	\{d_{v_{h_n}^{\sss(1)}} > (\log n)^\sigma\} = T_{h_n} = S_{x_1} \,.
	\end{equation*}
Consequently, if we prove that $\pr^*(E_{h_n}) \ge \eta$, then our goal $\pr^*(S_{x_1}) \ge \eta$ follows
(recall \eqref{cmnd-success-1}).

\smallskip

The reason for working with the events $E_j$ is that their probabilities can be controlled
by an induction argument. Recalling \eqref{eq:pstar}, we can write
	\begin{equation} \label{eq:doli}
	\begin{split}
	\pr^*(E_{j+1})
	& = \pr^*(T_{j}) + \pr^*(T_{j}^c \cap W_{j+1}) \\
	& = \pr^*(T_{j}) + \pr\big( d_{v_{j+1}^{\sss(1)}} > g_{j+1} \,,\,
	v_{j+1}^{\sss(1)} \not\in H_{j} \,\big|\, 
	\{\widehat U = H\} \cap T_j^c \cap W_j \big)
	\, \pr^*(T_j^c \cap W_j ) \,.
	\end{split}
	\end{equation}
The key point is the following estimate on the conditional probability, proved below:
	\begin{equation} 
	\label{eq:bigpo}
	\begin{split}
	& \pr\big( d_{v_{j+1}^{\sss(1)}} > g_{j+1} \,,\,
	v_{j+1}^{\sss(1)} \not\in H_{j} \,\big|\, 
	\{\widehat U = H\} \cap T_j^c \cap W_j \big) \,\ge\, 1 - \varepsilon_{j} \,, \qquad
	\text{where} \quad \varepsilon_j := \e^{-c (g_j)^{\xi}/2} \,,
	\end{split}
	\end{equation}
with $\xi > 0$ is defined in \eqref{eq:xifix} and $c > 0$ is the constant appearing
in relation \eqref{eq:cmnd-high-relation+}. This yields
	\begin{equation*}
	\begin{split}
	\pr^*(E_{j+1}) & \,\ge\, \pr^*(T_{j}) + (1 - \varepsilon_{j})
	\, \pr^*(T_j^c \cap W_j ) \,\ge\, 
	(1 - \varepsilon_{j}) \big( \pr^*(T_{j}) +
	\, \pr^*(T_j^c \cap W_j ) \big) \\
	& \,=\, (1 - \varepsilon_{j}) \pr^*(T_j \cup W_j) 
	\,\ge\, (1 - \varepsilon_{j}) \pr^*(T_{j-1} \cup W_j) \\
	& \,=\, (1 - \varepsilon_{j}) \pr^*(E_j) \,,
	\end{split}
	\end{equation*}
which leads us to
	\begin{equation*}
	\pr^*(E_{h_n}) \ge \pr^*(E_0) \prod_{j=0}^{h_n - 1} (1-\varepsilon_j)
	\ge \pr^*(E_0) \prod_{j=0}^{\infty} (1-\varepsilon_j) =: \eta  \,.
	\end{equation*}
Since $\sum_{j\geq 0} \varepsilon_j < \infty$ and $\varepsilon_j<1$ for every $j\geq 0$, by \eqref{eq:bigpo} and \eqref{eq:gell},
the infinite product is strictly positive. Also note that $\pr^*(E_0) = \pr^*(d_{v_0^{\sss(1)}} \ge 2) = 1$,
because $g_0 = 2$ and $d_{v_0^{\sss(1)}} \ge \dmin \ge 3$. Then $\eta > 0$, as required.

\smallskip

It remains to prove \eqref{eq:bigpo}. To lighten notation, we rewrite the left hand side
of \eqref{eq:bigpo} as
\begin{equation} \label{eq:qD}
	q_{j+1} := \pr\big( d_{v_{j+1}^{\sss(1)}} > g_{j+1} \,,\,
	v_{j+1}^{\sss(1)} \not\in H_{j} \,\big|\, 
	D_j \big) \,, \qquad \text{where} \quad
	D_j := \{\widehat U = H\} \cap T_j^c \cap W_j \,.
\end{equation}
Note that, on the event $D_j \subseteq W_j$,
vertex $v_j^{\sss(1)}$ is fresh (i.e., it is connected to the graph for the first time),
hence it has $m = d_{v_j^{\sss(1)}} - 1$ unpaired half-edges. 
These are paired uniformly, connecting $v_j^{\sss(1)}$ to
(not necessarily distinct) vertices $w^{\sss(1)}, \ldots,
w^{\sss(m)}$. Let us introduce for $1 \le \ell \le m$ the event
\begin{equation} \label{eq:Cell}
	C_\ell := \bigcap_{k=1}^\ell
	\big\{ d_{w^{\sss(k)}} > g_{j+1} \,,\,
	w^{\sss(k)} \not\in H_{j} \big\}^c \,.
\end{equation}
By Definition~\ref{cmnd-def-expl}, $v_{j+1}^{\sss(1)}$ is the \emph{fresh}
vertex with maximal degree among them, hence
\begin{equation*}
	\big\{ d_{v_{j+1}^{\sss(1)}} > g_{j+1} \,,\,
	v_{j+1}^{\sss(1)} \not\in H_{j} \big\}^c = C_m \,.
\end{equation*}
Since $m = d_{v_j^{\sss(1)}}-1 > g_j-1$ on $W_j \subseteq D_j$,
the left hand side of \eqref{eq:bigpo} can be estimated by
\begin{equation} \label{eq:lastim}
\begin{split}
	q_{j+1} & = 1 - \pr\big( C_m \,\big|\, D_j \big)
	\ge 1 - \prod_{k=1}^{g_j-1} \pr\big( C_k \,\big|\, 
	D_j \cap C_{k-1} \big) \\
	& = 1 - \prod_{k=1}^{g_j-1} \Big( 1 - \pr\big( d_{w^{(k)}} > g_{j+1} \,,\,
	w^{\sss(k)} \not\in H_{j} \,\big|\, 
	D_j \cap C_{k-1} \big) \Big) \,.
\end{split}	
\end{equation}

We claim that we can apply relation
\eqref{eq:cmnd-high-relation+} from Lemma~\ref{cmnd-lemma-highvertices}
to each of the probabilities in the last line of \eqref{eq:lastim}.
To justify this claim, 
we need to look at the conditioning event $D_j \cap C_{k-1}$, recalling \eqref{eq:Cell},
\eqref{eq:qD} and \eqref{eq:TW}. In order to produce it, we have to do the following:
\begin{itemize}
\item[$\rhd$] First we build the $k^{+}_n$-exploration graph $\widehat U_{\le k^{+}_n}(v) = H$,
which requires to pair at most $O( (\dmin-1)^{k^{+}_n} ) = O((\log n)^{1+\varepsilon})$
half-edges (recall Definition~\ref{cmnd-def-kexp});

\item[$\rhd$] Next, starting from the boundary vertex $x_1$, we generate
the fresh vertices $v_0^{\sss(1)}, \ldots, v_j^{\sss(1)}$ \emph{all outside $\core_n$}, because we are on
the event $T_j^c$, and this requires to pair a number of half-edges which is at most
$(\log n)^\sigma j \le (\log n)^\sigma h_n = O((\log n)^{\sigma + 1})$;

\item[$\rhd$] Finally, in order to generate $w^{\sss(1)}, \ldots, w^{\sss(k-1)}$, we pair exactly
$k-1$ half-edges, and note that $k-1 \le g_j - 1 \le g_{h_n} - 1 = O((\log n)^{\sigma})$
(always because $v_j \not\in \core_n$).
\end{itemize}

It follows that the conditioning event $D_j \cap C_{k-1}$ is in the $\sigma$-algebra generated
by $\cG_l$ for $l \le O((\log n)^{1+\sigma+\varepsilon})$
(we use the notation of Lemma~\ref{cmnd-lemma-highvertices}).
In particular, $l \le n^{1/3}$.
Also note that $z = g_{j+1} \le g_{h_n} = O((\log n)^{\sigma})$,
{\blue see} \eqref{eq:gbo}, hence also $z \le n^{1/3}$.
Applying \eqref{eq:cmnd-high-relation+}, we get
	\eqan{
	\label{qj-bd-1}
	q_{j+1} & \ge 1 - \bigg( 1 - \frac{c}{(g_{j+1})^{\tau-2+\zeta}} \bigg)^{g_j-1}
	\ge 1 - \exp \bigg(- c\frac{g_j-1}{(g_{j+1})^{\tau-2+\zeta}} \bigg) \\
	& \ge 1 - \exp \bigg(- \frac{c}{2} \, \frac{g_j}{(g_{j+1})^{\tau-2+\zeta}} \bigg)\nn
	}
because $1-x \le \e^{-x}$ and $n-1 \ge n/2$ for all $n \ge 2$
(note that $g_j \ge g_0 = 2$).
Since $g_{j+1} = (g_j)^{\e^\gamma}$, by \eqref{eq:gell}, we finally arrive at
	\eqn{
	\label{qj-bd-2}
	q_{j+1} \ge 1 - \exp \bigg( -\frac{c}{2} \, (g_j)^{1 - \e^\gamma(\tau-2+\zeta)} \bigg)
	= 1 - \e^{-c \, (g_j)^\xi/2} \,,
	}
which is precisely \eqref{eq:bigpo}. This completes the proof of \eqref{cmnd-success-1}.

\smallskip

In order to prove \eqref{cmnd-success-2}, we proceed in the same way:
for any fixed $2 \le i \le N$, we start
from the modification of \eqref{eq:pstar} given by 
$\pr^* (\,\cdot\,) := \pr(\,\cdot\,|\,\widehat U = H \,,
		\, S_{x_1}^c \,,\, \ldots \,,\, S_{x_{i-1}}^c )$
and we follow the same proof, 
working with the vertices $v_1^{\sss(i)}$, \ldots, $v_{h_n}^{\sss(i)}$
instead of $v_1^{\sss(1)}, \ldots, v_{h_n}^{\sss(1)}$ (recall Definition~\ref{cmnd-def-expl}).
We leave the details to the reader.
\end{proof}

\section{Upper bound for Preferential attachment model}
\label{upperproofs-prefatt}

In this section we prove Statements~\ref{stat-collision} and~\ref{stat-triplelog}
for the preferential attachment model. By the discussion in
Section~\ref{sub-upper}, this completes the proof of the upper
bound in Theorem~\ref{main-prefatt}, because the proof
of Statement~\ref{stat-core} is already known in the literature, 
as explained below Statement~\ref{stat-core}.

\subsection{Proof of Statement \ref{stat-collision}}
\label{proof-upper-prefatt-1}

Recall the definition of $\core_t$ in \eqref{core_n-def}. It is crucial that in $\core_t$, we let $D_{t/2}(v)$ be large. We again continue to define what a $k$-exploration graph and its collisions are, but this time for the preferential attachment model:

\begin{Definition}[$k$-exploration graph]
\label{prefatt-def-kexpl}
Let $(\PA{t})_{t\geq 1}$ be a preferential attachment model. For $v\in[t]$, we call the {\em $k$-exploration graph} of $v$ to be the subgraph of $\PA{t}$, where we consider the $m$ edges originally incident to $v$, and the $m$ edges originally incident to any other vertex that is connected to $v$ in this procedure, up to distance $k$ from $v$.
\end{Definition}

\begin{Definition}[Collision]
\label{prefatt-def-coll}
Let $(\PA{t})_{t\geq 1}$ be a preferential attachment model with $m\geq 2$, and let $v$ be a vertex. We say that we have a {\em collision} in the $k$-exploration graph of $v$ when one of the $m$ edges of a vertex in the $k$-exploration graph of $v$ is connected to a vertex that is already in the $k$-exploration graph of $v$.
\end{Definition}

Now we want to show that every $k$-exploration graph has at most a finite number of collisions before hitting the $\core_t$, as we did for the configuration model. The first step is to use \cite[Lemma 3.9]{DSvdH}:

\begin{Lemma}[Early vertices have large degree]
\label{prefatt-lemma-oldvertices}
Fix $m\geq 1$. There exists $a>0$ such that
 	\eqn{
	\label{prefatt-formula-oldvertices}
  	\pr\Big(\min_{i\leq t^a}D_t(i)\geq (\log t)^\sigma\Big)\longrightarrow 1
 	}
for some $\sigma>1/(3-\tau)$. As consequence, $[t^a]\subseteq \core_t$ with high probability.
\end{Lemma}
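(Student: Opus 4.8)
The statement is essentially \cite[Lemma 3.9]{DSvdH}, and the plan is to reproduce its argument. The starting point is the classical fact that in $\PA{t}$ the degree of an early vertex grows like a positive power of $t$: with $\gamma := m/(2m+\delta)$ one has $\gamma = 1/(\tau-1) \in (\tfrac12,1)$ (since $\delta \in (-m,0)$ and $\tau = 3+\delta/m$), and $D_t(i)$ concentrates around a constant multiple of $(t/i)^{\gamma}$, with $\E[D_t(i)]$ given by an explicit ratio of Gamma functions that is asymptotic to $(t/i)^{\gamma}$. For $i \le t^a$ with $a \in (0,1)$ this puts the typical value of $D_t(i)$ above $t^{(1-a)\gamma}$, which is a positive power of $t$ and therefore eventually dwarfs $(\log t)^\sigma$ for \emph{every} fixed $\sigma$ --- in particular for the $\sigma \in (1/(3-\tau),\infty)$ fixed in \eqref{core_n-def}. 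So the whole issue is to control all early vertices \emph{simultaneously}.

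The key step I would carry out is a lower-tail estimate for $D_t(i)$ that decays faster than any power of $t$, uniformly in $i \le t^a$. The cleanest route is the P\'olya urn representation of the preferential attachment model (see \cite[Chapter 8]{vdH1}): after realising $\PA{t}$ as the collapse of $\PAONE{mt}$, the quantity $D_t(i)$ can be expressed --- after taking logarithms --- through a sum of independent Beta-distributed random variables whose parameters depend on $i$ and $t$, with mean of order $-\gamma\log(t/i)$; a Chernoff bound for this sum then yields, for a suitable $\gamma' \in (0,\gamma)$ and constants $\rho, c > 0$,
\[
	\pr\big( D_t(i) \le t^{(1-a)\gamma'} \big) \le \e^{-c\,t^{\rho}}
	\qquad \text{for all } i \le t^a .
\]
An alternative is to avoid the urn representation and instead apply Azuma's inequality to the Doob martingale $s \mapsto \E[D_t(i) \mid \PA{s}]$, whose increments are bounded by $m$; this is the path taken in \cite{DSvdH}.

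Once such a tail bound is in place, the remaining steps are routine: a union bound over $i \in [t^a]$ gives $\pr(\min_{i \le t^a} D_t(i) < t^{(1-a)\gamma'}) \le t^a\,\e^{-c t^\rho} = o(1)$, and on the complement $\min_{i \le t^a} D_t(i) \ge t^{(1-a)\gamma'} \ge (\log t)^\sigma$ for $t$ large, which is \eqref{prefatt-formula-oldvertices}. For the consequence $[t^a] \subseteq \core_t$, recall that $\core_t = \{v\colon D_{t/2}(v) \ge (\log t)^\sigma\}$ by \eqref{core_n-def}; applying the bound at time $t/2$ (using $(t/2)^a \ge t^{a/2}$ for large $t$ and then relabelling $a$) shows that with high probability $D_{t/2}(i) \ge (\log t)^\sigma$ for all $i \le t^a$, i.e.\ $[t^a] \subseteq \core_t$.

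I expect the main obstacle to be precisely the uniform-in-$i$ lower-tail estimate: a plain second-moment bound on $D_t(i)$ gives only a polynomially small probability, which is not summable against the $\sim t^a$ vertices in the union bound. Getting super-polynomial decay requires either the exact distributional input of the urn representation (so that the left tail of $D_t(i)$ is Gamma-like) or a careful bounded-increment martingale argument; both are available in the literature, so no new ideas are needed, but the bookkeeping --- in particular checking that $\gamma'$ stays a fixed positive power and that the decay exponent $\rho$ is positive uniformly in $i \le t^a$ --- has to be done with care.
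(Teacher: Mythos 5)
Your overall plan (per-vertex lower-tail bound on $D_t(i)$, union bound over $i\le t^a$, then apply it at time $t/2$ to get $[t^a]\subseteq\core_t$) is the right shape, and indeed the paper does not prove this lemma at all but simply quotes it as \cite[Lemma 3.9]{DSvdH}. However, your key claimed estimate is false: there is no stretched-exponential bound $\pr\big(D_t(i)\le t^{(1-a)\gamma'}\big)\le \e^{-c t^{\rho}}$ uniformly in $i\le t^a$. Already the event that vertex $i$ never receives an incoming edge has probability
\[
\pr\big(D_t(i)=m\big)\;=\;\prod_{s>i}\prod_{j=1}^m\Big(1-\tfrac{m+\delta}{c_{s,j}}\Big)\;\asymp\;(i/t)^{\frac{m(m+\delta)}{2m+\delta}}\;=\;(i/t)^{\gamma(m+\delta)},
\]
which for $i\le t^a$ is only \emph{polynomially} small in $t$; since $\{D_t(i)=m\}\subseteq\{D_t(i)\le t^{(1-a)\gamma'}\}$, no super-polynomial decay can hold. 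This is consistent with the fact that $D_t(i)\,(i/t)^{\gamma}$ converges to a non-degenerate limit whose left tail is polynomial (in the urn picture the relevant Beta variables have bounded negative exponential moments only up to a finite order, so a Chernoff bound yields polynomial, not stretched-exponential, tails). For the same reason the Azuma variant you mention does not work as stated: the Doob martingale $s\mapsto\E[D_t(i)\mid\PA{s}]$ does \emph{not} have increments bounded by $m$ — an incoming edge received at an early time $s$ shifts the conditional expectation by order $(t/s)^{\gamma}$, i.e.\ by the same order as the mean itself.

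The place where the reasoning goes astray is the claim that a polynomially small per-vertex bound ``is not summable against the $\sim t^a$ vertices'': the lemma only asserts that \emph{some} $a>0$ works, and the freedom to choose $a$ small is exactly what makes the polynomial bound sufficient. The correct input (and the one underlying \cite[Lemma 3.9]{DSvdH}) is a bound of the form $\pr\big(D_t(i)\le x\big)\le C\,x^{\,m+\delta}\,(i/t)^{\gamma(m+\delta)}$, obtained from the Gamma-function/urn computations for the degree evolution (see \cite[Chapter 8]{vdH1}). Then
\[
\sum_{i\le t^a}\pr\big(D_t(i)\le (\log t)^{\sigma}\big)\;\le\;C\,(\log t)^{\sigma(m+\delta)}\;t^{\,a(1+\gamma(m+\delta))-\gamma(m+\delta)}\;=\;o(1)
\]
as soon as $a<\gamma(m+\delta)/\big(1+\gamma(m+\delta)\big)$, which proves \eqref{prefatt-formula-oldvertices}; your handling of the consequence $[t^a]\subseteq\core_t$ (apply the estimate at time $t/2$ and relabel $a$, noting the degree bound is a power of $t$ while the threshold is polylogarithmic) is fine once this correct tail bound replaces the false one.
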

{\blue 
In agreement with \eqref{kstar-exp}
(see also \eqref{kstar-MKC-prefatt}), we set
	\eqn{
	\label{kstar-MKC-prefatt-new}
	k^{+}_t = (1+\varepsilon)\frac{\log\log t}{\log m}.
	}
We  want to prove that the exploration }
graph $\widehat{U}_{\leq k^{+}_t}(v)$ has at most a finite number of collisions before hitting $\core_t$, similarly to the case of $\CMnd$, now for $\PA{t}$. As it is possible to see from \eqref{core_n-def}, $\core_t\subseteq [t/2]$, i.e., is a subset defined in $\PA{t}$ when the graph has size $t/2$. As a consequence, we do not know the degree of vertices in $[t/2]$ when the graph has size $t$. 
However, in \cite[Appendix A.4]{DSvdH} the authors
prove that at time $t$
all the vertices $t/2+1,\ldots,t$ have degree smaller than $(\log t)^\sigma$.

We continue by giving a bound on the degree of vertices that are not in $\core_t$.
{\blue For vertices $i\in[t/2] \setminus \core_t$ we know that
$D_{t/2}(i)<(\log t)^\sigma$, see \eqref{core_n-def},
but in principle their degree $D_t(i)$ at time $t$
could be quite high}. We need to prove that this happens with very small probability. Precisely, we prove that, for some $B>0$,
	\eqn{
	\label{eq::poly0}
	\pr\left(\max_{i\in[t/2]\setminus\core_t}D_t(i)\geq (1+B)(\log t)^\sigma\right) =o(1).
	}
This inequality implies that when a degree is at most $(\log t)^\sigma$ at time $t/2$, then it is unlikely to grow by $B(\log t)^\sigma$ between time $t/2$ and $t$. This provides a bound on the cardinality of incoming neighborhoods that we can use in the definition of the exploration processes that we will rely on, in order to avoid $\core_t$. 
We prove \eqref{eq::poly0} in the following lemma that is an adaptation of the proof of \cite[Lemma A.4]{DSvdH}. Its proof is deferred to \longversion{Appendix \ref{app-B}}\shortversion{\cite[Appendix B]{CarGarHof16ext}}: 

\begin{Lemma}[Old vertex not in $\core_t$]
\label{lem-degoutcore}
There exists $B\in (0,\infty)$ such that, for every $i\in[t/2]$,  
	\eqn{
	\pr\left(D_t(i)\geq (1+B)(\log t)^\sigma \mid D_{t/2}(i)<(\log t)^\sigma\right) = o(1/t).
	}
\end{Lemma}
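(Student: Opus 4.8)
\textbf{Proof proposal for Lemma~\ref{lem-degoutcore}.}

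The plan is to control the growth of the degree of a fixed vertex $i \in [t/2]$ between time $t/2$ and time $t$, conditionally on the event $\{D_{t/2}(i) < (\log t)^\sigma\}$. The key observation is that, given the graph $\PA{t/2}$, the process $s \mapsto D_s(i)$ for $s \ge t/2$ is a (time-inhomogeneous) pure-birth process: at each time step $s+1$, each of the $m$ edges of the new vertex $s+1$ attaches to $i$ with probability $\frac{D_{s,j-1}(i)+\delta}{c_{s+1,j}}$, which by \eqref{eq:normali} is of order $\frac{D_s(i)+\delta}{2ms}$. Thus the increments of $D_s(i)$ are stochastically dominated by those of a process whose jump rate is linear in the current value, and the cumulative ``clock'' run between times $t/2$ and $t$ is $\sum_{s=t/2}^{t-1} \frac{m}{2ms}(1+o(1)) = \frac12\log 2 \, (1+o(1))$, a bounded quantity. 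The standard way to make this precise, following \cite[Lemma A.4]{DSvdH}, is to introduce the martingale-type quantity built from the ratios $\prod (1 - \text{attachment prob})^{-1}$ or, equivalently, to use the Pólya urn / exponential supermartingale representation of preferential attachment degrees.

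Concretely, I would proceed as follows. First, I would fix the value $D_{t/2}(i) = d_0 < (\log t)^\sigma$ and write $D_t(i) - D_{t/2}(i)$ as a sum of conditionally-independent-ish indicators $\sum_{s=t/2+1}^{t}\sum_{j=1}^m \indic{\xi_{s,j}=i}$. Second, I would dominate this sum: since $D_{s,j-1}(i) \le D_t(i)$ and $c_{s,j} \ge c m s /4$ (say) for $s \ge t/2$, one bounds the conditional probability of attachment at step $(s,j)$ by $\frac{D_{s,j-1}(i)+\delta}{cms/4}$. The cleanest route is to define the process $Z_s := D_s(i)$ and bound its moment generating function: for a suitable $\lambda>0$, the quantity $M_s := \mathrm{e}^{\lambda Z_s}\prod_{r=t/2}^{s-1} g_r^{-1}$, where $g_r$ is the conditional expectation of $\mathrm{e}^{\lambda(Z_{r+1}-Z_r)}$ given $\PA{r}$, is a martingale; since $g_r \le \exp\big(C\lambda Z_r / r\big)$ for small $\lambda$ and $Z_r$ not too large, a bootstrap / stopping-time argument controls $\prod_{r=t/2}^{t-1} g_r \le \exp\big(C'\lambda (\log t)^\sigma\big)$ on the relevant event. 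Third, apply Markov's inequality:
\begin{equation*}
	\pr\big(D_t(i) \ge (1+B)(\log t)^\sigma \,\big|\, D_{t/2}(i) = d_0\big)
	\le \mathrm{e}^{-\lambda(1+B)(\log t)^\sigma}\, \mathrm{e}^{\lambda d_0}\, \mathrm{e}^{C'\lambda(\log t)^\sigma}
	\le \mathrm{e}^{-\lambda(B - C')(\log t)^\sigma},
\end{equation*}
and choosing $B$ large enough that $\lambda(B-C') > \log t / (\log t)^\sigma \cdot$ (i.e.\ simply $B$ a large constant, since $(\log t)^\sigma \gg \log t$ as $\sigma > 1/(3-\tau) > 1$) makes the right-hand side $o(1/t)$. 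Averaging over the conditional law of $d_0 < (\log t)^\sigma$ preserves the bound.

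The main obstacle is the self-referential nature of the estimate: the attachment probability at step $s$ involves $D_s(i)$, which is exactly the quantity being bounded, so one cannot naively plug in a deterministic bound for $D_s(i)$ inside the exponent. This is handled either by the exponential-supermartingale trick above (where the self-reference is absorbed into the compensator $\prod g_r$, and one then needs an a priori polynomial bound on $\max_{s\le t} D_s(i) \le \sqrt t$ — which holds with probability $1-o(1/t)$, cf.\ \cite[Theorem 8.13]{vdH1} — to justify the linearization of $g_r$), or by a stopping-time argument that freezes the process the first time it exceeds a threshold and shows that threshold is not reached. I would also need to be slightly careful that the bound is uniform in $i \in [t/2]$: the worst case is $i$ close to $t/2$, for which the clock $\sum_{s=i}^{t}\frac1s$ from the vertex's birth could be larger, but restricting attention to the window $[t/2,t]$ (which is all that matters given the conditioning on $D_{t/2}(i)$) keeps the compensator bounded by an absolute constant times $\lambda (\log t)^\sigma$, uniformly in $i$. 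Since this is an adaptation of \cite[Lemma A.4]{DSvdH}, I would cite that argument for the parts that carry over verbatim and only spell out the modification needed to track the additive growth $B(\log t)^\sigma$ rather than a multiplicative statement.
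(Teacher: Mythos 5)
Your strategy is sound, but it is a genuinely different argument from the one the paper actually uses. The paper's proof (Appendix B, adapting \cite[Lemma A.4]{DSvdH}) sets up a two-colour P\'olya urn with linear weights for the degree of $i$ versus the rest of the graph, uses exchangeability and De Finetti's theorem to represent the number of new edges attached to $i$ in $(t/2,t]$ as a mixed binomial $\mathrm{Bin}(mt/2,U)$ with $U\sim\mathrm{Beta}(\alpha_t,\beta_t)$, $\alpha_t=(\log t)^\sigma+\delta$, and then splits according to whether $U\le g(t)=B(\log t)^\sigma/(4mt)$: a Chernoff bound handles the binomial on that event, and a direct computation with the Beta density and Stirling's formula shows $\pr(U>g(t))=o(1/t)$. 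You instead run a direct exponential-supermartingale/stopping-time Chernoff bound on the conditional Markov evolution of $D_s(i)$ for $s\ge t/2$, which avoids De Finetti entirely; what it buys is a more hands-on argument, what it costs is exactly the self-referential bookkeeping you identify, and there your sketch is slightly too glib. If you stop the process at the first time the degree reaches $(1+B)(\log t)^\sigma$, then the compensator bound is $\prod_r g_r\le\exp\bigl(\gamma\lambda(1+O(\lambda))\log 2\,(1+B)(\log t)^\sigma\bigr)$ with $\gamma=m/(2m+\delta)$, i.e.\ your $C'$ is not a fixed constant but grows linearly in $B$; the final exponent is $-\lambda\bigl[(1+B)\bigl(1-\gamma(1+O(\lambda))\log 2\bigr)-1\bigr](\log t)^\sigma$, and the argument closes only because the total ``clock'' $\gamma\log 2$ is strictly less than $1$ (so that $1-\gamma(1+O(\lambda))\log 2>0$ for $\lambda$ small), after which $\sigma>1/(3-\tau)>1$ gives $\exp(-cB(\log t)^\sigma)=o(1/t)$ uniformly in $i$ and in $d_0<(\log t)^\sigma$. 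With that correction spelled out (and noting that, given $\PA{t/2}$, the future evolution of $D_s(i)$ depends only on $D_{t/2}(i)$ and the deterministic $c_{s,j}$ of \eqref{eq:normali}, so uniformity in $i$ is automatic and the a priori $\sqrt{t}$ bound is not needed once the stopping time is in place), your route is a valid alternative proof of the lemma.
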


\medskip

{\blue We can now get to the core of the proof of Statement \ref{stat-collision},
that is we show that there are few collisions before reaching $\core_t$:} 

\begin{Lemma}[Few collisions before hitting the core]
\label{prefatt-corol-numbercollision}
Let $(\PA{t})_{t\geq 1}$ be a preferential attachment model, with $m\geq 2$ and $\delta\in(-m,0)$. 
Fix $a \in (0,1)$ and $l\in\N$ such that $l > 1/a$.
With $k^{+}_t$ as in \eqref{kstar-MKC-prefatt-new},
the probability that there exists a vertex $v\in[t]$ such that its $k^{+}_t$-exploration graph has at least $l$ collisions before hitting $\core_t \cup [t^a]$ is $o(1)$.
\end{Lemma}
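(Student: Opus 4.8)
The plan is to mimic the proof of Proposition~\ref{cmnd-prop-collision} for the configuration model, adapting the argument to the dependent structure of $\PA{t}$ by using the bound \eqref{prefatt-connectioncondition} on connection probabilities together with Lemma~\ref{lem-degoutcore} to control the degrees of vertices outside $\core_t$. First I would fix a vertex $v\in[t]$ and estimate the probability $q_t(v)$ that the $k^{+}_t$-exploration graph of $v$ has at least $l$ collisions before hitting $\core_t\cup[t^a]$. Summing a union bound over the $\le l$-subsets of the at most $i_{k^{+}_t}=O((\log t)^{1+\varepsilon})$ edges explored, it suffices to show that for any fixed $l$-tuple of explored edges, the probability that \emph{all} of them are collisions (while the exploration stays outside $\core_t\cup[t^a]$) is $o(t^{-l a})$; then $\sum_{v\in[t]}q_t(v)=o(1)$ provided $l>1/a$.

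The key probabilistic input is a per-collision bound. When exploring an edge emanating from a vertex $u$ currently in the exploration graph, a ``collision'' means this edge attaches to some vertex $w$ already present. Working on the event $B_t$ (all degrees $\le\sqrt t$, which holds whp, see \eqref{eq-PAM-deg}) and on the event that the exploration has not yet touched $\core_t\cup[t^a]$ — so all explored vertices lie in $[t]\setminus[t^a]$ and have degree $<(1+B)(\log t)^\sigma$ at all times, by Lemma~\ref{lem-degoutcore} — the explored graph consists of at most $O((\log t)^{1+\varepsilon})$ vertices, each of age $\ge t^a$. Using \eqref{prefatt-connectioncondition}, for a fixed pair $u,w$ with $u\wedge w\ge t^a$ we have $\pr(u\leftrightarrow w)\le c\,(u\wedge w)^{-\gamma}(u\vee w)^{\gamma-1}\le c\,t^{-a\gamma}t^{\gamma-1}=c\,t^{-a\gamma-(1-\gamma)}$, which for $\gamma>\tfrac12$ is $\le c\,t^{-a/2}$ up to constants; more carefully, summing over the $O((\log t)^{1+\varepsilon})$ possible targets $w$ already in the exploration graph gives that each explored edge is a collision with (conditional) probability at most $O\big((\log t)^{1+\varepsilon+\sigma}\,t^{-a}\big)$, since there are at most that many candidate half-edges weighted by degree. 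Raising to the power $l$ and multiplying by the number of $l$-subsets of explored edges, $\binom{O((\log t)^{1+\varepsilon})}{l}=O((\log t)^{l(1+\varepsilon)})$, yields $q_t(v)=O\big((\log t)^{c'}\,t^{-la}\big)$ for a constant $c'$ depending on $l,\varepsilon,\sigma$. Since $la>1$, summing over $v\in[t]$ gives $\sum_v q_t(v)=O((\log t)^{c'}t^{1-la})=o(1)$, and adding the $o(1)$ probabilities of $B_t^c$ and of the bad-degree event from Lemma~\ref{lem-degoutcore} (summed over $i\in[t/2]$) completes the proof.

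The main obstacle is making the conditioning rigorous: in the preferential attachment model the edges are not independent, so ``the probability that edge $e_1,\dots,e_l$ are all collisions'' must be handled via the chain rule, conditioning on the history of the exploration up to each collision, exactly as in \eqref{eq:luba2}. Here one must check that, conditionally on the explored graph so far, the bound \eqref{prefatt-connectioncondition} (or rather the path-probability bound \eqref{prefatt-paths-minmax}, which already encodes the relevant conditional estimates) still applies to the next edge. This is precisely the kind of delicate conditioning the authors flag as the technical heart of the paper; the cleanest route is to express the event ``the exploration graph equals a fixed labeled directed graph $H$ and edges $e_1,\dots,e_l$ are collisions'' as a factorizable event in the sense of Definition~\ref{def:factorizable}, compute its probability as a product, and bound each factor using that all vertices involved have age $\ge t^a$ and bounded degree. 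A secondary technical point is that $\core_t$ is defined through $D_{t/2}(\cdot)$, so one must use Lemma~\ref{lem-degoutcore} (and the cited fact that vertices in $[t/2+1,t]$ have degree $<(\log t)^\sigma$) to guarantee that a vertex outside $\core_t\cup[t^a]$ indeed has degree $<(1+B)(\log t)^\sigma$ throughout the construction up to time $t$, which is what the per-collision estimate needs.
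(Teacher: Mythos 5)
Your overall strategy coincides with the paper's: a union bound over the choices of collision locations in the $k^{+}_t$-exploration graph, a per-collision factor of order $(\log t)^{\sigma+1+\varepsilon}t^{-a}$ (this is exactly the paper's $\alpha(t)$ in \eqref{eq:alphat}), degree control outside $\core_t$ via Lemma~\ref{lem-degoutcore}, and the conclusion from $la>1$. You also correctly identify that the naive chain-rule conditioning of \eqref{eq:luba2} does not transfer to $\PA{t}$, and that factorizable events (Definition~\ref{def:factorizable}) are the right remedy. One minor slip: in your use of \eqref{prefatt-connectioncondition} you bound $(u\vee w)^{\gamma-1}$ by $t^{\gamma-1}$, but since $\gamma-1<0$ the correct use of $u\vee w\ge t^a$ gives $c\,t^{-a}$, not $c\,t^{-a\gamma-(1-\gamma)}$; this is immaterial since you then switch to the degree-weighted count, and in any case the marginal bound \eqref{prefatt-connectioncondition} cannot be multiplied across several collision edges.

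The substantive gap is in the step ``compute the probability of the factorizable event as a product and bound each factor''. For a fixed admissible tree $H$ with all incoming neighborhoods specified, essentially all factors in the product are of order one, and a direct bound on this probability is useless because you must afterwards sum over all realizations of $H$, over all compatible incoming neighborhoods, and over the identities of the collided vertices --- a sum you have no control over. The paper's mechanism (see \eqref{eq:c1+}--\eqref{eq:c1++} in Section~\ref{sec-proof-key-bound-1}) is a \emph{comparison}: the probability of the configuration with a collision at a prescribed index is bounded by $\alpha(t)/(2(\log t)^{1+\varepsilon})$ times the probability of the \emph{same} configuration with the colliding edge re-attached outside $H$, via an explicit ratio of the two factorized products controlled using age $\ge t^a$ and degree $\le (1+B)(\log t)^\sigma$; summing over the target vertex in $H$ and then over all configurations then returns a total of at most $\alpha(t)^l$ times a probability bounded by $1$. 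You would also need the Ulam--Harris bookkeeping that makes this well defined: the collision indices $Z_1,\ldots,Z_{\cC}$ and the pruned index sets $W_{\le k}^{(\vec z_l)}$ of \eqref{eq:subt} and \eqref{eq:identi}, which avoid over-counting descendants of collisions and let ``the exploration graph equals $H$'' be expressed as a tree-indexed factorizable event. Without this redirection/comparison device your plan stalls exactly where the dependence of the model bites. (Also, the event $B_t$ is not needed here; Lemma~\ref{lem-degoutcore} already supplies the degree control your estimate uses.)
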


Next we give a lower bound on the number of vertices 
on the boundary of a $k^+_n$-exploration graph. 
First of all, for any fixed $a \in (0,1)$, we notice that 
the probability of existence of a vertex in $[t]\setminus [t^a]$, 
that has only self loops is $o(1)$. Indeed,
the probability that a vertex $s$ has only self-loops is $O(\frac{1}{s^m})$. Thus, the probability that there exists a vertex in $[t]\setminus [t^a]$ that has only self-loops is bounded above by
	\eqn{
	\sum_{s>t^a} O(\frac{1}{s^m})=O(t^{-a(m-1)})=o(1),
	}
since we assume that $m\geq 2$. 
We can thus assume that no vertex in $[t]\setminus [t^a]$ has only self-loops.
This leads us formulate the following Lemma,
whose proof is also deferred to \longversion{Appendix~\ref{app-B}}\shortversion{\cite[Appendix B]{CarGarHof16ext}}.

\begin{Lemma}[Lower bound on boundary vertices]
\label{prefatt-lemma-boundarybound}
 Let $(\PA{t})_{t\geq 1}$ be a preferential attachment model, with $m\geq 2$ and $\delta\in(-m,0)$. 
 For $a\in(0,1)$, consider a vertex $v\in[t]\setminus(\core_t\cup [t^a])$ 
 and its $k$-exploration graph.
 If there are at most $l$ collisions in the $k$-exploration graph, 
and no vertex in $[t] \setminus [t^a]$ has only self loops,
 then there exists a constant $s = s(m,l)>0$ such that the number of vertices in the boundary of the $k$-exploration graph
 is at least $s(m,l)m^{k}$.
\end{Lemma}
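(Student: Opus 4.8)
Here is the plan I would follow.

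\smallskip

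The plan is to reduce the statement to a purely deterministic combinatorial count on the breadth‑first exploration tree, and then to exploit the directed structure of $\PA{t}$ (every outgoing edge points to an older vertex) to control where the $\le l$ collisions can sit. First I would record the combinatorial backbone. The $k$-exploration graph of $v$ is grown by breadth‑first search: one starts with $\{v\}$ at depth $0$ and, for each already‑discovered vertex $u$ at depth $<k$, reveals the $m$ outgoing edges of $u$; an endpoint not yet present is added at depth $d(u)+1$, while every revealed edge whose other endpoint is already present — a self‑loop, a multiple edge, or an edge into a different already‑discovered vertex — is a collision in the sense of Definition~\ref{prefatt-def-coll}. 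Keeping only the edge through which each vertex is first reached yields a spanning tree $T$ of the exploration graph rooted at $v$; I write $n_j$ for the number of vertices of $T$ at depth $j$. The boundary of the $k$-exploration graph contains the $n_k$ vertices at depth $k$, so it suffices to bound $n_k$ from below by $s(m,l)\,m^k$. For $u$ with $d_T(u)<k$, let $c_u$ be the number of revealed outgoing edges of $u$ that are collisions; then $u$ has exactly $m-c_u$ children in $T$, and $\sum_u c_u\le l$ by hypothesis, so with $\gamma_j:=\sum_{d_T(u)=j}c_u$ one gets
\begin{equation*}
	n_0=1,\qquad n_{j+1}=m\,n_j-\gamma_j,\qquad \sum_{j=0}^{k-1}\gamma_j\le l,
	\qquad\text{hence}\qquad
	n_k=m^k-\sum_{j=0}^{k-1}m^{\,k-1-j}\gamma_j .
\end{equation*}
Thus everything reduces to controlling where the collisions can be placed.

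\smallskip

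Next I would bring in the directed structure: in $\PA{t}$ an outgoing edge always points to a vertex of label $\le$ that of its source, so discovery edges strictly decrease the label and labels strictly increase along $T$ from the leaves towards the root. In particular $v$ is the youngest vertex of $T$, so a collision out of $v$ can only be a self‑loop; since $v\notin[t^a]$ the vertex $v$ does not consist of only self‑loops, hence it has at least one child and $\gamma_0=c_v\le m-1$. The same reasoning propagates level by level along the \emph{oldest surviving line}: whenever the explored part of $T$ is a single chain $v=u_0,u_1,\dots,u_j$ with $u_j<u_{j-1}<\dots<u_0$, the vertex $u_j$ has the smallest label among all vertices present when it is processed, so its first non–self‑loop outgoing edge necessarily discovers a new vertex, giving $c_{u_j}\le m-1$ and keeping the chain alive. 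Consequently a level can be shrunk to a bottleneck of size one only at the cost of $m-1$ collisions, and more generally reducing a level from size $n_j\ge1$ to size $b$ costs at least $m\,n_j-b$ collisions.

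\smallskip

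Combining these facts, the configuration that minimises $n_k$ spends its collision budget on the shallowest levels, shrinking at most $\rho:=\lceil l/(m-1)\rceil$ consecutive levels near the root down to size one; once the budget is exhausted, $T$ continues as a perfect $m$-ary tree, so $n_k\ge m^{\,k-\rho}$. This would give the lemma with $s(m,l):=m^{-\lceil l/(m-1)\rceil}>0$ (any smaller positive constant, e.g.\ $m^{-l}$, also works). The step I expect to be the main obstacle is making the per‑level accounting rigorous: a collision need not merely prune a branch but may instead close a cycle back into an already‑explored — necessarily older — vertex, and when a branch first widens and later re‑narrows one has to verify that each collision can still be charged to a unique ``lost child'', so that $n_{j+1}\ge1$ holds throughout and the bottleneck count $\rho$ is never exceeded. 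This bookkeeping, using repeatedly that collision targets are older than their sources and that no vertex of $[t]\setminus[t^a]$ has only self‑loops (in the regime where the lemma is applied, $[t^a]\subseteq\core_t$, so an exploration avoiding the core avoids $[t^a]$ as well, and hence every explored vertex enjoys this property), is exactly what the detailed proof in the appendix has to carry out.
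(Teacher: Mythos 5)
Your reduction to the level recursion $n_{j+1}=m\,n_j-\gamma_j$ with $\sum_{j<k}\gamma_j\le l$ is exactly the paper's starting point (its $N_i$ and $l_i$), and a constant of the form $m^{-c(m,l)}$ is indeed what comes out; but the two steps that carry the actual content of the lemma are asserted rather than proved, and you flag them yourself. First, the inequality $n_{j+1}\ge 1$ (equivalently $\gamma_j\le m\,n_j-1$) is only justified in your text when the explored part is a single chain. In general you need the following structural fact: at every stage the \emph{oldest} vertex of the whole explored ball $\widehat U_{\le j}(v)$ necessarily lies on the deepest level $j$ (if it sat at a shallower level, its non--self-loop out-edge --- which exists because it lies in $[t]\setminus[t^a]$, the explored vertices staying outside $[t^a]$ before the core is hit --- would point to a strictly older vertex already in the ball, contradicting minimality of its label), and that same out-edge then discovers a fresh vertex at level $j+1$. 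This is the argument the paper uses (its wording ``youngest'' should really be ``oldest''), and it is precisely the bookkeeping for branches that widen and re-narrow which you leave open.

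Second, your quantitative conclusion rests on the unproven extremal claim that the minimiser of $n_k$ spends its budget greedily on the shallowest levels, yielding $n_k\ge m^{k-\lceil l/(m-1)\rceil}$. The claim is plausible (and, I believe, true), but it is not a one-line exchange argument, because moving collisions to earlier levels tightens the constraints $n_{i}\ge 1$ at all intermediate levels. The paper sidesteps identifying the extremal configuration altogether: it proves, by induction on $k$ with a \emph{general} initial level size $N_0$, the pair of bounds $N_k\ge(mN_0-\bar l_k)\,m^{k-1}$ and, when $\bar l_k\ge mN_0-1$, $N_k\ge m^{-(\bar l_k-(mN_0-1))/(m-1)}\,m^{k-2}$, which at $N_0=1$ and $\bar l_k\le l$ combine to $N_k\ge m^{-1-l/(m-1)}m^{k}$, a bound of the required form $s(m,l)\,m^k$. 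So your plan follows the paper's route, but as written it leaves open exactly the two points that the appendix proof is there to settle; either carry out the oldest-vertex argument plus an induction of the paper's type, or supply a genuine proof of the greedy extremality.
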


Together, Lemmas~\ref{prefatt-lemma-oldvertices},
\ref{prefatt-corol-numbercollision} 
and~\ref{prefatt-lemma-boundarybound} complete the proof of Statement \ref{stat-collision}.

\medskip

The rest of this section is devoted to the proof of Lemma~\ref{prefatt-corol-numbercollision}.
We first need to introduce some notation, in order to 
be able to express the probability of collisions. We do this in the next subsection. 

\subsubsection{Ulam-Harris notation for trees}
\label{sec-Ulam-Harris}
Define
	\begin{equation*}
	W_\ell := [m]^\ell \,, \qquad W_{\le k} := \bigcup_{\ell = 0}^{k} W_\ell \,,
	\end{equation*}
where $W_0 := \varnothing$. We use $W_{\le k}$ as a universal set
to label any regular tree of depth
$k$, where each vertex has $m$ children. This is sometimes called the {\em Ulam-Harris notation} for trees. 

Given $y \in W_\ell$ and $z \in W_m$, we denote by $(y,z) \in W_{\ell + m}$
the concatenation of $y$ and $z$. Given $x, y \in W_{\le k}$, we write $y \succeq x$ if $y$ 
is a descendant of $x$, that is $y = (x,z)$ for some $z\in W_{\le k}$.

Given a finite number of points $z_1, \ldots, z_m \in W_{\le k}$, abbreviate $\vec{z}_m=(z_1, \ldots, z_m)$, and define
$W_{\le k}^{\sss(\vec z_m)}$ to be the tree obtained from $W_{\le k}$
by cutting the branches starting from any of the $z_i$'s (including the $z_i$'s themselves):
	\begin{equation} \label{eq:subt}
	W_{\le k}^{\sss(\vec{z}_k)} :=
	\big\{x \in W_{\le k}: \ x \not\succeq z_1, \ \ldots,
	x \not\succeq z_m \big\} \,.
	\end{equation}

\begin{remark}[Total order]
\label{def-tot-order}
\rm 
The set $W_{\le k}$ comes with a natural total order relation, 
{\blue called \emph{shortlex order},
in which shorter words precede
longer ones, and words with equal length are ordered lexicographically.}
More precisely, given $x \in W_\ell$ and $y \in W_m$, we say that $x$ precedes $y$ if
{\blue either $\ell < m$, or if $\ell = m$} and 
$x_i \le y_i$ for all $1 \le i \le \ell$.
We stress that this is a \emph{total}
order relation, unlike the descendant relation $\succeq$ which is only a partial order.
(Of course, if $y \succeq x$, then $x$ precedes $y$, but not vice versa).
\end{remark}

\subsubsection{Collisions}
We recall that, given $z \in [t]$ and $j \in [m]$,
the $j$-th half-edge starting from vertex $z$ in $\PA{t}$ is attached to a random vertex,
denoted by $\xi_{z,j}$. We can use the set $W_{\le k}$ to label
the exploration graph
$\widehat{U}_{\leq k}(v)$, as follows:
	\begin{equation} 
	\label{eq:urep}
	\widehat{U}_{\leq k}(v) = \big\{ V_z \big\}_{z \in W_{\le k}} \,,
	\end{equation}
where $V_\varnothing = v$ and, iteratively, $V_z = \xi_{V_x, j}$
for $z = (x,j)$ with $x \in W_{\le k-1}$
and $j \in [m]$.

The first vertex generating a \emph{collision}
is $V_{Z_1}$, where the random 
index $Z_1 \in W_{\le k}$ is given by
\begin{equation*}
	Z_1 := \min\big\{z \in W_{\le k}: \ V_z = V_y \ \text{for some } y 
	\ \text{which precedes} \ z \big\} \,,
\end{equation*}
where ``$\min$'' refers to the total order relation on $W_{\le k}$ as defined in 
Remark~\ref{def-tot-order}.

Now comes a tedious observation.
Since $V_{Z_1} = V_y$ for some $y$ {which precedes} $Z_1$, by definition of $Z_1$,
then all descendants of $Z_1$
will coincide with the corresponding descendants of $y$, that is
$V_{(Z_1, r)} = V_{(y,r)}$ for all $r$. 
In order not to over count collisions, in defining the second collision index $Z_2$,
we avoid exploring the descendants
of index $Z_1$, that is we only look at indices in $W_{\le k}^{\sss(Z_1)}$,
{\blue see} \eqref{eq:subt}.
The second vertex representing a (true) collision
is then $V_{Z_2}$, where we define
	\begin{equation*}
	Z_2 := \min\big\{z \in W_{\le k}^{\sss (Z_1)}\colon z \text{ follows }Z_1, \text{ i.e., }
	\ \ V_z = V_y \ \text{for some } y \ \text{which precedes} \ z \big\} \,,
	\end{equation*}
Iteratively, we define
	\begin{equation*}
	Z_{i+1} := \min\big\{z \in W_{\le k}^{\sss (\vec{Z}_i)} \colon
	z \text{ follows } Z_i, \text{ i.e., }V_z = V_y \ \text{for some } 
	y \ \text{which precedes} \ z \big\} \,,
	\end{equation*}
so that $V_{Z_i}$ is the $i$-th vertex that represents a collision.
The procedure stops when there are no more collisions. Denoting by $\cC$ the (random) number
of collisions, we have a family
	\begin{equation*}
	Z_1, \ Z_2, \, \ldots, \ Z_{\cC}
	\end{equation*}
of random elements of $W_{\le k}$, such that
$(V_{Z_i})_{1 \le i \le \cC}$ are the vertices generating the collisions.

\subsubsection{Proof of Lemma~\ref{prefatt-corol-numbercollision}}
\label{sec-key-bound}
Recalling \eqref{eq:urep} and \eqref{eq:subt},
given arbitrarily $z_1, \ldots, z_l \in W_{\le k}$, we define
	\begin{equation} \label{eq:urepplus}
	\widehat{U}_{\leq k}^{\sss (\vec{z}_l)}(v) 
	= \big\{ V_z \big\}_{z \in W_{\le k}^{\sss (\vec{z}_l)}} \,,
	\end{equation}
that is, we consider a subset of the full exploration graph
$\widehat{U}_{\leq k}(v)$, 
consisting of vertices $V_z$ whose indexes $z \in W_{\le k}$ are not
descendants of $z_1, \ldots, z_l$. The basic observation is that
	\begin{equation} \label{eq:identi}
	\widehat{U}_{\leq k}(v) = \widehat{U}_{\leq k}^{\sss(\vec{z}_l)}(v) 
	\qquad \text{on the event}
	\quad \{\cC = l\,, \ Z_1 = z_1, \ldots, Z_l = z_l\} \,.
	\end{equation}
In words, this means that to recover the full exploration graph
$\widehat{U}_{\leq k}(v)$, it is irrelevant to look at vertices $V_z$ for $z$ that is a descendant of
a collision index $z_1, \ldots, z_l$.

We will bound the probability that there are $l$ collisions before reaching $\core_t \cup [t^a]$, 
occurring at specified indices $z_1, \ldots, z_l \in W_{\le k}$,
for $k = k^{+}_t$ as in \eqref{kstar-MKC-prefatt-new}, as follows:
	\begin{equation}
	\label{eq:key}
	\begin{split}
	\pr\big(\cC = l\,, \ Z_1 = z_1, \ldots, Z_l = z_l, \
	& \widehat{U}_{\leq k}(v) \cap (\core_t \cup [t^a]) = \varnothing\big) 
	\,\le\, \alpha(t)^l \,, 
	\end{split}
	\end{equation}
where, for the constant $B$ given by Lemma \ref{lem-degoutcore}, we define
	\begin{equation} 
	\label{eq:alphat}
	\alpha(t) = \frac{4(1+B)}{m}\frac{(\log t)^{\sigma + 1 + \varepsilon}}{t^a} \,.
	\end{equation}
Summing \eqref{eq:key} over $z_1, \ldots, z_l \in W_{\le k}$ we get
	\begin{equation*}
	\pr(\cC = l ,\ \widehat{U}_{\leq k}(v) \cap (\core_t \cup [t^a]) = \varnothing) 
	\le \alpha(t)^l \, | W_{\le k} |^l \, .
	\end{equation*}
Since, for $k = k^{+}_t$ as in \eqref{kstar-MKC-prefatt-new}, we can bound
	\begin{equation}
	\label{eq:cardW}
	|W_{\le k}| = \frac{m^{k+1}-1}{m-1} \le 2 \, m^k
	\le 2 \, (\log t)^{1+\varepsilon} \,,
	\end{equation}
the probability of having at least $l$ collisions, before reaching $\core_t \cup [t^a]$,
is $O(\alpha(t)^l (\log t)^{2l}) 
= o(1/t)$, because $l > 1/a$ by assumption.
This completes the proof of Lemma~\ref{prefatt-corol-numbercollision}.
It only remains to show that \eqref{eq:key} holds true.

\subsubsection{Proof of \eqref{eq:key}: case $l=1$}
\label{sec-proof-key-bound-1}
We start proving  \eqref{eq:key} for one collision. By \eqref{eq:identi}, we can replace $\widehat{U}_{\leq k}(v)$
by $\widehat{U}_{\leq k}^{(z_1)}(v)$ in the left hand side of \eqref{eq:key},
i.e., we have to prove that
	\eqn{\label{eq:c1}
	\begin{split}
	& \pr(\cC = 1\,, \ Z_1 = z_1, \
	\widehat{U}_{\leq k}^{\sss(z_1)}(v) \cap (\core_t \cup [t^a]) = \varnothing)
	\,\le\, \alpha(t) \,.
	\end{split}
	}

Since $v,k$ and $z_1$ are fixed, let us abbreviate, and recalling \eqref{eq:urepplus},
	\begin{equation}
	\label{eq:forshort}
	\cW := W_{\le k}^{\sss(z_1)}(v) \,, \qquad
	\widehat{U} := \widehat{U}_{\leq k}^{(z_1)}(v) = \big\{ V_z \big\}_{z \in \cW} \,.
	\end{equation}
Note that $V_{z_1}$ is the only collision precisely
when \emph{$\widehat{U}$ is a tree and $V_{z_1} \in \widehat{U}$}.
Then \eqref{eq:c1} becomes
	\begin{equation} 
	\label{eq:impl}
	\begin{split}
	& \pr(\widehat{U} \text{ is a tree}\,, \ V_{z_1} \in \widehat{U}\,, \
	\widehat{U} \cap (\core_t \cup [t^a]) = \varnothing) 
	\le \alpha(t) \, .
	\end{split}
	\end{equation}
We will actually prove a stronger statement: for any fixed \emph{deterministic}
labeled directed tree $H \subseteq [t]$ and for any $y \in H$,
	\begin{equation}\label{eq:c1+}
	\begin{split}
	& \pr(\widehat{U} = H\,, \ V_{z_1} = y\,, \
	H \cap (\core_t \cup [t^a]) = \varnothing)  \le \frac{\alpha(t)}{2 (\log t)^{1+\varepsilon}} \, 
	\pr\big(\widehat{U} =H \,, \ V_{z_1} \not\in H
	\big) \,.
	\end{split}
	\end{equation}
This yields \eqref{eq:impl} by summing over $y \in H$
---note that $|H| \le |W_{\le k}| \le 2 (\log t)^{1+\varepsilon}$
by \eqref{eq:cardW}--- and then summing over all possible realizations of $H$.

\smallskip

It remains to prove \eqref{eq:c1+}. We again use the notion of a {\em factorizable event,} as in the proof of the lower bound. 
Since the events in \eqref{eq:c1+} are not factorizable, we will specify
the incoming neighborhood $\cN(y)$ (recall \eqref{eq:inco}) of all 
$y \in H$. More precisely, by labeling the vertices of $H$,
{\blue see} \eqref{eq:forshort}, as
	\begin{equation} \label{eq:asin}
	H = \{v_s\}_{s \in \cW} \qquad \text{and} \qquad y = v_{\bar s} \,, \qquad
	\text{for some} \ \ \bar s \in \cW \,,
	\end{equation}
we can consider the events $\{\cN(v_s) = N_{v_s}\}$
where $N_{v_s}$ are (deterministic) disjoint subsets of $[t] \times [m]$.
We say that the subsets $(N_{v_s})_{s\in\cW}$ are \emph{compatible} with the tree $H$ when
$(v_{s'}, j) \in N_{v_s}$ whenever $s = (s', j)$ with $s, s' \in \cW$, $j \in [m]$. Then we can write
	\begin{equation} \label{eq:unio}
	\{\widehat{U} = H\} = \bigcup_{\text{compatible } (N_{v_s})_{s\in\cW}}
	\{ \cN(v_s)=N_{v_s}\ \text{ for every } \ s \in \cW\} \,.
	\end{equation}
Since the degree of vertex $v_s$ equals $D_t(v_s) = m + |N_{v_s}|$,
we can ensure that $H \cap (\core_t \cup [t^a]) = \varnothing$ 
by restricting the union in \eqref{eq:unio} to those $N_{v_s}$ satisfying
the constraints
	\begin{equation}
	\label{eq:constraints}
	v_s > t^a \qquad \text{and} \qquad |N_{v_s}| \le (1+B)(\log t)^\sigma - m \,,
	\qquad \forall s \in \cW \,.
	\end{equation}
Finally, if we write
	\begin{equation}
	\label{eq:z1xj}
	z_1 = (x,j) \qquad \text{for some} \quad x \in \cW \,, \ j \in [m] \,,
	\end{equation}
then, since $V_{z_1} = \xi_{V_{x},j}$, the event $\{V_{z_1} = v_{\bar s}\}$ amounts
to require that\footnote{Incidentally, we observe that the constraint
\eqref{eq:lastconst} is not included in the requirement that 
$(N_{v_s})_{s\in\cW}$ are compatible, because $z_1 = (x,j) \not\in \cW$
by definition \eqref{eq:forshort} of $\cW$.}
\begin{equation}\label{eq:lastconst}
	(v_x,j) \in N_{v_{\bar s}} \,.
\end{equation}

Let us summarize where we now stand: When
we fix a family of $(N_{v_s})_{s \in \cW}$ that is compatible
and satisfies the constraints \eqref{eq:constraints} and \eqref{eq:lastconst},
in order to prove \eqref{eq:c1+} it is enough to show that
	\begin{equation}\label{eq:c1++}
	\begin{split}
	& \pr(\cN(v_s)=N_{v_s} \text{ for every } s \in \cW) \\
	& \qquad \le \frac{\alpha(t)}{2 (\log t)^{1+\varepsilon}} \,
	\pr(\cN(v_s)=N_{v_s} \text{ for every } s \in \cW \setminus \{\bar s\}, \ \
	\cN(v_{\bar s})=N_{v_{\bar s}} \setminus \{(v_x,j)\}) \,.
	\end{split}
	\end{equation}
\smallskip

Let us set
	\begin{equation}\label{eq:N}
	N := \bigcup_{s \in \cW} N_{v_s} \subseteq [t] \times [m] \,.
	\end{equation}
The probability on the left-hand side of \eqref{eq:c1++}
can be factorized, using conditional expectations and the tower property, as a product of two kinds of terms:

\begin{itemize}
\item[$\rhd$] For every edge $(u,r)\in N$ ---say $(u,r) \in N_{v_s}$, with $s \in \cW$---
we have the term
	\eqn{
	\label{eq:Dul-1}
	\frac{D_{u,r-1}(v_s)+\delta}{c_{u,r}}
	}
corresponding to the fact that the edge needs to be connected to $v_s$;

\item[$\rhd$] On the other hand, 
for every edge $(u,r)\not\in N$, we have the term
	\eqn{
	\label{eq:outH}
	1-\frac{D_{u,r-1}(H)+|H\cap [u-1]|\delta}{c_{u,r}},
	}
corresponding to the fact that the edge may not connect to any vertex in $H$.
\end{itemize}

(We emphasize that all the degrees $D_{\cdot,\cdot}(\,\cdot\,)$ appearing in
\eqref{eq:Dul-1} and \eqref{eq:outH} are \emph{deterministic}, since they
are fully determined by the realizations of the incoming neighborhoods
$(N_{v_s})_{s\in\cW}$.)

We can obtain the right-hand side in \eqref{eq:c1++} by replacing some terms 
in the product. 
\begin{itemize}
\item[$\rhd$] Among the edges $(u,r)\in N$, whose contribution is \eqref{eq:Dul-1}, we have
the one that creates the collision, namely $(v_x,j)$.
If we want this edge to be connected \emph{outside} $H$, 
as in the right-hand side in \eqref{eq:c1++},
we need to divide the left hand side of \eqref{eq:c1++} by 
	\eqn{
	\label{eq:div1}
	\left(\frac{D_{v_x,j-1}(v_{\bar s})+\delta }{c_{v_x,j}}\right)
	\left(1-\frac{D_{v_x,j-1}(H)
	+|H \cap [v_x-1]|\delta}{c_{v_x,j}}\right)^{-1} .
	}
We also have to replace some other terms corresponding to edges 
$(u,r) \in N_{v_{\bar s}}$, because the degree of 
vertex $v_{\bar s}$ is decreased by one after connecting $(v_x,j)$ outside $H$.
More precisely, for every edge $(u,r)\in N_{v_{\bar s}}$
that is younger than $(v_x,j)$, that is
$(u,r)>(v_x,j)$, we can reduce the degree of $v_{\bar s}$ 
by one by dividing the left-hand side of \eqref{eq:c1++} by 
	\eqn{
	\label{eq:div2}
	\prod_{(u,r)\in N_{v_{\bar s}},\
	(u,r) > (v_x,j)}\frac{D_{u,r-1}(v_{\bar s})+\delta}
	{D_{u,r-1}(v_{\bar s}) - 1 +\delta}
	= \frac{D_{t}(v_{\bar s})+\delta}{D_{v_x,j-1}(v_{\bar s}) +\delta} \,.
	}
Finally, the contribution of the edges $(u,r) \in N_{v_{s}}$ for $s \ne \bar s$ is unchanged.

\item[$\rhd$] For every edge $(u,r)\not\in N$, 
the probability that such edge is not attached to $H$, after we reconnect the edge $(v_x,j)$,
becomes larger, since the degree of $H$ is reduced by one. 
\end{itemize}

It follows that the inequality \eqref{eq:c1++} holds with
$\alpha(t) / (2 (\log t)^{1+\varepsilon})$ replaced by $\beta$, defined by
\begin{equation} \label{eq:betadef}
\begin{split}
	\beta & = \left(\frac{D_{v_x,j-1}(v_{\bar s})+\delta }{c_{v_x,j}}\right)
	\left(1-\frac{D_{v_x,j-1}(H)
	+|H \cap [v_x-1]|\delta}{c_{v_x,j}}\right)^{-1}
	\frac{D_{t}(v_{\bar s})+\delta}{D_{v_x,j-1}(v_{\bar s}) +\delta} \\
	& = \left(\frac{D_{t}(v_{\bar s})+\delta}{c_{v_x,j}}\right)\left(1-\frac{D_{v_x,j-1}(H)
	+|H \cap [v_x-1]|\delta}{c_{v_x,j}}\right)^{-1} \\
	& \le \left(\frac{D_{t}(v_{\bar s})}{c_{v_x,j}}\right)
	\left(1-\frac{D_{v_x,j-1}(H)}{c_{v_x,j}}\right)^{-1}  =: \beta' \,,
\end{split}
\end{equation}
because $\delta \le 0$.
We only need to show that $\beta' \le \alpha(t) / (2 (\log t)^{1+\varepsilon})$.

Since $c_{v,j} \ge m(v-1)$, the first relation in \eqref{eq:constraints} yields
	\begin{equation*}
	c_{v_x, j} \ge  t^a.
	\end{equation*}
Hence, since $D_{t}(v_{\bar s}) \le (1+B)(\log t)^\sigma$
by the second relation in \eqref{eq:constraints}, we can bound
	\begin{equation*}
	\left(\frac{D_{t}(v_{\bar s}) }{c_{v_x,j}}\right)
	\le \frac{(1+B)(\log t)^\sigma}{m t^a} \,.
	\end{equation*}
Likewise, since $D_t(H) \le |H| (1+B) (\log t)^\sigma$, for $k = k^{+}_t$ we get,
by \eqref{eq:cardW},
	\begin{equation*}
	\left(1-\frac{D_{v_x,j-1}(H)}{c_{v_x,j}}\right)^{-1} \le
	\left(1-\frac{2 (\log t)^{1+\varepsilon} (1+B)(\log t)^\sigma}{t^a}\right)^{-1}
	\le 2 \,,
	\end{equation*}
where the last inequality holds for $t$ large enough. 
Recalling \eqref{eq:alphat},
	\begin{equation*}
	\beta' \le 2 \frac{(1+B)(\log t)^\sigma}{m t^a}
	= \frac{\alpha(t)}{2 (\log t)^{1+\varepsilon}} \,.
	\end{equation*}
This completes the proof of \eqref{eq:c1++},  and
hence of \eqref{eq:key}, in the case where $l=1$.
\qed

\subsubsection{Proof of \eqref{eq:key}: general case $l\geq 2$}
The proof for the general case is very similar to that for $l=1$, so we only highlight the (minor) changes.

In analogy with \eqref{eq:c1},  we can replace $\widehat{U}_{\leq k}(v)$
by $\widehat{U}_{\leq k}^{\sss (\vec{z}_l)}(v)$ in the left-hand side of \eqref{eq:key},
thanks to \eqref{eq:identi}. Then, as in \eqref{eq:forshort}, we write
	\begin{equation} 
	\label{eq:forshortl}
	\cW := W_{\le k}^{\sss (\vec{z}_l)}(v) \,, \qquad
	\widehat{U} := \widehat{U}_{\leq k}^{\sss (\vec{z}_l)}(v)
	= \big\{ V_z \big\}_{z \in \cW} \,.
	\end{equation}
The extension of \eqref{eq:c1+} becomes that for any fixed \emph{deterministic}
labeled directed tree $H \subseteq [t]$ and for all $y_1, \ldots, y_l \in H$,
	\begin{equation}\label{eq:c1+l}
	\begin{split}
	& \pr(\widehat{U} = H\,, \ V_{z_1} = y_1\,,\, \ldots, V_{z_l} = y_l\,, \
	H \cap (\core_t \cup [t^a]) = \varnothing)  \\
	& \ \ \le \left(\frac{\alpha(t)}{2(\log t)^{1+\varepsilon}}\right)^l \, 
	\pr\big(\widehat{U} =H\,, \ V_{z_1} \not\in H \,, \,
	V_{z_2} \not\in H\,,\, \ldots \,, \,
	V_{z_l} \not\in H
	\big) \,.
	\end{split}
	\end{equation}
As in \eqref{eq:asin}, we can write
	\begin{equation*}
	H = \{v_s\}_{s \in \cW} \qquad \text{and} \qquad y_1 = v_{\bar s_1} \,,\ \ldots \,, \
	y_l = v_{\bar s_l} \qquad
	\text{for some} \ \ \bar s_1\,, \ \ldots\,, \ \bar s_l \in \cW \,.
	\end{equation*}

To obtaint a factorizable event, we must specify the incoming neighborhoods
$\cN_{v_s} = N_{v_s}$ for all $s\in\cW$, which must be compatible with $H$ and
satisfy the constraint \eqref{eq:constraints}. If we write
	\begin{equation*}
	z_1 = (x_1, j_1)\,, \ \ldots, \ z_l = (x_l, j_l) \,, \qquad
	\text{for some} \quad x_1, \ldots, x_l \in \cW, \ j_1, \ldots, j_l \in [m] \,,
	\end{equation*}
then we also impose the constraint that obviously generalizes \eqref{eq:lastconst}, namely
	\begin{equation*}
	(v_{x_1}, j_1) \in N_{v_{\bar s_1}} \,, \ \ldots \,, \
	(v_{x_l}, j_l) \in N_{v_{\bar s_l}} \,.
	\end{equation*}
The analogue of \eqref{eq:c1++} then becomes
	\begin{equation}\label{eq:c1++l}
	\begin{split}
	& \pr(\cN(v_s)=N_{v_s} \text{ for every } s \in \cW) \\
	& \qquad \le \left(\frac{\alpha(t)}{2(\log t)^{1+\varepsilon}}\right)^l \,
	\pr\big(\cN(v_s)=N_{v_s} \text{ for every } s \in \cW \setminus \{\bar s_1, \ldots, \bar s_l\}, \\
	& \hskip5cm
	\cN(v_{\bar s_i}) = N_{v_{\bar s_i}} \setminus \{(v_{x_i}, j_i)\}
	\text{ for every } i=1,\ldots, l \big) \,.
\end{split}
\end{equation}

When we define $N$ as in \eqref{eq:N},
the probability in the left-hand side of \eqref{eq:c1++l}
can be factorized in a product of terms of two different types, which are given
precisely by \eqref{eq:Dul-1} and \eqref{eq:outH}. 
In order to obtain the probability in the right-hand side of \eqref{eq:c1++l},
we have to divide the left-hand side by a product of factors analogous to
\eqref{eq:div1} and \eqref{eq:div2}. More precisely,
\eqref{eq:div1} becomes
	\eqn{\label{eq:div1l}
	\prod_{i=1}^l \left(\frac{D_{v_{x_i},j_i-1}(v_{\bar s_i})+\delta }{c_{v_{x_i},j_i}}\right)
	\left(1-\frac{D_{v_{x_i},j_i-1}(H)
	+|H \cap [v_{x_i}-1]|\delta}{c_{v_{x_i},j_i}}\right)^{-1} ,
	}
while \eqref{eq:div2} becomes
	\begin{equation*}
	\prod_{i=1}^l \frac{D_{t}(v_{\bar s_i})+\delta}{D_{v_{x_i},j_i-1}(v_{\bar s_i}) +\delta} \,.
	\end{equation*}
We define $\beta$ accordingly,
namely we take the product for $i=1,\ldots, l$
of \eqref{eq:betadef} with $x, j, \bar s$ replaced respectively by $x_i, j_i, \bar s_i$.
Then it is easy to show that
	\begin{equation*}
	\beta \le \left(\frac{\alpha(t)}{2(\log t)^{1+\varepsilon}}\right)^l \,,
	\end{equation*}
arguing as in the case $l=1$. This completes the proof of \eqref{eq:c1++l}.
\qed
\medskip

\subsection{Proof of Statement \ref{stat-triplelog}}
\label{proof-upper-prefatt-2}

The next step is to prove that the boundaries of the $k^{+}_t$-exploration graphs are at most at distance
\begin{equation}\label{eq:ht}
	h_t=\lceil B\log\log\log t+C\rceil
\end{equation}
from $\core_t$, where $B, C$ are constants to be chosen later on.
Similarly to the proof in Section \ref{proof-upper-cmnd-2}, we consider a $k^{+}_t$-exploration graph,
and we enumerate the vertices on the boundary as $x_1, \ldots,x_N$, where $N\geq s(m,l)m^{k^{+}_t}$ from Lemma \ref{prefatt-lemma-boundarybound} and $l$ is chosen as in 
Lemma~\ref{prefatt-corol-numbercollision}. We next define what it means to have a success:

\begin{Definition}[Success]
\label{prefatt-def-success}
Consider the vertices $x_1,\ldots,x_N$ on the boundary of a $k^{+}_t$-exploration graph. We say that 
$x_i$ is a {\em success} when the distance between $x_i$ and $\core_t$ is at most $2h_t$.
\end{Definition}

The next lemma is similar to Lemma \ref{cmnd-lemma-hitting}
(but only deals with vertices in $[t/2]$):

\begin{Lemma}[Probability of success]
\label{prefatt-lemma-hitting}
 Let $(\PA{t})_{t\geq 1}$ be a preferential attachment model, with $m\geq 2$ and $\delta\in(-m,0)$. Consider $v\in [t/2]\setminus \core_t$ and its $k^{+}_t$-exploration graph. Then there
 exists a constant $\eta>0$ such that
 	\eqn{
	\label{prefatt-formula-hitting1}
  	\pr\left(S_{x_1}\mid \PA{t/2}\right) \geq \eta,
 	}
and for all $j=2,\ldots,N$,
	\eqn{
	\label{prefatt-formula-hitting2}
   	\pr\left(S_{x_1} \mid \PA{t/2}, S_{x_1}^c,\ldots,S_{x_{j-1}}^c\right) \geq \eta.
 	}
\end{Lemma}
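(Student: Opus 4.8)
The strategy mirrors the configuration-model proof of Proposition~\ref{cmnd-lemma-hitting}, but adapted to the preferential attachment dynamics and exploiting that we only need to reach $\core_t$ (defined via degrees at time $t/2$) starting from a vertex that is already known to be outside $\core_t$ at time $t/2$. The plan is to show that, conditionally on $\PA{t/2}$, the growth of a greedy ``maximal-degree-child'' path started at $x_1$ is with positive probability fast enough to reach a vertex of degree $\ge (\log t)^\sigma$ within $h_t$ steps, where the $\log\log\log t$ scale of $h_t$ reflects a doubly-exponential growth of degrees along this path. Concretely, I would first set up the exploration: reveal $\PA{t/2}$, then reveal the edges of vertices in $[t/2]\setminus[t/4]$ successively (adding vertices $t/2+1,\ldots,t$), and at each step, from the current vertex $v_j^{(1)}$ (starting $v_0^{(1)}=x_1$), look among the \emph{fresh} vertices that attach to $v_j^{(1)}$ via one of the $m$ edges incident to it at time $t/2$ (or among later vertices attaching to it) and pick the one of maximal degree $D_t(\cdot)$, call it $v_{j+1}^{(1)}$.

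\textbf{Key estimate.} The core input is a preferential-attachment analogue of Lemma~\ref{cmnd-lemma-highvertices}: if $v_j^{(1)}$ currently has degree $D \ge g_j$ (where $g_\ell = 2^{\e^{\gamma\ell}}$ grows doubly exponentially, exactly as in \eqref{eq:gell}), then the probability that \emph{none} of the edges/incoming connections of $v_j^{(1)}$ lands on a fresh vertex of degree $> g_{j+1}$ is at most $\exp(-c\, g_j/(g_{j+1})^{\tau-2+\zeta})$ for suitable small $\zeta,\gamma>0$; since $g_{j+1} = (g_j)^{\e^\gamma}$ and $1-\e^\gamma(\tau-2+\zeta) =: \xi > 0$ by choosing $\gamma,\zeta$ small (the same $\xi$ as in \eqref{eq:xifix}), this bound is $\e^{-c(g_j)^\xi/2}$, summable in $j$. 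The ingredient that replaces the uniform half-edge choice of the configuration model is the bound \eqref{prefatt-connectioncondition} on connection probabilities together with the fact that, among the $\Theta(t)$ vertices in $[t/2]\setminus[t/4]$, a positive proportion (of order $(1-F)(z) \asymp z^{-(\tau-1)}$) have degree exceeding $z$ at time $t/2$ — this is again Lemma~\ref{prefatt-lemma-oldvertices}-type input, or can be read off the degree evolution as in \cite{DSvdH}. One has to be careful that the events ``$v_{j+1}^{(1)}$ is fresh with high degree'' be expressed as factorizable events (Definition~\ref{def:factorizable}), specifying incoming neighborhoods as in the proof of Proposition~\ref{pr-PAM-boundsplit}, so that the conditional probabilities are deterministic and the induction over $j=0,\ldots,h_t$ goes through; then, as in \eqref{eq:doli}, define $T_\ell = \bigcup_{k\le\ell}\{D_t(v_k^{(1)}) > (\log t)^\sigma\}$, $W_\ell = \bigcap_{k\le\ell}\{D_t(v_k^{(1)}) > g_k,\ v_k^{(1)}\text{ fresh}\}$, $E_j = T_{j-1}\cup W_j$, and show $\pr^*(E_{j+1}) \ge (1-\varepsilon_j)\pr^*(E_j)$ with $\varepsilon_j = \e^{-c(g_j)^\xi/2}$, so $\pr^*(E_{h_t}) \ge \prod_{j\ge 0}(1-\varepsilon_j) =: \eta > 0$. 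Choosing $B=1/\gamma$ and $C$ so that $g_{h_t} > (\log t)^\sigma$ (cf.\ \eqref{eq:gbo}) gives $E_{h_t}\subseteq S_{x_1}$, proving \eqref{prefatt-formula-hitting1}; for \eqref{prefatt-formula-hitting2} one repeats the argument conditioning additionally on $S_{x_1}^c,\ldots,S_{x_{j-1}}^c$, which only removes a bounded, polylogarithmic number of vertices and edges from the pool, hence does not affect the estimates (the conditioning events live in a $\sigma$-algebra generated by $\PA{s}$ for $s$ up to $t/2 + O((\log t)^{1+\sigma+\varepsilon} \cdot N)$, comfortably below $t$).

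\textbf{Main obstacle.} The hard part is not the doubly-exponential induction — that is essentially identical to the configuration-model argument — but controlling the dependence structure: in the preferential attachment model one cannot simply ``pair half-edges uniformly'', so the event that the maximal-degree fresh child of $v_j^{(1)}$ exceeds $g_{j+1}$ must be realized through a carefully chosen factorizable event, and one must verify that conditioning on the history (the already-built exploration graph, the previous vertices $v_0^{(1)},\ldots,v_j^{(1)}$ all outside $\core_t$, and the failures $S_{x_1}^c,\ldots,S_{x_{i-1}}^c$) does not destroy the lower bound on the probability of picking a high-degree fresh vertex. This requires the a~priori degree bounds of Lemma~\ref{lem-degoutcore} (so that ``fresh outside $\core_t$'' vertices have degree $\le (1+B)(\log t)^\sigma$, keeping the normalizing constants $c_{u,j}$ under control) and the connection-probability bound \eqref{prefatt-connectioncondition} to guarantee that enough probability mass is available on the large pool of genuinely high-degree old vertices in $[t/2]\setminus[t/4]$. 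I expect the bookkeeping of which vertices and edges have been revealed — and the verification that the relevant event is factorizable in the sense of Definition~\ref{def:factorizable} — to be the technically delicate core of the argument, just as the conditioning arguments were for Statement~\ref{stat-MKC} and Proposition~\ref{pr-PAM-boundsplit}.
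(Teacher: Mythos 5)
There is a genuine gap, and it sits exactly where you locate the ``key estimate''. Your proposed PAM analogue of Lemma~\ref{cmnd-lemma-highvertices} --- that the probability that none of the edges/incoming connections of $v_j^{\sss(1)}$ lands on a fresh vertex of degree $>g_{j+1}$ is at most $\exp\bigl(-c\,g_j/(g_{j+1})^{\tau-2+\zeta}\bigr)$ --- does not hold for a single greedy step in the preferential attachment model. In $\CMnd$ each of the $\approx g_j$ free half-edges of $v_j^{\sss(1)}$ is paired uniformly, so each lands on a \emph{size-biased} vertex (tail exponent $\tau-2$), and with $g_j$ tries one reaches degree $g_{j+1}=(g_j)^{\e^\gamma}$ with the stated failure probability. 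In $\PA{t}$ the size-biasing is only available through the $m$ \emph{outgoing} edges of a vertex, and a boundary vertex (and every vertex along your greedy path) has just $m$ of those; its $\approx g_j$ \emph{incoming} edges come from younger vertices whose degrees are not size-biased (tail exponent $\tau-1$), so the maximal-degree fresh neighbour in one step is typically of order $g_j^{1/(\tau-1)}\ll g_j$, and the probability of reaching degree $g_{j+1}$ in a single step is roughly $m\,g_{j+1}^{-(\tau-2)}+g_j\,g_{j+1}^{-(\tau-1)}$, which tends to $0$ rather than to $1$. So the induction $\pr^*(E_{j+1})\ge(1-\varepsilon_j)\pr^*(E_j)$ with summable $\varepsilon_j$ breaks down at the first step, and no choice of factorizable-event bookkeeping repairs it: the obstruction is probabilistic, not a conditioning issue.

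The paper's proof of Lemma~\ref{prefatt-lemma-hitting} circumvents this by making each degree boost a \emph{two-step} move through a $t$-connector (Definition~\ref{prefatt-def-tconn}): conditionally on $\PA{t/2}$, each of the $\Theta(t)$ vertices in $[t]\setminus[t/2]$ uses its $m$ preferential out-edges, and Lemma~\ref{prefatt-lemma-existencetconn} gives failure probability $\exp\bigl(-\mu D_{w_{i-1}}(t/2)D_{H_i}(t/2)/t\bigr)$ for connecting $w_{i-1}$ to the set $H_i=\{u:D_u(t/2)\ge g_i\}$, while Lemma~\ref{prefatt-lemma-tail} yields $D_{H_i}(t/2)/t\ge g_i P_{\ge g_i}(t/2)\gtrsim g_i^{\,1-(\tau-1+\zeta)}$, recovering exactly the $\e^{-\tilde c(g_{i-1})^\xi}$ terms of \eqref{prefatt-fastly-7}. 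This is why success for the PAM is defined as distance at most $2h_t$ (not $h_t$) from $\core_t$, why $\core_t$ is defined via degrees at time $t/2$, and why the conditioning in \eqref{prefatt-formula-hitting2} is handled simply by noting that on $S_{x_1}^c\cap\cdots\cap S_{x_{j-1}}^c$ at most $h_t(j-1)=o(t)$ potential connectors have been consumed --- no delicate factorizable-event argument is needed in this part of the paper. If you want to stay closer to your greedy picture, the minimal fix is to route each level through a young vertex (an in-neighbour of $w_{i-1}$ or, as in the paper, any vertex born after $t/2$) and use \emph{its} out-edges for the size-biased jump; as written, your single-step estimate is the missing ingredient and the argument does not go through.
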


The aim is to define a sequence of vertices $w_0, \ldots,w_h$ that connects a vertex $x_i$ on the boundary with $\core_t$. In order to do this, we need some preliminar results. We start with the crucial definition of a $t$-connector:

\begin{Definition}[$t$-connector]
\label{prefatt-def-tconn}
Let $(\PA{t})_{t\geq 1}$ be a preferential attachment model, with $m\geq 2$. 
Consider two subsets $A,B\subseteq [t/2]$, with $A\cap B=\varnothing$. We say that a vertex $j\in [t]\setminus[t/2]$ is a {\em $t$-connector for $A$ and $B$} if at least one of the edges incident to $j$ is attached to a vertex in $A$ and at least one is attached to a vertex in $B$.
\end{Definition}

The notion of $t$-connector is useful, because, unlike in the configuration model, in the preferential attachment model typically two high-degree vertices are not directly connected. From the definition of the preferential attachment model, it is clear that the older vertices have with high probability large degree, and the younger vertices have lower degree. When we add a new vertex,
this is typically attached to vertices with large degrees. This means that, 
with high probability, two vertices with high degree can be connected by a young vertex, which is the $t$-connector.

A further important reason for the usefulness of $t$-connectors is that we have effectively {\em decoupled} the preferential attachment model at time $t/2$ and what happens in between times $t/2$ and $t$. When the sets $A$ and $B$ are appropriately chosen, then each vertex will be 
a $t$-connector with reasonable probability, and the events that distinct
vertices are $t$-connectors are close to being independent. Thus, we can use comparisons to binomial random variables to investigate the existence of $t$-connectors. In order to make this work, we need to identify the structure of $\PA{t/2}$ and show that it has sufficiently many vertices of large degree, and we need to show that $t$-connectors are likely to exist. We start with the latter.

In more detail, we will use $t$-connectors to generate the sequence of vertices $w_1,\ldots,w_h$ between the boundary of a $k^+_n$-exploration graph and the $\core_t$, in the sense that we use a $t$-connector to link the vertex $w_i$ to the vertex $w_{i+1}$. (This is why we define a vertex $x_i$ 
to be a success if its distance from $\core_t$ is at most $2h_t$, instead of $h_t$.) 
We rely on a result implying the existence of $t$-connectors between sets of high total degree:

\begin{Lemma}[Existence of $t$-connectors]
\label{prefatt-lemma-existencetconn}
 Let $(\PA{t})_{t\geq 1}$ be a preferential attachment model, with $m\geq 2$ and $\delta\in(-m,0)$. There exists a constant $\mu>0$ such that, for every $A\subseteq[t/2]$, and $i\in[t/2]\setminus A$,
 	\eqn{
	\label{prefatt-formula-tconnprob}
  	\pr\Big(\nexists j\in[t]\setminus[t/2]\colon j\mbox{ is a }t\mbox{-connector for }i\mbox{ and }A\mid \PA{t/2}\Big) 
      	\leq \mathrm{exp}\left(-\frac{\mu D_A(t/2)D_i(t/2)}{t}\right),
 	}
 where $D_A(t/2) = \sum_{v\in A}D_v(t/2)$ is the total degree of $A$ at time $t/2$.
\end{Lemma}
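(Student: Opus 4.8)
The plan is to estimate, conditionally on $\PA{t/2}$, the probability that \emph{none} of the vertices $j \in \{t/2+1, \ldots, t\}$ is a $t$-connector for $\{i\}$ and $A$, and to show this probability is at most $\exp(-\mu D_A(t/2)D_i(t/2)/t)$. The key observation is that vertex $j$ is a $t$-connector for $i$ and $A$ as soon as, say, its first edge attaches to $i$ and its second edge attaches to some vertex of $A$ (this requires $m \ge 2$, which we have). So if we let $\Gamma_j$ denote the event ``$j \overset{1}{\to} i$ and $j \overset{2}{\to} v$ for some $v \in A$'', then $\{j \text{ is a } t\text{-connector for } i \text{ and } A\} \supseteq \Gamma_j$, and hence
\begin{equation*}
\pr\big(\nexists\, t\text{-connector}\mid \PA{t/2}\big) \le \pr\Big(\bigcap_{j=t/2+1}^{t} \Gamma_j^c \;\Big|\; \PA{t/2}\Big).
\end{equation*}
First I would bound $\pr(\Gamma_j \mid \PA{j-1})$ from below. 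Using the attachment rule \eqref{prefatt-attprob}, the probability that the first edge of $j$ attaches to $i$ is $\frac{D_{j,0}(i)+\delta}{c_{j,1}} \ge \frac{D_i(t/2)+\delta}{c_{j,1}}$, since degrees are non-decreasing in time and $j > t/2$; and given this, the probability that the second edge attaches to some vertex of $A$ is at least $\frac{D_{j,1}(A)+|A|\delta}{c_{j,2}} \ge \frac{D_A(t/2)+|A|\delta}{c_{j,2}}$ (here the relevant degrees at time step $(j,1)$ dominate their values at time $t/2$). Since $\delta \in (-m,0)$ and $D_i(t/2)\ge m$, $D_v(t/2)\ge m$ for $v\in A$, the terms $D_i(t/2)+\delta$ and $D_A(t/2)+|A|\delta = \sum_{v\in A}(D_v(t/2)+\delta)$ are bounded below by a constant multiple of $D_i(t/2)$ and $D_A(t/2)$ respectively. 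Also $c_{j,1}, c_{j,2} = (2+\delta/m)mj(1+o(1)) \le C t$ for $j \le t$ by \eqref{eq:normali}. Hence there is a constant $c_0 > 0$ with
\begin{equation*}
\pr(\Gamma_j \mid \PA{j-1}) \ge c_0\,\frac{D_i(t/2)\,D_A(t/2)}{t^2}
\end{equation*}
on $\PA{t/2}$, uniformly in $t/2 < j \le t$.

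Next I would chain these estimates over $j$. By the tower property, conditioning successively on $\PA{j-1}$,
\begin{equation*}
\pr\Big(\bigcap_{j=t/2+1}^{t} \Gamma_j^c \;\Big|\; \PA{t/2}\Big)
= \prod_{j=t/2+1}^{t} \pr\big(\Gamma_j^c \mid \textstyle\bigcap_{j'<j}\Gamma_{j'}^c,\ \PA{t/2}\big),
\end{equation*}
and on each conditioning event the lower bound on $\pr(\Gamma_j\mid\cdot)$ still holds (it was a pointwise bound on $\pr(\Gamma_j\mid\PA{j-1})$, valid on \emph{any} configuration of $\PA{j-1}$ consistent with $\PA{t/2}$, since it used only that relevant degrees dominate their time-$t/2$ values). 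Therefore
\begin{equation*}
\pr\Big(\bigcap_{j=t/2+1}^{t} \Gamma_j^c \;\Big|\; \PA{t/2}\Big)
\le \Big(1 - c_0\,\frac{D_i(t/2)D_A(t/2)}{t^2}\Big)^{t/2}
\le \exp\Big(-\frac{c_0}{2}\cdot\frac{D_i(t/2)D_A(t/2)}{t}\Big),
\end{equation*}
using $1-x \le e^{-x}$. Taking $\mu = c_0/2$ completes the proof.

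I do not expect a genuine obstacle here; the only point requiring a little care is the chaining argument, namely verifying that the per-step lower bound $\pr(\Gamma_j\mid \PA{j-1}) \ge c_0 D_i(t/2)D_A(t/2)/t^2$ remains valid after conditioning on the ``failure'' events $\Gamma_{j'}^c$ for $j' < j$. This is clean because that bound was established as a deterministic lower bound on the conditional probability given the \emph{entire} history $\PA{j-1}$, using only monotonicity of degrees, so it survives any further conditioning on events measurable with respect to $\PA{j-1}$. A minor bookkeeping point is to make sure the constant absorbing $\delta$ (which can be negative) is handled correctly, i.e.\ that $D_i(t/2)+\delta \ge \frac{\delta+m}{m}D_i(t/2) > 0$ and similarly for $A$; this is where $\delta > -m$ and $D\ge m$ are used.
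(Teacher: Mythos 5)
Your proof is correct, and it reconstructs essentially the same argument as the one the paper relies on (the paper does not spell it out but defers to the proof of \cite[Proposition 3.2]{DSvdH}): each new vertex $j\in[t]\setminus[t/2]$ connects its first edge to $i$ and its second to $A$ with conditional probability at least $c_0 D_i(t/2)D_A(t/2)/t^2$, uniformly in the history, and chaining over the $t/2$ new vertices gives the exponential bound. Your handling of the $\delta$-terms via $D+\delta\ge\frac{m+\delta}{m}D$ (using $D\ge m$, $\delta\in(-m,0)$) and the observation that the per-step bound survives conditioning on $\PA{j-1}$-measurable failure events are exactly the points that make this clean.
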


\begin{proof} The proof of this lemma is present in the proof of \cite[Proposition 3.2]{DSvdH}.
\end{proof}

\begin{remark}\rm
Notice that this bound depends on the fact that the number of possible $t$-connectors is of order $t$.
\end{remark}

A last preliminary result that we need is a technical one, which plays the role of Lemma \ref{cmnd-lemma-highvertices} for the configuration model and shows that at time $t/2$ there are sufficiently many vertices of high degree, uniformly over a wide range of what `large' could mean:

\begin{Lemma}[Tail of degree distribution]
\label{prefatt-lemma-tail}
 Let $(\PA{t})_{t\geq 1}$ be a preferential attachment model, with $m\geq 2$ and $\delta\in(-m,0)$. Then, for all $\zeta>0$ there exists a constant $c = c(\zeta)$ such that,
 for all $1\leq x\leq (\log t)^{q}$, for any $q>0$, and uniformly in $t$,
 	\eqn{
	\label{prefatt-formula-tail}
   	P_{\geq x}(t) = \frac{1}{t}\sum_{v\in[t]}\I_{\{D_v(t)\geq x\}} \geq cx^{-(\tau-1+\zeta)}.
 	}
\end{Lemma}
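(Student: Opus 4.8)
\textbf{Proof plan for Lemma~\ref{prefatt-lemma-tail}.}
The plan is to reduce the claim about $P_{\geq x}(t)$ --- the empirical fraction of vertices of degree at least $x$ at time $t$ --- to a first--moment and concentration argument. First I would compute $\E[t\,P_{\geq x}(t)] = \sum_{v\in[t]}\pr(D_v(t)\geq x)$. The relevant estimate here is the standard lower bound on the probability that a fixed vertex $v$ has reached degree at least $x$ by time $t$: using the explicit attachment probabilities \eqref{prefatt-attprob} (or the P\'olya urn / Beta representation that underlies the preferential attachment model, see \cite[Chapter 8]{vdH1}), one has, for vertices $v$ that are not too young (say $v \le t/(\log t)^{C}$ for a suitable constant depending on $q$) and for $1\le x\le (\log t)^q$,
\begin{equation*}
	\pr\big(D_v(t)\geq x\big) \;\geq\; c'\,\Big(\frac{v}{t}\Big)^{\gamma}\, x^{-(\tau-2)} \,,
\end{equation*}
with $\gamma = m/(2m+\delta)$ as in \eqref{prefatt-connectioncondition}; more precisely one gets a bound of the form $c' (v/t)^{\gamma} x^{-(\tau-2-\zeta/2)}$ once one absorbs the polynomial corrections, which is all we need since we have a free $\zeta>0$ to play with. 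Summing over $v$ in the range $[\epsilon_t t, t]$ for a slowly varying $\epsilon_t$ (or simply over $v\in[t/2,t]$, which already suffices) gives
\begin{equation*}
	\E[t\,P_{\geq x}(t)] \;\geq\; c'' \, t \, x^{-(\tau-2-\zeta/2)} \;\geq\; c'' \, t\, x^{-(\tau-1+\zeta)}
\end{equation*}
for $t$ large, since $x^{-(\tau-2-\zeta/2)} \ge x^{-(\tau-1+\zeta)}$ (as $x\ge 1$ and $\tau-2-\zeta/2 < \tau-1+\zeta$); thus the expectation is at least $c x^{-(\tau-1+\zeta)}t$ with room to spare.

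The second step is concentration: I would show that $t\,P_{\geq x}(t)$ does not deviate much below its mean. The cleanest route is a bounded--differences (Azuma--Hoeffding) argument with respect to the filtration generated by the successive edge attachments $\xi_{s,i}$: adding or changing one edge can change the number of vertices of degree $\ge x$ by at most a constant ($2$), so the martingale increments are bounded, and one obtains $\pr(t P_{\geq x}(t) \le \tfrac12 \E[t P_{\geq x}(t)]) \le \exp(-c_3 \,(\E[t P_{\geq x}(t)])^2 / (mt)) = \exp(-c_4 \, t\, x^{-2(\tau-1+\zeta)})$. Since $x\le (\log t)^q$, the exponent is at least $t/(\log t)^{2q(\tau-1+\zeta)} = t^{1-o(1)} \to \infty$, so this failure probability is $o(1)$, uniformly over the range $1\le x\le(\log t)^q$ after a union bound over the at most $(\log t)^q$ relevant integer values of $x$. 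Hence with high probability $t P_{\geq x}(t) \ge \tfrac12 c x^{-(\tau-1+\zeta)} t$ for \emph{all} such $x$ simultaneously; relabelling the constant $c$ gives the statement. (Alternatively, if a deterministic --- rather than whp --- statement is wanted as literally phrased ``uniformly in $t$'', one notes that the known almost--sure convergence $P_{\ge x}(t) \to p_{\ge x}$ with $p_{\ge x} \asymp x^{-(\tau-1)}$ from \eqref{prefatt-empdegdist} already yields the bound for each \emph{fixed} $x$ and large $t$; the content of the lemma is the uniformity over the growing window $x\le(\log t)^q$, which is exactly what the concentration estimate delivers.)

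The main obstacle I anticipate is obtaining the first--moment lower bound $\pr(D_v(t)\ge x) \gtrsim (v/t)^\gamma x^{-(\tau-2)}$ with the right power of $x$ uniformly for $x$ as large as $(\log t)^q$ --- one must be careful that the Beta/Gamma asymptotics used to estimate $\pr(D_v(t)\ge x)$ are applied in a regime where $x$ is still much smaller than any power of $v$ and of $t$, which holds here since $x$ is only polylogarithmic. Concretely, $D_v(t)$ stochastically dominates a P\'olya-urn-type variable whose tail is computed via Stirling's formula on ratios of Gamma functions; the correction factors are $1+O(x/v)$ and hence negligible in our range. Once this estimate is in hand --- and it is essentially \cite[Lemma 3.9]{DSvdH} / the computations in \cite[Chapter 8]{vdH1} in a slightly different guise --- the rest is the routine first--moment-plus-Azuma scheme sketched above. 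I would cite the relevant degree-tail estimate from \cite{DSvdH} or \cite{vdH1} rather than reprove it, and spend the written proof on the summation over $v$ and the concentration bound.
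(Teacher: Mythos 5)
Your overall scheme (lower bound the mean of $tP_{\geq x}(t)$, then concentrate via a Doob martingale/Azuma argument) is sound, and it is essentially the mechanism hidden inside the references the paper uses: the paper's own proof is three lines, citing the convergence of the empirical degree distribution to $(p_k)_k$ with $p_{\geq x}\geq \bar c\,x^{-(\tau-1+\zeta)}$, and the uniform concentration $|P_{\geq x}(t)-p_{\geq x}|\leq C\sqrt{\log t/t}$ of \cite[Theorem 8.2]{vdH1} (itself proved by bounded differences), and then observing that the error term is negligible precisely because $x\leq(\log t)^q$. Your concentration step is acceptable at this level of detail and matches that route.

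The genuine problem is in your first-moment step. The per-vertex bound $\pr(D_v(t)\geq x)\geq c'(v/t)^{\gamma}x^{-(\tau-2)}$ is false for vertices of age comparable to $t$: for $v\in[t/2,t]$ the degree $D_v(t)$ has typical size $O(1)$ and its tail in $x$ decays (stretched-)exponentially, not polynomially --- equivalently, $\E[D_v(t)^p]\leq C_p (t/v)^{p\gamma}$ for every $p$, so $\pr(D_v(t)\geq x)$ is $o(x^{-p})$ for every $p$ when $v\geq t/2$. Indeed, if your bound held on $[t/2,t]$, summing it would give $\E[P_{\geq x}(t)]\geq c\,x^{-(\tau-2)}$, contradicting $p_{\geq x}\asymp x^{-(\tau-1)}$ (your intermediate claim $\E[tP_{\geq x}(t)]\geq c''t\,x^{-(\tau-2-\zeta/2)}$ is therefore too strong to be true, even though the weaker conclusion you actually need is true). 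Note also the internal inconsistency: you first restrict to $v\leq t/(\log t)^{C}$ and then sum over $v\in[t/2,t]$; and restricting only to $v\leq t/(\log t)^{C}$ with a constant per-vertex probability would give merely $(\log t)^{-C}$, which fails for small $x$ (e.g.\ $x=O(1)$). The repair is to make the summation range depend on $x$: exactly the vertices with $v\leq \epsilon\,t\,x^{-(\tau-1)}$ (i.e.\ $(t/v)^{\gamma}\geq x/\epsilon$, using $\gamma(\tau-1)=1$) have $\pr(D_v(t)\geq x)$ bounded below by a constant, and summing the constant over this range yields $\E[tP_{\geq x}(t)]\geq c\,t\,x^{-(\tau-1)}$ (with the $\zeta$-slack absorbing lower-order corrections), which suffices for your Azuma step. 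With that fix --- or by simply citing $P_k(t)\to p_k$ together with \cite[Theorem 8.2]{vdH1}, as the paper does --- the argument goes through.
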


\begin{proof}
The degree distribution sequence $(p_k)_{k\in\N}$ in \eqref{prefatt-empdegdist} satisfies a power law with exponent $\tau\in(2,3)$. As a consequence,
for all $\zeta>0$ there exists a constant $\bar{c}= \bar{c}(\zeta)$ such that
	\eqn{
	\label{tail-pk}
  	p_{\geq x} := \sum_{k\geq x}p_k \geq \bar{c}x^{-(\tau-1+\zeta)}.
	}
We now use a concentration result on the empirical degree distribution (for details, see \cite[Theorem 8.2]{vdH1}), which assures us that there exists a second constant $C>0$ such that, with high probability, for every $x\in\N$, 
	\eqn{
	\left|P_{\geq x}-p_{\geq x}\right|\leq C\sqrt{\frac{\log t}{t}}.
	}
Fix now $\zeta>0$, then from this last bound we can immediately write, for a suitable constant $\bar{c}$ as in \eqref{tail-pk}, 
	\eqn{
	P_{\geq x}\geq p_{\geq x}-C\sqrt{\frac{\log t}{t}}\geq \bar{c}x^{-(\tau-1+\zeta)}-C\sqrt{\frac{\log t}{t}}\geq 
		\frac{\bar{c}}{2}x^{-(\tau-1+\zeta)},
	}
if and only if 
	\eqn{
	C\sqrt{\frac{\log t}{t}} = o\left(x^{-(\tau-1+\zeta)}\right).
	}
This is clearly true for $x\leq (\log t)^q$, for any positive $q$. Taking $c = \bar{c}/2$ completes the proof.
\end{proof}

With the above tools, we are now ready to complete the proof of Lemma \ref{prefatt-lemma-hitting}:
\medskip
\begin{proof}[Proof of Lemma \ref{prefatt-lemma-hitting}]
As in the proof of Proposition \ref{cmnd-lemma-hitting},
we define the super-exponentially growing sequence $g_\ell$ as in \eqref{eq:gell},
where $\gamma > 0$ is chosen small enough, as well as $\zeta > 0$,
so that \eqref{eq:xifix} holds. The constants $B$ and $C$ in the definition
\eqref{eq:ht} of $h_t$ are fixed as prescribed below \eqref{eq:gell}.

We will define a sequence of vertices $w_0,\ldots,w_h$ such that, for $i=1,\ldots,h$, $D_{w_i}(t)\geq g_{i}$ and $w_{i-1}$ is connected to $w_i$. For this, we define, for $i=1,\ldots,h-1$,
	\eqn{
	\label{prefatt-fastly-1}
  	H_i = \Big\{u\in[t] \colon D_{u}(t/2)\geq g_i\Big\}
	\subseteq [t/2],
	}
so that we aim for $w_i\in H_i$.
\medskip

We define the vertices recursively, and start with $w_0=x_1$. Then, we consider $t$-connectors between $w_0$ and $H_1$, and denote by $w_1$ the vertex in $H_1$ with minimal degree among the ones that are connected to $w_0$ by a $t$-connector. Recursively, consider $t$-connectors between $w_i$ and $H_{i+1}$, and denote by $w_{i+1}$ the vertex in $H_{i+1}$ with minimal degree among the ones that are connected to $w_i$ by a $t$-connector.  
Recall \eqref{eq:gbo}  to see that $g_{h_t}\geq (\log{t})^{\sigma}$, where $h_t$ is defined in \eqref{eq:ht}.
The distance between $w_0$ and $\core_t$ is at most $2h_t = 2\lceil B\log \log \log t+C\rceil$. If we denote the event that there exists a $t$ connector between $w_{i-1}$ and $H_i$ by $\{w_{i-1}\sim H_i\}$, then we will bound from below
	\eqn{
	\label{prefatt-fastly-2}
  	\pr(S_{x_1}\mid \PA{t/2}) \geq \E\Big[\prod_{i=1}^{h_t} \I_{\{w_{i-1}\sim H_i\}}\mid \PA{t/2}\Big].
	}
In Lemma \ref{prefatt-lemma-existencetconn}, the bound on the probability that a vertex  $j\in[t]\setminus[t/2]$ is a $t$-connector between two subsets of $[t]$ is independent of the fact that the other vertices are $t$-connectors or not. This means that, with $\mathcal{F}_i$ the $\sigma$-field generated by the path formed by $w_0,\ldots,w_i$ and their respective $t$-connectors,
	\eqn{
	\label{prefatt-fastly-3}
  	\E\Big[\I_{\{w_{i-1}\sim H_i\}}\mid\PA{t/2},\mathcal{F}_{i-1}\Big]
      	\geq 1-\e^{-\mu D_{w_{i-1}}(t/2)D_{H_i}(t/2)/t},
	}
where $D_{H_i}(t)=\sum_{u\in H_i}D_u(t/2)$. This means that
	\eqn{
	\label{prefatt-fastly-4}
  	\E\Big[\prod_{i=1}^{h_t} \I_{\{w_{i-1}\sim H_i\}}\mid \PA{t/2}\Big] \geq 
      	\prod_{i=1}^{h_t}\Big(1-\e^{-\mu D_{w_{i-1}}(t/2)D_{H_i}(t/2)/t}\Big).
	}
We have to bound every term in the product. Using Lemma \ref{prefatt-lemma-tail}, for $i=1$,
	\eqn{
	\label{prefatt-fastly-5}
  	1-\e^{-\mu D_{w_0}(t/2)D_{H_1}(t/2)/t}\geq 1-\e^{-\mu D_{w_0}(t/2) g_1 P_{\geq g_1}(t/2)},
	}
while, for $i=2,\ldots,h-1$
	\eqn{
	\label{prefatt-fastly-6}
	1-\e^{-\mu D_{w_{i-1}}(t/2)D_{H_i}(t/2)/t}\geq 1-\e^{-\mu g_{i-1}g_iP_{\geq g_i}(t/2)}.
	}
Applying \eqref{prefatt-formula-tail} and  recalling \eqref{qj-bd-1}--\eqref{qj-bd-2}, the result is
	\eqn{
	\label{prefatt-fastly-7}
	\begin{array}{cl}
  	\grosso\pr( S_{x_1}\mid \PA{t}) 
	& \grosso \geq \Big(1-\e^{-\mu D_{w_0}(t/2)g_1P_{\geq g_1}(t/2)}\Big)
	\prod_{i=2}^{h_t} \Big(1-\e^{-\mu g_{i-1}g_iP_{\geq g_i}(t/2)}\Big)\\
  	& \grosso\geq \Big(1-\e^{-\mu m g_1 P_{\geq g_1}(t/2)}\Big)
	\prod_{i=2}^\infty \Big(1 - \e^{-\tilde c \, (g_i)^\xi}\Big) ,
	\end{array}
	}
for some constant $\tilde c$.
Since $h_t= \lceil B\log \log \log t+C\rceil$, and 
	\eqn{
	\grosso P_{\geq g_1}(t/2) \rightarrow \sum_{k\geq g_1}p_k>0
	}
with high probability as $t\rightarrow\infty$, we can find a constant $\eta$ such that
	\eqn{
	\label{prefatt-fastly-8}
  	\Big(1-\e^{-\eta m g_1 P_{\geq g_1}(t/2)}\Big)\prod_{i=2}^{h_t} 
	\Big(1 - \e^{-\tilde c \, (g_i)^\xi}\Big)> \eta >0,
	}
which proves \eqref{prefatt-formula-hitting1}.

To prove \eqref{prefatt-formula-hitting2}, we observe that all the lower bounds that we have used on the probability of existence of $t$-connectors only depend on the existence of sufficiently many potential $t$-connectors. Thus, it suffices to prove that, on the event $S_{x_1}^c\cap \cdots\cap S_{x_{j-1}}^c$, we have not used too many vertices as $t$-connectors. On this event, we have used at most $h_t\cdot (j-1)$
vertices as $t$-connectors, which is $o(t)$. Thus, this means that, when we bound the probability of $S_{x_j}$, we still have 
$t-h_t\cdot (j-1)$ possible $t$-connectors, where $j$ is at most $(\log t)^{1+\varepsilon}$. Thus, with the same notation as before,
	\eqn{
	\label{prefatt-fastly-9}
   	\E\Big[\I_{\{w_{i-1}\sim H_i\}}\mid\PA{t/2},S_{x_1}^c,\ldots,S_{x_{j-1}}^c\Big]\geq 
   	1-\e^{-\mu D_{w_{i-1}}(t/2)D_{H_i}(t/2)/t},
	}
so that we can proceed as we did for $S_{x_1}$. We omit further details.
\end{proof}

We are now ready to identify the distance between the vertices outside the core and the core:

\begin{Proposition}[Distance between periphery and $\core_t$]
 Let $(\PA{t})_{t\geq 1}$ be a preferential attachment model with $m\geq 2$ and $\delta\in(-m,0)$. 
Then, with high probability and for all $v\in[t]\setminus\core_t$,
 	\eqn{
	\label{prefatt-formula-perifery}
  	\dist_{\PA{t}}(v,\core_t)\leq k^{+}_t+2h_t.
	 }
\end{Proposition}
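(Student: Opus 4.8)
The plan is to combine the three ingredients that have already been established for the preferential attachment model: the collision bound (Lemma~\ref{prefatt-corol-numbercollision}), the lower bound on boundary vertices (Lemma~\ref{prefatt-lemma-boundarybound}), and the success probability estimate (Lemma~\ref{prefatt-lemma-hitting}). First I would fix the constant $a \in (0,1)$ from Lemma~\ref{prefatt-lemma-oldvertices}, so that $[t^a] \subseteq \core_t$ with high probability, and also fix $l > 1/a$. By Lemma~\ref{prefatt-corol-numbercollision}, with high probability \emph{every} vertex $v \in [t]$ has a $k^{+}_t$-exploration graph with at most $l$ collisions before hitting $\core_t \cup [t^a] = \core_t$ (on the good event). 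By Lemma~\ref{prefatt-lemma-boundarybound}, still with high probability, for every $v \in [t] \setminus \core_t$ either the $k^{+}_t$-exploration graph of $v$ has already intersected $\core_t$ (in which case $\dist_{\PA{t}}(v,\core_t) \le k^{+}_t$ and we are done), or its boundary contains at least $N \ge s(m,l)\, m^{k^{+}_t} = s(m,l)(\log t)^{1+\varepsilon}$ vertices, where we also use that no vertex in $[t]\setminus[t^a]$ has only self-loops, an event of probability $1-o(1)$.

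Next I would handle the remaining case. Restrict attention to a vertex $v \in [t/2] \setminus \core_t$ whose $k^{+}_t$-exploration graph does not meet $\core_t$ and has $N \ge s(m,l)(\log t)^{1+\varepsilon}$ boundary vertices $x_1, \ldots, x_N$. Conditionally on $\PA{t/2}$, I apply Lemma~\ref{prefatt-lemma-hitting}: the probability that none of $x_1, \ldots, x_N$ is a success is
	\eqn{
	\pr(S_{x_1}^c \cap \cdots \cap S_{x_N}^c \mid \PA{t/2})
	= \pr(S_{x_1}^c \mid \PA{t/2}) \prod_{j=2}^N \pr(S_{x_j}^c \mid \PA{t/2}, S_{x_1}^c, \ldots, S_{x_{j-1}}^c)
	\le (1-\eta)^N \le (1-\eta)^{s(m,l)(\log t)^{1+\varepsilon}} = o(1/t).
	}
Since this bound is uniform over the realization of $\PA{t/2}$ and of the $k^{+}_t$-exploration graph, the probability that \emph{some} vertex $v \in [t/2]\setminus\core_t$ fails to have a successful boundary vertex is at most $t \cdot o(1/t) = o(1)$. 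Recalling Definition~\ref{prefatt-def-success}, a successful $x_i$ is at distance at most $2h_t$ from $\core_t$; combined with $\dist_{\PA{t}}(v, x_i) \le k^{+}_t$, this gives $\dist_{\PA{t}}(v,\core_t) \le k^{+}_t + 2h_t$ for all $v \in [t/2]\setminus\core_t$ with high probability.

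It remains to treat vertices $v \in [t]\setminus[t/2]$, i.e.\ the late vertices. For these the $k^{+}_t$-exploration graph uses the $m$ outgoing edges of $v$, and the argument is essentially the same: by Lemma~\ref{prefatt-corol-numbercollision} and Lemma~\ref{prefatt-lemma-boundarybound} (both of which apply to any $v \in [t]\setminus(\core_t \cup [t^a])$), the boundary has at least $s(m,l)(\log t)^{1+\varepsilon}$ vertices unless $\core_t$ has already been hit. One subtlety is that a late boundary vertex $x_i$ could itself lie in $[t]\setminus[t/2]$, where Lemma~\ref{prefatt-lemma-hitting} is stated only for vertices in $[t/2]\setminus\core_t$; however, after at most one further step the $k^{+}_t$-exploration graph contains boundary vertices in $[t/2]$ (indeed, generically $\widehat U_{\le k^{+}_t}(v)$ reaches back into $[t/4]$ by construction, cf.\ the structure used for minimally connected vertices), and one applies Lemma~\ref{prefatt-lemma-hitting} to those, absorbing the extra step into the $o(\log\log t)$ correction. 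I expect the main obstacle to be precisely this bookkeeping of which boundary vertices lie in $[t/2]$ versus $[t]\setminus[t/2]$, together with making the union bound over the realizations of $\PA{t/2}$ and of the exploration graphs genuinely uniform — but since all the quantitative estimates ($\eta$ in Lemma~\ref{prefatt-lemma-hitting}, the collision count, the boundary size) are uniform in these realizations, the union bound goes through, and combining with $\dist_{\PA{t}}(v,\core_t) \le k^{+}_t + 2h_t$ and $h_t = O(\log\log\log t)$ yields the claim.
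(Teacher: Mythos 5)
For vertices $v\in[t/2]\setminus\core_t$ your argument is exactly the paper's: Lemmas~\ref{prefatt-lemma-oldvertices}, \ref{prefatt-corol-numbercollision} and~\ref{prefatt-lemma-boundarybound} give a boundary of size $N\geq s(m,l)(\log t)^{1+\varepsilon}$ unless $\core_t$ has already been hit, and Lemma~\ref{prefatt-lemma-hitting} plus a union bound over $v$ finishes that case. The genuine gap is your treatment of late vertices $v\in[t]\setminus[t/2]$. The reason Lemma~\ref{prefatt-lemma-hitting} needs the starting boundary vertices to lie in $[t/2]$ is that the whole success mechanism ($t$-connectors, the sets $H_i$, Lemma~\ref{prefatt-lemma-existencetconn}) is phrased in terms of degrees at time $t/2$ and connectors chosen among $[t]\setminus[t/2]$; so you must guarantee that sufficiently many boundary vertices of $\widehat U_{\le k^+_t}(v)$ lie in $[t/2]$. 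Your claim that ``after at most one further step the exploration graph contains boundary vertices in $[t/2]$'' is not true deterministically: an outgoing edge of a vertex in $[t]\setminus[t/2]$ only points to an \emph{older} vertex, which may perfectly well still be in $[t]\setminus[t/2]$, and the appeal to the structure of minimally-connected vertices is misplaced, since the requirement that those neighborhoods sit in $[t/2]\setminus[t/4]$ is a constraint imposed in Definition~\ref{prefatt-MKCdef} on special vertices, not a property of generic exploration graphs. Moreover, even if an extra step were granted, the bound you would obtain is $k^+_t+1+2h_t$, which overshoots the stated inequality \eqref{prefatt-formula-perifery}.

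What is actually needed, and what the paper does, is a quantitative estimate that a positive fraction of the boundary lies in $[t/2]$: for each boundary vertex $x_i$, let $y_i$ be the unique exploration-graph vertex attached to it; either $y_i\in[t/2]$, forcing $x_i\in[t/2]$, or $\pr\left(x_i\in[t/2]\mid\PA{y_i-1}\right)\geq\tfrac12$, uniformly in the attachments of the other edges, so that $\Delta_N=\sum_{i=1}^N\I_{\{x_i\in[t/2]\}}$ stochastically dominates a $\mathrm{Bin}(N,\tfrac12)$ variable and satisfies $\Delta_N\geq N/4$ except on an event of probability $\e^{-N/4}=o(1/t)$. Lemma~\ref{prefatt-lemma-hitting} is then applied only to those at least $N/4$ boundary vertices in $[t/2]$, giving a failure probability $(1-\eta)^{N/4}+o(1/t)=o(1/t)$ per vertex, after which your union bound goes through. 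Without this (or an equivalent) probabilistic step, the late-vertex case of the proposition is not established.
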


\begin{proof}
We start by analyzing $v\in [t/2]$.  By Lemma \ref{prefatt-lemma-oldvertices}, with high probability there exists $a\in(0,1]$ such that $[t^a]\subseteq\core_t$.
Consider $l>1/a$, and fix a vertex $v\in[t/2]$. Then, by 
Lemma~\ref{prefatt-corol-numbercollision} and with high probability, the $k^{+}_t$-exploration graph starting from $v$  has at most 
$l$ collisions before hitting $\core_t$. By Lemma \ref{prefatt-lemma-boundarybound} and with high probability, the number of vertices on the boundary of the $k^{+}_t$-exploration graph is at least
$N = s(m,l)(\log t)^{1+\varepsilon}$. It remains to bound the probability that none of the $N$ vertices on the boundary is a success, meaning that it does not reach $\core_t$ in at most $2h_t=2\lceil B\log\log{t}+C\rceil$ steps. 

By Lemma \ref{prefatt-lemma-hitting},
	\eqn{
	\label{prefatt-fastly-10}
	\pr(S_{x_1}^c\cap\cdots\cap S_{x_N}^c\mid\PA{t/2}) \leq (1-\eta)^N = o(1/t),
	}
thanks to the bound $N \ge s(m,l) (\log t)^{1+\epsilon}$.
This means that the probability that there exists a vertex $v\in[t/2]$ such that 
its $k^+_n$-exploration graph is at distance more than $A\log\log\log t$ from $\core_t$ is $o(1)$. 
This proves the statement for all $v\in [t/2]$.

Next, consider a vertex $v\in[t]\setminus[t/2]$. 
Lemma~\ref{prefatt-corol-numbercollision} implies that the probability that there exists a vertex $v\in[t]\setminus[t/2]$ such that its $k^{+}_t$-exploration graph contains more than one collision before hitting $\core_t\cup[t/2]$ is $o(1)$. As before, the number of vertices on the boundary of a $k^{+}_t$-exploration graph starting at $v\in[t]\setminus[t/2]$ is at least $N \geq  s(m,1)m^{k^+_n} = s(m,1)(\log t)^{1+\varepsilon}$.
We denote these vertices by $x_1,\ldots,x_N$. We aim to show that, with high probability,
	\eqn{
	\label{prefatt-fastly-11}
  	\Delta_{\sss N} = \sum_{i=1}^N \I_{(x_i\in[t/2])}\geq N/4.
	}
For every $i=1,\ldots,N$, there exists a unique vertex $y_i$ such that $y_i$ is in the $k^{+}_t$-exploration graph and it is attached to $x_i$. Obviously, if $y_i\in[t/2]$ then also $x_i\in[t/2]$, since
$x_i$ has to be older than $y_i$. If $y_i\not\in [t/2]$, then
	\eqn{
	\label{prefatt-fastly-12}
  	\pr\left(x_i\in[t/2]\mid \PA{y_i-1}\right) = \pr\left(y_i\rightarrow[t/2]\mid \PA{y_i-1} \right)\geq \frac{1}{2},
	}
and this bound does not depend on the attaching of the edges of the other vertices $\{y_j\colon j\neq i\}$. 

This means that we obtain the stochastic domination
	\eqn{
	\label{prefatt-fastly-13}
  	\Delta_{\sss N}\geq  \sum_{i=1}^N \I_{(x_i\in[t/2])} \succeq \mathrm{Bin}\big(N,\frac{1}{2}\big),
	}
where we write that $X \succeq Y$ when the random variable $X$ is stochastically larger than $Y$. By concentration properties of the binomial, $\mathrm{Bin}\big(N,\frac{1}{2}\big)\geq N/4$
with probability at least 
	\eqn{
	\label{prefatt-fastly-14}
 	1-\e^{-N/4} 
	= 1-\e^{-s(m,1)(\log t)^{1+\varepsilon}/4}
  	=1-o(1/t).
	}
Thus, the probability that none of the vertices on the boundary intersected with $[t/2]$ is a success is bounded by
	\eqn{
	\label{prefatt-fastly-15}
	\pr\big(S_{x_1}^c\cap \cdots \cap S_{x_{\Delta_N}}^c\mid \PA{t/2}\big) \leq (1-\eta)^{N/4} +o(1/t)=
	o(1/t).
	}
We conclude that the probability that there exists a vertex in $[t]\setminus[t/2]$ such that it is at distance more than $k^{+}_t+2h_t$ from $\core_t$ is $o(1)$.
\end{proof}
\medskip

This completes the proof of Statement \ref{stat-triplelog}, and thus of Theorem \ref{main-prefatt}.
\qed

\longversion{\appendix
\section{Proof of Proposition \ref{prefatt-prop-bounddistance}}
\label{section-appendix}

We prove Proposition \ref{prefatt-prop-bounddistance}.
As mentioned in Section \ref{lower-proof-prefatt-3}, the proof of
\eqref{prefatt-bounddistance} is an adaptation of an argument in \cite[Section 4.1]{DSMCM}.
The final aim is to prove that \eqref{prefatt-lowdistfix-6} is $o(1)$. 

\smallskip

First, we start with a technical lemma. Let us fix $R \in (0,\infty)$ and define
	\begin{equation}\label{eq:pnl}
	p(n,l) = R(n\wedge l)^{-\gamma}(n\vee l)^{\gamma-1} \,.
	\end{equation}
Our interest is for $\gamma=m/(2m+\delta)\in (1/2,1)$, so that $\gamma \in (1/2, 1)$
(because $\delta \in (-m,0)$).

\begin{Lemma}
\label{prefatt-appendix-result1}
 Let $\gamma\in(1/2,1)$ and suppose that $2\leq g\leq t$, $\alpha,\beta\geq0$ and $q\colon [t]\rightarrow[0,\infty)$ satisfy
 	\eqn{
	\label{prefatt-appendix-1}
   	q(n)\leq \I_{\{n\geq g\}}\left(\alpha n^{-\gamma}+\beta n^{\gamma-1}\right)
 	}
for all $n\in[t]$. Then 
there exists a constant $c= c(R,\gamma)>1$ such that,  for all $l\in[t]$,
 	\eqn{
	\label{prefatt-appendix-2}
   	\sum_{n=1}^tq(n)p(n,l)\leq c\big(\alpha\log(t/g)+\beta t^{2\gamma-1}\big)l^{-\gamma}
	+c\I_{\{l>g\}}
      	\big(\alpha g^{1-2\gamma}+\beta\log(t/g)\big)l^{\gamma-1}.
 	}
\end{Lemma}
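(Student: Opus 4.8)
The plan is to insert the hypothesis \eqref{prefatt-appendix-1} into the sum, exploit that $q$ is supported on $\{n\ge g\}$, and then split the range at $n=l$, which is exactly the point where the two regimes of the kernel $p(n,l)=R(n\wedge l)^{-\gamma}(n\vee l)^{\gamma-1}$ in \eqref{eq:pnl} switch. Concretely,
\[
\sum_{n=1}^t q(n)\,p(n,l) \;\le\; R\sum_{n=g}^t\big(\alpha n^{-\gamma}+\beta n^{\gamma-1}\big)(n\wedge l)^{-\gamma}(n\vee l)^{\gamma-1},
\]
and for $g\le n<l$ the kernel equals $Rn^{-\gamma}l^{\gamma-1}$, contributing $Rl^{\gamma-1}\sum_{g\le n<l}(\alpha n^{-2\gamma}+\beta n^{-1})$, while for $n\ge l$ it equals $Rl^{-\gamma}n^{\gamma-1}$, contributing $Rl^{-\gamma}\sum_{l\le n\le t}(\alpha n^{-1}+\beta n^{2\gamma-2})$. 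When $l\le g$ the first range is empty and only the second survives (now starting at $g$ rather than $l$); this is precisely the source of the factor $\I_{\{l>g\}}$ in front of the $l^{\gamma-1}$ term in \eqref{prefatt-appendix-2}.

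It then remains to estimate the four resulting power sums by comparison with integrals, using crucially that $\gamma\in(1/2,1)$: since $2\gamma>1$ one has $\sum_{n\ge g}n^{-2\gamma}\le c(\gamma)\,g^{1-2\gamma}$, since $-1<2\gamma-2<0$ one has $\sum_{n=l}^t n^{2\gamma-2}\le c(\gamma)\,t^{2\gamma-1}$, and the harmonic-type sums obey $\sum_{n=g}^{l}n^{-1}\le \log(t/g)+c$ and $\sum_{n=l}^{t}n^{-1}\le\log(t/g)+c$ (using $g\ge2$, so that $\log\frac{g}{g-1}\le\log 2$). Substituting these bounds, the $n\ge l$ part is at most $cR(\alpha\log(t/g)+\beta t^{2\gamma-1})l^{-\gamma}$ and the $g\le n<l$ part at most $cR\,\I_{\{l>g\}}(\alpha g^{1-2\gamma}+\beta\log(t/g))l^{\gamma-1}$, which is \eqref{prefatt-appendix-2}, with $c$ depending only on $R$ and on $\gamma$ through the factor $1/(2\gamma-1)$.

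There is no real obstacle here: this is a routine split-and-integrate estimate, and the only points needing attention are (i) tracking which half of the split is empty when $l\le g$, so that the indicator in \eqref{prefatt-appendix-2} is placed correctly, and (ii) absorbing the additive constants produced by the sums $\sum n^{-1}$ into the multiplicative constant $c$ — harmless in every application of the lemma, where $g$ is at most a power $t^{\theta}$ with $\theta<1$, so that $\log(t/g)$ is bounded below. It is important, though, that the conclusion reproduces \emph{exactly} the functional form $\alpha_k l^{-\gamma}+\I_{\{l>g_{k-1}\}}\beta_k l^{\gamma-1}$ appearing in the recursion \eqref{prefatt-lowdistfix-11}, since the lemma will be iterated along that recursion; the computation above does precisely this.
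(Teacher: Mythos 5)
Your proof is correct and follows essentially the same route as the paper's: split the sum at the point where the kernel $p(n,l)$ switches regimes (equivalently, into the ranges $n\ge g\vee l$ and, when $l>g$, $g\le n<l$), and bound the resulting four power sums by integral comparison using $\gamma\in(1/2,1)$, with the indicator $\I_{\{l>g\}}$ arising exactly as you describe. Your remark about absorbing the additive constants from the harmonic sums into $c$ matches the (implicit) treatment in the paper, so there is nothing further to add.
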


\begin{proof}
We split
	\eqn{
	\label{prefatt-appendix-3}
  	\sum_{n=1}^tq(n)p(n,l) = \sum_{n = g\vee l}^tq(n)p(n,l) 
	+\I_{\{l>g\}}\sum_{n=g}^{l-1}q(n)p(n,l),
	}
because $q(n)=0$ when $n<g$. Therefore,
	\eqn{
	\label{prefatt-appendix-4}
  	\sum_{n = g\vee l}^tq(n)p(n,l) 
	= \sum_{n=g\vee l}^t q(n)R(n\wedge l)^{-\gamma}(n\vee l)^{\gamma-1}
	\leq 
    	\sum_{n=g\vee l}^t\left(\alpha n^{-\gamma}+\beta n^{\gamma-1}\right)Rn^{\gamma-1}l^{-\gamma},
	}
because the sum is over $n\geq g\vee l\geq l$. 
For the other term, since we may assume that $l>g$,
	\eqn{
	\label{prefatt-appendix-5}
  	\I_{\{l>g\}}\sum_{n=g}^{l-1}q(n)p(n,l) 
	\leq \I_{\{l>g\}}\sum_{n=g}^{l-1} \left(\alpha n^{-\gamma}
	+\beta n^{\gamma-1}\right)Rn^{-\gamma}l^{\gamma-1}.
	}
This means that $\sum_{n=1}^tq(n)p(n,l)$ is bounded above by
	\eqan{
	\label{prefatt-appendix-6}
	&
  	\grosso R\Big[\alpha\sum_{n=g\vee l}^tn^{-1}
	+\beta\sum_{n=g\vee l}^tn^{2\gamma-2}\Big] l^{-\gamma} 
	+\I_{\{l>g\}}R\Big[\alpha\sum_{n=g}^{l-1}n^{-2\gamma}
	+\beta\sum_{n=g}^{l-1}n^{-1}\Big]l^{\gamma-1}\\
 	&\grosso \quad \leq c_1\left[\alpha\log\left(t/g\right)
	+\beta t^{2\gamma-1}\right]l^{-\gamma}
	+\I_{\{l>g\}}c_2\left[\alpha g^{1-2\gamma}
	+\beta\log\left(t/g\right)\right] l^{\gamma-1},\nn
	}
where we have used that $\gamma > 1/2$.
We take $c=\max(c_1,c_2)$ to obtain the statement.
\end{proof}
\medskip

We now define recursively the sequences $(\alpha_k)_{k\in\N}$, $(\beta_k)_{k\in\N}$
and $(g_k)_{k\in\N}$, for which we will prove the bound \eqref{prefatt-lowdistfix-11}.
This will allow us to control \eqref{prefatt-lowdistfix-6}.

\begin{Definition}
\label{prefatt-def-sequencerecursive-app}
We define
	\eqn{
	\label{prefatt-lowdistfix-7-app}
  	\begin{array}{ccccc}
    	\grosso g_0 = \left\lceil\frac{t}{(\log t)^2}\right\rceil, 
	& & \grosso\alpha_1 = R\left(g_0\right)^{\gamma-1}, 
	& &
	  \grosso\beta_1 = R\left(g_0\right)^{-\gamma},
 	\end{array}
	} 
and recursively, for $k\geq 1$:
\begin{enumerate}
	\item $g_k$ is the smallest integer such that
    		\eqn{
		\label{prefatt-lowdistfix-8-app}
      		\frac{1}{1-\gamma}\alpha_k g_k^{1-\gamma}\geq \frac{6}{\pi^2k^2(\log t)^2};
    		}
	\item
		\eqn{
		\label{prefatt-lowdistfix-9-app}
  		\alpha_{k+1} = c\big(\alpha_k\log(t/g_k)+\beta_k t^{2\gamma-1}\big);
		}
	\item 
		\eqn{
		\label{prefatt-lowdistfix-10-app}
  		\beta_{k+1} = c\big(\alpha_k g_k^{1-2\gamma}+\beta_{k}\log(t/g_k)\big),
		}
\end{enumerate}
where $c= c(R,\gamma)>1$ is the same 
constant appearing in Lemma~\ref{prefatt-appendix-result1}.
\end{Definition}
\medskip

One can check that $k\mapsto g_k$ is non-increasing, while $k\mapsto \alpha_k, \beta_k$ 
are non-decreasing.

We recall that $f_{k,t}(x,l)$ was introduced in \eqref{prefatt-lowdistfix-4},
with $p(z,w)$ defined in \eqref{prefatt-paths-minmax} (where we set
$R = Cm$, to match with \eqref{eq:pnl}). As a consequence,
the following recursive relation is satisfied:
	\eqn{
	\label{prefatt-lowdistfix-5-app}
	\forall k \ge 1: \qquad
  	f_{k+1,t}(x,w) = \sum_{z=g_k}^tf_{k,t}(x,z)p(z,w),
	}
where $p(z,w)$ is given in \eqref{eq:pnl}.
The following lemma derives recursive bounds on $f_{k,t}$.

\begin{Lemma}[Recursive bound on $f_{k,t}$]
\label{prefatt-appendix-result2}
For the sequences in Definition \ref{prefatt-def-sequencerecursive-app}, for every $l\in[t]$ and $k\in\N$, 
 	\eqn{
	\label{prefatt-appendix-7}
   	f_{k,t}(x,l)\leq \alpha_kl^{-\gamma}+\I_{\{l>g_{k-1}\}}\beta_k l^{\gamma-1}.
 	}
\end{Lemma}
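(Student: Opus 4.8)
The plan is to prove Lemma~\ref{prefatt-appendix-result2} by induction on $k$, using Lemma~\ref{prefatt-appendix-result1} as the inductive engine and the recursion \eqref{prefatt-lowdistfix-5-app} to pass from step $k$ to step $k+1$.

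\textbf{Base case.} For $k=1$ we have $f_{1,t}(x,l) = \I_{\{x \ge g_0\}} p(x,l) \le p(x,l) = R(x \wedge l)^{-\gamma}(x\vee l)^{\gamma-1}$. Since $x \ge g_0$ is forced, and $g_0 = \lceil t/(\log t)^2\rceil$, one has $x \ge g_0$; bounding crudely $p(x,l) \le R (g_0)^{\gamma-1} l^{-\gamma} + \I_{\{l > g_0\}} R(g_0)^{-\gamma} l^{\gamma-1}$ by splitting according to whether $l \le x$ or $l > x$ (and using $x\ge g_0$ together with $\gamma-1<0<\gamma$) gives exactly $\alpha_1 l^{-\gamma} + \I_{\{l>g_0\}}\beta_1 l^{\gamma-1}$ with $\alpha_1, \beta_1$ as in \eqref{prefatt-lowdistfix-7-app}. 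This is a short direct check of monotonicity of the two power functions.

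\textbf{Inductive step.} Assume \eqref{prefatt-appendix-7} holds for some $k \ge 1$, i.e. $f_{k,t}(x,l) \le \I_{\{l \ge g_{k-1}\}}(\alpha_k l^{-\gamma} + \beta_k l^{\gamma-1})$ --- note the indicator can be inserted for free since in \eqref{prefatt-lowdistfix-5-app} the sum over $z$ starts at $g_k \le g_{k-1}$, so only $z \ge g_{k-1}$ actually contributes once we recall that $f_{k,t}(x,z)=0$ for $z<g_{k-1}$ by construction of the truncation. Then apply \eqref{prefatt-lowdistfix-5-app}:
\[
f_{k+1,t}(x,w) = \sum_{z=g_k}^t f_{k,t}(x,z) p(z,w) \le \sum_{z=1}^t q(z)\, p(z,w),
\]
where $q(z) := \I_{\{z \ge g_k\}}(\alpha_k z^{-\gamma} + \beta_k z^{\gamma-1})$ satisfies hypothesis \eqref{prefatt-appendix-1} of Lemma~\ref{prefatt-appendix-result1} with $g = g_k$, $\alpha = \alpha_k$, $\beta = \beta_k$ (here one must check $g_k \le t$, which holds because $g_k$ is non-increasing in $k$ and $g_0 \le t$, and $g_k \ge 2$, which follows from \eqref{prefatt-lowdistfix-8-app} for $t$ large since the right-hand side there tends to $0$ while $\alpha_k g_k^{1-\gamma}$ would be bounded below for $g_k=1$). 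Lemma~\ref{prefatt-appendix-result1} then gives
\[
f_{k+1,t}(x,w) \le c\big(\alpha_k \log(t/g_k) + \beta_k t^{2\gamma-1}\big) w^{-\gamma} + c\,\I_{\{w > g_k\}}\big(\alpha_k g_k^{1-2\gamma} + \beta_k \log(t/g_k)\big) w^{\gamma-1},
\]
which is precisely $\alpha_{k+1} w^{-\gamma} + \I_{\{w > g_k\}} \beta_{k+1} w^{\gamma-1}$ by the definitions \eqref{prefatt-lowdistfix-9-app}--\eqref{prefatt-lowdistfix-10-app}. This closes the induction.

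\textbf{Main obstacle.} The only subtle points are bookkeeping ones: (i) justifying that the indicator $\I_{\{l > g_{k-1}\}}$ in the inductive hypothesis is consistent with the summation range $z \ge g_k$ in \eqref{prefatt-lowdistfix-5-app} — this needs the monotonicity $g_k \le g_{k-1}$ and the convention that $f_{k,t}(x,z)$ carries its own truncation at $g_{k-1}$ built into definition \eqref{prefatt-lowdistfix-4}; and (ii) verifying the side conditions $2 \le g_k \le t$ needed to invoke Lemma~\ref{prefatt-appendix-result1}, which follow from \eqref{prefatt-lowdistfix-8-app} and the monotonicity of $(g_k)$, $(\alpha_k)$ for $t$ large enough (and Proposition~\ref{prefatt-prop-bounddistance} only asserts an asymptotic bound, so "$t$ large" is harmless). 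Everything else is a mechanical match of the output of Lemma~\ref{prefatt-appendix-result1} against the recursive definitions. The genuinely substantive work — controlling the resulting sums $\sum_k \sum_l f_{\lfloor k/2\rfloor,t}(x,l) f_{\lceil k/2\rceil,t}(y,l)$ etc. in \eqref{prefatt-lowdistfix-6} via bounds on $\alpha_k, \beta_k, g_k$ — lies \emph{after} this lemma and is not part of the present statement.
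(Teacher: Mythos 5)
Your proof is correct and follows essentially the same route as the paper: induction on $k$, the base case checked directly from the definition of $p$, and the inductive step obtained by inserting the recursion \eqref{prefatt-lowdistfix-5-app} into Lemma~\ref{prefatt-appendix-result1} with $g=g_k$, $\alpha=\alpha_k$, $\beta=\beta_k$, whose output matches the definitions \eqref{prefatt-lowdistfix-9-app}--\eqref{prefatt-lowdistfix-10-app} of $\alpha_{k+1},\beta_{k+1}$. One correction, though: your parenthetical claim that $f_{k,t}(x,z)=0$ for $z<g_{k-1}$ ``by construction of the truncation'', and the resulting restatement of the inductive hypothesis as $f_{k,t}(x,l)\le \I_{\{l\ge g_{k-1}\}}\big(\alpha_k l^{-\gamma}+\beta_k l^{\gamma-1}\big)$, are false: the truncation in \eqref{prefatt-lowdistfix-4} constrains only the intermediate vertices $\pi_1,\ldots,\pi_{k-1}$ (and the starting point via $\I_{\{x\ge g_0\}}$), not the terminal argument --- indeed, if it did, the bad-path sums $\sum_{l=1}^{g_k-1} f_{k,t}(x,l)$ in \eqref{prefatt-lowdistfix-6} would be identically zero. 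Fortunately this claim is never used in your chain of inequalities: on the summation range $z\ge g_k$ the bound $f_{k,t}(x,z)\le \alpha_k z^{-\gamma}+\beta_k z^{\gamma-1}$ follows from \eqref{prefatt-appendix-7} simply by dropping the indicator on the $\beta_k$ term, which is all that the application of Lemma~\ref{prefatt-appendix-result1} requires, and the conclusion of that lemma returns the indicator $\I_{\{l>g_k\}}$ in exactly the form needed to close the induction on the correct statement \eqref{prefatt-appendix-7}. So delete that remark and the proof stands as written.
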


\begin{proof} We prove \eqref{prefatt-appendix-7} by induction on $k$. For $k=1$, using $\alpha_1 = Rg_0^{\gamma-1}$ and $\beta_1=Rg_0^{-\gamma}$,
	\eqn{
	\label{prefatt-appendix-8}
  	f_{1,t}(x,l) = p(x,l) \I_{\{x\geq g_0\}}
	\leq R(g_0)^{\gamma-1}l^{-\gamma}+\I_{\{l>g_0\}}R(g_0)^{-\gamma}l^{\gamma-1} 
	= \alpha_1l^{-\gamma}+\I_{\{l>g_0\}}\beta_1l^{\gamma-1},
	}
as required. This initiates the induction hypothesis. 
We now proceed with the induction: suppose that $g_{k-1}$, $\alpha_k$ and $\beta_{k}$ are such that
	\eqn{
	\label{prefatt-appendix-9}
  	f_{k,t}(x,l)\leq \alpha_k l^{-\gamma}+\I_{(l>g_{k-1})}\beta_kl^{\gamma-1}.
	}
We use the recursive property of $f_{k,t}$ in \eqref{prefatt-lowdistfix-5-app}.
We apply Lemma \ref{prefatt-appendix-result1}, with $g = g_k$ and $q(n) = f_{k,t}(x,n)\I_{\{n\geq g_k\}}$,
so, by Definition \ref{prefatt-def-sequencerecursive-app}, 
	\eqan{
	\label{prefatt-appendix-12}
	f_{k+1,t}(x,l)&\leq c\Big[\alpha_k\log(t/g_k)+\beta_k t^{2\gamma-1}\Big]l^{-\gamma}+
	c\I_{\{l>g_k\}}\Big[\alpha_k g_k^{1-2\gamma}+\beta_k\log(t/g_k)\Big] l^{\gamma-1}\\
	&\qquad
  	\grosso = \alpha_{k+1} l^{-\gamma}+\I_{\{l>g_k\}}\beta_{k+1} l^{\gamma-1}.\nn
	}
This advances the induction hypothesis, and thus completes the proof.
\end{proof}

In order to proceed, we define $\eta_k = t/g_k$.
We aim to derive a bound on the growth of $\eta_k$.

\begin{Lemma}[Recursive relation of $\eta_k$]
\label{prefatt-appendix-result3}
Let $\eta_k=t/g_k$ be defined as above. Then there exists a constant $C>0$ such that
 	\eqn{
	\label{prefatt-appendix-13}
    	\eta_{k+2}^{1-\gamma}\leq C\left[\eta_k^\gamma +\eta_{k+1}^{1-\gamma}\log \eta_{k+1}\right].
	 }
\end{Lemma}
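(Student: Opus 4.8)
The plan is to pass to rescaled quantities in which Definition~\ref{prefatt-def-sequencerecursive-app} becomes a clean linear recursion. Set $a_k:=t^{1-\gamma}\alpha_k$ and $\tilde b_k:=t^{\gamma}\beta_k$. First I would read off the ``dictionary'' forced by the definition of $g_k$: since $g_k$ is the \emph{smallest} integer with $\tfrac{1}{1-\gamma}\alpha_k g_k^{1-\gamma}\ge\tfrac{6}{\pi^2k^2(\log t)^2}$, minimality together with $(g_k-1)^{1-\gamma}\ge 2^{\gamma-1}g_k^{1-\gamma}$ for $g_k\ge 2$ gives $\alpha_k g_k^{1-\gamma}\asymp \tfrac{1}{k^2(\log t)^2}$, i.e.
\[
\eta_k^{1-\gamma}\asymp k^2(\log t)^2\,a_k,
\]
where $\eta_k=t/g_k$; moreover $k\mapsto\eta_k$ is non-decreasing (as $\alpha_k$ increases while $k\mapsto 6/(\pi^2k^2(\log t)^2)$ decreases), and whenever $g_k=1$, i.e.\ $\eta_k=t$, the claimed inequality is trivial because $\gamma>\tfrac12$ forces $\eta_{k+2}^{1-\gamma}\le t^{1-\gamma}\le t^{\gamma}=\eta_k^{\gamma}$. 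Next I would rewrite the recursions using $\log(t/g_k)=\log\eta_k$ and the identities $t^{2\gamma-1}g_k^{1-2\gamma}=\eta_k^{2\gamma-1}$, $\alpha_k g_k^{1-2\gamma}t^{\gamma}=a_k\eta_k^{2\gamma-1}$, $\beta_k t^{2\gamma-1}\cdot t^{1-\gamma}=\tilde b_k$, obtaining
\[
a_{k+1}=c\big(a_k\log\eta_k+\tilde b_k\big),\qquad
\tilde b_{k+1}=c\big(a_k\eta_k^{2\gamma-1}+\tilde b_k\log\eta_k\big),
\]
with $a_1=R\eta_0^{1-\gamma}$, $\tilde b_1=R\eta_0^{\gamma}$, so that $\tilde b_1/a_1=\eta_0^{2\gamma-1}$.

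Since $\eta_{k+2}^{1-\gamma}\asymp(k+2)^2(\log t)^2a_{k+2}=c(k+2)^2(\log t)^2\big(a_{k+1}\log\eta_{k+1}+\tilde b_{k+1}\big)$, the estimate splits into three pieces. By the dictionary the $a_{k+1}\log\eta_{k+1}$-piece is $\asymp\tfrac{(k+2)^2}{(k+1)^2}\,\eta_{k+1}^{1-\gamma}\log\eta_{k+1}\le 4\,\eta_{k+1}^{1-\gamma}\log\eta_{k+1}$, the second term of the statement. Expanding $\tilde b_{k+1}=c(a_k\eta_k^{2\gamma-1}+\tilde b_k\log\eta_k)$, the $a_k\eta_k^{2\gamma-1}$-piece contributes $\asymp\tfrac{(k+2)^2}{k^2}\,\eta_k^{1-\gamma}\eta_k^{2\gamma-1}\le 9\,\eta_k^{\gamma}$ --- this is exactly where the $\eta_k^{\gamma}$ term originates. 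The remaining, and decisive, piece comes from $\tilde b_k\log\eta_k$ and is controlled by the ratio $\rho_k:=\tilde b_k/a_k$. Dividing the two recursions, $\rho_{k+1}=f(\rho_k)$ with $f(\rho)=\tfrac{\eta_k^{2\gamma-1}+\rho\log\eta_k}{\log\eta_k+\rho}$; viewed with $\eta_k$ frozen, $f$ is monotone with $f(0)=\eta_k^{2\gamma-1}/\log\eta_k$ and $f(\infty)=\log\eta_k$, so $f$ maps $[0,\infty)$ into the interval with endpoints $\eta_k^{2\gamma-1}/\log\eta_k$ and $\log\eta_k$. Hence $\rho_{k+1}\le\max\!\big(\eta_k^{2\gamma-1}/\log\eta_k,\ \log\eta_k\big)$ \emph{regardless of $\rho_k$}, whence $\tilde b_{k+1}=\rho_{k+1}a_{k+1}\le a_{k+1}\max\!\big(\eta_k^{2\gamma-1}/\log\eta_k,\ \log\eta_k\big)$, and so the $\tilde b_{k+1}$-contribution to $\eta_{k+2}^{1-\gamma}$ is $\lesssim\eta_{k+1}^{1-\gamma}\max\!\big(\eta_k^{2\gamma-1}/\log\eta_k,\ \log\eta_k\big)$. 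When the maximum is $\log\eta_k$ this is $\le\eta_{k+1}^{1-\gamma}\log\eta_{k+1}$ and one is done; in the complementary regime $\eta_k^{2\gamma-1}>(\log\eta_k)^2$ one finishes with a short case split, comparing $\eta_k^{2\gamma-1}$ with $\log\eta_k\log\eta_{k+1}$ and, when needed, invoking the lower bound $\eta_{k+1}^{1-\gamma}\asymp(\log\eta_k+\rho_k)\,\eta_k^{1-\gamma}$ (again the $a$-recursion plus the dictionary) together with the fact that in this regime $f$ already forces $\rho_k$ to lie above $\log\eta_{k-1}$, which is what lets the leftover be absorbed into $C\eta_k^{\gamma}$. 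Collecting the three pieces gives the inequality with an absolute constant $C$.

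The main obstacle, as flagged, is this last piece: one must estimate $\tilde b_k$ \emph{without} ever replacing $\log\eta_k$ by $\log t$ or bounding a sum of $O(\log\log t)$ terms crudely, since either would cost a spurious logarithmic factor and destroy the clean right-hand side. The key device that avoids any accumulation is precisely that $f$ maps all of $[0,\infty)$ into a short interval, so the bound $\rho_{k+1}\le\max(\eta_k^{2\gamma-1}/\log\eta_k,\log\eta_k)$ is independent of the history; I expect the book-keeping in the complementary regime $\eta_k^{2\gamma-1}>(\log\eta_k)^2$ to be the only genuinely delicate part of the argument.
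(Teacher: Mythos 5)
Your rescaling $a_k=t^{1-\gamma}\alpha_k$, $\tilde b_k=t^{\gamma}\beta_k$, the dictionary $\eta_k^{1-\gamma}\asymp k^2(\log t)^2a_k$ (forced by minimality of $g_k$), and the three-way decomposition are a faithful reformulation of the paper's argument, and your treatment of the first two pieces ($a_{k+1}\log\eta_{k+1}$ giving $\eta_{k+1}^{1-\gamma}\log\eta_{k+1}$, and $a_k\eta_k^{2\gamma-1}$ giving $\eta_k^{\gamma}$) matches the paper's \eqref{prefatt-appendix-17} and \eqref{prefatt-appendix-20}. The gap is in the decisive third piece. Bounding $\rho_{k+1}=\tilde b_{k+1}/a_{k+1}$ by $\max\bigl(\eta_k^{2\gamma-1}/\log\eta_k,\ \log\eta_k\bigr)$ ``regardless of $\rho_k$'' is exactly where the needed information is thrown away: the true contribution of $\tilde b_{k+1}$ to $\eta_{k+2}^{1-\gamma}$ is $\asymp \eta_k^{\gamma}+\rho_k\,\eta_k^{1-\gamma}\log\eta_k$, whereas your max-bound replaces it (in the regime $\eta_k^{2\gamma-1}>(\log\eta_k)^2$) by $\asymp \eta_k^{\gamma}+\rho_k\,\eta_k^{\gamma}/\log\eta_k$, which is larger by the factor $\eta_k^{2\gamma-1}/(\log\eta_k)^2$. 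Absorbing this into $C[\eta_k^{\gamma}+\eta_{k+1}^{1-\gamma}\log\eta_{k+1}]$ would require $\rho_k\lesssim\log\eta_k$, and that is false in general: already for $k=1$ one has $\rho_1=\eta_0^{2\gamma-1}$ with $\eta_0\asymp(\log t)^2$, a power of $\log t$, while $\log\eta_1=O(\log\log t)$. Concretely, for $\gamma=3/4$ one finds $\eta_1\asymp(\log t)^{10}$, $\eta_2\asymp(\log t)^{14}$, the target right-hand side is $\asymp(\log t)^{7.5}$, but your bound on the third piece is $\asymp(\log t)^{8.5}/\log\log t$, so the ``short case split'' cannot close. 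Your proposed rescue, that ``$f$ forces $\rho_k$ to lie \emph{above} $\log\eta_{k-1}$'', points in the wrong direction: you would need an \emph{upper} bound of order $\log\eta_k$ on $\rho_k$.

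The repair is one line, and it is precisely the paper's device in \eqref{prefatt-appendix-21}: from the $a$-recursion itself, $a_{k+1}=c(a_k\log\eta_k+\tilde b_k)\ge c\,\tilde b_k$, i.e.\ $c\beta_k t^{2\gamma-1}\le\alpha_{k+1}$, so $\rho_k\,\eta_k^{1-\gamma}\asymp k^2(\log t)^2\tilde b_k\lesssim k^2(\log t)^2 a_{k+1}\asymp\eta_{k+1}^{1-\gamma}$, and hence the problematic term $\rho_k\,\eta_k^{1-\gamma}\log\eta_k\lesssim\eta_{k+1}^{1-\gamma}\log\eta_k\le\eta_{k+1}^{1-\gamma}\log\eta_{k+1}$ (using monotonicity of $k\mapsto\eta_k$). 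In other words, do not pass to the ratio recursion and take a worst case over $\rho_k$; keep $\tilde b_k$ and trade it for $a_{k+1}$ directly. With that substitution your write-up becomes a correct (and essentially identical) version of the paper's proof.
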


\begin{proof}
By definition of $g_k$ in \eqref{prefatt-lowdistfix-8-app},
	\eqn{
	\label{prefatt-appendix-14}
  	\eta_{k+2}^{1-\gamma} = t^{1-\gamma}g_{k+2}^{\gamma-1}\leq t^{1-\gamma} 
	\frac{1}{1-\gamma}\frac{\pi^2(\log t)^2}{6}(k+2)^2\alpha_{k+2}.
	}
By definition of $\alpha_k$ in \eqref{prefatt-lowdistfix-9-app},
	\eqan{
	\label{prefatt-appendix-15}
 	& t^{1-\gamma}\frac{1}{1-\gamma}\frac{\pi^2(\log t)^2}{6}(k+2)^2\alpha_{k+2}\\
	&\qquad= t^{1-\gamma}\frac{c}{1-\gamma}\frac{\pi^2(\log t)^2}{6}(k+2)^2\left[\alpha_{k+1}\log\eta_{k+1}
	+\beta_{k+1}t^{2\gamma-1}\right].\nn
	}
By definition of $g_{k}$, relation \eqref{prefatt-lowdistfix-8-app}
holds with the opposite inequality if we replace $g_k$ by $g_k-1$ in the left hand side.
This, with $k+1$ instead of $k$, yields
	\eqn{
	\label{prefatt-appendix-16}
  	\alpha_{k+1}\leq \frac{6(1-\gamma)}{\pi^2(k+1)^2(\log t)^2}(g_{k+1}-1)^{\gamma-1}.
	}
Since $\alpha_{k+1}\geq 2$, we must have that $g_{k+1}\geq 2$, so that
	\eqn{
	\label{prefatt-appendix-16bis}
	(g_{k+1}-1)^{\gamma-1} \leq 2^{1-\gamma}g_{k+1}^{\gamma-1}.
	}
We conclude that
	\eqan{
	\label{prefatt-appendix-17}
  	&t^{1-\gamma}\frac{c}{1-\gamma}\frac{\pi^2(\log t)^2}{6}
	(k+2)^2\alpha_{k+1}\log\eta_{k+1}\\
  	&\qquad\leq t^{1-\gamma} \frac{c2^{1-\gamma}}{1-\gamma}
	\frac{\pi^2(\log t)^2}{6}(k+2)^2
	\frac{6(1-\gamma)}{\pi^2(k+1)^2(\log t)^2}g_{k+1}^{\gamma-1}\log\eta_{k+1}\nn\\
  	&\qquad= c2^{1-\gamma} \frac{(k+2)^2}{(k+1)^2}\eta_{k+1}^{1-\gamma}\log\eta_{k+1}.\nn
	}
We now have to bound the remaining term in \eqref{prefatt-appendix-15}, which equals
	\eqn{
	\label{prefatt-appendix-18}
  	t^{1-\gamma}\frac{c}{1-\gamma}\frac{\pi^2(\log t)^2}{6}(k+2)^2\beta_{k+1}t^{2\gamma-1} 
	= \frac{c}{1-\gamma}\frac{\pi^2(\log t)^2}{6}(k+2)^2\beta_{k+1}t^{\gamma}.
	}
We use the definition of $\beta_k$ in \eqref{prefatt-lowdistfix-10-app} to write
	\eqn{
	\label{prefatt-appendix-19}
  	\frac{c}{1-\gamma}\frac{\pi^2(\log t)^2}{6}(k+2)^2\beta_{k+1}t^{\gamma} 
	= \frac{c}{1-\gamma}\frac{\pi^2(\log t)^2}{6}(k+2)^2t^{\gamma}
      	c\left[\alpha_k g_k^{1-2\gamma}+\beta_k\log\eta_k\right],
	}
and again use the fact that $\alpha_k \leq 2^{1-\gamma}\frac{6(1-\gamma)}{\pi^2k^2 (\log t)^2}g_k^{\gamma-1}$,
so that
	\eqan{
	\label{prefatt-appendix-20}
	&\frac{c}{1-\gamma}\frac{\pi^2(\log t)^2}{6}(k+2)^2t^{\gamma} c \alpha_k
	g_k^{1-2\gamma}\\
	&\qquad\leq \frac{c2^{1-\gamma}}{1-\gamma}\frac{\pi^2(\log t)^2}{6}
	(k+2)^2t^{\gamma} c
      \frac{6(1-\gamma)}{\pi^2k^2(\log t)^2}g_k^{\gamma-1}g_k^{1-2\gamma} 
	= c^2 2^{1-\gamma}\frac{(k+2)^2}{k^2}\eta^{\gamma}_k.\nn
	}
By Definition \ref{prefatt-def-sequencerecursive-app}, we have $c\beta_kt^{2\gamma-1}\leq \alpha_{k+1}$,
so that, using \eqref{prefatt-appendix-16} and \eqref{prefatt-appendix-16bis},
	\eqan{
	\label{prefatt-appendix-21}
	&\frac{c}{1-\gamma}\frac{\pi^2(\log t)^2}{6}(k+2)^2t^{\gamma}c\beta_k\log\eta_k\\
	&\qquad\leq \frac{c}{1-\gamma}\frac{\pi^2(\log t)^2}{6}(k+2)^2\alpha_{k+1}t^{1-\gamma}\log\eta_k 
	\leq c2^{1-\gamma}\frac{(k+2)^2}{(k+1)^2}g_{k+1}^{\gamma-1}t^{1-\gamma}\log \eta_k\nn\\
	&\qquad= c2^{1-\gamma}\frac{(k+2)^2}{(k+1)^2}\eta_{k+1}^{1-\gamma}\log \eta_k.\nn
	}
Since $k\mapsto\eta_k$ is increasing,
	\eqn{
	\label{prefatt-appendix-22}
  	c2^{1-\gamma}\frac{(k+2)^2}{(k+1)^2}\eta_{k+1}^{1-\gamma}\log \eta_k
	\leq c2^{1-\gamma}\frac{(k+2)^2}{(k+1)^2}\eta_{k+1}^{1-\gamma}\log \eta_{k+1}.
	}
Putting together all the bounds and taking a different constant $C= C(\gamma)$, we obtain \eqref{prefatt-appendix-13}.
\end{proof}

We can now obtain a useful bound on the growth of $\eta_k$.

\begin{Lemma}[Inductive bound on $\eta_k$]
\label{prefatt-appendix-result4}
Let $(\eta_k)_{k\in\N}$ be given by $\eta_k = t/g_k$ and let $\kappa=\gamma/(1-\gamma)\in (1,\infty)$. Then, there exists a constant $B\geq 2$ such that
 	\eqn{
	\label{prefatt-appendix-23}
  	\eta_k\leq \mathrm{exp}\big(B(\log\log t)\kappa ^{k/2}\big).
	}
for any $k=O(\log\log t)$.
\end{Lemma}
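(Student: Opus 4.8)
The plan is to prove, by induction on $k$, the slightly stronger bound $\log\eta_k \le BL\kappa^{k/2} - D$, where $L:=\log\log t$, the constant $\kappa=\gamma/(1-\gamma)\in(1,\infty)$ and $\gamma\in(1/2,1)$ are as in the statement, $D>0$ is a fixed constant chosen below, and $B$ is taken large at the end. Abbreviate $a_k:=BL\kappa^{k/2}$ and $\ell_k:=\log\eta_k$, so the goal becomes $\ell_k\le a_k-D$. Since $\kappa^{k/2}\ge 1$, this implies $\eta_k\le e^{a_k}=\exp(B(\log\log t)\kappa^{k/2})$, which is the claim. The recursion of Lemma~\ref{prefatt-appendix-result3} relates index $k+2$ to indices $k$ and $k+1$, so the induction will be two-step, with base cases $k=0,1$.

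First I would dispose of the base cases. From $g_0=\lceil t/(\log t)^2\rceil$ one gets $\eta_0=t/g_0\le(\log t)^2$ immediately. For $g_1$, solving the defining inequality \eqref{prefatt-lowdistfix-8-app} with $\alpha_1=Rg_0^{\gamma-1}$ gives $g_1\ge(6(1-\gamma)/(\pi^2R(\log t)^2))^{1/(1-\gamma)}g_0$, whence $g_0/g_1\le c(\log t)^{2/(1-\gamma)}$ and therefore $\eta_1=\eta_0\cdot(g_0/g_1)\le c(\log t)^{2+2/(1-\gamma)}$, a fixed power of $\log t$. Consequently $\ell_0,\ell_1\le C'L$ for a constant $C'$ and all large $t$, so the base cases hold once $B\ge C'+D+1$.

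For the inductive step, suppose $\ell_k\le a_k-D$ and $\ell_{k+1}\le a_{k+1}-D$. Starting from $\eta_{k+2}^{1-\gamma}\le C(\eta_k^{\gamma}+\eta_{k+1}^{1-\gamma}\log\eta_{k+1})$ of Lemma~\ref{prefatt-appendix-result3}, taking logarithms and using $\log(x+y)\le\log 2+\max(\log x,\log y)$ yields
\[
(1-\gamma)\ell_{k+2}\le \log(2C)+\max\big(\gamma\ell_k,\ (1-\gamma)\ell_{k+1}+\log\ell_{k+1}\big).
\]
The key computation is that the first term dominates: with $\gamma=\kappa(1-\gamma)$ one has $\gamma a_k-(1-\gamma)a_{k+1}=(1-\gamma)BL\,\kappa^{k/2+1/2}(\kappa^{1/2}-1)$, and subtracting the lower-order quantities $\gamma D$ and $\log a_{k+1}=\log(BL)+\tfrac{k+1}{2}\log\kappa$ leaves a non-negative quantity for $B$ and $t$ large — here the hypothesis $k=O(\log\log t)$ makes $\log a_{k+1}=O(L)$, although the exponential gain in $k$ actually makes the bound robust. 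Hence $\max(\cdots)\le\gamma(a_k-D)$, so $(1-\gamma)\ell_{k+2}\le\log(2C)+\gamma(a_k-D)$. Invoking the identity $\gamma a_k=(1-\gamma)a_{k+2}$ (equivalently $\gamma\kappa^{k/2}=(1-\gamma)\kappa^{(k+2)/2}$), this becomes $\ell_{k+2}\le a_{k+2}+\frac{\log(2C)-\gamma D}{1-\gamma}$.

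The main obstacle — and the reason the naive induction with $D=0$ fails, since the multiplicative constant $C$ would otherwise accumulate $\Theta(\log C)$ per step and destroy the clean doubly-exponential form — is precisely this last residual. It is resolved by choosing $D:=\frac{\log(2C)}{2\gamma-1}$, which is a positive constant because $\gamma=m/(2m+\delta)\in(1/2,1)$ for $\delta\in(-m,0)$, hence $2\gamma-1>0$; a direct computation gives $\log(2C)-\gamma D=-(1-\gamma)D$, so $\ell_{k+2}\le a_{k+2}-D$ and the induction closes. (One may take $C\ge 1$ without loss of generality so that $D>0$.) The remaining details are the routine verifications already sketched: the size of $\eta_0,\eta_1$, positivity of $\ell_{k+1}$ so that $\log\ell_{k+1}$ makes sense, and the choice of $B$ large enough to satisfy both the base-case bound and the "first term dominates" inequality.
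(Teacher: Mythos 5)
Your proof is correct, but it follows a genuinely different route from the one in the paper. The paper proves \eqref{prefatt-appendix-23} by \emph{fully unrolling} the two-term recursion of Lemma~\ref{prefatt-appendix-result3} down to $\eta_0$ (see \eqref{prefatt-appendix-30}): this produces the leading term $C^{\sum_{l\le k/2}\kappa^l}\eta_0^{\kappa^{k/2}}$ plus a sum of $k/2$ terms $C^{\sum_{l<i}\kappa^l}\eta_{k-2i+1}^{\kappa^{i-1}}(\log\eta_{k-2i+1})^{\kappa^{i-1}/(1-\gamma)}$, each of which is then controlled by the (strong) induction hypothesis together with the gap between $\kappa^{(k-1)/2}$ and $\kappa^{k/2}$, the accumulated constants being absorbed by taking $B$ large (this is where $k=O(\log\log t)$ enters, see \eqref{prefatt-appendix-32b}--\eqref{prefatt-appendix-33}). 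You instead stay at the level of a single application of Lemma~\ref{prefatt-appendix-result3}, pass to logarithms, and run a two-step induction on the \emph{strengthened} hypothesis $\log\eta_k\le B(\log\log t)\kappa^{k/2}-D$; the per-step constant $\log(2C)$ is absorbed into the slack by the choice $D=\log(2C)/(2\gamma-1)$, which is legitimate precisely because $\gamma=m/(2m+\delta)>1/2$, and the cross term $\eta_{k+1}^{1-\gamma}\log\eta_{k+1}$ is dominated by $\eta_k^\gamma$ through the same identity $\gamma=\kappa(1-\gamma)$ and the gap factor $\kappa^{(k+1)/2}(\sqrt{\kappa}-1)$. What each approach buys: yours is shorter, avoids the iterated products of constants and powers of logarithms, makes transparent where $\kappa^{k/2}$ and the condition $\gamma>1/2$ come from, and, as you note, barely uses $k=O(\log\log t)$ since the exponential gain in $k$ beats the linear term $\tfrac{k+1}{2}\log\kappa$; the price is the strengthened hypothesis and the explicit second base case $k=1$, which you handle correctly via the definition \eqref{prefatt-lowdistfix-8-app} of $g_1$ (giving $\eta_1=O((\log t)^{2+2/(1-\gamma)})$), a point the paper's single base case $k=0$ leaves implicit. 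The routine details you defer (taking $C\ge1$ so that $D>0$, monotonicity of $k\mapsto\eta_k$ ensuring $\log\eta_{k+1}\ge\log\eta_0\sim 2\log\log t>1$, and the size of $B$) all check out.
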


\begin{proof}
We prove the lemma by induction on $k$, and start by initializing the induction. For $k=0$,
	\eqn{
	\label{prefatt-appendix-24}
  	\eta_0 = t/g_0= \frac{t}{\left\lceil\frac{t}{(\log t)^2}\right\rceil}\leq (\log t)^2 
	= \e^{2\log\log t} \leq \e^{B\log\log t},
}
for any $B\geq 2$, which initializes the induction.

We next suppose that the statement is true for $l=1,\ldots,k-1,$ and will prove it for $k$. 
Using that
	\eqn{
	\label{prefatt-appendix-26}
  	(z+w)^{\frac{1}{1-\gamma}}\leq 2^{\frac{1}{1-\gamma}}
	\left(z^{\frac{1}{1-\gamma}}+w^{\frac{1}{1-\gamma}}\right),
	}
we can write by Lemma~\ref{prefatt-appendix-result3}, for a different constant $C$,
	\eqn{
	\label{prefatt-appendix-27}
  	\eta_k\leq C\big[\eta_{k-2}^{\frac{\gamma}{1-\gamma}}
	+\eta_{k-1}(\log \eta_{k-1})^{\frac{1}{1-\gamma}}\big]
	=C\big[\eta_{k-2}^{\kappa}
	+\eta_{k-1}(\log \eta_{k-1})^{\frac{1}{1-\gamma}}\big].
	}
Using this inequality, we can write
	\eqn{
	\label{prefatt-appendix-28}
  	\eta_{k-2}\leq C\big[\eta_{k-4}^{\kappa}+\eta_{k-3}(\log \eta_{k-3})^{\frac{1}{1-\gamma}}\big],
	}
so that, by $(z+w)^\kappa \le 2^\kappa (z^\kappa + w^\kappa)$,
	\eqn{
	\label{prefatt-appendix-29}
  	\eta_k\leq C (2 C)^{\kappa}\big[\eta_{k-4}^{\kappa^2}+\eta_{k-3}^{\kappa}
      	(\log \eta_{k-3})^{\frac{\kappa}{1-\gamma}}\big]+C\eta_{k-1}(\log \eta_{k-1})^{\frac{1}{1-\gamma}}.
	}
Renaming $2 C$ as $C$ for simplicity,
and iterating these bounds, we obtain
	\eqn{
	\label{prefatt-appendix-30}
  	\eta_k\leq C^{\sum_{l=0}^{k/2}\kappa^l}
	\eta_0^{\kappa^{k/2}}+
      	\sum_{i=1}^{k/2}C^{\sum_{l=0}^{i-1}\kappa^l}\eta_{k-2i+1}^{\kappa^{i-1}}
	  (\log\eta_{k-2i+1})^{\frac{\kappa^{i-1}}{1-\gamma}}.
	}
For the first term in \eqref{prefatt-appendix-30}, we use the precise expression for $\eta_0$ to obtain
	\eqan{
	\label{prefatt-appendix-31}
	C^{\sum_{l=0}^{k/2}\kappa^l}\eta_0^{\kappa^{k/2}} 
	&\leq 
       C^{\sum_{l=0}^{k/2}\kappa^l}\mathrm{exp}\left(2(\log\log t)\kappa^{k/2}\right) \\
	& \grosso \leq  \frac{1}{2}\mathrm{exp}\left(B(\log\log t)\kappa^{k/2}\right),\nn
	}
for a constant $B\geq 2$ large enough. 

For the second term in \eqref{prefatt-appendix-30}, we use the induction hypothesis to obtain
	\eqan{
	\label{prefatt-appendix-32}
	&\sum_{i=1}^{k/2}C^{\sum_{l=0}^{i-1}\kappa^l}\eta_{k-2i+1}^{\kappa^{i-1}}
	  (\log\eta_{k-2i+1})^{\frac{\kappa^{i-1}}{1-\gamma}} \\
  	&\qquad\leq \sum_{i=1}^{k/2}C^{\sum_{l=0}^{i-1}\kappa^l}
	\mathrm{exp}\left(B(\log\log t)\kappa^{(k-1)/2}\right)
	\Big[B(\log\log t)\kappa^{(k-2i+1)/2}\Big]^{\frac{\kappa^{i-1}}{1-\gamma}}.\nn
	}
We can write
	\eqan{
	\label{prefatt-appendix-32b}
	\mathrm{exp}\Big(B(\log\log t)\kappa^{(k-1)/2}\Big) 
	&=\mathrm{exp}\left(B(\log\log t)\kappa^{k/2}\right)
	\mathrm{exp}\left(B(\log\log t) \kappa^{k/2}\big(\sqrt{1/\kappa}-1\big)\right).
	}
Since $\sqrt{1/\kappa}-1 <0$, for $k = O(\log \log t)$
we can take $B$ large enough such that 
	\eqn{
	\label{prefatt-appendix-33}
  	\sum_{i=1}^{k/2}C^{\sum_{l=0}^{i-1}\kappa^l} 
	\mathrm{exp}\left(B(\log\log t) \kappa^{k/2}\big(\sqrt{1/\kappa}-1\big)\right)
	\Big[B(\log\log t)\kappa^{(k-2i+1)/2}\Big]^{\frac{\kappa^{i-1}}{1-\gamma}}<\frac{1}{2}.
	}
We can now sum the bounds in \eqref{prefatt-appendix-31} and \eqref{prefatt-appendix-32}--\eqref{prefatt-appendix-33} to obtain
	\eqn{
	\label{prefatt-appendix-34}
  	\eta_{k}\leq\Big(\frac{1}{2}+\frac{1}{2}\Big)
	\mathrm{exp}\left(B(\log\log t)\kappa^{k/2}\right),
	}
as required. This completes the proof of Lemma \ref{prefatt-appendix-result4}.
\end{proof}

Now we are ready to complete the proof of  Proposition \ref{prefatt-prop-bounddistance}:
\begin{proof}[Proof of Proposition \ref{prefatt-prop-bounddistance}]
Recall the definition of $\bar{k}_t$ in \eqref{prefatt-low-kt}. By \eqref{prefatt-lowdistfix-6},
	\eqn{
	\label{prefatt-appendix-45}
  	\pr(\dist_{\PA{t}}(x,y)\leq \bar{k}_t)\leq \sum_{k=1}^{\bar{k}_t}\sum_{l=1}^{g_k-1} f_{k,t}(x,l)
	+\sum_{k=1}^{\bar{k}_t}\sum_{l=1}^{g_k-1}f_{k,t}(y,l) 
	+\sum_{k=1}^{2\bar{k}_t}\sum_{l=g_{\lfloor k/2\rfloor}}^t f_{\lfloor k/2\rfloor,t}(x,l)
	f_{\lceil k/2\rceil,t}(y,l).
	}
We start with the first two sums, which are equal except that $x$ is replaced by $y$ in the second. We use \eqref{prefatt-appendix-7}, together with the fact that $l\leq g_k-1$, to obtain
	\eqn{
	\sum_{k=1}^{\bar{k}_t}\sum_{l=1}^{g_k-1} f_{k,t}(x,l)
	\leq \sum_{k=1}^{\bar{k}_t}\alpha_k\sum_{l=1}^{g_k-1} l^{-\gamma}.
	} 
Since $\gamma\in(1/2,1)$, there exists a constant $b$ such that
	\eqn{
	\sum_{k=1}^{\bar{k}_t}\alpha_k\sum_{l=1}^{g_k-1} l^{-\gamma}
	\leq b\sum_{k=1}^{\bar{k}_t}\alpha_k(g_k-1)^{1-\gamma}.
	}
We use the definition of $g_k$ to bound
	\eqn{
	 b\sum_{k=1}^{\bar{k}_t}\alpha_k(g_k-1)^{1-\gamma}
	<b\sum_{k=1}^{\bar{k}_t}\frac{6}{\pi^2k^2(\log t)^2}\leq \frac{b}{(\log t)^2},
	}
as required. The term with $y$ replacing $x$ is identical.

We next consider now the third sum in \eqref{prefatt-appendix-45}, we again use the bound in  \eqref{prefatt-appendix-7} as well as the fact that $k\mapsto g_k$ is non-increasing, while $k\mapsto \alpha_k, \beta_k$ are non-decreasing, to obtain
	\eqan{
	\label{prefatt-appendix-100}
	&\sum_{k=1}^{2\bar{k}_t}\sum_{l=g_{\lfloor k/2\rfloor}}^t
	(\alpha_{\lfloor k/2\rfloor}l^{-\gamma}+\beta_{\lfloor k/2\rfloor}l^{\gamma-1})
	 (\alpha_{\lceil k/2\rceil}l^{-\gamma}+\beta_{\lceil k/2\rceil}l^{\gamma-1})\\
	&\qquad\leq \sum_{k=1}^{2\bar{k}_t}\sum_{l=g_{\lceil k/2\rceil}}^t
	(\alpha_{\lceil k/2\rceil}l^{-\gamma}+\beta_{\lceil k/2\rceil}l^{\gamma-1})^2
	\leq 2\sum_{k=1}^{2\bar{k}_t}\sum_{l=g_{\lceil k/2\rceil}}^t
	\big(\alpha^2_{\lceil k/2\rceil}l^{-2\gamma}+\beta^2_{\lceil k/2\rceil}l^{2\gamma-2}\big)\nn\\
	&\qquad= 2\sum_{k=1}^{2\bar{k}_t}\sum_{l=g_{\lceil k/2\rceil}}^t
	\alpha^2_{\lceil k/2\rceil}l^{-2\gamma}
	+2\sum_{k=1}^{2\bar{k}_t}\sum_{l=g_{\lceil k/2\rceil}}^t\beta^2_{\lceil k/2\rceil}l^{2\gamma-2}.\nn
	}
This leads to two terms that we bound one by one. For the first term in \eqref{prefatt-appendix-100}, we can write
	\eqn{
	2\sum_{k=1}^{2\bar{k}_t}\sum_{l=g_{\lceil k/2\rceil}}^t\alpha^2_{\lceil k/2\rceil}l^{-2\gamma}
	\leq
		2b'\sum_{k=1}^{2\bar{k}_t}\alpha^2_{\lceil k/2\rceil}g_{\lceil k/2\rceil}^{1-2\gamma}
		=2b'\sum_{k=1}^{2\bar{k}_t}\alpha^2_{\lceil k/2\rceil}g_{\lceil k/2\rceil}^{2-2\gamma}
		\eta_{\lceil k/2\rceil}\frac{1}{t}.
	}
By definition of $g_k$, 
	\eqn{
	\alpha_{\lceil k/2\rceil}g_{\lceil k/2\rceil}^{1-\gamma}
	\leq 2^{1-\gamma} \alpha_{\lceil k/2\rceil}(g_{\lceil k/2\rceil}-1)^{1-\gamma}
	\leq \frac{6(1-\gamma)}
	{\pi^2 (k/2)^2(\log t)^2}.
	}
Therefore,
	\eqan{
	\label{first-term-app}
	\frac{2b'}{t}\eta_{\bar{k}_t}\sum_{k=1}^{2\bar{k}_t}
	\alpha^2_{\lceil k/2\rceil}g_{\lceil k/2\rceil}^{2-2\gamma}
	&\leq \frac{2b'2^{2(1-\gamma)}}{t}\eta_{\bar{k}_t}\sum_{k=1}^{2\bar{k}_t} 
	\Big(\frac{24(1-\gamma)}
	{\pi^2k^2(\log t)^2}\Big)^2\\ 
	&\leq \frac{C}{t}\eta_{\bar{k}_t}\frac{1}{(\log t)^4} = o\left((\log t)^{-4}\right),\nn
	}
since $\eta_{\bar{k}_t}=o(t)$ by the definition of $\bar{k}_t$ in \eqref{prefatt-low-kt} and 
Lemma \ref{prefatt-appendix-result4}: in fact, $\gamma = m/(2m+\delta)$
and consequently $\kappa = \gamma / (1-\gamma) = (1+\delta/m)^{-1}$,
that is $\kappa = 1/(\tau - 2)$ (recall that $\tau = 3+\delta/m$).

For the second term in \eqref{prefatt-appendix-100}, we use that $2-2\gamma\in(0,1)$ to compute 
	\eqn{
	2\sum_{k=1}^{2\bar{k}_t}\sum_{l=g_{\lceil k/2\rceil}}^t\beta^2_{\lceil k/2\rceil}l^{2\gamma-2}\leq
		2b''\sum_{k=1}^{2\bar{k}_t}\beta^2_{\lceil k/2\rceil}t^{2\gamma-1}.
	}
By definition of $\alpha_k$, we have $\beta_k\leq \alpha_{k+1}t^{1-2\gamma}$, which means that
	\eqn{
	\sum_{r=1}^{\bar{k}_t}\beta_r^2t^{2\gamma-1} 
	\leq \sum_{r=1}^{\bar{k}_t}\alpha^2_{r+1}t^{2-4\gamma}t^{2\gamma-1} 
	= \sum_{r=1}^{\bar{k}_t}\alpha_{r+1}^2t^{1-2\gamma} 
	\leq \frac{1}{t}\eta^{2-2\gamma}_{\bar{k}_t}\sum_{r=1}^{\bar{k}_t}
	\alpha_{r+1}^2g^{2-2\gamma}_{r+1},
	}
which is $o((\log t)^{-4})$ as in \eqref{first-term-app}. We conclude that there exists a constant $p$ such that
	\eqn{
	\pr(\dist_{\PA{t}}(x,y)\leq 2\bar{k}_t)\leq p(\log t)^{-2},
	}
as required. This completes the proof of Proposition \ref{prefatt-prop-bounddistance}.
\end{proof}

\section{Proof of Lemmas~\ref{lem-degoutcore} and~\ref{prefatt-lemma-boundarybound}}
\label{app-B}

\subsection{Proof of Lemma~\ref{lem-degoutcore}}
We adapt the proof of \cite[Lemma A.4]{DSvdH}. We design a Poly\'a's urn experiment to bound 
the probability that a fixed vertex $i\in[t/2]\setminus \core_t$ accumulates too many edges from 
the vertices $t/2+1,\ldots,t$. 

Let us fix $i\in[t/2]$.
We consider one urn containing blue balls and containing red balls. For every edge that we add to the graph from $t/2+1$ to $t$ (which means $mt/2$ edges), we need to keep track of the number of edges attached to $i$. To give an upper bound, we can assume that $D_{t/2}(i) = (\log t)^\sigma$. Let $R_k$ and $B_k$ denote the number of red and blue balls after $k$ draws, so that $R_0=(\log t)^\sigma$ and $B_0=m
(t/2)-(\log t)^\sigma$. Thus, $R_0$ is the maximal degree of vertex $i$ at time $t/2$, while $B_0$ is the minimal degree of all vertices unequal to $i$ at time $t/2$. We consider two linear weight functions for the number of balls in each urn, 
	\eqn{
	W_k^{\sss(r)} = k+\delta,\quad \mbox{ and }\quad W_k^{\sss(b)} = k+\delta (t/2-1).
	}
At time $k\geq 0$, let $R_k$ and $B_k$ denote the number of red and blue balls after $k$ draws. 
Then we draw a ball colored red or blue according to the weights $W_{R_k}^{\sss(r)}$ and $W_{B_k}^{\sss(b)}$. Here $W_{R_k}^{\sss(r)}$ represents the weight of the vertex $i$ and $W_{B_k}^{\sss(b)}$ is the weight of the rest of the graph. Naturally, $R_k+B_k
=m(t/2)+k$ is deterministic, as it should be in a Poly\'a urn, and also $W_{R_k}^{\sss(r)}+W_{B_k}^{\sss(b)}=(m + \delta)t/2+k$ is deterministic.

We consider $mt/2$ draws, and at every one, we pick a red ball with probability proportional to $W_{R_k}^{\sss(r)}$ and a blue ball with probability proportional to $W_{B_k}^{\sss(b)}$, respectively. We add one ball of the same color as the selected color (next to the drawn ball, which we put back). 
Recalling \eqref{prefatt-attprob}, one
could notice that at every draw we should add $1+\delta/m$ to the weight of the blue balls since the weight of the graph (with $i$ excluded) is always increasing due to the fact that the new edge is attached to a new vertex (that is not equal to $i$). However, $1+\delta/m\geq 0$ and thus ignoring this effect only increases the probability of choosing $i$. This suffices for our purposes, since we are only interested in an upper bound.

We denote by $(X_n)_{n=1}^{mt/2}$ a sequence of random variables, where $X_n = 1$ whenever the $n$-th extraction is a red ball (a new edge is attached to $i$). As a consequence,
	\eqn{
	\pr\Big(D_t(i)\geq (1+B)(\log t)^\sigma \mid D_{t/2}(i)<(\log t)^\sigma\Big) \leq
		\pr\Big(\sum_{n=1}^{mt/2}X_n\geq B(\log t)^\sigma\Big).
	}
As the reader can check, the sequence of random variables $(X_n)_{n=1}^{mt/2}$ is exchangeable, so we can apply De Finetti's Theorem, and obtain that
	\eqn{
	\label{eq::poly1}
	\pr\Big(\sum_{n=1}^{mt/2}X_n\geq B(\log t)^\sigma\Big) = 
		\E\left[\pr\left(\mathrm{Bin}(mt/2,U)\geq 
		B(\log t)^\sigma\mid U\right)\right],
	}
where $U$ is a distribution on $[0,1]$. In the case of the Poly\'a urn with two colors and linear weights, as shown in \cite[Theorem 6.2]{vdH2}, $U$ has a Beta distribution with parameters given by
	\eqn{
	\alpha_t = (\log t)^\sigma + \delta,\quad\mbox{ and }\quad 
	\beta_t = \frac{t}{2}\left(2m+\delta\right)-\left((\log t)^\sigma + \delta \right).
	}
We call
	\eqn{
	\psi(u) = \pr\left(\mathrm{Bin}(mt/2,u)\geq B(\log t)^\sigma\right)\leq 1.
	}
By the classical Chernoff bound $\pr(\mathrm{Bin}(n,u) \ge k) \le
e^{-n I_u(k/n)}$, where $I_u(a) = a(\log (a/u) - 1) + u$ is the large deviation function
of a $\mathrm{Pois}(u)$ random variable, see \cite[Corollary 2.20]{vdH1}.
For $a \ge 8 u$ we can bound $I_u(a) \ge a(\log 8 - 1) \ge a$, hence
	\eqn{
	\label{eq::poly2}
	\psi(u)\leq \e^{-B(\log t)^\sigma}\mbox{ whenever } 4mtu\leq B(\log t)^\sigma.
	}
Define
	\eqn{
	g(t) = \frac{B(\log t)^\sigma}{4mt}.
	}
Using $g$ and $\psi$ in \eqref{eq::poly1}, we have
	\eqn{
	\label{eq::poly3}
	\E\left[\pr\left(\mathrm{Bin}(mt/2,U)\geq 
	B(\log t)^\sigma\mid U\right)\right] \leq
	\e^{-B(\log t)^\sigma} +\pr\left(U>g(t)\right).
	}
Note that $\e^{-B(\log t)^\sigma} = o(1/t)$
since $\sigma>1$ and for $B$ large enough. 
What remains is to show that also the second term in the right-hand side 
of \eqref{eq::poly3} is $o(1/t)$. Since $U$ has a Beta distribution, 
its density $f_U(u) = \frac{\Gamma(\alpha_t+\beta_t)}{\Gamma(\alpha_t)\Gamma(\beta_t)}
u^{\alpha_t-1} (1-u)^{\beta_t-1}$ attains its maximum at the point
$\bar u_t = \frac{\alpha_t-1}{\alpha_t+\beta_t-2}$.
Since
$g(t) \geq \bar u_t$ (which can easily be checked, for $B$ large enough), we have
	\eqn{
	\label{eq::poly8}
	\pr\left(U>g(t)\right) \leq \frac{\Gamma(\alpha_t+\beta_t)}{\Gamma(\alpha_t)\Gamma(\beta_t)}\left(1-g(t)\right)^{\beta_t} g(t)^{\alpha_t-1}.
	}
We next bound each of these terms separately. Firstly, asymptotically as $t\rightarrow\infty$ and since $\delta<0$,
	\eqn{
	\label{eq::poly4}
	\frac{\Gamma(\alpha_t+\beta_t)}{\Gamma(\beta_t)}\leq (\alpha_t+\beta_t)^{\alpha_t}=\left(mt(1+\delta/2)\right)^{\alpha_t}
	\leq (mt)^{\alpha_t}.
	}
Secondly,
	\eqn{
	\label{eq::poly5}
	g(t)^{\alpha_t}=\left(\frac{B(\log t)^\sigma}{4mt}\right)^{\alpha_t}=(B/4)^{\alpha_t} \Big(\frac{(\log t)^\sigma}{mt}\Big)^{\alpha_t}.
	}
Thirdly, since $1-x\leq \e^{-x}$,
	\eqn{
	\label{eq::poly6}
	(1-g(t))^{\beta_t} = \left(1-\frac{B(\log t)^\sigma}{4mt}\right)^{\beta_t}\leq \mathrm{exp}\left(-c B(\log t)^\sigma\right),
	}
for some $c \in (0,\infty)$. Finally, by Stirling's formula $\Gamma(\alpha_t)\geq (\alpha_t/\e)^{\alpha_t}$, so that
	\eqn{
	\label{eq::poly7}
	\Gamma(\alpha_t)^{-1}\leq (\e/\alpha_t)^{\alpha_t}.
	}
Substituting \eqref{eq::poly4}, \eqref{eq::poly5}, \eqref{eq::poly6} and \eqref{eq::poly7} into \eqref{eq::poly8}, we obtain
	\eqn{
	\pr\left(U>g(t)\right)\leq \frac{\mathrm{exp}
	\left(-cB(\log t)^\sigma\right)}{g(t)}
	\Big(\frac{B \e (\log t)^\sigma}{4\alpha_t}\Big)^{\alpha_t}
	\leq t \, \mathrm{exp}\left(-cB(\log t)^\sigma/2\right)= o(1/t),
	}
for $B$ large enough and using that $(\log t)^\sigma/\alpha_t\leq 1-\delta/\alpha_t=1+O(1/\alpha_t)$. This completes the proof of the lemma.
\qed

\subsection{Proof of Lemma~\ref{prefatt-lemma-boundarybound}}
For $i=0,\ldots,k$, we denote by $N_i$ the number of vertices in the $k$-exploration graph 
at distance $i$ from $v$; the set of such vertices will be called ``level $i$''.
Clearly, $N_0=1$ because the only vertex at level $0$ is $v$. Plainly, if there are no collisions
between level $i-1$ and level $i$, then $N_i = m N_{i-1}$. The number of collisions $l_i$
between level $i-1$ and level $i$ is then given by
\begin{equation} \label{eq:elli}
	l_i := m N_{i-1} - N_i \,,
\end{equation}
and the total number of collisions is, by assumtion,
\begin{equation} \label{eq:lsum}
	\bar l_k := l_1 + \ldots + l_k \le l \,.
\end{equation}
\emph{The assumption that no vertex has only self-loops implies that $N_i \ge 1$ for every $i$}, 
because it ensures that the youngest vertex at level $i-1$ has at least one ``descendant'' 
at level $i$. Therefore
\begin{equation} \label{eq:elliconst}
	l_i \le m N_{i-1} - 1 \,.
\end{equation}

For later purposes, it is convenient to start with $N_0 \ge 1$ vertices
(even though we are eventually interested in the case $N_0 = 1$).
Rewriting \eqref{eq:elli} as $N_i = m N_{i-1} - l_i$, a simple iteration yields
\begin{equation} \label{eq:fili}
\begin{split}
	N_k & = m^k N_0 - m^{k-1} l_1 - m^{k-2} l_2 + \ldots - m l_{k-1} - l_k \,.
\end{split}
\end{equation}
This yields $N_k \ge m^k N_0 - m^{k-1} (l_1 + l_2 + \ldots + l_k)$, that is
\begin{equation} \label{eq:lb1}
\begin{split}
	N_k \ge (m N_0 - \bar l_k) m^{k-1} \,.
\end{split}
\end{equation}
This lower bound is only useful if $\bar l_k < m N_0$, otherwise the
right hand side is negative.
To deal with the complementary case, we now show by induction the following
useful bound:
\begin{equation}\label{eq:lb2}
	N_k \ge m^{-\frac{\bar l_k - (m N_0 - 1)}{m-1}} \, m^{k-2} 
	\qquad \text{when} \quad \bar l_k \ge m N_0 -1 \,.
\end{equation}

The case $k=1$ is easy:
since $\bar l_1 = l_1 \le m N_0 - 1$ by \eqref{eq:elliconst},
we only need to consider the extreme case $\bar l_1 = mN_0 - 1$,
when \eqref{eq:lb2} reduces to $N_1 \ge \frac{1}{m}$, which holds since
$N_1 \ge 1$.
Next we fix $k \ge 1$ and
our goal is to prove \eqref{eq:lb2} for $k+1$, assuming that it holds for $k$.

The crucial observation is that, iterating relation $N_i = m N_{i-1} - l_i$
(that is \eqref{eq:elli}) from $i=k+1$ until $i=2$,
we get an analogue of \eqref{eq:fili}, that is
\begin{equation} \label{eq:fili2}
\begin{split}
	N_{k+1} & = m^k N_1 - m^{k-1} l_2 - m^{k-2} l_3 + \ldots - m l_{k} - l_{k+1} \,.
\end{split}
\end{equation}
This means that \emph{$N_{k+1}$ coincides with $N_k$ in \eqref{eq:fili}
where we replace $N_0$ by $N_1$
and $(l_1, \ldots, l_k)$ by $(l_2, \ldots, l_{k+1})$ (therefore
$\bar l_k$ is replaced by $\bar l_{k+1} - l_1$)}.
As a consequence, by the inductive assumption, we can apply \eqref{eq:lb2} which yields
\begin{equation}\label{eq:lb2bis}
	N_{k+1} \ge m^{-\frac{(\bar l_{k+1} - \bar l_1) - (m N_1 - 1)}{m-1}} \, m^{k-2} 
	\qquad \text{when} \quad (\bar l_{k+1} - \bar l_1) \ge m N_1 - 1 \,.
\end{equation}
For later use, we note that, analogously, relation \eqref{eq:lb1} gives
\begin{equation} \label{eq:lb1bis}
\begin{split}
	N_{k+1} \ge (m N_1 - (\bar l_{k+1}-l_1)) m^{k-1} \,.
\end{split}
\end{equation}
By \eqref{eq:elli} and \eqref{eq:elliconst},
which yield $N_1 = m N_0 - l_1$ and $l_1 \le m N_0 - 1$, we can write
\begin{equation*}
	(\bar{l}_{k+1}-l_1)-(mN_1-1) =
	\bar{l}_{k+1} -m^2N_0
	+(m-1)l_1+1 \leq 
	\bar{l}_{k+1}-(mN_0-1) - (m-1),
\end{equation*}
which plugged into \eqref{eq:lb2bis} gives
\begin{equation}\label{eq:lb2bis2}
	N_{k+1} \ge m^{-\frac{\bar{l}_{k+1} -(mN_0-1)}{m-1}} \, m^{k -1} 
	\qquad \text{when} \quad (\bar l_{k+1} - \bar l_1) \ge m N_1 - 1 \,.
\end{equation}
This is precisely the analogue of \eqref{eq:lb2} for $k+1$, which is our goal,
except for the ``wrong'' restriction $(\bar l_{k+1} - \bar l_1) \ge m N_1$,
instead of $\bar l_{k+1} \ge m N_0$. We are thus left with showing that the
inequality in \eqref{eq:lb2bis2} still holds if
$(\bar l_{k+1} - \bar l_1) < m N_1 - 1$ and $\bar l_{k+1} \ge m N_0 - 1$,
but this is easy, because these two
conditions, together with \eqref{eq:lb1bis}, imply
	\begin{equation*}
	N_{k+1} \ge (m N_1 - (\bar l_{k+1}-l_1)) m^{k-1}
	\ge m^{k-1} \ge m^{-\frac{\bar{l}_{k+1} -(mN_0-1)}{m-1}} \, m^{k -1}  \,.
	\end{equation*}

We are ready to conclude. 
The bounds \eqref{eq:lb1} and \eqref{eq:lb2}, for $N_0 = 1$ and $l_k \le l$, yield
\begin{equation}\label{eq:finalboundlb}
	N_k \ge m^{-\frac{l}{m-1}} \, m^{k-1}
	= s(m,l) \, m^k \,, \qquad \text{where} \qquad s(m,l) := m^{-1-\frac{l}{m-1}} \,,
\end{equation}
which is what we wanted to prove.
\qed}

\section*{Acknowledgements}

We thank the referees for their detailed comments, 
which improved the paper considerably.

The work of FC was partially supported by the ERC Advanced Grant 267356 VARIS
and by the PRIN 20155PAWZB ``Large Scale Random Structures''.
The work of AG was partially supported by University of Milano Bicocca through an EXTRA scholarship that sponsored his visit to Eindhoven University of Technology
to complete his master project in March-May 2014. The work of AG and RvdH is supported by the Netherlands Organisation for Scientific Research (NWO) through 
the Gravitation {\sc Networks} grant 024.002.003.  The work of RvdH is also supported by NWO through VICI grant 639.033.806.

%\bibliographystyle{plain}
%\bibliography{biblio}
%\AtEveryBibitem{%
%    \clearfield{url}%  %these commands erase some fields to display a short bibliography
%    \clearfield{urldate}%
%    \clearfield{doi}%
%		\clearfield{eprint}%
%		\clearfield{issn}
%}

%\printbibliography

\end{document}